\newtheorem{theorem}{Theorem}[section]
\newtheorem{lemma}[theorem]{Lemma}
\newtheorem{proposition}[theorem]{Proposition}
\newtheorem{corollary}[theorem]{Corollary}
\newtheorem*{measure-compare-rpt}{Proposition \ref{measure-compare}}
\newtheorem*{main-geo-num-rpt}{Proposition \ref{main-geo-num}}
\newtheorem*{main-quad-gap-rpt}{Proposition \ref{main-quad-gap}}
\newtheorem*{blue-prop-main-rpt}{Proposition \ref{blue-prop-main}}
\theoremstyle{definition}
\newtheorem{question}{Question}[section]
\renewcommand{\leq}{\leqslant}
\renewcommand{\geq}{\geqslant}
\newcommand\tr{\operatorname{tr}}
\def\F{\mathbf{F}}
\def\R{\mathbf{R}}
\def\C{\mathbf{C}}
\def\Z{\mathbf{Z}}
\def\P{\mathbb{P}}
\def\Q{\mathbf{Q}}
\def\N{\mathbf{N}}
\def\T{\mathbf{T}}
\def\eps{\varepsilon}
\def\b{\mathbf{b}}
\def\init{\operatorname{init}}
\def\dist{\operatorname{dist}}
\def\blue{\operatorname{Blue}}
\def\red{\operatorname{Red}}
\def\vol{\operatorname{vol}}
\def\Sym{\operatorname{Sym}}
\def\Span{\operatorname{Span}}
\def\disc{\operatorname{disc}}
\def\Supp{\operatorname{Supp}}
\def\xx{\mathbf{x}}
\def\LL{\mathbf{L}}
\def\hh{\mathbf{h}}
\newcommand{\md}[1]{\ensuremath{(\operatorname{mod}\, #1)}}
\numberwithin{equation}{section}
\renewcommand\subsection{\@startsection{subsection}{2}%
	\z@{.5\linespacing\@plus.7\linespacing}{.5\linespacing}%
	{\normalfont\bfseries}}
\renewcommand\part{\@startsection{part}{2}%
	\z@{0.5\linespacing\@plus2\linespacing}{\linespacing}%
	{\normalfont\large\scshape\bfseries\centering}}
\begin{document}

\title[van der Waerden numbers]{New lower bounds for van der Waerden numbers}


\author{Ben Green}
\address{Mathematical Institute\\
Radcliffe Observatory Quarter\\
Woodstock Road\\
Oxford OX2 6GG\\
England}
\email{ben.green@maths.ox.ac.uk}
\thanks{The author is supported by a Simons Investigator grant and is grateful to the Simons Foundation for their continued support.}

\subjclass[2000]{Primary }

\begin{abstract}
We show that there is a red-blue colouring of $[N]$ with no blue 3-term arithmetic progression and no red arithmetic progression of length $e^{C(\log N)^{3/4}(\log \log N)^{1/4}}$. Consequently, the two-colour van der Waerden number $w(3,k)$ is bounded below by $k^{b(k)}$, where $b(k) = c \big( \frac{\log k}{\log\log k} \big)^{1/3}$. Previously it had been speculated, supported by data, that $w(3,k) = O(k^2)$.
\end{abstract}

\maketitle

\setcounter{tocdepth}{1}
\tableofcontents

%

\part{Introduction}

\section{Statement of results and history}

Let $k \geq 3$ be a positive integer. Write $w(3,k)$ (sometimes written $w(2;3,k)$) for the smallest $N$ such that the following is true: however $[N] = \{1,\dots, N\}$ is coloured blue and red, there is either a blue 3-term arithmetic progression or a red $k$-term arithmetic progression. The celebrated theorem of van der Waerden implies that $w(3,k)$ is finite; the best upper bound currently known is due to Schoen \cite{schoen}, who proved that for large $k$ one has $w(3,k) < e^{k^{1 - c}}$ for some constant $c > 0$. This also follows from the celebrated recent work of Bloom and Sisask \cite{bloom-sisask} on bounds for Roth's theorem.

There is some literature on lower bounds for $w(3,k)$. Brown, Landman and Robertson \cite{blr} showed that $w(3,k) \gg k^{2 - \frac{1}{\log\log k}}$, and this was subsequently improved by Li and Shu \cite{li-shu} to $w(3,k) \gg (k/\log k)^2$, the best bound currently in the literature. Both of these papers use probabilistic arguments based on the Lov\'asz Local Lemma.

Computation or estimation of $w(3,k)$ for small values of $k$ has attracted the interest of computationally-inclined mathematicians. In \cite{blr} one finds, for instance, that $w(3,10) = 97$, whilst in Ahmed, Kullmann and Snevily \cite{aks} one finds the lower bound $w(3,20) \geq 389$ (conjectured to be sharp) as well as $w(3,30) \geq 903$. This data suggests a quadratic rate of growth, and indeed Li and Shu state as an open problem to prove or disprove that $w(3,k) \geq c k^2$, whilst in \cite{aks} it is conjectured that $w(3,k) = O(k^2)$.  Brown, Landman and Robertson are a little more circumspect and merely say that it is ``of particular interest whether or not there is a polynomial bound for $w(3,k)$''. I should also admit that I suggested the plausibility of a quadratic bound myself \cite[Problem 14]{green-open-probs}.

The main result in this paper shows that, in fact, there is no such bound.

\begin{theorem}\label{mainthm}
There is a blue-red colouring of $[N]$ with no blue 3-term progression and no red progression of length $e^{C(\log N)^{3/4}(\log \log N)^{1/4}}$. Consequently we have the bound $w(3,k) \geq k^{b(k)}$, where $b(k) = c \big( \frac{\log k}{\log\log k} \big)^{1/3}$.
\end{theorem}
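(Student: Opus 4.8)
The plan is as follows. Since $\red = [N]\setminus\blue$, the requirement that there be no red $k$-term progression is exactly the requirement that $\blue$ meet every $k$-term progression in $[N]$. So I would aim to construct a set $B\subseteq[N]$ that is simultaneously $3$-AP-free and a transversal for the family of all $k$-term progressions, with $k$ as large as possible; the stated lower bound for $w(3,k)$ then follows by solving $\log k \asymp (\log N)^{3/4}(\log\log N)^{1/4}$ for $\log N$ and noting $b(k)=\log N/\log k \asymp (\log k/\log\log k)^{1/3}$.

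For the construction I would use a Behrend-type set in a dimension $d=d(N)$ that is allowed to grow. Pick a base $q$ with $q^{d}\approx N$, identify $n\in[N]$ with the vector $\phi(n)\in\{0,1,\dots,q-1\}^{d}$ of its base-$q$ digits, and set $B=\{\,n\in[N]:Q(\phi(n))\in I\,\}$, where $Q$ is a positive definite quadratic form on $\R^{d}$ and $I$ is a short interval. To control \emph{which} progressions $B$ catches, $Q$ should itself be random: for instance $Q(x)=x^{T}Ax$ with $A$ drawn from a normalised, suitably conditioned $\GOE$-type ensemble, so that $\{x:Q(x)\in I\}$ is a random thin ellipsoidal shell. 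Three-term-progression-freeness is the soft part: if $n,n+m,n+2m\in B$ and the digits involved do not carry, then $\phi(n),\phi(n+m),\phi(n+2m)$ is a genuine $3$-AP in $\R^{d}$, and strict convexity of $Q$ forces it to be trivial provided $I$ is thin and $q$ is large enough that all $\phi$-values lie in a region too small for wraparound; carries only help, since they spread a progression out and can be arranged so that the image is not collinear.

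The real work is showing $B$ meets every $k$-AP. For a progression $P=\{a+jm:0\leq j<k\}$ whose digits do not carry, $j\mapsto Q(\phi(a+jm))=Q(\phi(a))+2j\langle\phi(a),\phi(m)\rangle_{Q}+j^{2}Q(\phi(m))$ is a quadratic polynomial in $j$, so $P$ meets $B$ as soon as this parabola takes a value in $I$; this holds whenever (i) the range of the parabola over $0\leq j<k$ covers $I$ and (ii) consecutive values differ by less than $|I|$, so the parabola cannot step over $I$ — a ``no large quadratic gap'' statement into which the anti-concentration of the random $Q$ feeds (I would expect this to be the content of the paper's quadratic-gap and measure-comparison propositions). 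One then takes a union bound over the $O(N^{2})$ progressions in $[N]$, requiring that the probability over $A$ of missing a fixed $P$ beats $N^{-2}$ uniformly in $P$. The main obstacle — and, I suspect, the technical heart of the paper, packaged in its geometry-of-numbers proposition — is the class of \emph{resonant} progressions, those whose common difference $m$ lies in a Bohr-type return-time set so that $\phi(m)$ is abnormally small: for these the parabola is essentially flat, its range is a tiny cluster around $Q(\phi(a))$, and if that cluster avoids $I$ then $P$ is missed for \emph{every} choice of $A$. Dealing with them forces a hierarchical construction over the natural scales of $[N]$ (or an equally delicate direct analysis), and it is precisely the cost of this step that degrades the naive heuristic $\log k\sim\sqrt{\log N}$ — which one gets by pretending $B$ is equidistributed — down to $\log k\sim(\log N)^{3/4}(\log\log N)^{1/4}$.

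Finally I would optimise the parameters. The density of $B$ is about the measure of the shell, roughly $e^{-cd}$ once $q\approx e^{c'd}$ so that $q^{d}\approx N$; in the classical Behrend regime $d\approx\sqrt{\log N}$, but catching all the resonant progressions forces a larger $d$ and hence a thinner shell, and balancing this loss against the resulting gain in $k$ is what produces the exponent $3/4$ together with the $(\log\log N)^{1/4}$ correction, whence $w(3,k)\geq k^{b(k)}$ with $b(k)=c(\log k/\log\log k)^{1/3}$.
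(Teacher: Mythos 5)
Your high-level plan (a $3$-AP-free $B$ meeting every $k$-AP) and your closing parameter computation linking $\log k\asymp(\log N)^{3/4}(\log\log N)^{1/4}$ to $b(k)\asymp(\log k/\log\log k)^{1/3}$ are both correct, but the construction you sketch has two gaps that the paper is built precisely to circumvent. \textbf{(i) Carries.} The digit map $\phi$ is not a homomorphism: for typical $m$ the digits of $a+jm$ carry as $j$ varies, so $j\mapsto Q(\phi(a+jm))$ is not a quadratic polynomial and your parabola argument simply does not apply to most progressions. And your claim that ``carries only help'' for $3$-AP-freeness is backwards: if $\phi(n),\phi(n+m),\phi(n+2m)$ are \emph{not} collinear, the convexity argument no longer forbids all three from lying on the shell, so carries are exactly the case you cannot handle, and you supply no replacement argument. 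The paper sidesteps both problems by replacing $\phi$ with $n\mapsto\theta n$ on $\T^D$ for a random diophantine $\theta$, which \emph{is} a homomorphism; collinearity is then automatic, $3$-AP wraparound is killed by taking $\rho<1/4$, and for $k$-APs a geometry-of-numbers reduction (Proposition~\ref{main-geo-num}) passes to a multidimensional sub-progression of $P$ on which the orbit $\theta d n$ stays in a small ball, so that $\pi^{-1}$ is genuinely linear before the quadratic-gap machinery is invoked.

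\textbf{(ii) A single shell cannot work.} By Dirichlet's theorem there are always moderate-codimension slices that any one thin shell avoids entirely (this is the paper's Example~3 and the ``strip'' discussion following it), and a progression confined to such a slice is red for \emph{every} choice of $Q$. You correctly flag these resonant progressions as the crux, but a gesture towards ``a hierarchical construction over the natural scales'' is not a fix, nor is it what the paper does. The actual device is the union $\bigcup_{j=1}^{M}(x_j+\pi(A_{\mathbf{e}}))$ of $M=D^D$ translated ellipsoidal annuli, with centres chosen (Proposition~\ref{prop21}) so that every slice of codimension at most $4r$ contains one of the balls $x_j+\pi(B_{\rho/10}(0))$; the proof is then two-step (Propositions~\ref{first-step} and~\ref{second-step}), steering the first half of each long progression into such a ball and only then letting the random-quadratic-form analysis you describe take over on the second half. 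Without the union of translates and this two-step structure, the resonant-progression obstruction you identified remains fatal.
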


\emph{Update, June 2022:} Nine months after the arxiv version of this paper was made public, Zachary Hunter \cite{hunter-vdw} was able to simplify parts of the argument, and at the same time improve the lower bound to $w(3,k) \geq k^{b'(k)}$, where $b'(k) = c \frac{\log k}{\log \log k}$.\vspace*{6pt}

\emph{Acknowledgements.} It is a pleasure to thank Jon Keating, Peter Keevash, Jens Marklof and Mark Rudelson and the three anonymous referees for helpful comments on various issues related to the paper.

\section{Overview and structure the paper}\label{outline-sec}

\subsection{Discussion and overview}

I first heard the question of whether or not $w(3,k) = O(k^2)$ from Ron Graham in around 2004. My initial reaction was that surely this must be false, for the following reason: take a large subset of $[N]$ free of 3-term progressions, and colour it blue. Then the complement of this set probably does not have overly long red progressions. However, by considering the known examples of large sets free of 3-term progressions, one swiftly becomes less optimistic about this strategy.\vspace*{8pt}

\emph{Example 1.} If we take the blue points to be the folklore example $\{ \sum_i a_i 3^i : a_i \in \{0,1\}\} \cap [N]$ then the set of red points contains a progression of \emph{linear} length, namely $\{ n \equiv 2 \md{3}\}$.\vspace*{8pt}

\emph{Example 2.} If we take the blue points to be the Salem-Spencer set \cite{salem-spencer} or the Behrend set \cite{behrend} then one runs into similar issues. Both sets consists of points $a_1 + a_2 (2d-1) + \dots + a_n (2d - 1)^n$ with all $a_i \in \{0,1,\dots, d-1\}$, and for such sets the red set will contain progressions such as $\{ n \equiv d \md{2d-1}\}$. If $N := d(2d - 1)^n$, so that the set is contained in $[N]$, the length of such a red progression is $\sim N/d$. In the Behrend case one takes $d \sim \sqrt{\log N}$, so this is very long.\vspace*{8pt}

\emph{Example 3.} One runs into an apparently different kind of obstacle (though in fact it is closely related) when considering the variant of Behrend's construction due to Julia Wolf and myself \cite{green-wolf}. (The \emph{bound} in \cite{green-wolf} was previously obtained by Elkin \cite{elkin}, but the method of construction in \cite{green-wolf} was different.) Roughly speaking, this construction proceeds as follows. Pick a dimension $D$ and consider the torus $\T^D = \R^D/\Z^D$. In this torus, consider a thin annulus, the projection $\pi(A)$ of the set $A = \{ x \in \R^D : \frac{1}{4} - N^{-4/D} \leq \Vert x \Vert_2 \leq \frac{1}{4}\}$ under the natural map. Pick a rotation $\theta \in \T^D$ at random, and define the blue points to be $\{ n \in [N] : \theta n \in \pi(A)\}$. By simple geometry one can show that the only 3-term progressions in $A$ are those with very small common difference $v$, $\Vert v \Vert_2 \ll N^{-4/D}$. This property transfers to $\pi(A)$, essentially because by choosing $\frac{1}{4}$ as the radius of the annulus one eliminates ``wraparound effects'' which could have generated new progressions under the projection $\pi$. Due to the choice of parameters it turns out that with very high probability there are no blue 3-term progressions at all.

Let us now consider red progressions. Suppose that $\theta = (\theta_1,\dots, \theta_D)$, and think of $D$ as fixed, with $N$ large. By Dirichlet's theorem, there is some $d \leq \sqrt{N}$ such that $\Vert \theta_1 d \Vert \leq 1/\sqrt{N}$, where $\Vert \cdot \Vert_{\T}$ denotes the distance to the nearest integer. Since $\theta$ is chosen randomly, the sequence $(\theta n)_{n = 1}^{\infty}$ will be highly equidistributed, and one certainly expects to find an $n_0 = O_D(1)$ such that $\theta n_0 \approx (\frac{1}{2},\dots, \frac{1}{2})$. If one then considers the progression $P = \{ n_0 + nd : n \leq \sqrt{N}/10\}$ (say), one sees that $P \subset [N]$, and that $\Vert \theta_1 (n_0 + nd) - \frac{1}{2} \Vert_{\T} < \frac{1}{4}$ for all $n \leq \sqrt{N}/10$. That is, all points of $P$ avoid the annulus $\pi(A)$ (and in fact the whole ball of radius $\frac{1}{4}$) since their first coordinates are confined to a narrow interval about $\frac{1}{2}$. Therefore $P$ is coloured entirely red.\vspace*{8pt}

This last example does rather better than Examples 1 and 2, and it is the point of departure for the construction in this paper. Note that, in Example 3, the progression $P$ of length $\gg \sqrt{N}$ that we found is not likely to be the only one. One could instead apply Dirichlet's theorem to any of the other coordinates $\theta_2,\dots, \theta_D$. Moreover, one could also apply it to $\theta_1 + \theta_2$, noting that points $x \in \pi(A)$ satisfy $\Vert x_1 + x_2 \Vert_{\T} \leq \frac{\sqrt{2}}{4}$ and so avoid a narrow ``strip'' $\Vert x_1 + x_2 - \frac{1}{2} \Vert_{\T} \leq  \frac{1}{2} - \frac{\sqrt{2}}{4}$, or to $\theta_1 + \theta_2 + \theta_3$, noting that points $x \in \pi(A)$ satisfy $\Vert x_1 + x_2 + x_3 \Vert_{\T} \leq \frac{\sqrt{3}}{4}$ and so avoid a narrow strip $\Vert x_1 + x_2 + x_3 - \frac{1}{2} \Vert_{\T} \leq \frac{1}{2} - \frac{\sqrt{3}}{4}$. However, this no longer applies to $\theta_1 + \theta_2 + \theta_3 + \theta_4$, since the relevant strip has zero width. 

This discussion suggests the following key idea: instead of one annulus $\pi(A)$, we could try taking several, in such a way that every strip like the ones just discussed intersects at least one of these annuli. This then blocks all the ``obvious'' ways of making red progressions of length $\sim \sqrt{N}$. Of course, one then runs the risk of introducing blue 3-term progressions. However, by shrinking the radii of the annuli to some $\rho \lll 1$ a suitable construction may be achieved by picking the annuli to be random translates of a fixed one.

At this point we have an annulus $A = \{ x \in \R^D : \rho - N^{-4/D} \leq \Vert x \Vert_2 \leq \rho\}$ together with a union of translates $S := \bigcup_{i=1}^M (x_i + \pi(A)) \subset \T^D$. Pick $\theta \in \T^D$ at random, and colour those $n \leq N$ for which $\theta n \in S$ blue. As we have stated, it is possible to show that (with a suitable choice of $\rho$, and for random translates $x_1,\dots, x_M$ with $M$ chosen correctly) there are likely to be no blue 3-term progressions. Moreover, the obvious examples of red progressions of length $\sim \sqrt{N}$ coming from Dirichlet's theorem are blocked.

Now one may also apply Dirichlet's theorem to pairs of frequencies, for instance producing $d \leq N^{2/3}$ such that $\Vert \theta_1 d\Vert_{\T}, \Vert \theta_2 d \Vert_{\T} \leq N^{-1/3}$, and thereby potentially creating red progressions of length $\sim N^{1/3}$ unless they too are blocked by the union of annuli. To obstruct these, one needs to consider ``strips'' of codimension $2$, for instance given by conditions such as $x_1, x_2 \approx \frac{1}{2}$. Similarly, to avoid progressions of length $\sim N^{1/4}$ one must ensure that strips of codimension $3$ are blocked, and so on.

Using these ideas one can produce, for arbitrarily large values of $r$, a red--blue colouring of $[N]$ with no blue 3-term progression and no obvious way to make a red progression of length $N^{1/r}$. Of course, this is by no means a proof that there are no such red progressions!

Let us now discuss a further difficulty which arises when one tries to show that there no long red progressions. Consider the most basic progression $P = \{ 1, 2,\dots, X\}$, $X = N^{1/r}$, together with the task of showing it has at least one blue point. Suppose that $x_1$ (the centre of the first annulus in $S$) is equal to $0 \in \T^D$. Then, since $P$ is ``centred'' on $0$ it is natural to try and show that $\{ \theta, 2\theta,\dots, X \theta\}$ intersects the annulus $\pi(A)$ with centre $x_1 = 0$, or in other words to show that there is $n \leq X$ such that 
\begin{equation}\label{gap-task} (\rho - N^{-4/D})^2 < \Vert n \theta_1 \Vert_{\T}^2 + \dots + \Vert n \theta_D \Vert_{\T}^2 < \rho^2.\end{equation}
The general flavour of this problem is to show that a certain ``quadratic form'' takes at least one value in a rather small interval. However, what we have is not a bona fide quadratic form. To make it look like one, we apply standard geometry of numbers techniques to put a multidimensional structure on the \emph{Bohr set} of $n$ such that $\Vert n \theta_1 \Vert_{\T},\dots, \Vert n \theta_D \Vert_{\T} \leq \frac{1}{10}$ (say). This gives, inside the set of such $n$, a multidimensional progression $\{ \ell_1 n_1 + \dots + \ell_{D+1} n_{D+1} : 0 \leq \ell_i < L_i\}$, for certain $n_i$ and certain lengths $L_i$. The size $L_1 \cdots L_{D+1}$ of this progression is comparable to $X$.

In this ``basis'', the task \eqref{gap-task} then becomes to show that there are $\ell_1,\dots, \ell_{D+1}$, $0 \leq \ell_i < L_i$, such that 
\begin{equation}\label{gap-task-2} (\rho - N^{-4/D})^2  < q(\ell_1,\dots, \ell_{D+1}) < \rho^2,\end{equation}
where $q$ is a certain quadratic form depending on $n_1,\dots, n_{D+1}$ and $\theta$. A representative case (but not the only one we need to consider) would be $L_1 \approx \dots \approx L_{D+1} \approx L = X^{1/(D+1)}$, with the coefficients of $q$ having size $\sim L^{-2}$ so that $q$ is bounded in size by $O(1)$. Note that $N^{-4/D} \approx L^{-4r}$. Thus we have a problem of roughly the following type: given a quadratic form $q : \Z^{D+1} \rightarrow \R$ with coefficients of size $\sim L^{-2}$, show that on the box $[L]^{D+1}$ it takes at least one value on some given interval of length $L^{-4r}$.

Without further information, this is unfortunately a hopeless situation because of the possibility that, for instance, the coefficients of $q$ lie in $\frac{1}{Q} \Z$, for some $Q > L^2$. In this case, the values taken by $q$ are $\frac{1}{Q}$-separated and hence, for moderate values of $Q$, not likely to lie in any particular interval of length $L^{-4r}$.

A small amount of hope is offered by the fact that $q$ is not a fixed quadratic form -- it depends on the random choice of $\theta$. However, the way in which random $\theta$ correspond to quadratic forms is not at all easy to analyse and moreover we also need to consider similar problems for $2\theta, 3 \theta,\dots$ corresponding to potential red progressions with common difference $d = 2,3,\dots$.

Our way around this issue, and the second key idea in the paper, is to introduce a large amount of extra randomness elsewhere, in the definition of the annuli. Instead of the standard $\ell^2$-norm $\Vert x \Vert_2$, we consider instead a perturbation $\Vert (I + E) x \Vert_2$, where $E$ is a random $D \times D$ matrix with small entries so that $I+E$ is invertible with operator norm close to $1$. Such ellipsoidal annuli are just as good as spherical ones for the purposes of our construction. The choice of $E$ comes with a massive $D(D+1)/2$ degrees of freedom (not $D^2$, because $E$s which differ by an orthogonal matrix give the same norm). The quadratic form $q$ then becomes a random quadratic form $q_E$. 

With considerable effort, the distribution of $q_E$, $E$ random, can be shown to be somewhat related (for typical $\theta$) to the distribution of a truly random quadratic form $q_a(\ell_1,\dots, \ell_{D+1}) = \sum_{i\leq j} a_{ij} \ell_i \ell_j$, with the coefficients chosen uniformly from $|a_{ij}| \leq L^{-2}$. One is then left with the task of showing that a uniformly random quadratic form $q_a$ takes values in a very short interval of length $L^{-4r}$. Moreover, this is required with a very strong bound on the exceptional probability, suitable for taking a union bound over the $\sim N^{1 - 1/r}$ possible choices of the common difference $d$.

A natural tool for studying gaps in the values of quadratic forms is the Hardy-Littlewood circle method, and indeed it turns out that a suitable application of the Davenport-Heilbronn variant of the method can be applied to give what we require. The application is not direct, and additionally requires a novel amplification argument using lines in the projective plane over a suitable $\F_p$ to get the strong bound on the exceptional probability that we need.

The above sketch omitted at least one significant detail, namely how to handle ``uncentred'' progressions $P$ starting at points other than $0$. For these, one must use the particular choice of the centres $x_i$. One can show that $P$ enters inside at least one of the balls $x_i + \pi(B_{\rho/10}(0))$, and starting from here one can proceed much as centred case.\vspace*{10pt}

\subsection{Structure of the paper} With that sketch of the construction complete, let us briefly describe the structure of the paper. As the above discussion suggests, it is natural to introduce a parameter $r$ and consider the following equivalent form of Theorem \ref{mainthm}.

\begin{theorem}\label{mainthm-1}
Let $r$ be an integer, and suppose that $N > e^{Cr^4 \log r}$. Then there is a red/blue colouring of $[N]$ with no blue 3-term progression and no red progression of length $N^{1/r}$.
\end{theorem}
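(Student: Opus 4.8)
The plan is to produce the colouring by a randomised construction and to show that, with positive probability, it has both required properties; a witnessing colouring then exists. Fix a dimension $D$ of order $r$ (the precise choice is made at the end), a radius $\rho \lll 1$, and an integer $M$. Let $E$ be a random real $D \times D$ matrix whose entries are small enough that $I + E$ is invertible with operator norm close to $1$, and set $A := \{ x \in \R^D : \rho - N^{-4/D} \le \Vert (I+E)x \Vert_2 \le \rho \}$, an ellipsoidal annulus of thickness $\sim N^{-4/D}$. Choose $x_1, \dots, x_M \in \T^D$ independently and uniformly, let $\pi : \R^D \to \T^D$ be the quotient map, and put $S := \bigcup_{i=1}^M (x_i + \pi(A))$. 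Finally choose $\theta \in \T^D$ uniformly at random, and colour $n \in [N]$ blue precisely when $n\theta \in S$ and red otherwise. All randomness lies in the tuple $(E, x_1, \dots, x_M, \theta)$.

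\emph{No blue 3-term progression.} The starting point is a geometric lemma: since the annulus is thin ($\sim N^{-4/D}$) and its radius $\rho$ stays away from the wraparound scale, the only 3-term progressions inside $A$ --- and hence, after projecting, inside any single translate $x_i + \pi(A)$ --- have common difference $v$ with $\Vert (I+E)v\Vert_2 \ll N^{-4/D}$. For each candidate progression $\{a, a+d, a+2d\} \subset [N]$ one then estimates the probability, over $\theta$ and the $x_i$, that $a\theta, (a+d)\theta, (a+2d)\theta$ all lie in $S$; using equidistribution of the orbit $(n\theta)_n$ together with the geometric constraint, this probability is small enough that, provided $M$ is not too large, the union bound over the $\ll N^2$ candidates still leaves positive probability of no blue 3-term progression.

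\emph{No red progression of length $X := N^{1/r}$.} Here one must show that every progression $P \subset [N]$ of length $X$ meets the blue set. A first reduction, using the random centres, is that $P$ enters some ball $x_i + \pi(B_{\rho/10}(0))$, which after translation leaves the ``centred'' problem: find $n$ in a suitable arithmetic progression of length $\sim X$ with $(\rho - N^{-4/D})^2 < \sum_{j=1}^D \Vert n\theta_j \Vert_\T^2 < \rho^2$ (in the ellipsoidal version). Applying the geometry of numbers to the Bohr set $\{ n : \Vert n\theta_j\Vert_\T \le \frac{1}{10}, \ 1 \le j \le D\}$ produces inside it a generalised progression $\{ \ell_1 n_1 + \dots + \ell_{D+1} n_{D+1} : 0 \le \ell_i < L_i\}$ of size $\sim X$; in this basis the target becomes $(\rho - N^{-4/D})^2 < q_E(\ell_1, \dots, \ell_{D+1}) < \rho^2$ for a quadratic form $q_E$ with coefficients of size $\sim L^{-2}$ (with $L := X^{1/(D+1)}$ in the representative case), and since $N^{-4/D} \approx L^{-4r}$ this asks $q_E$ to take a value in a window of length $\sim L^{-4r}$ on the box $\prod_i [L_i]$. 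The decisive feature is that $q_E$ is random through $E$: with substantial effort its distribution (for typical $\theta$, as $E$ varies uniformly) can be compared to that of a genuinely uniform random quadratic form $q_a(\ell) = \sum_{i \le j} a_{ij} \ell_i\ell_j$ with $|a_{ij}| \le L^{-2}$. One then shows, via a Davenport--Heilbronn treatment of the associated exponential integral, that such a $q_a$ hits the short window, and --- crucially --- that the set of ``bad'' $a$ (equivalently ``bad'' $E$) has probability small enough to survive a union bound over all $\sim N^{1 - 1/r}$ common differences $d$ and all starting points of $P$. Getting so strong an exceptional bound is not delivered by the bare circle method and needs an amplification step: organising the relevant family of forms along lines in the projective plane over a suitable $\F_p$ and exploiting near-independence across these lines.

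\emph{Main obstacle and parameters.} I expect the genuinely hard part to be exactly this last point --- extracting from Davenport--Heilbronn a \emph{quantitative} statement with a \emph{super-polynomially small} exceptional probability, uniform over all the dilates $d\theta$ at once --- since the classical method is only qualitative and its minor-arc errors must be dominated for a quadratic form in $\sim D$ variables on a box of side only $L = N^{1/(r(D+1))}$; this is precisely what forces the projective-plane amplification and a delicate accounting of which $E$ yield a usable model form $q_a$. The hypothesis $N > e^{Cr^4\log r}$ is then what reconciles the competing demands: $D$ must be of order $r$ so that the main-term count $\sim L^{D+1-4r}$ in the circle method is genuinely large, the progression obtained from the geometry of numbers must stay long after losses that cost a factor polynomial in $D$ in the exponent, and $L \sim N^{1/r^2}$ must itself be large enough (super-polynomial in $D$) to absorb the circle-method errors --- together pushing $\log N$ up to order $r^4\log r$.
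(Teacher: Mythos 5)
Your proposal tracks the paper's argument closely: random ellipsoidal annuli with random $E$, random centres $x_1,\dots,x_M$, random rotation $\theta$, the Behrend-style parallelogram argument for blue 3-term progressions, the two-step treatment of a red progression (first enter a ball $x_j + \pi(B_{\rho/10}(0))$, then hit the annulus), geometry of numbers to extract a generalised progression from a Bohr set, the comparison of the induced form $q_E$ with a genuinely random form $q_a$, a Davenport--Heilbronn analysis of gaps in $q_a$, and the projective-plane amplification to bring the exceptional probability down to survive a union bound over common differences. All of this is the paper's construction and decomposition.

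The one substantive error is in your parameter accounting. You take the dimension ``$D$ of order $r$'' and deduce $L \sim N^{1/r^2}$; the paper takes $D = C_3 r^2$, hence $L \sim X^{1/D} = N^{1/(C_3 r^3)}$. The exponent $r^2$ is not a constant optimisation but is \emph{forced} by two parts of the argument you yourself invoke. First, the well-distributed centres $x_1,\dots,x_M$ must intersect every ``strip'' of codimension up to $4r$, and making that happen with high probability (so that the bad event has measure $< \tfrac14$ after a union over $\rho^{-O(Dr)}$ slices) needs $D \gg r^2$; this is where the paper's Proposition \ref{prop21} uses $D = C_3 r^2$ explicitly. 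Second, and more fatally for a $D\sim r$ choice, the projective-plane amplification packs edge-disjoint copies of $K_m$ (with $m \asymp B \asymp r$) into $K_s$ (with $s$ roughly the number of surviving generalised-progression directions, which is $\asymp D$); to obtain $\Omega(s)$ such copies one needs $s \geq 16m^2$, i.e.\ $D \gg r^2$ (this is exactly the hypothesis $s \geq C_1 B^2$ in Proposition \ref{main-quad-gap}). With $D \sim r$ the host graph $K_D$ cannot even accommodate a single $K_m$, let alone $\Omega(D)$ edge-disjoint ones, so the amplification --- which is the step you correctly identify as carrying the whole red argument --- does not get off the ground. Once $D \sim r^2$ is in place, $L = N^{1/(rD)}$ and the requirement $L \geq (QB)^{C_2 B}$ with $Q \asymp D^{10}$, $B \asymp r$ give $N \geq D^{C r^2 D} = e^{\Theta(r^4 \log r)}$, which is the hypothesis of the theorem; your derivation of the same exponent from $D\sim r$, $L\sim N^{1/r^2}$ does not in fact close.
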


Taking $r = c \big( \frac{\log N}{\log \log N} \big)^{1/4}$ for suitable $c$, we recover Theorem \ref{mainthm}. While Theorems \ref{mainthm} and \ref{mainthm-1} are equivalent, it is much easier to think about Theorem \ref{mainthm-1} and its proof by imagining that $r$ is fixed and that $N$ is a very large compared to $r$. We will, of course, keep track of just how large $N$ needs to be as we go along, but this is somewhat secondary to understanding the key concepts of the argument. For the rest of the paper, $r$ will denote the parameter appearing in Theorem \ref{mainthm-1}, and we will always assume (as we clearly may) that it is sufficiently large.

In Section \ref{notation-sec} we summarise some key notation and conventions in force for the rest of the paper. In Section \ref{sec4} we turn to the details of our construction, in particular constructing the translates $x_1,\dots, x_M$ of our annuli, and introducing the notion of a random ellipsoidal annulus properly. In Section \ref{colour-def} we describe the red/blue colouring itself and divide the task of showing that there are no blue 3-term progressions or red $N^{1/r}$-term progressions into three parts (the blue progressions, and what we call steps 1 and 2 for the red progressions). In Section \ref{blue-ap-sec} we handle the blue 3-term progressions. Section \ref{diophantine-sec} is then devoted to a technical ``diophantine'' condition on $\theta \in \T^D$ which will be in force for the rest of the paper. In Section \ref{first-step-sec} we handle step 1 of the treatment of red progressions.

At this point we are only one third of the way through the paper. The remaining discussion is devoted to the treatment of step 2 for the red progressions, which involves the geometry of numbers and gaps in random quadratic forms material outlined above. We devote Section \ref{second-step-sec} to a more detailed technical overview of the argument which reduces it to three key propositions. The proofs of these propositions are then handled in Parts IV and V of the paper. Part IV contains, roughly speaking, the relevant geometry of numbers arguments, whilst Part V contains the arguments pertaining to gaps in quadratic forms. These parts may be read independently of one another and of the rest of the paper.

\subsection{Further comments} The discussion around the application of Dirichlet's theorem above suggests that there are certain ``phase changes'' in the problem as one goes from ruling out red progressions of length $\sim N^{1/2}$ to ruling out progressions of length $\sim N^{1/3}$, and so on. Indeed, this is why we formulate our main result in the equivalent form of Theorem \ref{mainthm-1}. I consider it quite plausible that such phase changes are not merely an artefact of our argument but rather of the problem as a whole, and that the apparently strong numerical evidence for quadratic behaviour of $w(3,k)$ reflects the fact that in the regime $k \leq 40$ one is only seeing the first phase in which it is more efficient to take just one large annulus as in the construction of Julia Wolf and myself, at the expense of having to allow strips of codimension 1 which admit red progressions of length $\sim N^{1/2}$. It would be interesting to see whether the ideas of this paper could be used to produce, computationally, an example with $w(3,k) \sim k^3$.

I have worked quite hard to try and optimise the exponent in Theorem \ref{mainthm} and it seems to represent the limit of the method for multiple different reasons, as discussed in a little more detail in Section \ref{notation-sec} below. These limitations seem to be a mix of fundamental ones and artefacts of our analysis. I would expect that the true value of $w(3,k)$ lies somewhere in between the bound of Theorem \ref{mainthm} and something like $k^{c\log k}$, which is what a Behrend construction of the blue points would give if only the complement of such a set ``behaved randomly''. Ron Graham \cite{graham} established a lower bound of this type for a restricted version of the problem in which one only forbids red progressions with common difference $1$.

Finally, we remark that \cite{fox-pohoata} is an earlier example in which random unions of structured objects are used to understand a problem related to arithmetic progressions.

\section{Notation and conventions}\label{notation-sec}

\subsection{Fourier transforms}
We use the standard notation $e(t) := e^{2\pi i t}$ for $t \in \R$.

We will take Fourier transforms of functions on $\R^k, \Z^k, \T^k$ for various integer $k$. We will use the same hat symbol for all of these, and define them as follows:
\begin{itemize}
\item If $f : \R^k \rightarrow \C$, $\hat{f}(\gamma) = \int_{\R^k} f(x) e(-\langle \gamma , x\rangle) dx$ for $\gamma \in \R^k$;
\item If $f : \Z^k \rightarrow \C$, $\hat{f}(\theta) = \sum_{n \in \Z^k} f(n) e(-n \cdot \theta)$ for $\theta \in \T^k$;
\item If $f : \T^k \rightarrow \C$, $\hat{f}(\xi) = \int_{\T^k} f(x) e(-\xi \cdot x) dx$ for $\xi \in \Z^k$.
\end{itemize}
The notation $\langle x, y\rangle$ for $\sum_i x_i y_i$ in $\R^k$, but $x \cdot y$ in $\Z^k$ and $\T^k$, is merely cultural and is supposed to reflect the fact that our arguments in the former space will be somewhat geometric in flavour.

We will only be using the Fourier transform on smooth, rapidly decaying functions where convergence issues are no problem. Note in particular that the normalisation of the Fourier transform on $\R^k$ (with the phase multiplied by $2\pi$) is just one of the standard options, but a convenient one in this paper. With this normalisation, Fourier inversion states that $f(x) = \int_{\R^k} \hat{f}(\gamma) e(\langle \gamma, x\rangle) d\gamma$.

\subsection{Convention on absolute constants}

It would not be hard to write in explicit constants throughout the paper. They would get quite large, but not ridiculously so. However, we believe it makes the presentation neater, and the dependencies between parameters easier to understand, if we leave the larger ones unspecified and adopt the following convention:
\begin{itemize}
\item $C_1$ is a sufficiently large absolute constant;
\item $C_2$ is an even larger absolute constant, how large it needs to be depending on the choice of $C_1$;
\item $C_3$ is a still larger constant, large enough in terms of $C_1, C_2$.
\end{itemize}

To clarify, no matter which $C_1$ we choose (provided it is sufficiently big) there is an appropriate choice of $C_2$, and in fact all sufficiently large $C_2$ work. No matter which $C_2$ we choose, all sufficiently large $C_3$ work. There are many constraints on how large $C_1$ needs to be throughout the paper, and it must be chosen to satisfy \emph{all} of them, and similarly for $C_2, C_3$.

\subsection{Key parameters}

The most important global parameters in the paper are the following:

\begin{itemize}
\item $N$: the interval $[N]$ is the setting for Theorem \ref{mainthm-1}.
\item $r$: a positive integer, always assumed to be sufficiently large. $N^{1/r}$ is the length of red progressions we are trying to forbid.
\item $D$: a positive integer dimension. The torus $\T^D = \R^D/\Z^D$ will play a key role in the paper. 
\end{itemize}

Throughout the paper, we will assume that 
\begin{equation}\label{global-conditions} \mbox{$r$ sufficiently large}, \qquad D = C_3 r^2, \qquad N \geq D^{C_2 D^2}.\end{equation}
Several lemmas and propositions do not require such strong assumptions. However, two quite different results in the paper (Proposition \ref{prop21}, and the application of Proposition \ref{main-quad-gap} during the proof of Proposition \ref{second-step} in Section \ref{second-step-sec}) require a condition of the form $D \gg r^2$. The condition $N > D^{C_2 D^2}$ comes up in the proof of Proposition \ref{second-step}, in fact in no fewer than three different ways in the last displayed equation of Section \ref{second-step-sec}. For these reasons, it seems as if our current mode of argument cannot possibly yield anything stronger than Theorem \ref{mainthm-1}.

A number of other parameters and other nomenclature feature in several sections of the paper:

\begin{itemize}
\item $X$: shorthand for $N^{1/r}$.
\item $\theta$: an element of $\T^D$, chosen uniformly at random, and later in the paper always taken to lie in the set $\Theta$ of diophantine elements (Section \ref{diophantine-sec}).
\item $\rho$: a small radius (of annuli in $\T^D$), from Section \ref{colour-def} onwards fixed to be $D^{-4}$.
\item $\mathbf{e}$: a uniform random element of $[-\frac{1}{D^4} , \frac{1}{D^4} ]^{D(D+1)/2}$ (used to define random ellipsoids). Usually we will see $\sigma(\mathbf{e})$, which is a symmetric matrix formed from $\mathbf{e}$ in an obvious way (see Section \ref{sec4} for the definition).
\item $d$: invariably the common difference of a progression, with $d \leq N/X$.
\end{itemize}

The letter $Q$ is reserved for a ``complexity'' parameter (bounding the allowed size of coefficients, or of matrix entries) in various different contexts.

\subsection{Notation}

$[N]$ always denotes $\{1,\dots, N\}$.

If $x \in \T$  then we write $\Vert x \Vert_{\T}$ for the distance from $x$ to the nearest integer. If $x = (x_1,\dots, x_D) \in \T^D$ then we write $\Vert x \Vert_{\T^D} = \max_i \Vert x_i \Vert_{\T}$.

We identify the dual $\hat{\T}^D$ with $\Z^D$ via the map $\xi \mapsto (x \mapsto e(\xi \cdot x))$, where $\xi \cdot x = \xi_1 x_1 + \cdots + \xi_D x_D$. In this setting we always write $|\xi| := \max_i |\xi_i|$, instead of the more cumbersome $\Vert \xi \Vert_{\infty}$.

Apart from occasional instances where it denotes $3.141592\dots$, $\pi$ is the natural projection homomorphism $\pi : \R^D \rightarrow \T^D$. Clearly $\pi$ is not invertible, but nonetheless we abuse notation by writing $\pi^{-1}(x)$ for the unique element $y \in (-\frac{1}{2}, \frac{1}{2}]^D$ with $\pi(y) = x$.

If $R$ is a $D \times D$ matrix over $\R$ then we write $\Vert R \Vert$ for the $\ell^2$-to-$\ell^2$ operator norm, that is to say $\Vert Rx \Vert_{2} \leq \Vert R \Vert \Vert x \Vert_2$, and $\Vert R \Vert$ is the smallest constant with this property. Equivalently $\Vert R \Vert$ is the largest singular value of $R$.

\part{A red/blue colouring of $[N]$}

\section{Random ellipsoidal annuli}\label{sec4}

In this section, we prepare the ground for describing our red/blue colouring of $[N]$, which we will give in Section \ref{colour-def}. In the next section, we describe our basic construction by specifying the points in $[N]$ to be coloured blue. The torus $\T^D$ (and Euclidean space $\R^D$) play a fundamental role in our construction, where $D = C_3 r^2$. 

\subsection{A well-distributed set of centres}

As outlined in Section \ref{outline-sec}, an important part of our construction is the selection (randomly) of a certain set $x_1,\dots, x_M$ of points in $\T^D$. Later, we will fix $\rho := D^{-4}$, but the following proposition does not make any assumption on $\rho$ beyond that $\rho < D^{-1}$.

\begin{proposition} \label{prop21} Suppose that $D = C_3 r^2$ and that $\rho < \frac{1}{D}$.
There are $x_1,\dots, x_M \in \T^D$ such that the following hold: 
\begin{enumerate}
\item Whenever $i_1, i_2, i_3$ are not all the same, $\Vert x_{i_1} - 2 x_{i_2} + x_{i_3} \Vert_{\T^D} \geq 10 \rho$. 
\item Whenever $V \leq \Q^D$ is a subspace of dimension at most $4r$ and $x \in \T^D$, there is some $j$ such that $\Vert \xi \cdot (x_j - x) \Vert_{\T} \leq \frac{1}{100}$ for all $\xi \in V \cap \Z^D$ with $|\xi| \leq \rho^{-3}$.
\end{enumerate}
\end{proposition}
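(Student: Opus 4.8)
The plan is to construct the points $x_1,\dots,x_M$ by a probabilistic argument: choose $x_1,\dots,x_M \in \T^D$ independently and uniformly at random, with $M$ to be specified, and show that both properties hold simultaneously with positive probability. For property (1), fix a triple $(i_1,i_2,i_3)$ not all equal. If at least two of the indices are distinct, then $x_{i_1} - 2x_{i_2} + x_{i_3}$ is (after conditioning on the remaining coordinates) a uniform random point of $\T^D$, so $\mathbb{P}(\Vert x_{i_1} - 2x_{i_2} + x_{i_3}\Vert_{\T^D} < 10\rho) \leq (20\rho)^D$. Taking a union bound over the $\leq M^3$ triples, property (1) fails with probability at most $M^3 (20\rho)^D$, which is much smaller than $1$ provided $M \leq (20\rho)^{-D/4}$, say — here the hypothesis $\rho < 1/D$ makes $(20\rho)^{-D}$ enormous, so $M$ can be taken very large.

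For property (2), fix a subspace $V \leq \Q^D$ of dimension $d \leq 4r$ and a point $x \in \T^D$. I want some $x_j$ with $\Vert \xi\cdot(x_j - x)\Vert_{\T} \leq \tfrac{1}{100}$ for all $\xi \in V \cap \Z^D$ with $|\xi| \leq \rho^{-3}$. The key observation is that this is a Bohr-set condition: the set of $y \in \T^D$ satisfying $\Vert \xi \cdot y \Vert_{\T} \leq \tfrac1{100}$ for all such $\xi$ is a $\Bohr$ set whose measure I can bound below. Concretely, $V\cap\Z^D$ has a basis of at most $d$ vectors, and after a change of basis the constraints cut out a set of the form $\{y : \Vert \text{(linear forms in } d \text{ coordinates)}\Vert_{\T} \le \tfrac1{100}\}$; since the relevant lattice of frequencies has rank $\le d$, this set contains a translate of a subtorus-like neighbourhood of measure at least $c^d$ for an absolute $c>0$ (one can make this precise by diagonalising the sublattice $\Z^D \cap V$ via Minkowski/HNF and noting the dual box has volume $\gg (1/100)^d$). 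Thus for fixed $V, x$, a single random $x_j$ lands in the good set with probability $\geq c^{4r}$, so $\mathbb{P}(\text{no } x_j \text{ works}) \leq (1 - c^{4r})^M \leq e^{-M c^{4r}}$.

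The remaining point is the union bound over $(V,x)$, and this is the step I expect to be the main obstacle, since there are infinitely many subspaces $V$ and infinitely many $x$. The fix is a discretisation/net argument on both. For $V$: because we only care about $\xi \in V$ with $|\xi| \le \rho^{-3}$, two subspaces that contain the same integer vectors of height $\le \rho^{-3}$ impose the same constraint, so the number of \emph{relevant} subspaces is at most the number of subsets of $\{\xi \in \Z^D : |\xi| \le \rho^{-3}\}$, crudely $2^{(2\rho^{-3}+1)^D} = e^{O(\rho^{-3D}\log\rho^{-1})}$ — large, but still only singly exponential in a fixed power of $\rho^{-1}$ and $D$. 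For $x$: the condition $\Vert \xi\cdot(x_j - x)\Vert_{\T} \le \tfrac1{100}$ is stable under perturbing $x$ by $O(\rho^3)$ (since $|\xi| \le \rho^{-3}$), so it suffices to check $x$ ranging over a $\rho^3$-net in $\T^D$, of size $O(\rho^{-3})^D$. Altogether the number of pairs to union-bound over is at most $e^{C\rho^{-3D}\log(1/\rho)}$, and it remains to verify that $M$ can be chosen (e.g. $M = \lceil e^{C'\rho^{-3D}\log(1/\rho)} c^{-4r}\rceil$, or more cleanly by noting $\rho < 1/D = 1/(C_3 r^2)$ so $c^{-4r} \ll \rho^{-3D}$-type bounds absorb everything) so that $e^{-Mc^{4r}}$ times this count is $< \tfrac12$, while simultaneously $M^3(20\rho)^D < \tfrac12$. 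Since $(20\rho)^{-D/4}$ dwarfs $e^{C'\rho^{-3D}\log(1/\rho)}$? — no, it does not; so one must instead observe that both bad-event probabilities can be made $< \tfrac12$ for a \emph{common} range of $M$ by checking $e^{C'\rho^{-3D}\log(1/\rho)} \cdot e^{-Mc^{4r}} < \tfrac12$ needs $M \gtrsim \rho^{-3D}$, while $M^3(20\rho)^D<\tfrac12$ allows $M$ up to $(20\rho)^{-D/3}$, and indeed $(20\rho)^{-D/3} \gg \rho^{-3D}$ fails too — so the correct move is to be less wasteful in property (1): only triples with two or three distinct indices matter and for those the probability is $(20\rho)^D$, but we may also simply \emph{relax} $10\rho$ handling or, better, note that in the intended application $\rho = D^{-4}$ and $D = C_3r^2$, so $(20\rho)^D = (20 D^{-4})^D$ while the count for (2) is $e^{O(D^{12}\cdot D\log D)} = e^{O(D^{13}\log D)}$ and we'd need $M \ge e^{\Omega(D^{13}\log D)}$, far exceeding $(20\rho)^{-D/3} = e^{\Theta(D\log D)}$. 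This tension means the naive net for (2) is too lossy; the honest fix — and the real content of the argument — is to bound the number of relevant subspaces $V$ much more efficiently, using that $V$ has dimension $\le 4r \ll D$: a rank-$\le 4r$ rational subspace with a primitive integer basis of height $\le \rho^{-3}$ is determined by $4r$ such vectors, giving a count of only $(\rho^{-3D})^{4r} = e^{O(r D \log(1/\rho))}$, and similarly the point $x$ effectively only needs to be pinned down modulo $V^\perp$, i.e. in $\le 4r$ "directions", giving $(\rho^{-3})^{4r}$ choices rather than $(\rho^{-3})^D$. With these refinements the total count is $e^{O(rD\log(1/\rho))} = e^{O(rD\log D)}$, we need $M \ge e^{\Omega(rD\log D)}$, and since $(20\rho)^{-D/3} = (20D^4)^{D/3} = e^{\Theta(D\log D)}$ we still need $rD\log D \ll D\log D$, i.e. $r = O(1)$ — still false. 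So the genuine obstacle is that properties (1) and (2) pull $M$ in opposite directions, and the resolution must be that property (1)'s constraint is actually weaker than I estimated: a cleaner bound replaces $(20\rho)^D$ by something like $(C\rho)^{D}$ only when the three indices are \emph{distinct}, whereas for two equal indices $x_{i_1} - 2x_{i_2} + x_{i_1} = 2(x_{i_1}-x_{i_2})$ and the relevant probability is $\mathbb P(\Vert 2(x_{i_1}-x_{i_2})\Vert_{\T^D} < 10\rho) \le (40\rho)^D$ — same order. I therefore expect the author's proof to instead choose the $x_i$ \emph{greedily/inductively}, or to build them inside a structured set (e.g. a generalized arithmetic progression or a carefully chosen lattice coset) where property (1) is automatic by design and only property (2) needs the counting argument; verifying that such a structured family still satisfies the Bohr-type covering in (2) is the crux. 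I would pursue the probabilistic route first and, if the union bounds refuse to close, switch to a construction where the $x_i$ are forced to be $10\rho$-separated in the $\ell^\infty$ "quadratic difference" sense by placing them on a sufficiently coarse grid, then run the net argument for (2) on that grid.
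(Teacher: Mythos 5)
Your overall framework is right---uniform random $x_1,\dots,x_M$, union bound for (1), and for (2) a count of relevant rational subspaces by their low-height integer generators followed by a union bound---and this is indeed the paper's approach. But there is a genuine gap in the heart of the estimate for (2), and it causes your subsequent arithmetic to come out so far off that you wrongly conclude the probabilistic route cannot close.

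The gap is the claimed ``hit probability.'' You assert that the Bohr set $\{y : \Vert \xi\cdot y\Vert_\T \le \tfrac{1}{100}\ \text{for all } \xi\in V\cap\Z^D,\ |\xi|\le\rho^{-3}\}$ has measure at least $c^{4r}$ for an absolute $c>0$, on the grounds that after HNF-diagonalising the lattice of frequencies only $\dim V \le 4r$ linear constraints remain. That is not so: after diagonalising, there are still \emph{many} constraints, one for each $\xi$ of height at most $\rho^{-3}$, and these are expressed in terms of the integral basis $\xi_1,\dots,\xi_m$ with integer multipliers $n_i$ that can be as large as $m!(2\rho^{-3})^m$ (this is Lemma~\ref{quant-lattice} in the appendix). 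Enforcing $\Vert \xi\cdot(x_j-x)\Vert_\T \le \tfrac{1}{100}$ for every such $\xi$ therefore forces the generators to satisfy the far more stringent condition $\Vert \xi_i\cdot(x_j-x)\Vert_\T \le \rho^{14r}$. The correct per-frame hit probability is thus of order $\rho^{14rm}=\rho^{\Theta(r^2)}$, not $c^{4r}$; already in the one-dimensional toy case $V=\Q e_1$ the Bohr set with cutoff $\rho^{-3}$ has measure $\asymp\rho^3$, not $\asymp c$, which shows the ``$c^d$'' heuristic is wrong.

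Once the hit probability is corrected, the argument does close, but only because of the hypothesis $D=C_3 r^2$ with $C_3$ large---a fact you never invoke and whose role is invisible in your computations. The paper avoids a net over $x$ entirely: since the constraint only sees $(\xi_1\cdot x,\dots,\xi_m\cdot x)\in\T^m$, and this projection is equidistributed on $\T^m$ by Lemma~\ref{ind-td-tm}, it suffices that every box of sidelength $\rho^{14r}$ in $\T^m$ contain at least one of the projected points $(\xi_1\cdot x_j,\dots,\xi_m\cdot x_j)$. With $M=\lceil\rho^{-D/4}\rceil$, the probability a fixed box is missed is $(1-\rho^{14rm})^M\le e^{-\rho^{14rm}M}\le e^{-\rho^{-D/8}}$, using $14rm\le 56r^2\le D/8$ (this is where $C_3\ge 448$ is needed); this is doubly-exponentially small in $D$ and easily absorbs the union over $\rho^{-56r^2}$ boxes and $\rho^{-14Dr}$ subspaces, and also sits comfortably below the constraint $M^3(20\rho)^D<\tfrac14$ coming from~(1). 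Your final arithmetic also contains a separate slip (you write ``$M\ge e^{\Omega(rD\log D)}$'' where you should have $M\gtrsim(\text{hit prob})^{-1}\cdot\log(\text{count})$), which compounds the problem. The speculation at the end---that the paper must instead construct the $x_j$ greedily or on a grid---is therefore not needed; the obstacles you ran into are artefacts of the mis-estimated Bohr set measure and of not exploiting $D\gg r^2$.
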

\emph{Remark.} With reference to the outline in Section \ref{outline-sec}, condition (2) here is saying that  any ``slice'' of codimension at most $4r$ contains one of the $x_i$; this is what obstructs a simple construction of red progressions of length $N^{1/r}$ using Dirichlet's theorem. Item (1) will allow us to guarantee that by taking a union of translates of annuli, rather than just one, we do not introduce new blue 3-term progressions. 

\begin{proof}
Set $M = \lceil \rho^{-D/4}\rceil$ and pick $x_1,\dots, x_M \in \T^D$ independently and uniformly at random. For any triple $(i_1, i_2, i_3)$ with not all the indices the same, $x_{i_1} - 2x_{i_2} + x_{i_3}$ is uniformly distributed on $\T^D$. Therefore $\P ( \Vert x_{i_1} - 2 x_{i_2} + x_{i_3} \Vert_{\T^D} \leq 10 \rho) \leq (20 \rho)^{D}$. Summing over all $< M^3$ choices of indices gives an upper bound of $M^3 (20 \rho)^D < \frac{1}{4}$ on the probability that (1) fails (since $D$ is sufficiently large).

For (2), we may assume that $V$ is spanned (over $\Q$) by vectors $\xi \in \Z^D$ with $|\xi| \leq \rho^{-3}$ (otherwise, pass from $V$ to the subspace of $V$ spanned by such vectors). There are at most $4r (3 \rho^{-3})^{4r D} < \rho^{-14rD}$ such $V$ (choose the dimension $m < 4r$, and then $m$ basis elements with $\xi \in \Z^D$ and $|\xi| \leq \rho^{-3}$).

Fix such a $V$. By Lemma \ref{quant-lattice}, $V \cap \Z^D$ is a free $\Z$-module generated by some $\xi_1,\dots, \xi_m$, $m \leq 4r$, and with every element $\xi \in V \cap \Z^D$ with $|\xi| \leq \rho^{-3}$ being $\xi = n_1 \xi_1+ \cdots + n_m \xi_m$ with $|n_i| \leq m! (2\rho^{-3})^m \leq \frac{1}{400r}\rho^{-14 r}$. Thus, to satisfy our requirement, we need only show that for any $x \in \T^D$ there is $x_j$ such that 
\begin{equation}\label{want}
\Vert \xi_i \cdot (x_j - x) \Vert_{\T} \leq \rho^{14r} \; \mbox{for $i = 1,\dots, m$},\end{equation} since then
\[ \Vert \xi \cdot (x_j - x) \Vert_{\T} \leq  \rho^{14r}\sum_{i = 1}^m |n_i|  < \frac{1}{100}.\]

Divide $\T^m$ into $\rho^{- 14 rm} \leq \rho^{-56r^2}$ boxes of sidelength $\rho^{14 r}$; it is enough to show that each such box $B$ contains at least one point $(\xi_1 \cdot x_j, \cdots \xi_m \cdot x_j)$. The following fact will also be needed later, so we state it as a separate lemma.

\begin{lemma}\label{ind-td-tm}
Let $\xi_1,\dots, \xi_m \in \Z^D$ be linearly independent. Then, as $x$ ranges uniformly over $\T^D$, $(\xi_1 \cdot x, \cdots, \xi_m \cdot x)$ ranges uniformly over $\T^m$.
\end{lemma}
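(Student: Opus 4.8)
The plan is to prove that the map $x \mapsto (\xi_1 \cdot x, \dots, \xi_m \cdot x)$ is a surjective continuous group homomorphism $\T^D \to \T^m$, and that any such homomorphism pushes the Haar (uniform) measure on $\T^D$ forward to the Haar measure on $\T^m$; since the Haar probability measure is the unique translation-invariant Borel probability measure on $\T^m$, this identifies the pushforward with the uniform distribution and gives the claim. The only content beyond generalities is surjectivity, which is where the linear independence hypothesis is used.

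\begin{proof}[Proof proposal]
Write $\Phi : \T^D \to \T^m$ for the map $\Phi(x) = (\xi_1 \cdot x, \dots, \xi_m \cdot x)$. Since each $\xi_i \in \Z^D$, the map $y \mapsto (\xi_1 \cdot y, \dots, \xi_m \cdot y)$ on $\R^D$ carries $\Z^D$ into $\Z^m$, so $\Phi$ is a well-defined continuous homomorphism of compact abelian groups. First I would check $\Phi$ is surjective: let $A$ be the $m \times D$ integer matrix with rows $\xi_1, \dots, \xi_m$. Linear independence of the $\xi_i$ means $A$ has rank $m$ over $\Q$ (equivalently over $\R$), so the linear map $A : \R^D \to \R^m$ is onto; composing with the projection $\R^m \to \T^m$ shows $\Phi \circ \pi = (\pi \text{ on } \T^m) \circ A$ is onto, hence $\Phi$ is onto. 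Now push forward Haar measure: $\Phi_* \mu_{\T^D}$ is a Borel probability measure on $\T^m$, and for any $t \in \T^m$ it is invariant under translation by $t$, because picking $x_0$ with $\Phi(x_0) = t$ (surjectivity) we have $\Phi(x + x_0) = \Phi(x) + t$, and $\mu_{\T^D}$ is invariant under translation by $x_0$. By uniqueness of Haar measure on the compact group $\T^m$, $\Phi_* \mu_{\T^D} = \mu_{\T^m}$, which is exactly the assertion that $(\xi_1 \cdot x, \dots, \xi_m \cdot x)$ is uniformly distributed on $\T^m$ when $x$ is uniform on $\T^D$.
\end{proof}

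An equally short alternative, if one prefers to avoid invoking uniqueness of Haar measure, is a Fourier/Weyl-criterion argument: for $\eta \in \Z^m$ nonzero, $\E_{x} e(\eta \cdot \Phi(x)) = \E_x e((A^{\top}\eta)\cdot x)$, and $A^{\top}\eta \neq 0$ precisely because the rows of $A$ are linearly independent (so $A^{\top}$ is injective on $\Z^m$), whence the expectation vanishes; all nontrivial Fourier coefficients of $\Phi_*\mu$ being zero forces $\Phi_*\mu$ to be uniform. The one genuine point in either route is the same — translating linear independence of $\xi_1, \dots, \xi_m$ into injectivity of $A^{\top}$ (equivalently surjectivity of $A$ over $\R$) — and this is elementary linear algebra rather than a real obstacle; everything else is bookkeeping about Haar measure on tori.
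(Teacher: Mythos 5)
Your primary argument is correct but takes a different route from the paper's: you establish that $\Phi(x) = (\xi_1\cdot x,\dots,\xi_m\cdot x)$ is a surjective continuous homomorphism $\T^D\to\T^m$ and then invoke uniqueness of Haar measure on $\T^m$ to identify $\Phi_*\mu_{\T^D}$ with the uniform measure. The paper instead goes directly via Fourier: it observes that for a nontrivial character $e(\gamma\cdot t)$ on $\T^m$, $\int_{\T^D} e((\gamma_1\xi_1+\dots+\gamma_m\xi_m)\cdot x)\,dx = 0$ because $\gamma_1\xi_1+\dots+\gamma_m\xi_m\neq 0$ by linear independence, and then uses density of characters in $L^1(\T^m)$. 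That is exactly your ``equally short alternative,'' down to the observation that the only real content is the injectivity of $A^\top$ on $\Z^m$. In terms of what each buys: your Haar-uniqueness route is slightly more conceptual and avoids any appeal to Fourier analysis, but requires the (easy) surjectivity step explicitly; the paper's Fourier route is more in keeping with the analytic toolkit used elsewhere in the argument and dispenses with the surjectivity lemma. Both are complete and correct, and the crux --- translating linear independence of $\xi_1,\dots,\xi_m$ into a nonvanishing statement about integer combinations --- is identical.
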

\begin{proof}
Let $f(t) = e(\gamma \cdot  t)$ be a nontrivial character on $\T^{m}$. Then
\[ \int_{\T^D} f(\xi_1 \cdot x, \dots, \xi_m \cdot x) dx = \int_{\T^D} e( (\gamma_1 \xi_1 + \cdots + \gamma_{m} \xi_{m}) \cdot x)dx  = 0 = \int_{\T^m} f,\] since $\gamma_1 \xi_1 + \dots + \gamma_m \xi_m \neq 0$. Since the characters are dense in $L^1(\T^m)$, the result follows.
\end{proof}

Returning to the proof of Proposition \ref{prop21}, Lemma \ref{ind-td-tm} implies that for each fixed $j$, 
\[ \P( (\xi_1 \cdot x_j, \dots , \xi_m \cdot x_j) \notin B ) = 1 - \rho^{14rm}.\]
By independence, 
\begin{align*} \P ( (\xi_1 \cdot x_j, \dots , \xi_m \cdot x_j) \notin & \; B \; \mbox{for $j = 1,\dots, M$} )  \\ &  = (1 - \rho^{14rm})^M  \leq e^{-\rho^{14rm} M}  \leq e^{-\rho^{-D/8}}\end{align*} Here we critically use that $D = C_3 r^2$; $C_3 \geq 448$ is sufficient here, but it will need to be larger than this in later arguments.
Summing over the boxes $B$, we see that the probability of even one empty box is $\leq \rho^{-56 r^2} e^{-\rho^{-D/8}}$. 
This is the probability that \eqref{want} does not hold, for this particular $V$. Summing over the $\leq \rho^{-14 D r}$ choices for $V$, the probability that \eqref{want} fails to hold for \emph{some} $V$ is 
\[ \leq \rho^{-16 D r} e^{-\rho^{-D/8}}.\]
For $D$ large, this will be $< \frac{1}{4}$ as well, uniformly in $\rho$. (To see this, write $X = 1/\rho > D$, then this function is bounded by $X^{D^2} e^{-X^{D/8}}$, which is absolutely tiny on the range $X > D$.)

It follows that, with probability $> \frac{1}{2}$ in the choice of $x_1,\dots, x_M$, both (1) and (2) hold.
\end{proof}

\subsection{Random ellipsoidal annuli} Our construction is based on annuli centred on the points $x_1,\dots, x_M$ just constructed. As outlined in Section \ref{outline-sec}, so as to introduce a source of randomness into the problem we consider, rather than just spherical annuli, random ellipsoidal annuli.

To specify the ellipsoids, here and throughout the paper identify $\R^{D(D+1)/2}$ with the space of all tuples $x = (x_{ij})_{1 \leq i \leq j \leq D}$. Let $\mathbf{e}$ be a random tuple uniformly sampled from $[-\frac{1}{D^4} , \frac{1}{D^4} ]^{D(D+1)/2} \subset \R^{D(D+1)/2}$. 

To any tuple $x \in \R^{D(D+1)/2}$, we associate a symmetric matrix $\sigma(x) \in \Sym_D(\R)$ (the space of $D \times D$ symmetric matrices over $\R$) as follows: $(\sigma(x))_{ii} = x_{ii}$, $(\sigma(x))_{ij} = \frac{1}{2}x_{ij}$ for $i < j$, and $(\sigma(x))_{ij} = \frac{1}{2}x_{ji}$ for $i > j$. 

The ellipsoidal annuli we consider will then be of the form $\pi(A_{\mathbf{e}})$, where $\pi : \R^D \rightarrow \T^D$ is the natural projection and
\begin{equation}\label{annulus-def} A_{\mathbf{e}} :=  \{ x \in \R^D : \rho - N^{-4/D} < \Vert (I + \sigma(\mathbf{e})) x \Vert_2 < \rho\}.\end{equation}
It is convenient to fix, for the rest of the paper,
\begin{equation}\label{rho-def} \rho := D^{-4};\end{equation}
the choice is somewhat arbitrary and any sufficiently large power of $1/D$ would lead to essentially the same bounds in our final result. With this choice, the parameter $M$ in Proposition \ref{prop21} (that is, the number of points $x_1,\dots, x_M$) is $D^D$. 

Now $\Vert \sigma(e) \Vert \leq D \Vert e \Vert_{\infty} \leq \frac{1}{2}$, where $\Vert \cdot \Vert$ denotes the $\ell^2$-to-$\ell^2$ operator norm on matrices, and therefore
\begin{equation}\label{e-operator} \frac{1}{2} \leq \Vert I + \sigma(\mathbf{e}) \Vert \leq \frac{3}{2}.\end{equation}

\emph{Remark.} Taking $\sigma(\mathbf{e})$ to be symmetric is natural in view of the polar decomposition of real matrices. Premultiplying $\sigma(\mathbf{e})$ by an orthogonal matrix makes no difference to $\Vert (I + \sigma(\mathbf{e}) ) x\Vert_2$.

\section{The colouring. Outline proof of the main theorem}\label{colour-def}

We are now in a position to describe our red/blue colouring of $[N]$. Once again let $r, D$ be integers with $r$ sufficiently large and $D = C_3 r^2$. Set $\rho := D^{-4}$, and let $x_1,\dots, x_M \in \T^D$ be points as constructed in Proposition \ref{prop21}, for this value of $\rho$. Pick $\mathbf{e} \in [-\frac{1}{D^4} , \frac{1}{D^4} ]^{D(D+1)/2} \subset \R^{D(D+1)/2}$ uniformly at random, and consider the random ellipsoidal annulus
\[ A_{\mathbf{e}} :=  \{ x \in \R^D : \rho - N^{-4/D} < \Vert (I + \sigma(\mathbf{e})) x \Vert_2 < \rho\}.\]

Pick $\theta \in \T^D$ uniformly at random, let $\pi : \R^D \rightarrow \T^D$ be the natural projection, and define a red/blue colouring by 

\begin{equation}\label{blue-pts-def}  \blue_{\mathbf{e},\theta} := \{ n \in [N] : \theta n \in \bigcup_{j=1}^M (x_j + \pi(A_{\mathbf{e}}) \}, \end{equation}

\begin{equation}\label{red-pts-def} \red_{\mathbf{e},\theta} := [N] \setminus \blue_{\mathbf{e},\theta}.\end{equation}

Suppose henceforth that $N > D^{C_2 D^2}$, this being stronger than needed for some results but necessary in the worst case. We claim that with high probability there is no blue progression of length $3$.

\begin{proposition}\label{blue-prop-main} Suppose that $N > D^{C_2D^2}$. Then 
\[  \P_{\theta, \mathbf{e}} \big( \mbox{$\blue_{\theta, \mathbf{e}}$ has no 3-term progression}   \big) \geq 1 - O(N^{-1}). \]
\end{proposition}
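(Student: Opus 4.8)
The plan is to show that a blue 3-term progression forces two bad geometric events simultaneously, each of which is unlikely, and then take a union bound over all progressions in $[N]$. Suppose $n-d, n, n+d$ are all blue, with $1 \leq d \leq N$ (so there are $O(N^2)$ choices of the pair $(n,d)$, and we need exceptional probability $O(N^{-3})$ for each). Being blue means $\theta(n-d) \in x_{i_1} + \pi(A_{\mathbf{e}})$, $\theta n \in x_{i_2} + \pi(A_{\mathbf{e}})$, $\theta(n+d) \in x_{i_3} + \pi(A_{\mathbf{e}})$ for some indices $i_1, i_2, i_3$. Applying the discrete Laplacian (the second difference) kills the linear term $\theta n$: we get $x_{i_1} - 2x_{i_2} + x_{i_3} \in \pi(A_{\mathbf{e}}) - 2\pi(A_{\mathbf{e}}) + \pi(A_{\mathbf{e}})$ as elements of $\T^D$. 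Now split into two cases according to whether the indices $i_1, i_2, i_3$ are all equal or not.

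\textbf{Case 1: not all indices equal.} Here Proposition \ref{prop21}(1) gives $\Vert x_{i_1} - 2x_{i_2} + x_{i_3}\Vert_{\T^D} \geq 10\rho$. On the other hand each of the three differences $\theta(n-d) - x_{i_1}$ etc.\ lifts to a point of $A_{\mathbf{e}}$, which by \eqref{annulus-def} and \eqref{e-operator} has Euclidean norm at most $2\rho$; so each representative has $\ell^\infty$-norm at most $2\rho < \frac{1}{10}$, and the second difference of these three small representatives has $\ell^\infty$-norm at most $8\rho < 10\rho$, contradiction (one must be slightly careful that the lifts really do add up without wraparound, but since $\rho = D^{-4}$ is tiny this is immediate). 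So Case 1 is impossible deterministically --- this is exactly the point of condition (1), as the remark after Proposition \ref{prop21} says.

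\textbf{Case 2: $i_1 = i_2 = i_3 = i$.} Now, writing $y_1 = \pi^{-1}(\theta(n-d) - x_i)$, $y_2 = \pi^{-1}(\theta n - x_i)$, $y_3 = \pi^{-1}(\theta(n+d) - x_i)$, all three lie in $A_{\mathbf{e}}$ and (again by smallness of $\rho$, no wraparound) satisfy $y_1 - 2y_2 + y_3 = 0$ in $\R^D$; that is, $y_1, y_2, y_3$ form a genuine 3-term AP in $\R^D$ contained in the ellipsoidal annulus $A_{\mathbf{e}}$, with common difference $v := y_2 - y_1 = \pi^{-1}(\theta d)$ (mod sign). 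The classical convexity/geometry argument --- for a spherical shell of radius $\rho$ and width $w$, any 3-AP inside has common difference of norm $O(\sqrt{\rho w})$, and here $w = N^{-4/D}$ --- applied to the ellipsoid $(I+\sigma(\mathbf{e}))$ (whose axes are within a factor $2$ of round by \eqref{e-operator}) forces $\Vert v \Vert_2 = \Vert \pi^{-1}(\theta d)\Vert_2 \ll \sqrt{\rho N^{-4/D}} \leq N^{-2/D}$, hence $\Vert \theta d\Vert_{\T^D} \ll N^{-2/D}$; and moreover $\theta d$ itself must lie near the ellipsoidal shell, i.e.\ $\Vert(I+\sigma(\mathbf{e}))\pi^{-1}(\theta n - x_i)\Vert_2 \in (\rho - N^{-4/D}, \rho)$ for the particular $n$. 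The first condition $\Vert \theta d\Vert_{\T^D} \leq N^{-2/D}$, for a fixed $d$, is a condition on $\theta$ of probability at most $(2N^{-2/D})^D = 2^D N^{-2}$; and then, conditionally, the second condition pins $\Vert(I+\sigma(\mathbf{e}))\pi^{-1}(\theta n - x_i)\Vert_2$ to an interval of length $N^{-4/D}$ around $\rho$, which over the random choice of $\mathbf{e}$ (or of $\theta$) costs another factor that is at worst $N^{-O(1)}$, or one simply observes that even ignoring $\mathbf{e}$, for the progression $\{n-d,n,n+d\}$ to be blue at all we separately need $\theta n \in \bigcup_j (x_j + \pi(A_{\mathbf{e}}))$, an event of probability $M \cdot \vol(A_{\mathbf{e}}) \asymp D^D \rho^{D-1} N^{-4/D} = D^{-D^2 + O(D)} N^{-4/D}$, which combined with $2^D N^{-2}$ and summed over the $O(N^2)$ pairs $(n,d)$ is $O(N^{-1})$ once $N > D^{C_2 D^2}$ (here is where that hypothesis is used, to beat the $D^D$ from $M$ against the $N^{-4/D}$ and have room to spare).

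\textbf{Main obstacle.} The routine parts are the no-wraparound bookkeeping and the spherical-shell 3-AP estimate; the delicate point is getting the exceptional probability genuinely below $N^{-1}$ after summing over $\sim N^2$ progressions, which forces one to use \emph{both} the $\Vert\theta d\Vert_{\T^D}$ smallness (costing $N^{-2+o(1)}$) \emph{and} the independent smallness of $\vol(A_{\mathbf{e}})$ (the annulus has measure $\asymp N^{-4/D}$ times a $D$-dependent constant), and to check that the constant $D^D$ coming from $M = D^D$ in Proposition \ref{prop21} is absorbed by the hypothesis $N > D^{C_2 D^2}$. I would organize the write-up so that Case 1 is dispatched in two lines via Proposition \ref{prop21}(1), and then spend the bulk of the argument on the probability estimate in Case 2, being careful about the order of conditioning on $\theta$ then $\mathbf{e}$.
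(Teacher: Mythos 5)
Your Case 1 is essentially the paper's: using Proposition \ref{prop21}(1), the second difference $x_{i_1} - 2x_{i_2} + x_{i_3}$ is forced to be $\T^D$-small (you get $\leq 8\rho$, the paper gets $\leq 4\rho$; either beats $10\rho$), so all three indices must coincide. And your opening move in Case 2 is also the paper's: the parallelogram-law/Behrend argument (Lemma \ref{parallel-law}) gives $\Vert \pi^{-1}(\theta d)\Vert_2 \ll \sqrt{\rho N^{-4/D}}$, hence $\Vert\theta d\Vert_{\T^D} \leq \tfrac12 N^{-2/D}$, an event of probability $\leq N^{-2}$ for each fixed $d\neq 0$.

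But there is a genuine gap in how you close the argument. You union-bound over all $O(N^2)$ pairs $(n,d)$ and therefore need exceptional probability $O(N^{-3})$ per pair, so you reach for a second factor coming from ``$\theta n$ must land in an annulus.'' That factor is only $\asymp D^{-O(D)}N^{-4/D}$, which is nowhere near the extra $N^{-1}$ you need: after multiplying by $N^{-2}$ and summing over $N^2$ pairs you are left with $\asymp D^{-O(D)}N^{1-4/D}$, which under the hypothesis $N > D^{C_2 D^2}$ is astronomically large, not $O(N^{-1})$. (Your intermediate claim $D^D\rho^{D-1} = D^{-D^2+O(D)}$ is an arithmetic slip -- with $\rho = D^{-4}$ this is $D^{-3D+4}$, and even including the $\omega_{D-1}$ surface constant it is only $D^{-O(D)}$ -- and this error is what makes the budget look like it balances when it does not.) There is also an unaddressed independence issue: for fixed $n, d$ the events ``$\Vert\theta d\Vert_{\T^D}$ small'' and ``$\theta n$ in an annulus'' are both linear conditions on the same $\theta$ and are not independent (consider $d \mid n$), so multiplying the two probabilities would need justification even if the numbers worked.

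The missing observation is structural, not analytic: once Case 1 is eliminated, the conclusion $\Vert\theta d\Vert_{\T^D}\leq \tfrac12 N^{-2/D}$ depends only on $d$, not on $n$. So for each fixed $d$, the existence of \emph{any} blue 3-AP with common difference $d$ already implies this single condition on $\theta$. One therefore union-bounds over the $\leq N$ values of $d$ only, and $N\cdot N^{-2} = N^{-1}$ is exactly the bound sought -- no annulus-volume factor, no use of the randomness of $\mathbf{e}$ (the paper fixes $\mathbf{e}=e$ throughout this proof), and no hypothesis on $N$ beyond what ensures $D$ is large. Restructuring your Case 2 along these lines repairs the proof and in fact simplifies it.
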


In dealing with the progressions in the red points, we introduce a specific set $\Theta$ of rotations $\theta \in \T^D$ which we wish to consider. We call $\Theta$ the set of \emph{diophantine} $\theta$: the terminology is not standard, but the word diophantine is used in similar ways in other contexts. The precise definition of $\Theta$ is given in Section \ref{diophantine-sec} below. Roughly, $\theta$ is disqualified from $\Theta$ if the orbit $\{ \theta n : n \leq N\}$ exhibits certain pathological behaviours such as being highly concentrated near $0$, or having long subprogressions almost annihilated by a large set of characters on $\T^D$. For our discussion in this section, the important fact about $\Theta$ is that diophantine elements are (highly) generic in the sense that 
\begin{equation}\label{dio-generic} \mu_{\T^D} (\Theta) \geq 1 - O(N^{-1}).\end{equation}
This is proven in Section \ref{diophantine-sec}, specifically Proposition \ref{prop31}, where the definition of $\Theta$ is given. 

Now we claim that, conditioned on the event that $\theta$ is diophantine, with high probability there is no red progression of length $N^{1/r}$.

\begin{proposition}\label{red-prop-main} Suppose that $N > D^{C_2D^2}$. Then
\[  \P_{\mathbf{e}} \big( \mbox{$\red_{\theta, \mathbf{e}}$ has no $N^{1/r}$-term progression} \; | \; \theta \in \Theta  \big) \geq 1 - O(N^{-1}), \]
where $\Theta \subset \T^D$ denotes the set of diophantine elements.\end{proposition}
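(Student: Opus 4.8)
The plan is to prove Proposition~\ref{red-prop-main} by a union bound over all potential red progressions, reducing to showing that for \emph{each} fixed arithmetic progression $P \subseteq [N]$ of length $X = N^{1/r}$, the probability (over $\mathbf{e}$, conditioned on $\theta \in \Theta$) that $P$ is entirely red is at most $N^{-2-\eps}$ or so; since there are fewer than $N^2$ such progressions (parametrised by a starting point $a$ and common difference $d \leq N/X$), this suffices. Fix such a $P = \{a, a+d, \dots, a+(X-1)d\}$. Saying $P$ is entirely red means that for every $n$ with $a + nd \in P$, the point $\theta(a+nd) = \theta a + nd\theta$ avoids the union $\bigcup_j (x_j + \pi(A_{\mathbf{e}}))$ of ellipsoidal annuli. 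So we must show: with high probability in $\mathbf{e}$, the orbit segment $\{\theta a + n d\theta : 0 \leq n < X\}$ hits at least one of the annuli.

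The first step, following the outline in Section~\ref{outline-sec} (and presumably ``step~1'' of Section~\ref{first-step-sec}), is to use property~(2) of Proposition~\ref{prop21}, together with the diophantine hypothesis $\theta \in \Theta$, to locate a centre $x_j$ such that the orbit segment passes near $x_j$ --- more precisely, so that some sub-progression $P' \subseteq P$ enters the ball $x_j + \pi(B_{\rho/10}(0))$. This is where one handles ``uncentred'' progressions: one first reduces to understanding when the orbit of the common difference $d\theta$ is well-distributed along a Bohr set, invokes geometry-of-numbers (the multidimensional progression structure on the Bohr set of $n$ with $\Vert n\theta_i\Vert_{\T}$ small, as sketched) to realise the relevant sub-progression, and uses property~(2) to ensure the centre is not ``missed'' by a low-codimension slice. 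The diophantine condition on $\theta$ is exactly what rules out the pathologies (concentration near $0$, long subprogressions annihilated by many characters) that would sabotage this. After this reduction, translating by $-x_j$, the task becomes the ``centred'' one: show that the resulting quadratic-form quantity $q_{\mathbf{e}}(\ell_1,\dots,\ell_{D+1})$ takes a value in the interval $((\rho - N^{-4/D})^2, \rho^2)$ on the box $\prod_i [L_i]$, for all but a tiny fraction of $\mathbf{e}$.

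The second, and genuinely hard, step is that gaps-in-quadratic-forms input. One shows that the distribution of the random quadratic form $q_{\mathbf{e}}$ (over random $\mathbf{e}$, with $\theta \in \Theta$ fixed) dominates, or is comparable to, the distribution of a truly random form $q_a$ with $|a_{ij}| \leq L^{-2}$ --- this is the ``Gram''/ellipsoid comparison, presumably Proposition~\ref{measure-compare}. Then a Davenport--Heilbronn circle-method argument (Proposition~\ref{main-quad-gap}), amplified by the projective-line-over-$\F_p$ trick alluded to in the overview, yields that $q_a$ hits the short target interval of length $\asymp L^{-4r} = N^{-4/D}$ except with probability beating $N^{-2}$. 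Concretely I would: (i) invoke Proposition~\ref{measure-compare} to pass from $q_{\mathbf{e}}$ to $q_a$; (ii) invoke Proposition~\ref{main-quad-gap} for the gap statement with its strong exceptional-probability bound; (iii) combine with the Section~\ref{first-step-sec} reduction and Proposition~\ref{prop21}(2); (iv) take the union bound over the $\leq N \cdot (N/X) < N^2$ pairs $(a,d)$, where the per-progression failure probability must be $o(N^{-2})$ --- this is exactly where the hypothesis $N > D^{C_2 D^2}$ and $D = C_3 r^2$ are consumed, since the target interval has length $N^{-4/D}$ and the circle method only wins when $L$ (hence $N$) is large enough relative to the number of variables $D+1$.

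The main obstacle is unquestionably step two: controlling $q_{\mathbf{e}}$ well enough to reduce to a truly random form, and then proving that a random quadratic form in $D+1$ variables on a box of side $L$ takes a value in an interval of length only $L^{-4r}$ with failure probability small enough to survive a union bound over $\sim N^2$ progressions. The circle method is the natural tool, but (a) the target interval is \emph{polynomially} short in $L$ with exponent $4r$ growing, while the number of variables $D \sim r^2$ is also growing, so one is permanently near the edge of what Davenport--Heilbronn can deliver; and (b) obtaining an exceptional-probability bound strong enough for the union bound --- rather than merely a positive-probability or in-expectation statement --- is precisely what forces the extra amplification argument over $\F_p$. Fortunately the excerpt tells us these are isolated into Propositions~\ref{measure-compare}, \ref{main-geo-num}, and \ref{main-quad-gap} (proved in Parts~IV and~V), so at the level of this proposition the work is to set up the fixed-$P$ reduction correctly, apply those three propositions, and carry out the union bound with the parameter bookkeeping that \eqref{global-conditions} makes available.
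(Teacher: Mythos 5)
Your proposal follows the paper's approach at the level of overall strategy: the union bound over the fewer than $N^2$ progressions $P$, the two-step reduction (first locate a centre $x_j$ so that $\theta P$ enters the ball $x_j + \pi(B_{\rho/10}(0))$, then show that from there the orbit hits the annulus $x_j + \pi(A_{\mathbf{e}})$ with very high probability in $\mathbf{e}$), and the three key inputs --- Proposition~\ref{measure-compare}, Proposition~\ref{main-quad-gap}, and the $\F_p$-amplification --- are all exactly the paper's plan.

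There is, however, a structural misplacement worth flagging. You fold the geometry-of-numbers machinery (Minkowski's second theorem giving the multidimensional progression $\{\ell_1 n_1 + \cdots + \ell_{D+1} n_{D+1}\}$ inside the Bohr set, i.e.\ Proposition~\ref{main-geo-num}) into \emph{step one}, describing it as part of the mechanism for locating $x_j$. In the paper it belongs to \emph{step two}: step one (Proposition~\ref{first-step}, Section~\ref{first-step-sec}) is a self-contained Fourier-analytic argument using the cutoffs of Lemmas~\ref{tera} and~\ref{fejer} together with the small-dimensionality of the character set $\Lambda$ guaranteed by Proposition~\ref{prop31}(1) and the net provided by Proposition~\ref{prop21}(2); no multidimensional progression appears there. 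The geometry of numbers is needed only once one is already inside a translated ball, to recoordinatise the remaining half $\dot P$ as a box $\prod_i[0,L_i)$ on which the ellipsoidal norm becomes the quadratic form $q_{a(\mathbf{e}),b(\mathbf{e}),c(\mathbf{e})}$, and crucially its output (3) --- the volume lower bound $\vol(w_1,\dots,w_s) \geq D^{-C_1 D}$ --- is exactly what Proposition~\ref{measure-compare} consumes. Without keeping Proposition~\ref{main-geo-num} in step two, the comparison to a truly random form has no input. One small numerical point: the per-progression failure probability must be taken as $O(N^{-3})$, not merely ``$N^{-2-\eps}$ or $o(N^{-2})$'', to yield the stated $1 - O(N^{-1})$ after summing over the $<N^2$ pairs $(n_0,d)$.
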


The proof of Proposition \ref{blue-prop-main} is relatively straightforward and is given in Section \ref{blue-ap-sec}. The proof of Proposition \ref{red-prop-main} is considerably more involved and occupies the rest of the paper.

Let us now show how Theorem \ref{mainthm-1} follows essentially immediately from Propositions \ref{blue-prop-main} and \ref{red-prop-main}. 

\begin{proof}[Proof of Theorem \ref{mainthm-1}] (assuming Propositions \ref{blue-prop-main} and \ref{red-prop-main}) First observe that, with $D = C_3 r^2$, the conditions required in Propositions \ref{blue-prop-main} and \ref{red-prop-main} will be satisfied if $N > e^{C r^4 \log r}$ for a sufficiently large $C$. Also, in proving Theorem \ref{mainthm-1} we may clearly assume that $r$ is sufficiently large.

First note that by Proposition \ref{red-prop-main} and \eqref{dio-generic} we have
\[ \P_{\theta, \mathbf{e}} \big( \mbox{$\red_{\theta, \mathbf{e}}$ has no $N^{1/r}$-term progression}   \big) \geq 1 - O(N^{-1}). \]
This and Proposition \ref{blue-prop-main} imply that there is some choice of $\theta, \mathbf{e}$ (in fact, a random choice works with very high probability) for which simultaneously $\blue_{\theta, \mathbf{e}}$ has no 3-term progression and $\red_{\theta, \mathbf{e}}$ has no $N^{1/r}$-term progression. This completes the proof of Theorem \ref{mainthm-1}.
\end{proof}

Let us consider the task of proving Proposition \ref{red-prop-main} in a little more detail. Let 
\begin{equation} \label{x-def} X := N^{1/r},\end{equation} a notational convention we will retain throughout the paper.

It suffices to show that if $N > D^{C_2D^2}$ and if $\theta \in \Theta$ is diophantine then for each fixed progression $P = \{n_0 +  dn : n \leq X \} \subset [N]$ of length $X$,
\begin{equation}\label{ap-prob-7} \P_{\mathbf{e}} (P \cap \blue_{\theta, \mathbf{e}} = \emptyset) \leq N^{-3}.\end{equation} Indeed, there are fewer than $N^2$ choices of $n_0$ and $d$, the start point and common difference of $P$, and so Proposition \ref{red-prop-main} follows from \eqref{ap-prob-7} by the union bound.

The task, then is to show that (with very high probability) $\theta P \subset \T^D$ intersects one of the annuli $x_j + \pi(A_{\mathbf{e}})$. To achieve this, we proceed in two distinct stages. Denoting by $P_{\init} = \{ n_0 + dn : n \leq X/2\}$ the first half of $P$, we show that $\theta P_{\init}$ at some point enters the interior of some ball $x_j + \pi(B_{\rho/10}(0))$, where $B_{\eps}(0) \subset \R^D$ is the Euclidean ball of radius $\eps$, and as usual $\pi : \R^D \rightarrow \T^D$ is projection. This we call the first step.

\begin{proposition}[First step]\label{first-step}
Suppose that $N > D^{C_2 D^2}$. Let $\theta \in \Theta$ be diophantine. Let $d \leq N/X$, and consider a progression $P_{\init} = \{n_0 + dn : n \leq X/2\}$. Then $\theta P_{\init}$ intersects $x_j + \pi(B_{\rho/10}(0))$ for some $j \in \{1,\dots, M\}$.
\end{proposition}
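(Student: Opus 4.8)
The plan is to show that $\theta P_{\init}$ must enter one of the balls $x_j + \pi(B_{\rho/10}(0))$ by combining a pigeonhole/Dirichlet argument on the orbit $\{\theta n : n \le X/2\}$ with property (2) of Proposition \ref{prop21}. First I would use the diophantine hypothesis $\theta \in \Theta$ together with a geometry-of-numbers (multidimensional Bohr set) argument to locate a short arithmetic subprogression of $P_{\init}$ — or rather a structured ``box'' of values $n$ — along which $\theta n$ is confined to a small subspace-translate in $\T^D$. Concretely: by Dirichlet's theorem applied to the $D$ frequencies $\theta_1 d, \dots, \theta_D d$, there is some $e \le (X/2)^{1 - 1/(\dim+1)}$ — more carefully, some $e$ with $e \le $ (a suitable power of $X$) — such that all $\Vert \theta_i d e\Vert_{\T}$ are small, so that $\theta(n_0 + d e n)$ for $n$ in a range moves only inside a ``slice'' — the translate of a subspace $V \le \Q^D$ of dimension at most the number of frequencies we failed to control, which (because $d \le N/X = X^{r-1}$ and $X/2$ points are available) can be arranged to have dimension $\le 4r$ using the condition $D = C_3 r^2 \gg r$. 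This is exactly the regime where property (2) of Proposition \ref{prop21} applies: for that subspace $V$ and the relevant point $x \in \T^D$ (the ``center'' of the slice that $\theta P_{\init}$ lands near), there is some $x_j$ with $\Vert \xi \cdot (x_j - x)\Vert_{\T} \le \frac{1}{100}$ for all $\xi \in V \cap \Z^D$ with $|\xi| \le \rho^{-3}$.

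The second part of the plan is to convert the conclusion ``$x_j$ lies in the dual-annihilator slice of $V$ through $x$'' into the geometric statement ``$\theta P_{\init}$ enters $x_j + \pi(B_{\rho/10}(0))$''. Here I would argue as follows: the orbit points $\theta(n_0 + den)$ that lie in the slice can be taken (using the diophantine non-concentration properties of $\Theta$, which prevent the orbit from being trapped near a single point or near a low-dimensional set) to equidistribute within that slice; in particular some orbit point lands within Euclidean distance $\rho/10$ of $x_j$ once we know $x_j$ is within $\frac{1}{100}$ (in every relevant dual coordinate $\xi$ with $|\xi| \le \rho^{-3} = D^{12}$) of the slice. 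The passage from ``small in all bounded-frequency characters'' to ``small in the $\Vert\cdot\Vert_{\T^D}$ metric, hence inside a Euclidean ball of radius $\rho/10$'' is a standard smoothing/Fourier argument: if $\Vert \xi \cdot (y)\Vert_{\T}$ is small for all $|\xi| \le \rho^{-3}$ then $y$ is within $O(\rho)$ of $0$ in $\T^D$ (one needs $\rho^{-3}$ to beat the dimension $D$, which it does since $\rho = D^{-4}$), and then one absorbs the constants to get radius $\rho/10$.

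The main obstacle, I expect, is the bookkeeping in the first part: one must produce the subspace $V$ of dimension \emph{at most} $4r$ honestly. The naive Dirichlet argument over all $D$ coordinates gives a subspace of dimension up to $D = C_3 r^2$, which is far too large for property (2). The resolution must be that because $P_{\init}$ has length $X/2 = N^{1/r}/2$ and $d$ can be as large as $N^{1 - 1/r}$, the ``cost'' of simultaneous approximation forces us to control only about $r$ of the frequencies simultaneously within the available length — i.e. the effective codimension of the slice that $\theta P_{\init}$ is forced into is $O(r)$, not $O(D)$. Making this precise requires a careful iterative/greedy application of Dirichlet (or a successive-minima argument for the Bohr set lattice) together with the diophantine hypothesis to rule out the degenerate cases where the orbit fails to spread out, and then checking that the constant in the $O(r)$ is genuinely at most $4r$. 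The rest — the Fourier-smoothing step and the appeal to Proposition \ref{prop21}(2) — should be routine given the earlier machinery.
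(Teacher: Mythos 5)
Your high-level plan correctly identifies the three ingredients the paper uses: a module $\Lambda$ of bounded characters almost annihilated by $\theta d$, the diophantine hypothesis (Proposition \ref{prop31}(1)) to bound $\dim_{\Q}\Lambda < 4r$, and Proposition \ref{prop21}(2) to select a suitable $x_j$. However there are two substantive issues.

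First, the Dirichlet-theorem detour is unnecessary and, as stated, not coherent: finding $e$ with all $\Vert\theta_i d e\Vert_{\T}$ small confines the sub-orbit near $0$ rather than near a nontrivial slice, and restricting to a sub-progression $\{n_0 + den\}$ throws away most of the $X/2$ available points. The paper avoids this entirely: it simply defines $\Lambda := \{\xi \in \Z^D : |\xi| < \rho^{-3},\ \Vert\xi\cdot\theta d\Vert_{\T} \leq \rho^{-2D}X^{-1}\}$ and works with the full progression; item (1) of Proposition \ref{prop31} then gives $\dim_{\Q}\Lambda < 4r$ with no further effort, and there is no need to extract a structured sub-progression.

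Second, and more seriously, your claimed passage from being small in all bounded-frequency characters to being small in $\Vert\cdot\Vert_{\T^D}$ presupposes more than you actually have. Proposition \ref{prop21}(2) only furnishes $\Vert\xi\cdot(x_j - \theta n_1)\Vert_{\T} \leq \frac{1}{100}$ for $\xi \in \Lambda$, a $<4r$-dimensional sliver of all $\xi$ with $|\xi|\leq\rho^{-3}$. For the remaining bounded characters there is no a priori closeness; those are precisely the directions in which one must prove that the orbit equidistributes, and that is where the work lies. The paper's actual mechanism is a Fourier positivity argument, not a routine smoothing step: it constructs a function $\chi$ on $\T^D$ with $\chi \leq 0$ outside $\pi(B_{\rho/10}(0))$, with $\hat\chi \geq 0$ supported on $|\xi|\leq\rho^{-3}$, together with a Fej\'er kernel $w$ on $\Z$, and shows
\[ \sum_{n}w(n)\,\chi\big(\theta(n_1+nd)-x_j\big) > 0 \]
by splitting the dual sum over $\xi$ into three ranges: $\xi=0$ gives a main term $\geq X$; for $\xi\in\Lambda$ the contribution is nonnegative because $\hat\chi,\hat w\geq 0$ and $\cos(2\pi\xi\cdot(n_1\theta - x_j)) > 0$ from Proposition \ref{prop21}(2); and for $\xi\notin\Lambda$, $|\xi|\leq\rho^{-3}$, the contribution is at most $X/2$ by the quadratic decay of $\hat w(\beta)$ when $\Vert\beta\Vert_{\T} > \rho^{-2D}X^{-1}$. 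The key device --- choosing $\hat\chi$ and $\hat w$ nonnegative so that the entire uncontrolled $\Lambda$-contribution can simply be discarded rather than estimated --- is the heart of the proof and is absent from your outline, which instead implicitly assumes an equidistribution statement equivalent to the result being proved.
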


Once we have a point of $\theta P_{\init}$ in $x_j + \pi(B_{\rho/10}(0))$ we use the remaining half of $P$ to intersect the annulus $x_j + \pi(A_{\mathbf{e}})$. This we call the second step.

\begin{proposition}[Second step]\label{second-step} Suppose that $N \geq D^{C_2D^2}$. Let $d \leq N/X$. Suppose that $\theta \in \Theta$ is diophantine, and consider a progression $\dot{P} = \{ dn : n \leq X/2\}$. Then \[ \P_{\mathbf{e}} (\mbox{there is $y \in \pi(B_{\rho/10}(0))$ such that $(y + \theta \dot{P}) \cap \pi(A_{\mathbf{e}}) = \emptyset$})  \leq N^{-3}.\]
\end{proposition}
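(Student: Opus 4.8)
The plan is to reduce, via the geometry of numbers, the statement that $\theta \dot P$ must enter the ellipsoidal annulus $\pi(A_{\mathbf{e}})$ to a quantitative statement about a random quadratic form, and then to solve that problem by a circle-method argument with strong tail bounds. First, I would put a multidimensional structure on the relevant Bohr set. Since $\theta$ is diophantine, the set of $n$ with $\Vert n\theta_i\Vert_{\T}\le \tfrac{1}{10}$ for all $i$ (intersected with $n\le X/2$, after accounting for the fixed difference $d$) contains a generalized arithmetic progression $\{\ell_1 n_1 + \cdots + \ell_{D+1} n_{D+1}: 0\le \ell_i < L_i\}$ of size $L_1\cdots L_{D+1}$ comparable to $X/d$; the diophantine hypothesis on $\theta$ (Section~\ref{diophantine-sec}) should be exactly what guarantees both the existence of such a progression and that the $L_i$ are not too unbalanced and not too small. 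In this basis, $\Vert(I+\sigma(\mathbf{e}))(\theta(\ell_1 n_1+\cdots+\ell_{D+1}n_{D+1}))\Vert_2^2$ becomes, up to the small wraparound corrections coming from $\Vert\cdot\Vert_{\T}$ versus $\pi^{-1}$, a quadratic form $q_{\mathbf{e}}(\ell_1,\dots,\ell_{D+1})$ in the $\ell_i$, and the requirement $(y+\theta\dot P)\cap\pi(A_{\mathbf{e}})\ne\emptyset$ amounts (after translating by $y$, which lands inside the ball $B_{\rho/10}(0)$ so there is room) to asking that $q_{\mathbf{e}}$, suitably recentred, takes a value in the interval $((\rho - N^{-4/D})^2, \rho^2)$, an interval of length roughly $\rho N^{-4/D}\approx L^{-4r}$ in the normalization where the coefficients of $q_{\mathbf{e}}$ have size $\sim L^{-2}$.

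The second, and I expect hardest, step is to pass from the structured randomness of $q_{\mathbf{e}}$ (driven by the $D(D+1)/2$ coordinates of $\mathbf{e}$, and depending in a complicated way on $\theta$, $d$, and the $n_i$) to something comparable with a genuinely uniform random quadratic form $q_a(\ell) = \sum_{i\le j} a_{ij}\ell_i\ell_j$ with $|a_{ij}|\le L^{-2}$. This is where the condition $D\gg r^2$ and the anti-concentration of the map $\mathbf{e}\mapsto q_{\mathbf{e}}$ must be used: one needs that for diophantine $\theta$ the pushforward of the uniform measure on the $\mathbf{e}$-cube dominates (up to an admissible constant and on the relevant scale) a uniform measure on a box of coefficient tuples. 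I would expect this to be the technical heart of Part~IV, and I would quarantine it as one of the three key propositions referred to in Section~\ref{second-step-sec}, so that here it can simply be invoked.

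The third step is the quadratic-form gap estimate: for a uniformly random $q_a$ with coefficients in $[-L^{-2},L^{-2}]$, the probability that $q_a$ takes \emph{no} value of $[L]^{D+1}$ in a prescribed interval of length $L^{-4r}$ is at most, say, $N^{-4}$. A Davenport--Heilbronn / Hardy--Littlewood treatment gives an asymptotic for the (weighted) count of solutions $\ell\in[L]^{D+1}$ with $q_a(\ell)$ in the short interval; the main term is of the expected size because there are $\sim L^{D+1}$ points, the interval has length $L^{-4r}$, and $D+1 = C_3 r^2 + 1$ is comfortably large enough (again $D\gg r^2$) that $L^{D+1}\cdot L^{-4r}$ dominates the minor-arc and singular-series losses. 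The probabilistic exceptional bound needed for the union bound over $d$ is much stronger than what a single circle-method application yields, so this is where the amplification argument using lines in $\mathbb{P}^2(\F_p)$ enters: one exhibits many ``independent enough'' specializations of the coefficients along such a projective line, each succeeding with constant probability, and multiplies the failure probabilities. Collecting these, $\P_{\mathbf{e}}$(no intersection) $\le \P_{a}(q_a$ misses the interval$) \cdot O(1) \le N^{-3}$, which is \eqref{ap-prob-7} and hence gives Proposition~\ref{second-step}.

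Throughout, the error terms I am glossing over — the wraparound corrections making $q_{\mathbf{e}}$ only approximately a quadratic form, the distortion in the measure-comparison step, and the dependence of the implied constants on $D$ — all have to be controlled using $N\ge D^{C_2 D^2}$, which is exactly why that hypothesis is stated; I would track these at the points flagged in Section~\ref{second-step-sec} rather than here.
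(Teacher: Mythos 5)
Your outline captures the paper's decomposition into the three key propositions exactly: the geometry-of-numbers step (Proposition \ref{main-geo-num}) producing a multidimensional progression with a volume guarantee on the normalised direction vectors, the measure-comparison step (Proposition \ref{measure-compare}) pushing the law of $\mathbf{e}$ forward to coefficient space, and the Davenport--Heilbronn gap estimate boosted by the projective-plane amplification (Proposition \ref{main-quad-gap}), assembled by the union bound over $d$. One correction: the worry about ``wraparound corrections making $q_{\mathbf{e}}$ only approximately a quadratic form'' is unfounded in this construction — because $\rho=D^{-4}$ is tiny, $\Vert z\Vert_2\le\rho/10$, and the lifts $v_i=\pi^{-1}(\theta dn_i)$ are small, the paper works directly with $z+\sum_i\ell_i v_i\in\R^D$ and the quantity $\rho^{-2}\Vert(I+\sigma(\mathbf{e}))(z+\sum_i\ell_i v_i)\Vert_2^2$ is \emph{exactly} the quadratic form $q_{a(\mathbf{e}),b(\mathbf{e}),c(\mathbf{e})}(\ell_1/L_1,\dots,\ell_s/L_s)$ with no error term; and the crucial output of the geometry-of-numbers step that drives the measure comparison is not balance of the $L_i$ per se but the lower bound $\vol(w_1,\dots,w_s)\ge D^{-C_1 D}$ of Proposition \ref{main-geo-num}(3).
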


Together, Propositions \ref{first-step} and \ref{second-step} imply \eqref{ap-prob-7} and hence, as explained above, Proposition \ref{red-prop-main}. Indeed, Proposition \ref{first-step} implies that there is some $n_0 + n_1 d \in P_{\init}$ (that is, some $n_1 \leq X/2$ such that $\theta (n_0 + n_1 d) \in x_j + \pi(B_{\rho/10}(0))$, for some $j \in \{1,\dots, M\}$). Now apply Proposition \ref{second-step}, taking $y = \theta (n_0 + n_1d) - x_j$. With probability $1 - O(N^{-3})$ in the choice of $\mathbf{e}$, this provides some $n_2 d \in \dot{P}$ (that is, $n_2 \leq X/2$) such that $y + \theta n_2 d \in \pi(A_{\mathbf{e}})$.

If $n_1, n_2$ can both be found (which happens with probability $1 - O(N^{-3})$ in the choice of $\mathbf{e}$) then
\[ \theta(n_0 + (n_1 + n_2) d) - x_j  \in \pi(A_{\mathbf{e}}),\] which means that $n_0 + (n_1 + n_2)d$ is coloured blue. This establishes \eqref{ap-prob-7}.

The remaining tasks in the paper are therefore as follows.

\begin{itemize}
\item Establish Proposition \ref{blue-prop-main} (blue 3-term progressions). This is relatively straightforward and is covered in Section \ref{blue-ap-sec}.
\item Give the full definition of $\Theta$, the set of diophantine $\theta$, and prove \eqref{dio-generic}. This is carried out in Section \ref{diophantine-sec}.
\item Prove Proposition \ref{first-step}, the ``first step'' for the red progressions. This is carried out in Section \ref{first-step-sec}.
\item Prove Proposition \ref{second-step}, the ``second step'' for the red progressions.\end{itemize}

The first three tasks, as well as an outline of the fourth, are carried out in Part III of the paper (Sections \ref{blue-ap-sec}, \ref{diophantine-sec}, \ref{first-step-sec} and \ref{second-step-sec} respectively).

The fourth task (proving Proposition \ref{second-step}) is very involved. We give a technical outline in Section \ref{second-step-sec} which reduces it to the task of proving three further propositions: Propositions \ref{main-geo-num}, \ref{measure-compare} and \ref{main-quad-gap}. These propositions are established in Parts IV and V of the paper. 

\part{Monochromatic progressions}

\section{No blue 3-term progressions}\label{blue-ap-sec}

In this section we establish Proposition \ref{blue-prop-main}. Let us recall the statement.

\begin{blue-prop-main-rpt} Suppose that $N > D^{C_2D^2}$. Then 
\[  \P_{\theta, \mathbf{e}} \big( \mbox{$\blue_{\theta, \mathbf{e}}$ has no 3-term progression}   \big) \geq 1 - O(N^{-1}). \]
\end{blue-prop-main-rpt}
Recall that the definition of $\blue_{\theta, \mathbf{e}}$ is given in \eqref{blue-pts-def}. The following lemma is a quantitative version of Behrend's observation that no three points on a sphere lie in arithmetic progression. The spherical version of this was already used in \cite{green-wolf}. 

\begin{lemma}\label{parallel-law}
Fix $e \in [-\frac{1}{D^4} , \frac{1}{D^4} ]^{D(D+1)/2}$. Suppose that $u , u + v, u + 2v$ all lie in $A_{e}$, where the ellipsoidal annulus $A_{e}$ is defined as in \eqref{annulus-def}, but with $e$ fixed. Then $\Vert v \Vert_2  \leq \frac{1}{2}N^{-2/D}$. 
\end{lemma}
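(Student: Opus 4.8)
The plan is to exploit the parallelogram law, exactly as in the classical spherical case, after applying the linear map $I + \sigma(e)$ to straighten the ellipsoid back to a sphere. Write $T := I + \sigma(e)$, and set $u' := Tu$, $v' := Tv$. Since $u, u+v, u+2v \in A_e$, the three points $u', u'+v', u'+2v'$ all satisfy $\rho - N^{-4/D} < \Vert \cdot \Vert_2 < \rho$; that is, their Euclidean norms all lie in the interval $(\rho - N^{-4/D}, \rho)$.

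Now apply the parallelogram law to the vectors $u'$ and $u' + 2v'$, whose midpoint is $u' + v'$:
\[ \Vert u' \Vert_2^2 + \Vert u' + 2v' \Vert_2^2 = 2\Vert u' + v' \Vert_2^2 + 2 \Vert v' \Vert_2^2. \]
Hence
\[ 2 \Vert v' \Vert_2^2 = \Vert u' \Vert_2^2 + \Vert u' + 2v' \Vert_2^2 - 2\Vert u' + v' \Vert_2^2 < 2\rho^2 - 2(\rho - N^{-4/D})^2 = 2N^{-4/D}(2\rho - N^{-4/D}) < 4\rho N^{-4/D}. \]
Since $\rho = D^{-4} < 1$ (and certainly $4\rho < 1$ for $D$ large), this gives $\Vert v' \Vert_2^2 < N^{-4/D}$, i.e. $\Vert v' \Vert_2 < N^{-2/D}$.

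Finally, transfer the bound back to $v$ itself using the lower bound in \eqref{e-operator}: since $\Vert I + \sigma(e) \Vert \geq \frac12$ is the \emph{operator norm}, that does not immediately control $\Vert v \Vert_2$ from $\Vert v' \Vert_2 = \Vert Tv \Vert_2$; instead one needs a lower bound on the smallest singular value of $T$. But $T^{-1}$ exists with $\Vert T^{-1} \Vert \leq (1 - \Vert \sigma(e) \Vert)^{-1} \leq 2$ since $\Vert \sigma(e) \Vert \leq \frac12$ (this is exactly the Neumann-series estimate underlying \eqref{e-operator}), so $\Vert v \Vert_2 = \Vert T^{-1} v' \Vert_2 \leq 2 \Vert v' \Vert_2 < 2 N^{-2/D}$. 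This is slightly weaker than the claimed $\frac12 N^{-2/D}$; to recover the stated constant one should be a little more careful — use that $\rho = D^{-4}$ is tiny, so $2\Vert v'\Vert_2^2 < 4\rho N^{-4/D}$ gives $\Vert v'\Vert_2 < 2\sqrt{\rho}\, N^{-2/D}$, and then $\Vert v\Vert_2 < 4\sqrt\rho\, N^{-2/D} = 4 D^{-2} N^{-2/D} \leq \frac12 N^{-2/D}$ for $D \geq 3$. So the genuinely quantitative input is just that $\rho$ is a small power of $1/D$, which is built into \eqref{rho-def}.

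\textbf{Main obstacle.} There is essentially no obstacle here — this is a short computation. The only point requiring a moment's care is the one flagged above: passing from a bound on $\Vert Tv\Vert_2$ to a bound on $\Vert v \Vert_2$ requires the lower singular value bound on $T$, not the operator-norm \emph{upper} bound, but this is immediate from $\Vert \sigma(e)\Vert \leq \frac12$ via $\Vert T^{-1}\Vert \leq 2$. One should also double-check the arithmetic so that the final constant comes out as $\frac12$ rather than something larger, using the smallness of $\rho$ as above.
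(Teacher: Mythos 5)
Your proof is correct and follows essentially the same approach as the paper: apply the parallelogram law to $(I+\sigma(e))u$ and $(I+\sigma(e))v$ to bound $\Vert (I+\sigma(e))v\Vert_2^2$ by $\rho^2 - (\rho - N^{-4/D})^2 < 2\rho N^{-4/D}$, then transfer the bound back to $\Vert v\Vert_2$ using that $I+\sigma(e)$ is invertible with $\Vert (I+\sigma(e))^{-1}\Vert \leq 2$. You are also right to flag that this last step genuinely requires a lower bound on the smallest singular value of $I+\sigma(e)$ rather than the operator-norm bound as literally stated in \eqref{e-operator}; what the paper is implicitly using is that $I+\sigma(e)$ is symmetric with $\Vert\sigma(e)\Vert\leq\tfrac12$, so all its eigenvalues lie in $[\tfrac12,\tfrac32]$, which gives exactly the estimate you derive via the Neumann series.
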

\begin{proof}
We have the parallelogram law
\[ 2 \Vert y \Vert_2^2 = \Vert x + 2y \Vert_2^2 + \Vert x\Vert_2^2 - 2 \Vert x+y \Vert_2^2.\]
Applying this with $x = (1 + \sigma(e))u$, $y = (1 + \sigma(e)) v$ gives
\[ \Vert (1 + \sigma(e)) v \Vert_2^2 = \Vert y \Vert_2^2 \leq \rho^2 -  (\rho - N^{-4/D})^2 < \frac{1}{10} N^{-4/D}.\]
The result now follows from \eqref{e-operator}.
\end{proof}

Condition on the event that $\mathbf{e} = e$ and let $\theta \in \T^D$ be chosen uniformly at random. Suppose that $n, n + d, n + 2d$ are all coloured blue. Then for some $i,j,k \in \{1,\dots, M\}$ we have $\theta n \in x_i + \pi(A_{e})$, $\theta (n+d) \in x_j + \pi(A_{e})$, $\theta (n+2d) \in x_k + \pi(A_{e})$.  Since $\theta (n+2d) - 2\theta (n+d) + \theta n = 0$ we have $x_i - 2x_j + x_k \in \pi(A_{e}) - 2 \pi(A_{e}) + \pi(A_{e})$, and so $\Vert x_i - 2x_j + x_k \Vert_{\T^D} \leq 4 \rho$ since every $x \in \pi(A_{e})$ has $\Vert x \Vert_{\T^D} = \Vert \pi^{-1} x \Vert_{\infty} \leq \Vert \pi^{-1} x \Vert_2 \leq \rho$. By the construction of the points $x_1,\dots, x_M$ (specifically, Proposition \ref{prop21} (1)) it follows that $i = j = k$. 

We apply Lemma \ref{parallel-law} with $u = \pi^{-1}(\theta n - x_i)$, $v = \pi^{-1}(\theta d)$, both of which lie in $B_{2\rho}(0) \subset B_{1/10}(0)$. Since $\pi(u + \lambda v) = \theta (n + \lambda d) - x_i$ for $\lambda \in \{0,1,2\}$, we see that $\pi(u + \lambda v) \in \pi( A_{e})$, and therefore since $u + \lambda v \in B_{1/5}(0)$ and $A_{e} \subset B_{1/5}(0)$ we have $u + \lambda v \in A_{e}$.

It follows from Lemma \ref{parallel-law} that $\Vert v \Vert_2 \leq \frac{1}{2}N^{-2/D}$. Therefore $\Vert \theta d \Vert_{\T^D} = \Vert \pi^{-1}(\theta d) \Vert_{\infty} \leq \Vert \pi^{-1}(\theta d) \Vert_2 \leq \frac{1}{2}N^{-2/D}$.

If $d \neq 0$ then, with $\theta \in \T^D$ chosen randomly, $\theta d$ is uniformly distributed on $\T^D$. Therefore the probability that there is any blue 3-term progression $(n, n+d, n + 2d)$ with common difference $d$ is bounded above by the probability that $\theta d$ lies in the box $\{ x \in \T^D : \Vert x \Vert_{\T^d} \leq \frac{1}{2}N^{-2/D}\}$, a set of volume $N^{-2}$.

Therefore for any fixed $e \in [-\frac{1}{D^4} , \frac{1}{D^4} ]^{D(D+1)/2}$ we have, summing over the at most $N$ possible choices for $d$, that
\[  \P_{\theta} \big( \mbox{$\blue_{\theta, e}$ has no 3AP}) \geq 1 - O(N^{-1}),\] from which Proposition \ref{blue-prop-main} follows immediately by removing the conditioning on $\mathbf{e} = e$.

This completes the proof of Proposition \ref{blue-prop-main}. Note that the randomness of $\theta$ was vital, but the ability to choose $\mathbf{e}$ randomly here was irrelevant, since the analysis works for any fixed $\mathbf{e} = e$.

\section{Diophantine conditions}\label{diophantine-sec}

We turn now to the definition of $\Theta$, the set of ``diophantine'' rotations $\theta \in \T^D$. Here (as usual) $X = N^{1/r}$.

\begin{proposition}\label{prop31}
Suppose that $D = C_3 r^2$ and that $N \geq D^{D^2}$. Define $\Theta \subset \T^D$ to be the set of all $\theta$ satisfying the following two conditions:
\begin{enumerate}
\item 
For all $n \leq N$, \[
\dim \{ \xi \in \Z^D : |\xi| \leq D^{C_2},  \Vert n \xi \cdot \theta\Vert_{\T} \leq D^{C_2 D} X^{-1} \} <  4r.
\]
\item For all $d \leq N/X$, \[ \#\{ n \leq X : \Vert \theta d n \Vert_{\T^D} \leq X^{-1/D} \} \leq X^{9/10}.\]
\end{enumerate}
Then $\mu_{\T^D}(\Theta) \geq 1 - O(N^{-1})$.
\end{proposition}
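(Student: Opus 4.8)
# Proof Proposal for Proposition 3.1

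The plan is to prove that each of the two defining conditions of $\Theta$ fails only on a set of $\theta$ of measure $O(N^{-1})$, and then take a union bound. These two conditions are genuinely independent in flavour: condition (1) is about avoiding a low-complexity ``near-annihilator'' of large dimension for any dilate $n\theta$, while condition (2) is a quantitative equidistribution statement for the orbit $(\theta d n)_n$.

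For \textbf{condition (1)}, I would fix $n \leq N$ and a candidate subspace $V = \Span_{\Q}\{\xi_1,\dots,\xi_m\} \subseteq \Q^D$ with $m = 4r$ and each $\xi_i \in \Z^D$, $|\xi_i| \leq D^{C_2}$. Since the $\xi_i$ are linearly independent, Lemma \ref{ind-td-tm} tells us that $x \mapsto (\xi_1\cdot x,\dots,\xi_m\cdot x)$ pushes the uniform measure on $\T^D$ to the uniform measure on $\T^m$; applying this to $x = n\theta$ (using that $n\theta$ is uniform on $\T^D$ when we range over $\theta$, for fixed $n \neq 0$), the probability that $\Vert n\xi_i\cdot\theta\Vert_{\T} \leq D^{C_2 D}X^{-1}$ for all $i$ is $(2 D^{C_2 D} X^{-1})^m \leq (D^{C_2 D}X^{-1})^{4r}$, say. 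Now union-bound over the $\leq N$ choices of $n$ and over the number of candidate $V$'s. As in the proof of Proposition \ref{prop21}, it suffices to consider $V$ spanned by integer vectors of bounded size, and the count of such $V$ is at most $(3D^{C_2})^{4rD} \leq D^{C_3 r D}$ for a suitable absolute constant (absorbing $C_2$). The total failure probability is then at most
\[ N \cdot D^{C_3 r D} \cdot \big(D^{C_2 D}X^{-1}\big)^{4r} = N \cdot D^{C_3 r D + 4 r C_2 D} \cdot N^{-4}, \]
using $X = N^{1/r}$ so that $X^{-4r} = N^{-4}$. Since $N \geq D^{D^2}$ and $D = C_3 r^2$, the power of $D$ here is $D^{O(rD)} = D^{O(r^3)}$, which is $\leq N^{1/2}$ comfortably, so the whole thing is $O(N^{-2}) = O(N^{-1})$. (One should be slightly careful that the case $n = 0$, or $\xi$-vectors spanning a lower-dimensional $V\cap\Z^D$, is handled; but $n=0$ makes the condition trivially hold, and passing to $V\cap\Z^D$ via Lemma \ref{quant-lattice} only shrinks things.)

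For \textbf{condition (2)}, fix $d \leq N/X$ and let $Y := \#\{n \leq X : \Vert \theta d n\Vert_{\T^D} \leq X^{-1/D}\}$. The natural route is a second-moment / Markov argument: estimate $\E_\theta Y = \sum_{n \leq X} \mu_{\T^D}(\{\Vert \theta d n\Vert \leq X^{-1/D}\})$. For each $n$ with $dn \neq 0$, $\theta d n$ is uniform on $\T^D$, so the $n$-th term is $(2X^{-1/D})^D = 2^D X^{-1}$; summing over $n\leq X$ gives $\E_\theta Y \leq 2^D$. By Markov, $\P_\theta(Y \geq X^{9/10}) \leq 2^D X^{-9/10}$. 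Then union-bound over the $\leq N/X \leq N$ values of $d$: the total is $\leq N \cdot 2^D X^{-9/10} = 2^D N^{1 - 9/(10 r)}$. This is \emph{not} obviously $O(N^{-1})$ — it is much too weak. So the first moment alone does not suffice, and this is where the real work lies.

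The fix, and the main obstacle, is to get a genuinely strong tail bound on $Y$ rather than just its mean. The right tool is a higher-moment estimate: for an integer parameter $k$, bound $\E_\theta Y^k = \sum_{n_1,\dots,n_k \leq X} \mu_{\T^D}\big(\bigcap_j \{\Vert \theta d n_j\Vert \leq X^{-1/D}\}\big)$. Each summand is controlled by the joint distribution of $(\theta d n_1, \dots, \theta d n_k)$ on $\T^{Dk}$, which is uniform on the subtorus it generates; the measure of the intersection is roughly $(X^{-1/D})^{D \cdot \mathrm{rk}}$ where $\mathrm{rk}$ is the rank of the relevant integer matrix built from $d n_1,\dots,dn_k$ — but crucially one only needs a clean bound when the $n_j$ are ``sufficiently spread'', and the dominant contribution comes from tuples where the $n_j$ cluster, which one bounds combinatorially. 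Optimizing $k$ (taking $k$ a small power of $\log N$, so that $(2^D)^k$ is still under control against the savings $X^{-\Omega(k)}$ from spread tuples) should give $\P_\theta(Y \geq X^{9/10}) \leq \exp(-c X^{c'})$ or at least $\leq N^{-10}$, which survives the union bound over $d$. I expect the bookkeeping of which tuples $(n_1,\dots,n_k)$ contribute how much — i.e. relating the rank of the $\T^{Dk}$-orbit to the additive structure of $\{n_1,\dots,n_k\}$ — to be the delicate part; an alternative, possibly cleaner, route is to directly exploit that if $Y$ is large then the set $\{n \leq X : \Vert\theta dn\Vert \leq X^{-1/D}\}$ contains a long progression or an additively structured piece, forcing $\theta d$ into a Bohr set of small measure, and then union-bounding over the (few) such Bohr sets. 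I would try the moment method first and fall back to the Bohr-set argument if the combinatorics get unwieldy.
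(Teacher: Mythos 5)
Your treatment of condition (1) is essentially identical to the paper's: equidistribution of $(\xi_1\cdot x,\dots,\xi_{4r}\cdot x)$ via Lemma \ref{ind-td-tm}, a probability bound of $(2D^{C_2D})^{4r}N^{-4}$ for each fixed $n$ and fixed spanning set, and a double union bound over $n\leq N$ and the $\leq((3D^{C_2})^D)^{4r}$ choices of spanning set. That part is fine.

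For condition (2) there is a genuine gap. You correctly identify that the first moment $\E_\theta Y\approx 2^D$ plus Markov is hopeless against the union bound over $d$, and you then \emph{propose} a higher-moment argument (or, as a fallback, a Bohr-set argument), but you do not carry either through — you say ``I expect the bookkeeping \ldots to be the delicate part'' and ``I would try the moment method first and fall back \ldots if the combinatorics get unwieldy''. The concluding estimate you would need, something like $\P_\theta(Y\geq X^{9/10})\leq N^{-10}$, is asserted as the target rather than derived. The difficulty is real: bounding $\E_\theta Y^k = \sum_{n_1,\dots,n_k}\mu_\beta(\{n_j\})^D$ requires understanding the measure of a rank-one Bohr set $\{\beta\in\T:\Vert\beta n_j\Vert\leq\eps\}$ as a function of the additive structure of the frequency set $\{n_j\}$, and the contributions from tuples with small linear relations (e.g.\ near-collinear or small-ratio tuples) do not come with the naive $\eps^{k}$ saving and need a separate combinatorial accounting. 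Since you have not done this, part (2) is unproved as written. It is also worth flagging that the paper's own remark after Proposition \ref{prop31} — that the author does not know how to push the exponent $9/10$ much lower, and states it as an open question — is a warning sign that the concentration here is subtler than a naive moment heuristic suggests; a sketch that would, if correct, give a near-optimal tail should be treated with some suspicion until the structured-tuple bookkeeping is actually done.

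For contrast, the paper proves (2) by a quite different route: an inductive dimension-reduction argument. The key Claim is Fourier-analytic: if $\alpha'\in\T^{D'}$ has $\#\{n\leq X:\Vert\alpha' n\Vert_{\T^{D'}}\leq X^{-(1-c)/D'}\}\geq X^{1-c}$, then using a smooth majorant for the small box, Poisson-type expansion, and pigeonhole on the dual side, one extracts a single small integer vector $\xi$ with $\Vert\xi\cdot\alpha'\Vert_\T$ tiny. One then drops a coordinate, reducing to dimension $D'-1$, and iterates; each step multiplies the exceptional measure by $O(X^{-c})$. Starting from $D_0=\lceil(1-c)D\rceil$ and iterating up to $D$ gives exceptional measure $\leq X^{-cD/2}$, which beats the union bound over $d$. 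This argument does not require you to understand the Bohr set $\{\beta:\Vert\beta n_j\Vert\leq\eps\text{ for }n_j\in S\}$ for arbitrary $S$; it only ever needs the (much easier) statement that a large concentration forces a single small dual vector. If you want to complete your proposal without reproducing the paper, the Bohr-set/dual-vector route you mention as a fallback is in fact much closer to the paper's actual argument than the moment method, and is the direction I would push.
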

Before launching into the proof, let us offer some informal explanation of conditions (1) and (2). Condition (2) is fairly self-explanatory, asserting that orbits $\{ \theta d n : n \leq X\}$ are not highly concentrated around $0$. Note for reference that the box $\{x \in \T^D : \Vert x \Vert_{\T^D} \leq X^{-1/D}\}$ has volume $\sim_D X^{-1}$, so on average on expects just $\sim_D 1$ points of the orbit $\{ \theta d n : n \leq X\}$ to lie in this box. Thus that (2) should hold generically is very unsurprising (though seemingly harder to prove than one might imagine).

Item (1) is harder to explain. With reference to the introductory discussion in Section \ref{outline-sec}, if some condition along these lines did not hold then the orbit $\theta n, 2 \theta n,3\theta n,\dots$ would be concentrated near subtori of high codimension, and this makes it easier for such orbits to evade our union of annuli. Thus one should expect this condition to come up in the proof of Proposition \ref{first-step}, and we shall see in the next section that this is indeed the case. (It also comes up later in the proof of Proposition \ref{second-step}.)

\begin{proof}
We can address (1) and (2) separately and then take the intersection of the corresponding sets of $\theta$.

(1) Suppose that $\xi_1,\dots, \xi_{4r}$ are linearly independent over $\Q$. Then, as $x$ varies uniformly over $\T^D$, $(\xi_1\cdot x, \dots, \xi_{4r} \cdot x)$ is equidistributed (the proof is the same as in Proposition \ref{prop21}). It follows that for fixed $n$,
\[ \mu \{ \theta : \Vert n \xi_i \cdot \theta \Vert_{\T} \leq D^{C_2 D} X^{-1} \; \mbox{for $i = 1,\dots, 4r$} \} = (2D^{C_2 D})^{4r} N^{-4}.\] 
By the union bound, the measure of $\theta$ for which there exists $n \leq N$ such that $\Vert n \xi_i \cdot \theta \Vert \leq D^{C_2 D} N^{-1/r}$ for $i = 1,\dots, 4r$ is $\leq (2D^{C_2 D})^{4r} N^{-3}$, which is comfortably less than $N^{-2}$ with our assumptions on $r, D$ and $N$. 

Now if item (1) fails then there must be some choice of $\xi_1,\dots, \xi_{4r} \in \Z^D$, $|\xi_i| \leq D^{C_2}$, for which the preceding statement holds. The number of choices for this tuple is at most  $\big((3D^{C_2})^{D} \big)^{4r} < N$. The result follows by another application of the union bound.

(2) The proof is easier to read if we set $c := \frac{1}{10}$ throughout (also a similar claim with any $c > 0$ would suffice for our purposes). It is enough to show that 
\begin{equation}\label{enough-alpha} \mu_{\T^D} \{ \alpha : \# \{ n \leq X : \Vert \alpha n \Vert_{\T^D} \leq X^{-1/D}\} \geq X^{1-c} \} \leq X^{-c^2 D/2}.\end{equation}
Then one may take a union bound over all $\alpha = \theta d$, $d = 1,\dots, N$, noting that $N X^{-c^2 D} < N^{-1}$ with our assumptions on $D$ and $r$ (recall that $X = N^{1/r}$). To prove \eqref{enough-alpha} we employ an inductive approach based on the following claim.

\emph{Claim.} Suppose that $\alpha' \in \T^{D'}$ and that 
\begin{equation}\label{claim-assumption} \# \{ n \leq X : \Vert \alpha' n \Vert_{\T^{D'}} \leq X^{-(1 - c)/D'} \} \geq X^{1 -c}.\end{equation}Then there is some $\xi \in \Z^{D'}$, $0 < |\xi| \leq 2X^{3c/D'}$, such that $\Vert \xi \cdot \alpha' \Vert_{\T} \leq 2X^{3c - 1}$.

\emph{Proof of Claim.} Inside the proof of the claim, we drop the dashes for clarity (if we did not include them in the statement, it would make the subsequent deduction of Proposition \ref{prop31} (2) confusing). By Lemma \ref{cutoff-for-dio}, there is a smooth cutoff $\chi : \T^D \rightarrow [0,\infty)$ satisfying
\begin{enumerate}
\item $\chi(x) \geq 1$ for $\Vert x \Vert_{\T^D} \leq X^{-(1-c)/D}$;
\item $\int \chi \leq 5^D X^{c-1}$;
\item $\hat{\chi}(\xi) = 0$ for $|\xi| \geq X^{(1-c)/D}$; 
\end{enumerate}
Then if \eqref{claim-assumption} holds (remember, we have dropped the dashes)
\begin{align*} X^{1-c}  \leq \sum_{n \leq X} \chi(\alpha n)  & = \sum_{\xi} \hat{\chi}(\xi) \sum_{n \leq X} e(\xi \cdot \alpha n) \\ & \leq 5^DX^{c-1} \sum_{|\xi| \leq X^{(1-c)/D}}| \sum_{n \leq X} e(\xi \cdot \alpha n)| \\ & < \frac{1}{2}X^{2c-1} \sum_{|\xi| \leq X^{(1-c)/D}}\min(X, \Vert \xi \cdot \alpha \Vert_{\T}^{-1}).\end{align*} (The factor $\frac{1}{2}$ is a minor technical convenience for later.)
Therefore
\[ 2X^{2 - 3c} <  \sum_{|\xi| \leq X^{(1-c)/D}} \min(X, \Vert \xi \cdot \alpha \Vert_{\T}^{-1}).\]
The contribution from those $\xi$ with $\Vert \xi \cdot \alpha\Vert_{\T} \geq X^{3c - 1}$ is bounded above by $(3 X^{(1 - c)/D})^D X^{1 - 3c} < X^{2 - 3c}$. Writing $\Xi \subset [-X^{1/D}, X^{1/D}]^D$ for the set of $\xi$, $|\xi| \leq X^{1/D}$, with $\Vert \xi \cdot \alpha\Vert_{\T} \leq X^{3c - 1}$, it follows that $X^{1 - 3c} < |\Xi|$.
By the pigeonhole principle, dividing $[-X^{1/D}, X^{1/D}]^D$ into $X^{1 - 3c}$ boxes of sidelength $2X^{3c/D}$, we see that there are distinct $\xi_1, \xi_2 \in \Xi$ with $| \xi_1 - \xi_2| \leq 2X^{3c/D}$.
Taking $\xi := \xi_1 - \xi_2$ completes the proof of the claim.

Let us resume the proof of Proposition \ref{prop31} (2).
Set $D_0 := \lceil (1 - c)D\rceil$. We prove, by induction on $j = 0,1,\dots, D - D_0$, the following statement: The measure of all $\alpha^{(j)} \in \T^{D_0 + j}$ such that 
\begin{equation}\label{alpha-j} \# \{ n \leq X : \Vert \alpha^{(j)} n \Vert_{\T^{D_0 + j}} \leq X^{-1/D} \} \geq X^{1-c}\end{equation} is at most $X^{- cj}$. This result is trivial for $j = 0$, and the case $j = D - D_0$ gives the result we are trying to prove, i.e. \eqref{enough-alpha}, which implies Proposition \ref{prop31} (2) as explained above.

To deduce the case $j$ from $j-1$, apply the claim with $D' = D_0 + j$. Since $D' \geq (1 - c)D$,  condition \eqref{alpha-j} implies that 
\[ \# \{ n \leq X : \Vert \alpha^{(j)} n \Vert_{\T^{D'}} \leq X^{-(1-c)/D'} \} \geq X^{1 - c},\] and so by the claim there is some $\xi \in \Z^{D'}$, $0 < |\xi| \leq X^{3c/D_0}$ such that $\Vert \xi \cdot \alpha^{(j)} \Vert_{\T} \leq 2X^{3c - 1}$. Suppose that the last non-zero coordinate of $\xi$ is $\xi_{D'} = \xi_{D_0 + j}$ (the other possibilities can be treated similarly with very minor notational changes). Form $\alpha^{(j-1)} \in \T^{D_0 + j - 1}$ by restricting $\alpha^{(j)}$ to the first $D_0 + j - 1$ coordinates (i.e. by dropping the last coordinate $\alpha^{(j)}_{D_0 + j}$). Then, since $\Vert \alpha^{(j-1)} n \Vert_{\T^{D_0 + j -1}} \leq \Vert \alpha^{(j)} n \Vert_{\T^{D_0 + j}}$, the hypothesis \eqref{alpha-j} is satisfied by $\alpha^{(j-1)}$. By the inductive hypothesis, the measure of possible $\alpha^{(j-1)}$ is at most $X^{-c(j-1)}$. For each such $\alpha^{(j-1)}$, and for each fixed $\xi$, the final coordinate satisfies $\Vert \gamma + \xi_{D_0 + j} \alpha^{(j)}_{D_0 + j} \Vert_{\T}  = \Vert \xi \cdot \alpha^{(j)}\Vert_{\T} \leq 2X^{3c - 1}$, where $\gamma$ depends only on $\alpha^{(j-1)}$ and the first $D_0 + j - 1$ coordinates of $\xi$. As $\alpha^{(j)}_{D_0 + j}$ varies uniformly over $\T$, so does $\xi_{D_0 + j} \alpha^{(j)}_{D_0 + j}$, so the probability of this event is $\leq 4X^{3c - 1}$. 

Summing over all possible choices of $\xi$ (of which there are at most $(3 X^{3c/D_0})^{D}$) it follows that the total measure of $\alpha^{(j)}$ satisfying \eqref{alpha-j} is bounded above by 
\[ 4X^{3c - 1} \cdot (3 X^{3c/D_0})^{D} \cdot X^{-c(j-1)} < X^{-cj}.\] (The key calculation for this last step is that $3c - 1 + \frac{3c}{1 - c} < - c$, which is certainly true for $c = \frac{1}{10}$, and we used the assumption that $N > D^{D^2}$ to comfortably absorb the $3^D$ term into a tiny power of $X = N^{1/r}$.)

This completes the inductive step, and hence \eqref{alpha-j} is true for all $j$. As previously remarked, the case $j = D - D_0 \geq cD/2$ gives Proposition \ref{prop31} (2).
\end{proof}

\emph{Remark.} Whilst the $\frac{9}{10}$ in Proposition \ref{prop31} (2) can easily be improved a little, it cannot be improved very far by the method we have employed here. One feels that, even with $X^{1/10}$ on the right hand side in (2), this should be a highly likely event in $\theta$, but I do not know how to prove anything in this direction. We state this (where, for simplicity, we have set $D = r^2$) as a separate question.

\begin{question}
Define $\Theta' \subset \T^{r^2}$ to be the set of all $\theta$ for which $\# \{ n \leq N^{1/r} : \Vert \theta d n \Vert_{\T^{r^2}} \leq N^{-1/r}\} \leq N^{1/10r}$ for all $d \leq N^{1 - 1/r}$. Is $\mu_{\T^{r^2}}(\Theta') \geq \frac{1}{2}$, for $N$ large enough in terms of $r$?
\end{question}

We also remark that something in the direction of Proposition \ref{prop31} (2) seems essential for obtaining a relatively small exponent of $r$ in Theorem \ref{mainthm-1}, but one can still obtain \emph{some} fixed exponent there using only consequences of Proposition \ref{prop31} (1), though this would require some reorganisation of the paper.

\section{The first step - red progressions enter balls}\label{first-step-sec}

In this section we prove Proposition \ref{first-step}. Thus, let $x_1,\dots, x_M$ be as in Proposition \ref{prop21}, let $\theta \in \Theta$ be diophantine (where $\Theta$ is defined in Proposition \ref{prop31}), and set $X := N^{1/r}$. Recall that $\rho := D^{-4}$, and recall that, in the statement of Proposition \ref{first-step}, we encounter $P_{\init} := \{n_0 + nd : n \leq X/2\}$.

\begin{proof}[Proof of Proposition \ref{first-step}] Set $\alpha := \theta d$. Set $n_1 := n_0 + \lfloor \frac{X}{4} \rfloor d$, so that $P_{\init} \supset \{ n_1 + nd : |n| \leq X/5\}$. Set
\begin{equation}\label{Lam-def} \Lambda := \{ \xi \in \Z^D : |\xi| < \rho^{-3}, \Vert \xi \cdot \alpha  \Vert_{\T} \leq \rho^{-2D} X^{-1}\}.\end{equation}
By the definition of $\Theta$ (specifically, item (1) of Proposition \ref{prop31}, and assuming that $C_2 \geq 12$) we have $\dim_{\Q}\Lambda < 4r$. By Proposition \ref{prop21} there is some $j$ such that 
\begin{equation}\label{xi-bd-8} \Vert \xi \cdot (x_j - \theta n_1) \Vert_{\T} \leq 10^{-2}\end{equation} for all $\xi \in \Lambda$.
We claim that $\theta P_{\init}$ intersects the ball $x_j + \pi(B_{\rho/10}(0))$. To this end, take a function $\chi : \T^D \rightarrow \R$ with the following properties: 
\begin{enumerate}
\item $\chi(x) \leq 0$ outside of $\pi(B_{\rho/10}(0))$;
\item $\hat{\chi}$ is real and non-negative;
\item $\hat{\chi}(\xi)$ is supported on $|\xi| \leq \rho^{-3}$;
\item $\int \chi  = 1$.
\item $\int |\chi| \leq 3$.
\end{enumerate}
Such a function is constructed in Lemma \ref{tera}. Take also a function $w : \Z \rightarrow [0,\infty)$ satisfying
\begin{enumerate}
\item[(a)] $w$ is supported on $[-X/5, X/5]$;
\item[(b)] $\hat{w} : \T \rightarrow \C$ is real and non-negative;
\item[(c)] $\sum_{n \in \Z} w(n) \geq X$;
\item[(d)] $|\hat{w}(\beta)| \leq 2^5 X^{-1} \Vert \beta \Vert_{\T}^{-2}$ for all $\beta \in \T$.
\end{enumerate}
For this, one can take a Fej\'er kernel: see Lemma \ref{fejer} for details.

Then it is enough to show that 
\begin{equation}\label{to-show-8} \sum_{n \in \Z} w(n)\chi(\theta (n_1 + nd) - x_j) > 0 .  \end{equation}
Indeed, if this holds then there must be some $n \in \Supp(w)$ (and hence $|n| \leq X/5$) such that $\chi (\theta (n_1 + nd) - x_j) > 0$, which means that $\theta (n_1 + nd) - x_j \in \pi(B_{\rho/10}(0))$. Thus $n' := n_1 + nd$ lies in $P_{\init}$ and $\theta n' \in x_j + \pi(B_{\rho/10}(0))$, as required.

It remains to establish \eqref{to-show-8}.
By Fourier inversion on $\chi$ (recalling that $\alpha = d \theta$), the LHS of \eqref{to-show-8} is
\begin{align} \nonumber \sum_{\xi \in \Z^D} \hat{\chi}(\xi) e(\xi  \cdot (n_1 \theta - & x_j)) \sum_{n \in \Z} w(n) e(\xi \cdot  \alpha n) \\ & = \sum_{\xi \in \Z^D} \hat{\chi}(\xi) e(\xi \cdot (n_1 \theta - x_j))\hat{w}(-\xi \cdot \alpha) \nonumber \\ & = \sum_{\xi \in \Z^D} \hat{\chi}(\xi) \cos (2\pi\xi \cdot (n_1 \theta - x_j))\hat{w}(-\xi \cdot \alpha) , \label{fourier-inv} \end{align} where the last line follows by taking real parts, noting that the LHS of \eqref{to-show-8} is real, as are $\hat{\chi}$, $\hat{w}$.
Note that by (3) the sum here is supported where $|\xi| \leq \rho^{-3}$. We now divide into the contributions from the following three different classes of $\xi$: (i) $\xi = 0$; (ii) $\xi \in \Lambda$ and (iii) $|\xi| \leq \rho^{-3}$ but $\xi \notin \Lambda$.

(i) The contribution from $\xi = 0$ is $\hat{\chi}(0) \hat{w}(0) = (\int \chi) (\sum_{n \in \Z} w(n)) \geq X$, using the properties of $\chi$ and $w$ listed above.

(ii) If $\xi \in \Lambda$ then by \eqref{xi-bd-8} we have $\cos (2\pi \xi \cdot(n_1 \theta - x_j)) > 0$. Since $\hat{\chi}$, $\hat{w}$ are both real and non-negative, the contribution of these terms to \eqref{fourier-inv} is non-negative.

(iii) By the triangle inequality, the contribution to \eqref{fourier-inv} from these $\xi$ is bounded above by
\begin{equation}\label{third-cont}   \sum_{\xi \notin \Lambda, |\xi| \leq \rho^{-3}} |\hat{\chi}(\xi)| |\hat{w}(-\xi \cdot \alpha) | \leq \big( \sum_{|\xi| \leq \rho^{-3}} |\hat{\chi}(\xi)|\big) \sup_{\Vert \beta \Vert_{\T} > \rho^{-2D} X^{-1}} |\hat{w}(\beta)|,\end{equation} where the second step follows by the definition \eqref{Lam-def} of $\Lambda$.
Now from (5) above we have $|\hat{\chi}(\xi)| \leq \int |\chi| \leq 3$ for all $\xi$, and so 
\begin{equation}\label{xi-ell-1} \sum_{|\xi| \leq \rho^{-3}} |\hat{\chi}(\xi)| \leq 3 (3\rho^{-3})^D < 2^{-6}\rho^{-4D}\end{equation} (here, of course, we have used the fact that $D$ is sufficiently large).
By property (4) of $w$, we have
\begin{equation}\label{w-prop} \sup_{\Vert \beta \Vert_{\T} > \rho^{-2D} X^{-1}} |\hat{w}(\beta)| \leq 2^5 \rho^{4D} X.\end{equation}
Combining \eqref{third-cont}, \eqref{xi-ell-1} and \eqref{w-prop}, we see that the total contribution from $\xi$ in (iii) is at most $X/2$ in magnitude.

Summing the estimates we have obtained under (i), (ii) and (iii), it follows from \eqref{fourier-inv} that the LHS of \eqref{to-show-8} is at least $X + 0 - X/2 = X/2$ and so is indeed positive, which is what we aimed to prove.
\end{proof}

\section{The second step -- red progressions hit annuli} \label{second-step-sec}

In this section, we give the proof of Proposition \ref{second-step}, conditional upon three substantial results which we will prove later in the paper. The proofs of these results may be read independently of one another. 

Throughout this section set $X = N^{1/r}$ (as usual) and let $d \leq N/X$. Recall that $\dot{P}$ denotes the progression $\{ nd : n \leq X/2\}$. Assume through the section that $\theta \in \Theta$ is diophantine in the sense of Proposition \ref{prop31}.

The first substantial result we need is the following proposition, in which we put a multidimensional structure on $\Z$ suitable for analysing the metric behaviour of the orbit $\theta \dot{P} = \{ \theta d n : n \leq X/2\} \subset \T^D$. Proposition \ref{main-geo-num} is the main result we will use in subsequent sections. 

In the statement of this result, recall that $\pi : \R^D \rightarrow \T^D$ is the natural projection and we write $\pi^{-1} : \T^D \rightarrow \R^D$ for the unique partial inverse map taking values in $(-\frac{1}{2}, \frac{1}{2}]^D$. In particular $\Vert x \Vert_{\T^D} = \Vert \pi^{-1} x \Vert_{\infty}$. If $w_1,\dots, w_s \in \R^D$ then we define the \emph{volume} $\vol(w_1,\dots, w_s)$ to be $\sqrt{ \det (\langle w_i, w_j \rangle)_{1 \leq i,j \leq s}}$, that is to say the square root of the determinant of the Gram matrix associated to the $w_i$. For more properties of this notion, see Section \ref{volume-sec}.

\begin{proposition}\label{main-geo-num}
Suppose that $N \geq D^{C_2 D^2}$, let $d \leq N/X$, and suppose that $\theta \in \Theta$ is diophantine. Then there are positive integers $n_1,\dots, n_s$, $s \leq D$, such that if we write $L_i := \Vert \theta d n_i \Vert_{\T^D}^{-1}$ then\begin{enumerate}
\item $\prod_{i = 1}^s L_i \geq X^{1/80}$;
\item $\sum_{i = 1}^s n_i L_i \leq X/2$;
\item if we set $v_i := \pi^{-1}(\theta d n_i) \in \R^D$ and $w_i :=v_i/\Vert v_i \Vert_2$, the unit vector in the direction of $v_i$, then $\vol (w_1,\dots, w_s) \geq D^{-C_1 D}$;
\item $L_i \leq X^{C_1/D}$ for all $i$.
\end{enumerate}
\end{proposition}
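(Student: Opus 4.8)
\textbf{Proof proposal for Proposition \ref{main-geo-num}.}

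The plan is to realise the orbit $\{\theta d n : n \leq X/2\}$ as a point of a lattice-like structure by applying the geometry of numbers to the Bohr set $B := \{ n \in \Z : \Vert \theta d n \Vert_{\T^D} \leq \tfrac{1}{10}\}$, or rather to a truncated version of it inside $\{|n| \leq X/2\}$. The standard picture is that $B$, viewed through the map $n \mapsto (\theta d n, n/X) \in \T^D \times \R$, is controlled by a lattice $\Lambda \subset \R^{D+1}$ of covolume $\sim 1/|B \cap [X]|$, and a reduced basis of $\Lambda$ yields a proper generalised arithmetic progression $\{\ell_1 n_1 + \dots + \ell_s n_s : 0 \le \ell_i < L_i\}$ contained in $B$, with $n_i$ positive integers, with $\prod_i L_i$ comparable to the size of the Bohr set, and with each $L_i = \Vert \theta d n_i \Vert_{\T^D}^{-1}$ up to constants depending on $D$. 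The first task is therefore to set this up cleanly and extract (1) and (2): the lower bound $\prod L_i \geq X^{1/80}$ should follow from a lower bound on $|B \cap [X/2]|$, which in turn is \emph{exactly} where the diophantine hypothesis is used — item (2) of Proposition \ref{prop31} controls concentration near $0$ in one direction, and one needs its contrapositive-flavoured consequence that $B$ is not too small (probably via a pigeonhole / Dirichlet argument bounding $|B|$ from below by something like $X^{1 - O(1/D)}$, or by combining with item (1) of Proposition \ref{prop31} to rule out the orbit being trapped in a low-dimensional subtorus, which would otherwise shrink the GAP). Condition (2), $\sum n_i L_i \leq X/2$, is the statement that the GAP fits inside $[X/2]$, which is built into the construction of the reduced basis (each generator $n_i$ repeated $L_i$ times stays within the window), so this is essentially bookkeeping once the lattice is set up with the right normalisation.

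Next I would address (4), the upper bound $L_i \leq X^{C_1/D}$. Since $\prod_i L_i \leq |B \cap [X]| \leq X$ trivially gives only $L_i \leq X$, one needs genuine input to get the exponent down to $C_1/D$. This is again a diophantine-type statement: if some $L_i = \Vert \theta d n_i\Vert_{\T^D}^{-1}$ were as large as $X^{c}$ for $c$ not tending to $0$ with $D$, then $n_i \theta d$ would be within $X^{-c}$ of $0$ in \emph{every} coordinate, which for $n_i \leq X$ would force strong concentration of the orbit — contradicting Proposition \ref{prop31}(2) after iterating (the multiples $2n_i, 3n_i, \dots$ would all be close to $0$). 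So (4) should come from feeding the would-be large generator back into condition (2) of the diophantine hypothesis, with a bit of care about the number of multiples one can take before leaving $[X]$, which is where the precise exponent $C_1/D$ versus $9/10$ gets balanced (and also where the condition $D \gg r^2$, equivalently $X^{1/D} = N^{1/(rD)}$ being a genuinely small power, is needed).

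Finally, (3) — the lower bound $\vol(w_1,\dots,w_s) \geq D^{-C_1 D}$ on the volume spanned by the \emph{unit} vectors in the directions $v_i = \pi^{-1}(\theta d n_i)$ — is the step I expect to be the main obstacle, because it is not automatic from lattice reduction: a reduced basis gives near-orthogonality in a metric adapted to the lattice, but here we have rescaled each $v_i$ to a unit vector, which can in principle collapse the spanned volume if two $v_i$ were nearly parallel but of very different lengths. The argument has to exploit that the $v_i$ come from a reduced basis of the Bohr lattice: Minkowski-reduced (or LLL-reduced) bases satisfy $\prod \Vert v_i \Vert_2 \leq 2^{O(D)} \vol(v_1,\dots,v_s)$, and combined with $\Vert v_i\Vert_2 = \Vert \theta d n_i\Vert_{\T^D} = L_i^{-1}$ this converts into $\vol(v_1,\dots,v_s) \geq 2^{-O(D)} \prod L_i^{-1}$, hence $\vol(w_1,\dots,w_s) = \vol(v_1,\dots,v_s)\big/\prod_i \Vert v_i\Vert_2 \geq 2^{-O(D)}$, which is even better than claimed; the real work is checking that the reduction theorem genuinely applies to the lattice at hand (it lives in $\R^{D+1}$ with a slightly degenerate-looking quadratic form coming from the $\T^D$-metric plus the archimedean coordinate) and that "unit vector in the direction of $v_i$" is literally $v_i/\Vert v_i\Vert_2$ with $\Vert v_i \Vert_2$ the Euclidean, not the adapted, norm. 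Once reduction is invoked correctly, (3) follows; I would flag the passage from the abstract Bohr-set / GAP machinery to concrete statements about $\pi^{-1}(\theta d n_i) \in \R^D$ as the place where constants must be tracked, and I would expect to cite a clean statement of simultaneous Dirichlet / Minkowski reduction (presumably the lemmas in Part IV that the paper advertises as self-contained) rather than reprove it here.
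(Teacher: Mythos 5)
Your overall frame — Minkowski's second theorem applied to the lattice in $\R^{D+1}$ attached to the Bohr set, yielding a GAP $\{\sum \ell_i n_i : 0 \leq \ell_i < L_i\}$ of size comparable to $X$ — is indeed the starting point, and this is precisely the paper's Lemma \ref{lem71}. But there are three substantive problems with the way you propose to extract the four properties.

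\textbf{The main gap is in (3).} You assert that a (Minkowski-/LLL-) reduced basis of the lattice yields vectors $v_i$ with $\prod_i \Vert v_i\Vert_2 \leq 2^{O(D)}\vol(v_1,\dots,v_s)$, whence $\vol(w_1,\dots,w_s) \geq 2^{-O(D)}$. This is false. First, Minkowski's second theorem gives a \emph{directional} basis $\b_1,\dots,\b_{D+1}$ of the lattice $\Lambda \subset \R^{D+1}$ achieving the successive minima; this is weaker than a Minkowski-reduced basis and comes with no near-orthogonality statement in the Euclidean (or any) metric. Second, and more seriously, the $v_i = \pi^{-1}(\theta d n_i) \in \R^D$ are not themselves a reduced basis of anything: they are the projections to the last $D$ coordinates of the $\b_i = (n_i/X, n_i\theta d - m_i) \in \R^{D+1}$, and this projection can collapse the spanned volume dramatically — two $\b_i$ with very different first coordinates can have nearly-parallel last-$D$-coordinate projections. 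Your remark that your bound $2^{-O(D)}$ is ``even better than claimed'' should itself be a warning sign; if the statement followed from standard reduction, the paper would say so. In fact, the paper's argument for (3) is of a completely different character: it supposes (3) fails, applies a volume dichotomy (Lemma \ref{volume-dichotomy}) to deduce that all the $w_i$ lie near a single proper subspace $V$ of dimension $<(1-2^{-9})D$, observes that then the whole GAP orbit $\theta d B$ is trapped in a thin slab around $V$, and finally contradicts the slab non-concentration Lemma \ref{slab-avoid}, which rests on Proposition \ref{prop31}(1). So (3) genuinely requires the diophantine hypothesis on $\theta$, not just lattice reduction.

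\textbf{(2) is not bookkeeping.} The lengths in the conclusion are $L_i := \Vert\theta d n_i\Vert^{-1}_{\T^D}$, whereas Minkowski's theorem hands you lengths $L'_i$ (essentially $(D+1)^{-1}\lambda_i^{-1}$) that satisfy $\sum n_i L'_i \leq D^{-D}X$ but only the one-sided bound $L'_i \leq L_i$; the ratio $L_i/L'_i$ has no a priori upper bound. The paper discards the indices where $L_i > D^{C_1}L'_i$, and proving that only $O(2^{-9}D)$ indices are discarded is again a slab-avoidance argument via Lemma \ref{slab-avoid}. Your proposal does not account for this discrepancy at all.

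\textbf{The role of the diophantine condition in (1) is also misplaced, and (4) is done differently.} The lower bound $\prod L'_i \geq D^{-3D}X$ comes for free from Minkowski's second theorem with no diophantine input. What the diophantine condition (Proposition \ref{prop31}(2)) buys you, via Lemma \ref{weak-i-control}, is stability of the product $\prod L_i$ under removing up to $2^{-7}D$ indices — this is what lets the paper discard indices for (2), (3), (4) while keeping (1). For (4), your iteration of Proposition \ref{prop31}(2) on multiples of $n_i$ runs into trouble when $n_i$ is close to $X$ (too few multiples fit in $[X]$); the paper instead gets (4) by a short counting argument using only $\prod_{i \in I_{(2)}} L_i < X^2$, which needs no further diophantine input. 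Overall the proposal has the right ``geometry of numbers on a Bohr set'' opening but is missing the central mechanism — slab avoidance backed by the diophantine hypothesis — that drives properties (2) and (3), and in particular the proposed proof of (3) would not go through.
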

Note that the $n_i$, the $L_i$ and $s$ can (and will) depend on $\theta$ and $d$. We will not indicate this dependence later in the section. Various bounds we state will be uniform in $\theta \in \Theta$ and $d \leq N/X$ and so this dependence is ultimately unimportant.

To prove this result  we first develop, in Section \ref{volume-sec}, some basic properties of volume. In Section \ref{slab-avoid-sec} we prove a key technical result stating that the orbit $\theta \dot{P}$ cannot be too concentrated near (the image in $\T^D$ of) the unit ball of a subspace of $\R^D$ of dimension $(1 - \eps) D$: here we use the diophantine assumption on $\theta$. In Section \ref{geom-num} we finally prove Proposition \ref{main-geo-num}, first using standard geometry of numbers techniques (Minkowski's second theorem) and then refining the information those give using the estimates of Sections \ref{volume-sec} and \ref{slab-avoid-sec}.

The multidimensional structure resulting from Proposition \ref{main-geo-num} gives a new ``basis'' relative to which we can try to understand the behaviour of the (random) quadratic form $\Vert (1 + \sigma(\mathbf{e}) )x \Vert_2^2$ (which occurs in the definition of the annuli $A_{\mathbf{e}}$) along the orbit $\theta \dot{P}$. Unfortunately, a uniformly random $\mathbf{e}$ does not transfer to a uniformly random quadratic form with respect to this new basis. However, it turns out that the volume condition (3) above gives some control of the former in terms of the latter. 

The following, the second main ingredient in the proof of Proposition \ref{second-step}, is the technical result we need. Here, we define a map $\sigma : \R^{n(n+1)/2} \rightarrow \Sym_n(\R)$ as in Section \ref{sec4}: to any tuple $\mathbf{x} \in \R^{n(n+1)/2}$, we associate a symmetric matrix $\sigma(\mathbf{x}) \in \Sym_n(\R)$ as follows: $(\sigma(\mathbf{x}))_{ii} = x_{ii}$, $(\sigma(\mathbf{x}))_{ij} = \frac{1}{2}x_{ij}$ for $i < j$, and $(\sigma(\mathbf{x}))_{ij} = \frac{1}{2}x_{ji}$ for $i > j$. We will use this for $n = D$ and for $n = s$. Which we are talking about should be clear from context, and the abuse of notation seems preferable to the clutter of further subscripts.

\begin{proposition}\label{measure-compare}
Let $w_1,\dots, w_{s} \in \R^D$, $s \leq D$, be linearly independent unit vectors. Write $f : \R^{D(D+1)/2} \rightarrow \R^{s(s+1)/2}$ for the map defined by \[ f(\mathbf{x}) = \sigma^{-1} \big( (\langle (I + \sigma(\mathbf{x})) w_i, (I + \sigma(\mathbf{x}))w_j\rangle)_{1 \leq i, j \leq s} \big)\] Then for any open set $U \subset \R^{s(s+1)/2}$ we have \[ \P_{\mathbf{e}} (f(\mathbf{e}) \in U) \leq D^{4D^2} \vol(w_1,\dots, w_{s})^{-D-1} \mu(U),\] where $\mu$ denotes Lebesgue measure. 
\end{proposition}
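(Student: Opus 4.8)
\textbf{Proof proposal for Proposition \ref{measure-compare}.}

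The plan is to estimate $\P_{\mathbf{e}}(f(\mathbf{e}) \in U)$ by a change of variables, bounding the Jacobian of $f$ from below. Write $G = I + \sigma(\mathbf{e})$, and for a fixed configuration of the $w_i$ let $W$ be the $D \times s$ matrix whose columns are $w_1,\dots,w_s$. Then $f(\mathbf{e})$ corresponds to the symmetric $s \times s$ matrix $W^T G^T G W$, i.e. the Gram matrix of the vectors $Gw_i$. The key point is that $f$ is an affine-quadratic map of $\mathbf{e}$ (since $G$ depends affinely on $\mathbf{e}$, the entry $\langle Gw_i, Gw_j\rangle$ is a quadratic polynomial in $\mathbf{e}$), so its derivative $Df(\mathbf{e})$ is an affine (hence bounded-complexity) linear map from $\R^{D(D+1)/2}$ to $\R^{s(s+1)/2}$, and $f$ maps the cube $[-D^{-4},D^{-4}]^{D(D+1)/2}$ (of volume $(2D^{-4})^{D(D+1)/2}$) onto a region whose measure we control. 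Since the target dimension $s(s+1)/2$ is smaller, the correct statement is that $f$ does not concentrate: for any open $U$, the preimage $f^{-1}(U) \cap [\text{cube}]$ has small volume. I would first compute $Df$ explicitly: differentiating $\langle Gw_i, Gw_j\rangle$ in the direction of a symmetric perturbation $H = \sigma(\mathbf{h})$ gives $\langle Hw_i, Gw_j\rangle + \langle Gw_i, Hw_j\rangle$, so the linearization at $\mathbf{e} = 0$ is $\mathbf{h} \mapsto \sigma^{-1}\big( (\langle Hw_i, w_j\rangle + \langle w_i, Hw_j\rangle)_{ij}\big)$, a linear map $T : \Sym_D \to \Sym_s$.

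The heart of the matter is a lower bound on the ``largest $s(s+1)/2 \times s(s+1)/2$ minor'' of $T$ — equivalently, showing $T$ is surjective with a quantitative bound on the smallest nonzero singular value, controlled by $\vol(w_1,\dots,w_s)$. The natural way: complete $w_1,\dots,w_s$ to an orthonormal-ish basis, or better, observe that $T$ is surjective onto $\Sym_s$ because we can hit any symmetric $s\times s$ target. Concretely, for target $e_k e_\ell^T + e_\ell e_k^T$ (in the $w$-coordinates) one can choose $H$ supported appropriately; the obstruction to doing this cheaply is exactly that the $w_i$ might be nearly linearly dependent, and the loss is measured by $\vol(w_1,\dots,w_s)$. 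I would pass to the Gram matrix $\Gamma = (\langle w_i,w_j\rangle)$, write things in terms of $\Gamma^{-1}$ (whose norm is $\lesssim \vol(w_1,\dots,w_s)^{-2}$ times something harmless since $\det\Gamma = \vol^2$ and the entries of $\Gamma$ are $O(1)$), and show the relevant minor of $T$ is bounded below by a negative power of $\vol$ — the bookkeeping should produce the exponent $D+1$ and the crude constant $D^{4D^2}$ with plenty of room to spare. Then the co-area / change-of-variables inequality
\[ \P_{\mathbf{e}}(f(\mathbf{e}) \in U) = \frac{1}{(2D^{-4})^{D(D+1)/2}}\,\vol\big(f^{-1}(U) \cap \text{cube}\big) \leq \big(\text{Jacobian lower bound}\big)^{-1} \mu(U) \]
gives the result, once one notes that away from $\mathbf{e}=0$ the operator norm $\|G\|$ stays in $[\tfrac12,\tfrac32]$ by \eqref{e-operator}, so the Jacobian bound at a general $\mathbf{e}$ differs from that at $0$ only by bounded factors (absorbed into $D^{4D^2}$).

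The main obstacle I anticipate is the Jacobian lower bound: making precise ``the smallest singular value of $T$ is at least a controlled power of $\vol(w_1,\dots,w_s)$'' without the dimension-dependent constants blowing up past $D^{4D^2}$. Because $T$ acts between spaces of dimension $\Theta(D^2)$ and we need a clean surjectivity-with-bound statement, the cleanest route is probably: choose a distinguished set of $s(s+1)/2$ coordinates of $\Sym_D$ — say those $\sigma(\mathbf{h})$ supported on the span of $w_1,\dots,w_s$ expressed in a fixed orthonormal basis of that span — restrict $T$ to that subspace, and show the restricted map is invertible with determinant $\gg \vol^{\pm(D+1)}$ by a direct computation in which the change of basis from $\{w_i\}$ to an orthonormal basis contributes exactly $\det\Gamma^{-1/2} = \vol^{-1}$ per slot, summed over $O(D^2)$ slots giving the $\vol^{-(D+1)}$ after the generous rounding. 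The rest (co-area inequality, the operator-norm control, absorbing numerical constants) is routine and I would not grind through it.
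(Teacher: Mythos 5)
Your general plan---bound the Jacobian of $f$ from below and apply change of variables---is aimed in the right direction and is the same overall strategy the paper takes, but there are two genuine gaps in what you sketch that the paper has to work to close.

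First, a Jacobian lower bound alone does not give the inequality $\P(f(\mathbf{e})\in U)\le (\text{Jac})^{-1}\mu(U)$ unless you also control multiplicity: a smooth map with nonvanishing Jacobian on a cube can still be many-to-one. In the case $s=D$, where $f$ is a map between spaces of equal dimension, you therefore need injectivity (or at least a bound on the covering number) and you never address it. The paper deals with this by factoring $f=f_2\circ f_1$ with $f_1(\mathbf{x})=\sigma^{-1}\big((I+\sigma(\mathbf{x}))^2\big)$ and $f_2$ linear, and then proving that $f_1$ is injective on $[-D^{-4},D^{-4}]^{D(D+1)/2}$ by a spectral argument: the eigenvalues of $I+\sigma(\mathbf{x})$ are all close to $1$, hence positive, and a positive symmetric matrix is determined by its square. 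Your co-area phrasing for the $s<D$ case is closer to correct but is also vague; the paper's device is to complete $w_1,\dots,w_s$ to $D$ unit vectors without decreasing the volume, apply the $s=D$ case to the completed tuple, and then project.

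Second, your derivation of the exponent $D+1$ is not right. You say the change of basis contributes a $\vol^{-1}$ ``per slot, summed over $O(D^2)$ slots, giving $\vol^{-(D+1)}$'', but $O(D^2)$ slots would give $\vol^{-O(D^2)}$, not $\vol^{-(D+1)}$. The actual source of $D+1$ is the determinant of the linear map $f_2^A:\Sym_D\to\Sym_D$, $f_2^A(\mathbf{x})=\sigma^{-1}(A^T\sigma(\mathbf{x})A)$: by singular value decomposition it suffices to treat $A$ diagonal with entries $\lambda_1,\dots,\lambda_D$, in which case $(f_2^A(\mathbf{x}))_{ij}=\lambda_i\lambda_j x_{ij}$ and hence $\det f_2^A=\prod_{i\le j}\lambda_i\lambda_j=(\det A)^{D+1}$ (each $\lambda_i$ appears once for each $j\ne i$ and twice from $j=i$). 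Applied with $A=W$ this is exactly the $|\det W|^{D+1}=\vol(w_1,\dots,w_D)^{D+1}$ factor. Without this clean computation you will not land on the stated exponent, and the route you gesture at (restricting to a distinguished set of $s(s+1)/2$ coordinates and estimating a minor) is much harder to make quantitative at the required precision.
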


This is the easiest of our three main ingredients and is a fairly standard argument in calculus/linear algebra. It is given in Section \ref{measure-compare-sec}. 

The third and final main input to the proof of Proposition \ref{second-step} is the following result, which says that with overwhelming probability, random quadratic forms have (very) small gaps in the heart of their range, subject to a few natural conditions.

\begin{proposition}\label{main-quad-gap}
Let $B \geq 1$ be an exponent and let $Q \geq 1$ be a parameter. Suppose that $s \geq C_1 B^2$ is an integer. Let $L \geq (QB)^{C_2B}$.
Let $L_1,\dots, L_s$ be lengths with $L_i \in [L, L^{1+1/48}]$. Choose an $\frac{1}{2}s(s+1)$-tuple $a = (a_{ij})_{1 \leq i \leq j \leq s}$ of coefficients by choosing the $a_{ij}$  independently and uniformly at random from $[-Q, Q]$, except for the diagonal terms $a_{ii}$ which are selected uniformly from $[32, Q]$.  For $b = (b_1,\dots, b_s)$ and $c \in \R$ and write $q_{a,b.c} : \R^s \rightarrow \R$ for the quadratic form defined by $q_{a,b,c}(t) := \sum_{i \leq j} a_{ij} t_i t_j + \sum_i b_i t_i + c$. Let $\Sigma = \Sigma(a)$ be the event that the set 
\begin{equation}\label{quad-set} \{ q_{a,b,c}(\frac{x_1}{L_1},\dots, \frac{x_s}{L_s}) : 0 \leq x_i < L_i\}\end{equation} is $L^{-B}$-dense in $[\frac{1}{2}, \frac{3}{2}]$ \textup{(}that is, intersects that $L^{-B}$-neighbourhood of every point in $[\frac{1}{2}, \frac{3}{2}]$\textup{)} for all $b,c$ satisfying $|b_i| \leq Q$, $|c| \leq \frac{1}{4}$ and $b_i^2 - 4a_{ii}c < 0$ for all $i$. Then $\P_a(\Sigma(a)) = 1 - O(L^{-Bs/16})$.
\end{proposition}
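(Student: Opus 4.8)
\emph{Proof strategy.}
The plan is to run a real-variable (Davenport--Heilbronn) circle-method argument that, for a \emph{fixed} coefficient vector $a$, counts the solutions of $|q_{a,b,c}(x_1/L_1,\dots,x_s/L_s)-t_0|\le L^{-B}$ with $0\le x_i<L_i$, concentrating all of the randomness in $a$ into a single ``Diophantine'' event $\mathcal D(a)$, whose exceptional probability is then bounded by an amplification trick. I would fix a smooth $\psi\colon\R\to[0,\infty)$ with $\psi\ge\mathbf 1_{[-1/4,1/4]}$ and $\Supp\psi\subset[-\tfrac12,\tfrac12]$ (so $\hat\psi$ is Schwartz), and for $t_0\in[\tfrac12,\tfrac32]$ set
\[
N_a(b,c,t_0):=\sum_{0\le x_i<L_i}\psi\big(2L^{B}\big(q_{a,b,c}(x_1/L_1,\dots,x_s/L_s)-t_0\big)\big),
\]
so $N_a(b,c,t_0)>0$ forces a value of $q_{a,b,c}$ within $\tfrac14 L^{-B}$ of $t_0$. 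By Fourier inversion,
\[
N_a(b,c,t_0)=\frac{1}{2L^{B}}\int_{\R}\hat\psi\!\left(\frac{\beta}{2L^{B}}\right)e(-\beta t_0)\,S_{a,b,c}(\beta)\,d\beta,\qquad S_{a,b,c}(\beta):=\sum_{0\le x_i<L_i}e\big(\beta\,q_{a,b,c}(x/L)\big),
\]
and I would split $\R$ into a major arc $\{|\beta|\le P\}$, an intermediate/minor range $\{P<|\beta|\le R\}$, and a trivial tail $\{|\beta|>R\}$, with $P$ a small and $R$ a large fixed power of $L$. On the major arc, replacing the sum by $\prod_i L_i\int_{[0,1]^s}e(\beta q_{a,b,c}(t))\,dt$ with controlled error produces a main term $\gg\prod_i L_i\cdot L^{-B}\kappa^{s}$ for some $\kappa=\kappa(Q)>0$; here the level set $\{q_{a,b,c}=t_0\}\cap[0,1]^s$ has $(s-1)$-volume $\ge\kappa^{s}$ uniformly over all admissible $(b,c,t_0)$, and this is precisely where the hypotheses $a_{ii}\ge 32$, $|b_i|\le Q$, $|c|\le\tfrac14$, $b_i^2-4a_{ii}c<0$ enter (they force $q_{a,b,c}(0)=c\in(0,\tfrac12)$ and $q_{a,b,c}(e_1)>\tfrac32$ with room, and keep the relevant quadric elliptic where it meets the cube). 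The tail is negligible by the rapid decay of $\hat\psi$, and since $\prod_i L_i\ge L^s$ and $L\ge(QB)^{C_2 B}$, the main term comfortably beats it.

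The heart of the fixed-$a$ analysis is the minor range. Weyl differencing $|S_{a,b,c}(\beta)|^2$ once lowers the degree and yields a bound $|S_{a,b,c}(\beta)|^2\ll\prod_i L_i\sum_{h\ne 0}\prod_{i=1}^{s}\min\big(L_i,\ \Vert\beta\lambda_i(h,a)\Vert_{\T}^{-1}\big)$, where $\lambda_i(h,a)$ is a linear form in the entries of $a$ with coefficients built from $h$ and the $L_j$; crucially the parts $b,c$ contribute only unimodular factors and drop out, so this estimate is \emph{uniform in $(b,c)$}. I would let $\mathcal D(a)$ be the event that for every $\beta$ with $P<|\beta|\le R$ and every admissible $h\ne 0$ fewer than $s/2$ of the numbers $\Vert\beta\lambda_i(h,a)\Vert_{\T}$ lie below $L^{-1/2}$, together with an accompanying non-degeneracy clause controlling $\Vert\sigma(a)^{-1}\Vert$ (needed for the stationary-phase bound on $\int_{[0,1]^s}e(\beta q_{a,b,c}(t))\,dt$ in the intermediate range). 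On $\mathcal D(a)$ one gets $|S_{a,b,c}(\beta)|\ll\prod_i L_i\cdot L^{-cs}$ throughout the minor range, so its contribution to $N_a(b,c,t_0)$ is $\ll RL^{-B}\prod_i L_i\cdot L^{-cs}$, dominated by the main term once $s\ge C_1B^2$ and $L\ge(QB)^{C_2B}$. Since all three arc estimates are uniform in $(b,c)$ and $t_0$, this shows \emph{deterministically} that $\mathcal D(a)\Rightarrow N_a(b,c,t_0)>0$ for every admissible $(b,c)$ and every $t_0\in[\tfrac12,\tfrac32]$, hence $\mathcal D(a)\Rightarrow\Sigma(a)$, and therefore $\P_a(\neg\Sigma)\le\P_a(\neg\mathcal D)$ --- with no union bound spent over $(b,c)$ or $t_0$.

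It then remains to prove $\P_a(\neg\mathcal D)=O(L^{-Bs/16})$, and this is the step I expect to be the main obstacle; it is where the ``amplification using lines in the projective plane over $\F_p$'' flagged in the introduction is needed. Naively, $\neg\mathcal D(a)$ is a union over $\beta$ (an $L^{O(B)}$-net, since $\lambda_i$ is Lipschitz in $\beta$) and over the $\approx\prod_i L_i\approx L^{s}$ admissible $h$ of an event which, for fixed $\beta,h$, has probability $\ll(CL^{-1/2})^{s/2}$: for each $i$ a coefficient occurring in $\lambda_i$ alone --- the diagonal $a_{ii}$, or a cross coefficient $a_{i,j_0}$ with $j_0\in\Supp h$ when $h_i=0$ --- can be isolated, so conditioning on the remaining entries makes the $s$ events $\{\Vert\beta\lambda_i(h,a)\Vert_{\T}<L^{-1/2}\}$ independent with conditional probabilities $\ll L^{-1/2}$ (the small-$|\beta|$ part being folded into an enlarged major arc and handled by stationary phase). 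This gives only $\P_a(\neg\mathcal D)\le L^{O(B)}\,L^{s}\,(CL^{-1/2})^{s/2}$, which is useless because of the $L^{s}$ from the $h$-sum. The amplification exploits that $\beta$ is a free scalar: projectivising the resonance conditions --- passing to the reduction modulo a suitably chosen prime $p\asymp L^{c'B}$ of a triple $(a_{i_1j_1},a_{i_2j_2},a_{i_3j_3})$ and viewing it in $\P^2(\F_p)$ --- converts ``$\beta\lambda_i$ simultaneously small for $i\in\{i_1,i_2,i_3\}$ for some $\beta$ in the minor range'' into the statement that the reduced point lies on one of a controlled family of lines in $\P^2(\F_p)$ (coming from rationals of bounded height), and a line in $\P^2(\F_p)$ carries only $p+1$ of its $\asymp p^2$ points; running this over $\ge s/3$ disjoint coordinate triples yields $\P_a(\neg\mathcal D)\le L^{s}\,(C/p)^{s/3}\le L^{-Bs/16}$ once $c'$, and $C_1$ (hence $s\ge C_1B^2$), are chosen large enough to absorb the $h$-union and the residual net factors. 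Assembling the three arc estimates with this probability bound completes the proof; the two genuinely delicate points are the uniform lower bound for the major-arc main term, which rests on the discriminant hypothesis, and the projective amplification, which is the novel ingredient.
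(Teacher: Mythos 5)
Your circle-method outline is broadly in the same spirit as the paper's (Proposition \ref{cont-disc-compare}), though the paper organises it as a comparison of a continuous and a discrete integral of $w^{\otimes m}\cdot\chi(q_{a,b,c})$ rather than as a direct count $N_a(b,c,t_0)$, and it achieves $(b,c)$-uniformity by a cleaner route (after Weyl differencing the $b,c$ literally drop out of the quantity to be bounded, so there is no need to invoke a level-set volume estimate; the positivity at the major arc is extracted via the intermediate value theorem using $c\in(0,\tfrac14)$, $a_{ii}\geq 32$, and $b_i^2-4a_{ii}c<0$). These are differences of packaging. But there is a genuine gap in the part you yourself flag as "the main obstacle," namely the amplification, and it cannot be repaired along the lines you sketch.

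The paper's use of lines in $\P^2(\F_p)$ is purely \emph{combinatorial}, and bears no relation to any reduction of the real coefficients $a_{ij}$ modulo a prime. What the paper actually does (Lemma \ref{cliques}) is pack $k\geq s/16$ \emph{edge-disjoint} copies of the clique $K_m$, $m=C_1B$, into $K_s$: taking a prime $m\leq p<2m$, the $p^2+p+1$ lines of $\P^2(\F_p)$ pairwise meet in one point, so chopping each down to $m$ vertices gives $p^2+p+1\asymp m^2$ edge-disjoint $m$-cliques in $K_{p^2+p+1}$, and disjoint copies of this block fill $[s]$. For each such clique $I_\ell$, restrict the quadratic form to the coordinates in $I_\ell$ (set $x_j=0$ for $j\notin I_\ell$); the resulting $m$-variable form depends only on the off-diagonal entries $a_{ij}$ with $i,j\in I_\ell$ (plus the diagonal, which is conditioned on). Edge-disjointness of the $I_\ell$ means these blocks of random variables are disjoint, hence the events $\Sigma(\ell)$ ("the restricted form is $L^{-B}$-dense") are \emph{independent}, each failing with probability $\leq L^{-B}$ by the $m$-variable preliminary version (Proposition \ref{quad-gap-fixed}). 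Since $\Sigma\supset\bigcup_\ell\Sigma(\ell)$, one gets $\P(\neg\Sigma)\leq\prod_\ell\P(\neg\Sigma(\ell))\leq L^{-Bk}\leq L^{-Bs/16}$. This is the whole argument, and it explains why the hypothesis is $s\geq C_1B^2$: one needs $s\gtrsim m^2$ to run the projective-plane packing efficiently, and it also explains why the diagonal is held fixed throughout the preliminary proposition — the diagonal variables are shared between cliques, so they must not be part of the randomness whose independence is being exploited.

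Your proposed amplification, by contrast, tries to reduce the real coefficients $a_{ij}\in[-Q,Q]$ modulo a prime $p\asymp L^{c'B}$ and argue that the bad triple $(a_{i_1j_1},a_{i_2j_2},a_{i_3j_3})$ must reduce onto a bounded union of lines in $\P^2(\F_p)$. This step is not well-defined: there is no canonical reduction of a uniformly random real number to $\F_p$, and the Diophantine resonance conditions $\|\beta\lambda_i(h,a)\|_{\T}<L^{-1/2}$ with $\beta$ ranging over an interval do not confine the coefficients to a union of $p$-adic lines in any way that yields the crucial $(p+1)/p^2$ saving. Moreover even in the naive count you write down, the union over $h$ contributes an $L^s$ factor that your bound does not credibly absorb. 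The correct mechanism is not arithmetic-geometric incidence but rather independence, manufactured combinatorially by the clique packing. Without that insight, the exponent $L^{-Bs/16}$ is not reachable by the argument as sketched: the determinant non-degeneracy (Lemma \ref{random-matrix}) already caps the fixed-$a$ exceptional probability at $\approx L^{-2B}$ for an $s$-variable form, far short of $L^{-Bs/16}$ when $s\gg B^2$.
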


This is the most substantial of our three main ingredients. A rough outline of the proof is as follows. First, we establish a kind of preliminary version of the result with a smaller number $s_0 = O(B)$ of variables, but with a much weaker exceptional probability of $O(L^{-B})$ which is insufficient, by itself, for our application. Indeed, we will be applying Proposition \ref{main-quad-gap}  for each of the $N/X = N^{1 - 1/r}$ values of the common difference $d$ with (roughly) $L \sim X^{1/D}$, $B \asymp r$. Since $X = N^{1/r}$, an exceptional probability of $O(L^{-B})$ is then roughly $O(N^{-C/D})$, nowhere near good enough to apply a union bound over the choices of $d$.

This preliminary variant uses a version of the Hardy-Littlewood circle method (basically the Davenport-Heilbronn variant of that method) but with a random quadratic form, which adds some features not usually seen in the method, although in essence it makes the analysis easier. However, we need some inputs such as a result on the probability that the determinant of a random matrix is small (Section \ref{random-matrix-sec}) for which we do not know references in the literature. 

We then use what appears to be a novel kind of amplification trick to prove Proposition \ref{main-quad-gap} itself. The basic idea is to efficiently pack edge-disjoint copies of the complete graph $K_{s_0}$ into $K_s$, which encodes a way of applying the preliminary variant in a large number of independent instances. To get the best exponents in our main theorems, we need to do this as efficiently as possible, and to achieve this we use a construction based on lines in projective space over $\F_p$, for a suitable prime $p$. 

There is one further complexity, which is that edges of the complete graph $K_s$ only encode \emph{off-diagonal} coefficients $a_{ij}$, $1 \leq i < j \leq s$. Whilst the copies of $K_{s_0}$ in $K_s$ are edge-disjoint, they are certainly not \emph{vertex}-disjoint. To ensure that the applications of the preliminary version really are independent, we need to build in the capability of holding the diagonal entries $a_{ii}$ fixed while only selecting the off-diagonal entries at random.

\begin{proof}[Proof of Proposition \ref{second-step}] (assuming Propositions \ref{main-geo-num},  \ref{measure-compare} and \ref{main-quad-gap}) We start by invoking Proposition \ref{main-geo-num}. The contribution to $\prod_{i = 1}^s L_i$ from those $i$ such that $L_i \leq X^{1/160 D}$ is at most $X^{1/160}$. Removing these $i$ (which may involve reducing $s$) we may assume that $X^{1/160 D} \leq L_i \leq X^{C_1/D}$ for all $i$. Dividing into $O(\log C_1) < C_1/160$ ranges we may further assume that that there is some $L$, 
\begin{equation}\label{L-bds} X^{1/2^8 D} \leq L \leq X^{C_1/D},\end{equation} such that $L_i \in [L, L^{1 + 1/48}]$ for all $i$, at the expense of weakening Proposition \ref{main-geo-num} (1) to
\begin{equation}\label{prod-new-lower} \prod_{i = 1}^s L_i \geq X^{1/C_1}.\end{equation} The upper bound in \eqref{L-bds} then implies that 
\begin{equation}\label{s-lower-9} s \geq D/C_1^2.\end{equation}

Now recall that our task is to show that, with high probability in $\mathbf{e}$,  for all $y \in \pi(B_{\rho/10}(0))$ the orbit $y + \theta \dot{P}$ contains a point in $\pi(A_\mathbf{e})$, where 
\[ A_{\mathbf{e}} := \{ x \in \R^D : \rho - N^{-4/D} < \Vert (1 + \sigma(\mathbf{e})) x \Vert_2 \leq \rho \}.\] 
Let $z := \pi^{-1}(y) \in \R^D$, thus 
\begin{equation}\label{z-norm} \Vert z \Vert_2 \leq \rho/10.\end{equation}
Recall that $v_i := \pi^{-1}(\theta d n_i)$. Thus, by Proposition \ref{main-geo-num}, any element of the form $\pi( \sum_i \ell_i v_i )$, $\ell_i < L_i$, lies in $\theta \dot{P}$ (since $\pi( \sum_i \ell_i v_i ) =  (\sum_i \ell_i n_i) d \theta$).
Thus it suffices to show that with probability $1 - O(N^{-3})$ in the random choice of $\mathbf{e}$, any set of the form
\[ \{ z + \sum_i \ell_i v_i : \ell_i \in \N, 0 \leq \ell_i < L_i \}, \quad \Vert z \Vert_2 < \rho/10, \] has nontrivial intersection with $A_{\mathbf{e}}$, that is to say that there is some choice of the $\ell_i$ such that 
\begin{equation}\label{annulus-cond} \rho - N^{-4/D} < \Vert (I + \sigma(\mathbf{e})) (z + \sum_i \ell_i v_i ) \Vert_2 < \rho.\end{equation}
In other words it is enough that 
\begin{equation}\label{ann-hit} 1 - N^{-4/D} \leq q_{a(\mathbf{e}), b(\mathbf{e}), c(\mathbf{e})}(\frac{\ell_1}{L_1}, \dots, \frac{\ell_{s}}{L_{s}}) \leq 1,\end{equation} where the quadratic form is given by

\begin{equation} q_{a(\mathbf{e}), b(\mathbf{e}), c(\mathbf{e})}(\frac{\ell_1}{L_1}, \dots, \frac{\ell_{s}}{L_{s}})  := \rho^{-2} \Vert (I + \sigma(\mathbf{e}))(z + \sum_{i = 1}^{s} \ell_i v_i ) \Vert_2^2.\end{equation} 

One computes
\[ c(\mathbf{e}) := \rho^{-2} \Vert (I + \sigma(\mathbf{e})) z \Vert_2^2,\qquad b_i(\mathbf{e}) := 2\rho^{-2}  \langle (I + \sigma(\mathbf{e})) z, (I + \sigma(\mathbf{e}))v_i\rangle L_i,\] 
\[ a_{ij}(\mathbf{e}) := 2\rho^{-2} \langle (I + \sigma(\mathbf{e})) v_i, (I + \sigma(\mathbf{e})) v_j \rangle L_i L_j,\]
\[ a_{ii}(\mathbf{e}) := \rho^{-2} \Vert (I + \sigma(\mathbf{e})) v_i \Vert_2^2 L_i^2.   \]

Set $Q := D^{10}$. We claim that the conditions $a_{ii}(\mathbf{e}) \geq 32$, $|a_{ij}(\mathbf{e})|$, $|b_i(\mathbf{e})| \leq Q$, $|c(\mathbf{e})| \leq \frac{1}{4}$ and $b_i(\mathbf{e})^2 - 4a_{ii}(\mathbf{e})c(\mathbf{e}) < 0$ of Proposition \ref{main-quad-gap} are automatically satisfied. 

To prove these, recall \eqref{e-operator} that 
\begin{equation}\label{i-norm-recall} \frac{1}{2} \leq  \Vert I + \sigma(\mathbf{e}) \Vert \leq 2,\end{equation}
and observe the bounds
\begin{equation}\label{crude-ell2} \frac{1}{L_i} = \Vert \theta d n_i \Vert_{\T^D} = \Vert v_i \Vert_{\infty} \leq  \Vert v_i \Vert_2 \leq D^{1/2}\Vert v_i \Vert_{\infty} = \frac{D^{1/2}}{L_i}.\end{equation}

Using \eqref{i-norm-recall}, \eqref{crude-ell2} and recalling that $\rho = D^{-4}$ with $D$ large, we have the following. First, 
\[ a_{ii}(\mathbf{e}) \geq \frac{1}{4} \rho^{-2} \Vert v_i \Vert_2^2 L_i^2 > 32;\]  
Second,  by Cauchy-Schwarz
\[ |a_{ij}(\mathbf{e})| \leq 2\rho^{-2}\Vert I + \sigma(\mathbf{e}) \Vert^2 \Vert v_i \Vert_2 \Vert v_j \Vert_2 L_i L_j \leq \frac{8D}{\rho^2} < Q;\] 
Third, 
\[ |b_i(\mathbf{e})| \leq 2 \rho^{-2} \Vert I + \sigma(\mathbf{e}) \Vert^2 \Vert z \Vert_2 \Vert v_i \Vert_2  L_i  < Q;\] 
Fourth, since $\Vert z \Vert_2 \leq \rho/10$, and by \eqref{i-norm-recall}, we have
\[  c(\mathbf{e})  \leq \rho^{-2}\Vert I + \sigma(\mathbf{e}) \Vert^2 \Vert z \Vert^2_2 < \frac{1}{4}.\]
Finally, the discriminant condition $b_i(\mathbf{e})^2 - 4a_{ii}(\mathbf{e})c(\mathbf{e}) < 0$ is automatic from the positive-definiteness of the quadratic form (and is also immediate using Cauchy-Schwarz from the formulae above).

With all the relevant conditions having been verified, we are in a position to apply Proposition \ref{main-quad-gap} with $Q = D^{10}$, with $L$ as selected above (satisfying \eqref{L-bds}), $s \geq D/C_1^2$ by \eqref{s-lower-9} and the lengths $L_1,\dots, L_s \in [L, L^{1+1/48}]$ as given above, and with $B := 2^{15}C_1^2 r$. We must first check that the application is valid by confirming that $L \geq (QB)^{C_2 B}$ and that $s \geq C_1 B^2$. Using the assumption that $N > D^{D^2}$ and recalling that $D = C_3 r^2$, one may check using \eqref{L-bds} and \eqref{s-lower-9} that this is indeed the case, if $C_3$ is big enough.

Thus all of the conditions are satisfied. Now observe that with the choice of $B$ we have made (and in view of the lower bound in \eqref{L-bds}, and recalling that $X = N^{1/r}$) the fact that $B \geq 2^{10} r$ then implies that
\begin{equation}\label{lbn8} L^{-B} \leq N^{-4/D},\end{equation} the importance of the right-hand side here being that this is the width of our ellipsoidal annuli.

Proposition \ref{main-quad-gap} therefore tells us that indeed $q_{a(\mathbf{e}), b(\mathbf{e}), c(\mathbf{e})}(\frac{\ell_1}{L_1}, \dots, \frac{\ell_{s}}{L_{s}})$  takes values in $[1 -N^{-4/D}, 1]$ (that is, \eqref{ann-hit}, and hence \eqref{annulus-cond} hold) provided that $a(\mathbf{e}) := (a_{ij}(\mathbf{e}))_{1 \leq i \leq j \leq s} \in \Sigma$, where $\Sigma$ is the event appearing in Proposition \ref{main-quad-gap}. To complete the proof of Proposition \ref{second-step}, it therefore suffices to show that 
\begin{equation}\label{e-suffices} \P_{\mathbf{e}} (a(\mathbf{e}) \in \neg \Sigma) \ll N^{-3}.\end{equation}
Now, using the fact that $X = N^{1/r}$ together with \eqref{L-bds} and \eqref{s-lower-9}, we see that if $a$ is chosen randomly as in Proposition \ref{main-quad-gap} (that is, uniformly from $[-Q,Q]^{s(s+1)/2}$ with diagonal terms $\geq 32$) then, from the conclusion of Proposition \ref{main-quad-gap}, we have
\begin{equation}\label{a-except} \P_a(a \in \neg \Sigma) \ll L^{- s B/16} \leq N^{-\frac{2^{-8}}{D} \cdot \frac{1}{16} \cdot \frac{D}{C_1^2} \cdot B \cdot \frac{1}{r}} = N^{-8}. \end{equation}

Now, as $\mathbf{e}$ varies uniformly, $a(\mathbf{e})$ may not be close to a uniformly random element of $[-Q, Q]^{s(s+1)/2}$, so \eqref{e-suffices} and \eqref{a-except} are not trivially comparable. To link the two statements we invoke Proposition \ref{measure-compare}, taking the $w_i$ to be the normalised $v_i$'s, as in the conclusion of Proposition \ref{main-geo-num}. Observe that $a(\mathbf{e}) = \psi(f(\mathbf{e}))$, where $f$ is the map in Proposition \ref{measure-compare} and $\psi : \R^{s(s+1)/2} \rightarrow \R^{s(s+1)/2}$ is the diagonal linear map
\[ (\psi(\mathbf{x}))_{ij} = \rho^{-2} \Vert v_i \Vert_2 \Vert v_j \Vert_2 L_i L_j x_{ij}.  \]  
Recall that $\Vert v_i \Vert_{\infty} = L_i^{-1}$, so the determinant of $\psi$ is at least $1$ (in fact, much bigger).

Therefore we have, by Proposition \ref{measure-compare} and Lemma \ref{main-geo-num} (3),
\begin{align*}
\P_{\mathbf{e}}(a(\mathbf{e}) \in \neg\Sigma) & = \P_{\mathbf{e}} ( f(\mathbf{e}) \in \psi^{-1}(\neg \Sigma)) \\ & \leq D^{4D^2} \vol(w_1,\dots, w_s)^{-D - 1} \mu (\psi^{-1}(\neg\Sigma)) \\ & \leq D^{4D^2}\cdot D^{C_1D(D+1)} \cdot \mu(\psi^{-1}(\neg \Sigma)) \\ & \leq D^{4D^2}\cdot D^{C_1D(D+1)} \cdot (2Q)^{s(s+1)/2} \cdot \P_a(a \in \neg \Sigma),
\end{align*}
In the last step we used the fact that the determinant of $\psi^{-1}$ is at most $1$.
Recalling \eqref{a-except}, the fact that $Q = D^{10}$ and that $s \leq D$, we see that this is $< N^{-3}$ provided that $N \geq D^{C_2 D^2}$, if $C_2$ is chosen sufficiently large.
This concludes the proof of Proposition \ref{second-step}.

\end{proof}

\part{Multidimensional structure and geometry of numbers}

\section{Preliminaries on volume}\label{volume-sec}

In this section we recall some basic concepts related to volume.  Given $w_1,\dots, w_m \in \R^n$ (which in our application will always be unit vectors), define 
\[ \vol (w_1,\dots, w_m) :=  \sqrt{ \det G(w_1,\dots, w_m)},\] 
where the \emph{Gram matrix} $G = G(w_1,\dots, w_m)$ has $(i,j)$-entry $\langle w_i, w_j\rangle$. Note that $\langle Gx, x\rangle = \Vert \sum x_i w_i \Vert_2^2$, so $G$ is positive semi-definite and hence $\det G \geq 0$; therefore the square root is well-defined. $G$ is nonsingular if and only if $G$ is positive definite, if and only if the $w_i$ are linearly independent.

As the notation suggests, $\vol(w_1,\dots, w_m)$ should be interpreted as the $m$-dimensional volume of the parallelepiped spanned by $w_1,\dots, w_m$, and indeed it satisfies the intuitive properties one would expect of such a notion. The one we will need is the following (``volume = base times height''), and we include the proof since this is not completely obvious and hard to find a concise reference for.

\begin{lemma}\label{base-times-height}
Let $w_1,\dots, w_m \in \R^n$. Then 
\[ \vol(w_1,\dots, w_m) = \dist(w_m, \Span_{\R} (w_1,\dots, w_{m-1})) \vol(w_1,\dots, w_{m-1}),\] where the distance is in $\ell^2$.
\end{lemma}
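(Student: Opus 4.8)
The plan is to prove the identity ``$\vol = $ base $\times$ height'' by orthogonal decomposition of the last vector. Write $w_m = u + h$, where $u = \operatorname{proj}_{\Span_\R(w_1,\dots,w_{m-1})}(w_m)$ is the orthogonal projection of $w_m$ onto $V := \Span_\R(w_1,\dots,w_{m-1})$, and $h = w_m - u$ is orthogonal to $V$. By definition $\Vert h \Vert_2 = \dist(w_m, V)$, so the claim becomes $\vol(w_1,\dots,w_m) = \Vert h \Vert_2 \cdot \vol(w_1,\dots,w_{m-1})$.

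The key observation is that the Gram determinant is unchanged if we replace $w_m$ by $h$: since $u \in V$ is an $\R$-linear combination of $w_1,\dots,w_{m-1}$, subtracting $u$ from $w_m$ amounts to a column operation (and the matching row operation) on the Gram matrix $G(w_1,\dots,w_m)$ of the form ``add a linear combination of the first $m-1$ rows/columns to the last'', which preserves the determinant. Hence $\det G(w_1,\dots,w_{m-1},w_m) = \det G(w_1,\dots,w_{m-1},h)$. Now in $G(w_1,\dots,w_{m-1},h)$ the last row and column are $\langle w_i, h\rangle = 0$ for $i < m$ and $\langle h, h\rangle = \Vert h\Vert_2^2$ in the corner, so the matrix is block-diagonal and its determinant is $\Vert h \Vert_2^2 \cdot \det G(w_1,\dots,w_{m-1})$. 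Taking square roots (all quantities being non-negative, by the positive-semidefiniteness remark already recorded in this section) gives the result.

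For a fully rigorous treatment I would phrase the column operation via a unipotent change of basis: if $T$ is the $m \times m$ matrix that is the identity except for its last column, which records the coefficients expressing $w_m = u + h$ with $u = \sum_{i<m} c_i w_i$ (so the last column is $(-c_1,\dots,-c_{m-1},1)^{\mathsf T}$ applied on the right), then $G(w_1,\dots,w_{m-1},h) = T^{\mathsf T} G(w_1,\dots,w_m) T$ and $\det T = 1$, giving equality of determinants directly. One should also note the degenerate case: if $w_1,\dots,w_{m-1}$ are linearly dependent then both sides are $0$ (the right-hand factor $\vol(w_1,\dots,w_{m-1})$ vanishes, and so does the left side since $w_1,\dots,w_m$ are then also dependent), so we may assume $w_1,\dots,w_{m-1}$ independent, in which case the projection $u$ is well-defined and unique.

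I do not anticipate a genuine obstacle here; this is a standard linear-algebra fact and the only mild care needed is (i) getting the column/row operation exactly right so that the determinant is manifestly preserved, and (ii) handling the degenerate case so the statement is literally correct as written. The cleanest writeup is the one-line change-of-basis computation $G(w_1,\dots,h) = T^{\mathsf T} G(w_1,\dots,w_m) T$ followed by the block-triangular evaluation of the new Gram determinant.
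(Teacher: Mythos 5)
Your proof is correct. Both you and the paper begin from the same orthogonal decomposition $w_m = u + v$ with $v$ perpendicular to $V = \Span_{\R}(w_1,\dots,w_{m-1})$, but the concluding computations differ: the paper assembles the $m-1$ orthogonality relations $\langle v, w_i\rangle = 0$ together with the Pythagorean relation $\langle w_m, w_m\rangle = \langle u, w_m\rangle + \Vert v\Vert_2^2$ into an $m\times m$ linear system in the unknowns $x_1,\dots,x_{m-1}, \Vert v\Vert_2^2$, then reads off $\Vert v\Vert_2^2 = \det G(w_1,\dots,w_m)/\det G(w_1,\dots,w_{m-1})$ by Cramer's rule; you instead act on the Gram matrix by the unipotent congruence $G \mapsto T^{\mathsf T} G T$ (which preserves the determinant since $\det T = 1$), producing a block-diagonal matrix whose corner block is $\Vert v\Vert_2^2$. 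Your route is arguably the more transparent one — it makes the determinant-preservation and the block structure visible without invoking Cramer's rule — while the paper's version is a compact way to package the same orthogonality relations. Your handling of the degenerate case ($w_1,\dots,w_{m-1}$ dependent) matches the paper's.
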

\begin{proof}
If $w_1,\dots, w_{m-1}$ are linearly dependent then this is clear, so suppose they are not. Let the foot of the perpendicular from $w_m$ to $\Span_{\R}(w_1,\dots, w_{m-1})$ be $x_1 w_1 + \dots + x_{m-1} w_{m-1}$. Then the fact that $v = w_m - x_1 w_1 - \dots - x_{m-1} w_{m-1}$ is orthogonal to $w_1,\dots, w_{m-1}$ gives us $m-1$ linear relations
\begin{equation}\label{nminus1} \langle w_1, w_i\rangle x_1 + \cdots + \langle w_{m-1}, w_i\rangle x_{m-1} = \langle w_m, w_i\rangle,\end{equation} $i = 1,\dots, m-1$.
Writing $y := \Vert v \Vert_2^2$ for the length of the perpendicular, Pythagoras's theorem gives
\begin{equation}\label{pythagoras}  \langle w_1, w_m\rangle x_1 + \dots + \langle w_{m-1}, w_m \rangle x_{m-1} + y = \langle w_m, w_m\rangle.\end{equation}
Combining \eqref{nminus1} and \eqref{pythagoras} into an $m \times m$ system of equations in the variables $x_1,\dots, x_{m-1}, y$, it follows from Cramer's rule that $y = \frac{G(w_1,\dots, w_m)}{G(w_1,\dots, w_{m-1})}$, which is equivalent to the stated result.
\end{proof}

Two immediate consequences of this are the following.

\begin{corollary}\label{volume-hereditary}
Suppose that $w_1,\dots, w_m$ are unit vectors in $\R^n$. Then
\[ \vol(w_1,\dots, w_m) \leq \vol(w_1,\dots, w_{m-1}) \leq \dots \leq 1.\]
\end{corollary}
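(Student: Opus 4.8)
The plan is to deduce Corollary \ref{volume-hereditary} directly from Lemma \ref{base-times-height} by an easy induction on $m$, after making one elementary observation about distances. The base case $m=1$ is immediate: $\vol(w_1) = \sqrt{\langle w_1, w_1\rangle} = \Vert w_1 \Vert_2 = 1$ since $w_1$ is a unit vector, which already gives the rightmost inequality $\vol(w_1) \leq 1$. For the inductive step, suppose the chain of inequalities holds for $m-1$ unit vectors.

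First I would dispose of the degenerate case: if $w_1,\dots,w_{m-1}$ are linearly dependent, then $\vol(w_1,\dots,w_{m-1}) = 0$ (the Gram matrix is singular), and the Gram matrix of $w_1,\dots,w_m$ contains this singular submatrix so it too is singular and positive semi-definite, hence $\vol(w_1,\dots,w_m) = 0$ as well; the claimed inequality $0 \le 0 \le \dots$ is then trivial. So assume $w_1,\dots,w_{m-1}$ are linearly independent. By Lemma \ref{base-times-height},
\[ \vol(w_1,\dots,w_m) = \dist\big(w_m, \Span_{\R}(w_1,\dots,w_{m-1})\big)\cdot \vol(w_1,\dots,w_{m-1}). \]
The key elementary point is that $\dist(w_m, V) \leq \Vert w_m \Vert_2 = 1$ for any subspace $V$ containing $0$, simply because $0 \in V$ so the distance from $w_m$ to $V$ is at most $\Vert w_m - 0\Vert_2 = 1$. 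Therefore $\vol(w_1,\dots,w_m) \leq \vol(w_1,\dots,w_{m-1})$, and combining this with the inductive hypothesis $\vol(w_1,\dots,w_{m-1}) \leq \dots \leq 1$ gives the full chain.

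There is no real obstacle here — this is a routine consequence of the preceding lemma — so the only thing to be careful about is handling the linearly dependent case cleanly (so that Lemma \ref{base-times-height}, whose proof assumed independence of $w_1,\dots,w_{m-1}$, is only invoked when legitimate) and noting explicitly that "distance to a subspace through the origin is at most the norm." One can also phrase the whole thing non-inductively: for each $k$ with $2 \le k \le m$, Lemma \ref{base-times-height} plus $\dist(w_k, \Span(w_1,\dots,w_{k-1})) \le 1$ gives $\vol(w_1,\dots,w_k) \le \vol(w_1,\dots,w_{k-1})$, and chaining these together with $\vol(w_1) = 1$ yields the statement; I would likely present it this way to avoid even the appearance of needing an induction hypothesis.
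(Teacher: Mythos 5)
Your proof is correct and matches the paper's intended argument: the paper simply states the corollary as an "immediate consequence" of Lemma \ref{base-times-height}, and the content you supply---that $\dist(w_k, \Span(w_1,\dots,w_{k-1})) \leq \Vert w_k\Vert_2 = 1$ because the subspace contains $0$, plus the degenerate linearly-dependent case---is exactly the routine filling-in that the paper leaves to the reader.
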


\begin{corollary}\label{lemc4}
Suppose that $w_1,\dots, w_m \in \R^n$ are unit vectors, where $m < n$. Then we may complete this list of vectors list to $w_1,\dots, w_n$ with $\vol(w_1,\dots, w_n) = \vol(w_1,\dots, w_m)$. 
\end{corollary}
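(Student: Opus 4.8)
The plan is to reduce the statement to repeated applications of Lemma \ref{base-times-height}, adding one orthonormal complement vector at a time so that each added vector contributes a ``height'' of exactly $1$ to the volume.

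First I would dispose of the degenerate case. If $w_1,\dots,w_m$ are linearly dependent, then $\vol(w_1,\dots,w_m) = 0$, and any list containing these vectors remains linearly dependent (a superset of a dependent set is dependent), so $\vol(w_1,\dots,w_n) = 0$ for any completion; we may therefore simply take $w_{m+1} = \dots = w_n := w_1$. Henceforth assume $w_1,\dots,w_m$ are linearly independent, so that $V := \Span_{\R}(w_1,\dots,w_m)$ has dimension $m < n$.

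Since $\dim V^{\perp} = n - m \geq 1$, choose (by Gram--Schmidt) an orthonormal basis $w_{m+1},\dots,w_n$ of $V^{\perp}$. Each $w_k$ with $m < k \leq n$ is a unit vector orthogonal to $V$ and to $w_{m+1},\dots,w_{k-1}$, hence orthogonal to $\Span_{\R}(w_1,\dots,w_{k-1})$, so
\[ \dist(w_k, \Span_{\R}(w_1,\dots,w_{k-1})) = \Vert w_k \Vert_2 = 1.\]
Applying Lemma \ref{base-times-height} successively for $k = m+1, m+2, \dots, n$ gives
\[ \vol(w_1,\dots,w_k) = \dist(w_k, \Span_{\R}(w_1,\dots,w_{k-1})) \cdot \vol(w_1,\dots,w_{k-1}) = \vol(w_1,\dots,w_{k-1}),\]
and chaining these equalities from $k = m+1$ up to $k = n$ yields $\vol(w_1,\dots,w_n) = \vol(w_1,\dots,w_m)$, as desired.

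There is essentially no serious obstacle here; the only points needing a moment's care are the linearly dependent case (where Lemma \ref{base-times-height} is awkward to invoke directly and one argues separately via $\vol = 0$) and the observation that a unit vector orthogonal to a subspace realises distance exactly $1$ to it — immediate since $0$ lies in the subspace and the vector has norm $1$.
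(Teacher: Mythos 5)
Your proof is correct and matches the paper's approach: the paper's entire proof is ``Choose $w_{m+1},\dots,w_n$ to be orthogonal to one another and to $w_1,\dots,w_m$,'' which is exactly what you do (via Gram--Schmidt on $V^\perp$), with the volume identity then following from repeated use of Lemma~\ref{base-times-height}. Your separate handling of the degenerate case is careful but unnecessary, since the orthogonal-complement construction and the base-times-height argument both go through regardless of whether $w_1,\dots,w_m$ are independent.
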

\begin{proof} Choose $w_{m+1},\dots, w_n$ to be orthogonal to one another and to $w_1,\dots, w_m$. \end{proof}

The other consequence of Lemma \ref{base-times-height} that we will require is the following dichotomy, for which I do not know a reference.

\begin{lemma}\label{volume-dichotomy}
Let $w_1,\dots w_m \in \R^n$ be unit vectors, and let $k \leq m$. Then, after reordering the $w_i$, at least one of the following statements is true:
\begin{enumerate}
\item $\vol(w_1,\dots, w_{k+1}) \geq \delta$;
\item If $V = \Span(w_1,\dots, w_k)$ then $\dist (w_i, V) \leq \delta^{1/k}$ for all $i \in \{1,\dots, m\}$.
\end{enumerate}
\end{lemma}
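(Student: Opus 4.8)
The natural approach is a greedy/extremal one: we try to build up a large volume by repeatedly adjoining the vector furthest from the current span, and we stop as soon as the next such vector is close to the current span. Concretely, I would reorder the $w_i$ by the following procedure. Having chosen $w_1,\dots,w_j$ (starting from any $w_1$), let $V_j := \Span(w_1,\dots,w_j)$ and pick $w_{j+1}$ among the remaining vectors to maximise $\dist(w_{j+1},V_j)$. Run this for $j = 1,\dots,k$, producing $w_1,\dots,w_k$ and finally $w_{k+1}$ as the remaining vector furthest from $V := V_k$. Let $h := \dist(w_{k+1},V)$ be that maximal distance. Then by the maximality in the last step, \emph{every} remaining $w_i$ (and trivially every $w_i \in \{w_1,\dots,w_k\}$, which has distance $0$) satisfies $\dist(w_i,V)\leq h$. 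So either $h \leq \delta^{1/k}$, and we are in case (2), or $h > \delta^{1/k}$, and we want to deduce case (1).

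For the second alternative, the key point is monotonicity of the successive heights along the greedy construction. I claim $\dist(w_{j+1},V_j)$ is non-increasing in $j$. This is because $V_{j-1}\subseteq V_j$, so $\dist(w_{j+1},V_j)\leq \dist(w_{j+1},V_{j-1})$, and $w_{j+1}$ was \emph{not} chosen to maximise distance to $V_{j-1}$ — $w_j$ was — hence $\dist(w_{j+1},V_{j-1})\leq \dist(w_j,V_{j-1})$. Combining, $\dist(w_{j+1},V_j)\leq \dist(w_j,V_{j-1})$, which is the claimed monotonicity. Consequently each of the $k$ heights $\dist(w_2,V_1),\dots,\dist(w_{k+1},V_k)$ is at least the last one, $h > \delta^{1/k}$. (One should also note $\dist(w_1, V_0) = \Vert w_1\Vert_2 = 1 \geq h$, consistent with the chain; the relevant heights are those with indices $1$ through $k$.)

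Now invoke Lemma \ref{base-times-height} repeatedly (``volume = base times height''):
\[ \vol(w_1,\dots,w_{k+1}) = \prod_{j=1}^{k} \dist(w_{j+1}, V_j) \cdot \vol(w_1) = \prod_{j=1}^{k}\dist(w_{j+1},V_j) > (\delta^{1/k})^k = \delta, \]
using $\vol(w_1) = \Vert w_1\Vert_2 = 1$. This gives case (1). So in all cases, after the reordering produced by the greedy procedure, at least one of (1), (2) holds.

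The only subtlety — and the step I would be most careful about — is the direction of the inequality in the monotonicity argument: one must be sure that it is \emph{distance to the previous span} that is being compared, i.e. that the greedy choice at stage $j$ genuinely forces $\dist(w_{j+1},V_{j-1})\leq\dist(w_j,V_{j-1})$ because $w_j$ was the maximiser of exactly that quantity. As long as the greedy rule is stated as ``maximise distance to the span of everything chosen so far,'' this is immediate, and the rest is a clean telescoping via Lemma \ref{base-times-height}. There is no real obstacle here; it is a short extremal argument.
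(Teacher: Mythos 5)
Your proof is correct and in the same spirit as the paper's: both iteratively build up $w_1,\dots,w_{k+1}$ via Lemma \ref{base-times-height}. The mechanics differ slightly. The paper does not commit to the greedy rule; instead, at stage $i$ it checks whether \emph{every} remaining vector is within $\delta^{1/k}$ of $\Span(w_1,\dots,w_i)$, stopping in case (2) if so, and otherwise appending any vector at distance greater than $\delta^{1/k}$. Each step therefore contributes a factor exceeding $\delta^{1/k}$ directly, so no monotonicity claim is needed. Your version greedily picks the furthest vector at every stage and then invokes the observation that the resulting sequence of heights $\dist(w_{j+1},V_j)$ is non-increasing in $j$ --- a correct and pleasant structural fact about greedy constructions that is not made explicit in the paper --- so the final height bounds all the earlier ones from below. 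The trade is a small extra argument for a cleaner final telescoping; both are valid. One minor detail worth noting: if $m=k$ there is no $w_{k+1}$ to select, but then every $w_i$ already lies in $V=\Span(w_1,\dots,w_k)$ and case (2) holds vacuously, so this edge case is harmless.
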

\begin{proof}
We perform the following algorithm for as long as possible. Start, at stage 1, with $w_1$. At the $i$th stage, we will have (after reordering) $w_1,\dots, w_i$. If \[ \dist(w_j, \Span(w_1,\dots, w_i)) \leq \delta^{1/k}\]  for all $j \in \{1,\dots,m\}$ then stop. If this happens for some $i \leq k$ then we have (2). Otherwise, we may reorder $w_{i+1},\dots, w_m$ so that $\dist(w_{i+1},\Span(w_1,\dots, w_i)) > \delta^{1/k}$. By Lemma \ref{base-times-height} (and a simple induction) we have $\vol(w_1,\dots, w_{i+1}) > \delta^{i/k}$. If this continues as far as $i = k$ then we have (1).
\end{proof}

\section{Non-concentration on subspaces}\label{slab-avoid-sec}

In this section we establish a key technical result, Lemma \ref{slab-avoid} below, which says that orbits $\{ \theta d n : n \leq X\}$ with $\theta$ diophantine cannot concentrate too much near low-dimensional subspaces (or, more accurately, the image under $\pi : \R^D \rightarrow \T^D$ of small balls in such subspaces). Here, we use $\mathcal{N}_{\delta}(U)$ to denote the $\delta$-neighbourhood (in the Euclidean metric) of a set $U \subset \R^D$, and as usual $B_{\delta}(0)$ denotes the Euclidean ball about $0 \in \R^D$. Write $\theta d [X] = \{ \theta d n : n \leq X\}$ for short.

\begin{lemma}\label{slab-avoid}
Let $r$ be sufficiently large, suppose that $D = C_3 r^2$, and assume that $N \geq D^{C_2 D^2}$. Let $\eps \in (\frac{1}{r},1)$. Let $\theta \in \T^D$ be diophantine, let $d \leq N/X$, and let $V \leq \R^D$ be a subspace of dimension at most $D(1 - \eps)$. Then for all $d \leq N/X$ we have\begin{equation}\label{key-bd} 
\# \{ \theta d [X] \cap \pi(\mathcal{N}_{D^{-C_1}}(V) \cap B_{1/10}(0))\} \leq 2D^{( 1-\eps C_1/2)D} X.
\end{equation}
\end{lemma}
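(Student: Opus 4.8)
The plan is to bound $|S|$, where $S := \{ n \le X : \theta d n \in \pi(\mathcal N_{D^{-C_1}}(V) \cap B_{1/10}(0))\}$ and $m := \dim V \le D(1-\eps)$. First dispose of the easy regime: if $\eps \le 2/C_1$ then $1 - \eps C_1/2 \ge 0$, so the claimed bound is already at least $X \ge |S|$. Hence we may assume $\eps > 2/C_1$; the point worth stressing is that this forces $\eps$ to be bounded below by a fixed positive constant, and it is exactly this constant-sized slack in the exponent that the rest of the argument will spend.

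The first substantive step is a reduction to counting in a coordinate-aligned Bohr set. I would cover $\mathcal N_{D^{-C_1}}(V)\cap B_{1/10}(0)$ by at most $(D^{C_1+1})^m$ translates $t_j + T_0$, the $t_j$ running over a grid of spacing $\sim D^{-C_1}$ in $V \cap B_{1/10}(0)$ and $T_0 := \mathcal N_{2\sqrt D\, D^{-C_1}}(V) \cap B_{2\sqrt D\, D^{-C_1}}(0)$. Discarding the $t_j$ whose image misses the orbit $\theta d[X]$ and fixing, for each survivor, some $n_j$ with $\theta d n_j \in \pi(t_j + T_0)$, one finds that any $n \in S$ with $\theta d n \in \pi(t_j + T_0)$ satisfies $\theta d(n - n_j) \in \pi(T_0 - T_0)$, hence $\|\theta d (n-n_j)\|_{\T^D} \le \delta$ with $\delta := 8\sqrt D\, D^{-C_1} \le D^{1-C_1}$. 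As distinct $n$ give distinct $n - n_j \in [-X, X]$, this yields
\[ |S| \le (D^{C_1+1})^m\big(1 + 2 B(\delta)\big), \qquad B(\delta) := \#\{\ell \le X : \|\theta d \ell\|_{\T^D} \le \delta\}. \]

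The second step bounds $B(\delta)$, and here the diophantine hypothesis enters. Since $\delta \le D^{1-C_1}$, the set $\{x : \|x\|_{\T^D} \le \delta\}$ is a product of arcs, so it is majorised by a product $\chi = \prod_i \chi_0$ of one-dimensional Fej\'er-type kernels with $\hat\chi \ge 0$ supported on $|\xi| \le D^{C_1}$ — which lies well inside $|\xi| \le D^{C_2}$ because $C_2 \gg C_1$ — and with $\int \chi \le (10\delta)^D$. Expanding $B(\delta) \le \sum_{\ell \le X}\chi(\theta d \ell) = \sum_\xi \hat\chi(\xi)\sum_{\ell \le X} e((\xi\cdot\theta d)\ell)$ and using $\big|\sum_{\ell \le X}e(\beta \ell)\big| \le \min(X, \|\beta\|_{\T}^{-1})$, the frequency $\xi = 0$ contributes the expected main term $(\int\chi)X \le (10\delta)^D X$. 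Among the nonzero $\xi$ with $|\xi| \le D^{C_1}$, those with $\|\xi\cdot\theta d\|_{\T} > D^{C_2 D}X^{-1}$ contribute at most $(3D^{C_1})^D \cdot \hat\chi(0)\cdot X D^{-C_2 D} \le 3^D(\int\chi)X$, while for those with $\|\xi\cdot\theta d\|_{\T} \le D^{C_2 D}X^{-1}$ I would invoke Proposition \ref{prop31}(1) with the multiplier $n = d$ (legitimate since $d \le N$), which says they span a rational subspace of dimension $< 4r$ and hence number at most $(3D^{C_1})^{4r} = D^{O(r)}$; each contributes at most $\hat\chi(0)X$. Summing, $B(\delta) \le D^{O(r)}(10\delta)^D X$, so that (using $m \le D(1-\eps)$ and $\delta \le D^{1-C_1}$)
\[ |S| \le C^D\, D^{O(r)}\, D^{(C_1+1)D(1-\eps)}\, D^{(1-C_1)D} X = C^D\, D^{\,2D - (C_1+1)\eps D + O(r)} X. \]
Because $\eps > 2/C_1$ is bounded below by an absolute constant, whereas $\log_D C$ and $O(r)/D$ tend to $0$ as $r$ (hence $D$) grows, a comparison of exponents gives $|S| \le 2 D^{(1-\eps C_1/2)D}X$.

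The work — and the only genuine obstacle — is quantitative bookkeeping: one must check that the slab thickness $D^{-C_1}$ is simultaneously coarse enough that the Bohr majorant above has Fourier support inside the range $|\xi| \le D^{C_2}$ controlled by Proposition \ref{prop31}(1), and fine enough that the covering in the first step costs only the factor $D^{(C_1+1)D(1-\eps)}$, which the $D^{\eps C_1 D/2}$ of slack in the target can absorb. Both hold precisely because of the constant hierarchy $C_1 \ll C_2$ and because the statement is vacuous unless $\eps$ exceeds the fixed constant $2/C_1$; carrying the many constants through these estimates is the substance of the proof. (Condition (2) of the diophantine property appears not to be needed for this particular argument, which is consistent with the later remark that condition (1) alone yields some fixed exponent of $r$.)
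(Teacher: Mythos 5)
Your proof is correct, but it follows a genuinely different route from the paper's. The paper builds a \emph{rotated slab-shaped} majorant $\chi(x) = \prod_i \psi(\delta^{-1_{i\leq m}}\langle x, v_i\rangle)$ in an orthonormal basis adapted to $V$, applies Poisson summation, and then counts integer frequencies directly using Proposition~\ref{prop31}(1) together with Lemma~\ref{lattice-subspace}. You instead first \emph{cover} the slab by $\sim D^{(C_1+1)\dim V}$ small balls, reducing to a single Bohr-set count $B(\delta)$, and only then majorize by a \emph{coordinate-aligned} product of Fej\'er kernels. The covering step is wasteful — roughly by $D^{(1-\eps)D + D/2}$ relative to the slab majorant — and this is exactly why your argument needs $\eps$ bounded below by a constant, whereas the paper's handles all $\eps\in(1/r,1)$ uniformly. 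Your opening observation that the lemma is vacuous for $\eps\leq 2/C_1$ (since then $1-\eps C_1/2\geq 0$ and the right side exceeds $X$) is what makes the lossier route viable, and it is a clean way to dodge the issue.

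Two things deserve flagging. First, your count ``dimension $<4r$ hence at most $(3D^{C_1})^{4r}$ lattice points'' is asserted without justification. It is true, but it needs an argument: a rank-$m'$ sublattice $\Lambda \leq \Z^D$ admits a coordinate projection to $\R^{m'}$ that is injective on $\Span\Lambda$ and maps $\Lambda$ into $\Z^{m'}$, so $\#(\Lambda\cap[-Q,Q]^D)\leq (2Q+1)^{m'}$. This is not the same as the paper's Lemma~\ref{lattice-subspace}, which proceeds by a volume-packing argument and gives the weaker $20^D m^{D/2} Q^m$. Notably, you \emph{need} the sharper form: plugging the paper's bound into your scheme would introduce a $(4r)^{D/2}\approx D^{D/4}$ loss, which is not absorbed by the $\eps(C_1/2+1)-1 > 2/C_1$ slack once $C_1$ is large, leaving the range $2/C_1 < \eps \lesssim 5/(2C_1)$ uncovered. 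So the coordinate-projection count is essential to your proof and should be written out. Second, your bookkeeping of $B(\delta)$ — the middle contribution is $\leq 3^D(\int\chi)X$, not subsumed into $D^{O(r)}$, but you recover this correctly by slipping in the $C^D$ factor on the next line, and it is absorbed in the end. (Your remark that only condition (1) of the diophantine property is used matches the paper's proof, which also uses only Proposition~\ref{prop31}(1) here.)
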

\begin{proof}
We may assume, by adding elements if necessary, that $\dim V = \lfloor (1 - \eps) D\rfloor$. Set $m := \lceil\eps D\rceil$, $\delta := D^{-C_1}$ and $S := \mathcal{N}_{D^{-C_1}}(V) \cap B_{1/10}(0)$. Let $(v_i)_{i = 1}^D$ be an orthonormal basis for $\R^D$ with $V = \Span_{\R}(v_{m+1},\dots, v_D)$. Let $\psi : \R \rightarrow \R$ be a smooth cutoff satisfying the following conditions:
\begin{enumerate}
\item $\psi \geq 0$ everywhere, and $\psi(x) \geq 1$ for $|x| \leq 1$;
\item $\hat{\psi}(y) = 0$ for $|y| \geq 1$;
\item $\int \psi \leq 5$.
\end{enumerate}
For the proof that such a function exists, see Lemma \ref{band-limited-r}. Now define $\chi : \R^D \rightarrow [0,\infty)$ by
\begin{equation}\label{bump-slab}  \chi(x) = \prod_{i = 1}^{D} \psi(\delta^{-1_{i \leq m}}\langle x, v_i\rangle),\end{equation} where the notation means that the scale factor $\delta^{-1}$ is included only for $i \leq m$ (that is, in the directions orthogonal to $V$), and is otherwise set equal to 1. If $x \in \mathcal{N}_{\delta}(V)$ and $i \leq m$ then \[ |\langle x, v_i \rangle| \leq \big(\sum_{i = 1}^m |\langle x, v_i \rangle|^2\big)^{1/2} = \dist(x, V) \leq \delta,\] and if $x \in B_{1/10}(0)$ then $|\langle x, v_i \rangle| < 1$ for all $i$. It follows from these observations and property (1) of $\psi$ that $\chi(x) \geq 1$ for $x \in S$, and therefore
\begin{equation}\label{majorant} \# \big( \theta d[X] \cap \pi(S) \big)  \leq \sum_{n \leq X} \sum_{\lambda \in \Z^D} \chi(\theta d n + \lambda) .\end{equation}
The Poisson summation formula tells us that for any $t \in \T^D$ we have
\[ \sum_{\lambda \in \Z^D} \chi(\lambda + t) = \sum_{\xi \in \Z^D} \hat{\chi}(\xi) e( \xi \cdot t).\] Substituting into \eqref{majorant} therefore implies that
\begin{equation}\label{poisson-sum}
\# \big( \theta d [X] \cap \pi(S) \big)  \leq \sum_{\xi \in \Z^D} \hat{\chi}(\xi) \sum_{n \leq X} e( \xi \cdot \theta d  n).
\end{equation}
To proceed further, we need to understand the Fourier transform $\hat{\chi}$, particularly at points of $\Z^D$. First,  it follows from property (3) of $\psi$ above that 
\begin{equation}\label{chi-bd} \Vert \hat{\chi} \Vert_{\infty} \leq \int \chi = \delta^m (\int \psi)^D \leq 5^D \delta^{m}.  \end{equation} 
Next, expanding in the orthonormal basis $(v_j)_{j = 1}^D$ then changing variables we have \begin{align*} \hat{\chi}(\gamma)  & = \int_{\R^D} \prod_{j = 1}^D \psi (\delta^{-1_{j \leq m}} \langle x , v_j\rangle) e(-\langle x, v_j\rangle \langle \gamma, v_j \rangle) dx 
\\ &   = \int_{\R^D} \prod_{j = 1}^D \psi(\delta^{-1_{j \leq m}} t_j) e(-t_j \langle \gamma ,v_j \rangle) dt \\ & = \delta^{m} \prod_{j = 1}^D \hat{\psi} (\delta^{1_{j \leq m}} \langle \gamma , v_j\rangle).  \end{align*}
Therefore from property (2) of $\psi$ we see that 
\begin{equation}\label{support} \Supp(\hat{\chi}) \subset B := \{ \gamma \in \R^D : | \langle \gamma , v_j \rangle | \leq  \delta^{-1_{j \leq m}} \; \mbox{for all $j$} \}.\end{equation}
First note that this implies (rather crudely) that if $\hat{\chi}(\xi) \neq 0$ for some $\xi \in \Z^D$ then
\begin{equation}\label{xi-bd} |\xi|^2 < \Vert  \xi \Vert_2^2 = \sum_{i} |\langle \xi, v_i \rangle|^2 \leq \frac{D}{\delta^2} < (\frac{D}{\delta})^2.\end{equation}
Second, to analyse \eqref{poisson-sum} we need a bound on $\#(\Supp (\hat{\chi}) \cap  \Z^D)$. To get such a bound, note that if $\gamma \in \Supp(\chi)$ and if $u \in [0,1]^D$ then by \eqref{support} (and since $D$ is big)
\[ |\langle \gamma + u, v_j \rangle| \leq  \delta^{-1_{j \leq m}} + |\langle  u , v_j \rangle | \leq \frac{D}{10} \delta^{-1_{j \leq m}}\] for all $j$. Therefore the disjoint union of cubes $(\Supp(\hat{\chi}) \cap \Z^D) + [0,1)^D$ is contained in the cuboid
\[ \{ x \in \R^D : | \langle x , v_j \rangle | \leq \frac{D}{10} \delta^{-1_{j \leq m}} \; \mbox{for all $j$}\},\] which has volume $(D/5)^D \delta^{-m}$.
It follows that $\# (\Supp(\chi) \cap \Z^D) \leq (D/5)^D \delta^{-m}$, and so by \eqref{chi-bd}
\begin{equation}\label{xi-integer-support} \sum_{\xi \in \Z^D} |\hat{\chi}( \xi)| \leq D^D. \end{equation}

Let us return to the main task of estimating the right-hand side of \eqref{poisson-sum}. Summing the geometric series on the right of \eqref{poisson-sum}, we have
\begin{equation}\label{poisson-follow} \# \big( \theta d [X] \cap \pi(S) \big)  \leq \sum_{\xi \in \Z^D} |\hat{\chi}(\xi)| \min(X, \Vert \xi \cdot \theta d \Vert_{\T}^{-1}).\end{equation}
Now by \eqref{xi-integer-support}, the contribution from $\xi$ with $\Vert \xi \cdot \theta d \Vert_{\T} > D^D \delta^{-m} X^{-1}$ is at most $\delta^{m}X \leq D^{-\eps C_1D}X$. It therefore follows from \eqref{chi-bd}, \eqref{xi-bd} and \eqref{poisson-follow} that 
\begin{equation}\label{poisson-follow-more} \# \big( \theta d [X] \cap \pi(S) \big) \leq 5^D \delta^{m} X \# \Omega+ D^{-\eps C_1D}X.\end{equation}
where
\begin{equation}\label{omega-def} \Omega := \{ \xi \in \Z^D: |\xi| < D/\delta,  \Vert \xi \cdot \theta d \Vert_{\T}  \leq D^D \delta^{-m} X^{-1}\} .\end{equation}
Now since $\theta$ is diophantine, it follows from Proposition \ref{prop31} (1) (the definition of diophantine) that $\dim \Omega < 4r$, assuming that $C_2 \geq C_1 + 1$.

It follows from Lemma \ref{lattice-subspace} that
\begin{equation}\label{bd-num-pts} \# \Omega \leq 20^D (4r)^{D/2}(D/\delta)^{4r}.\end{equation}

To conclude the proof, we combine \eqref{poisson-follow-more} and \eqref{bd-num-pts} and bound the resulting terms crudely. Since $D = C_3 r^2$ and $r$ is large, crude bounds for the terms in \eqref{bd-num-pts} show that
\[ \# \big( \theta d [X] \cap \pi(S)\big) \leq D^D \delta^{m-4r} X + D^{-\eps C_1D}X.\] Using the assumption that $m \geq 8r$, the first term is bounded above by $D^D\delta^{m/2}X \leq D^{1-C_1\eps D/2}X$. The proposition follows.\end{proof}

\section{Geometry of numbers}\label{geom-num}

Set $X := N^{1/r}$ as usual, with $r$ sufficiently large. Let $d \leq N/X$; for the rest of the section we regard $d$ as fixed and do not explicitly indicate the dependence of various objects (lengths $L_i, L'_i$, vectors $v_i, w_i$ and so on) on $d$.
Our aim in this section is to prove Proposition \ref{main-geo-num}, whose statement was as follows.

\begin{main-geo-num-rpt}
Suppose that $N \geq D^{C_2 D^2}$, let $d \leq N/X$, and suppose that $\theta \in \Theta$ is diophantine. Then there are positive integers $n_1,\dots, n_s$, $s \leq D$, such that if we write $L_i := \Vert \theta d n_i \Vert_{\T^D}^{-1}$ then\begin{enumerate}
\item $\prod_{i = 1}^s L_i \geq X^{1/80}$;
\item $\sum_{i = 1}^s n_i L_i \leq X/2$;
\item if we set $v_i := \pi^{-1}(\theta d n_i) \in \R^D$ and $w_i :=v_i/\Vert v_i \Vert_2$, the unit vector in the direction of $v_i$, then $\vol (w_1,\dots, w_s) \geq D^{-C_1 D}$;
\item $L_i \leq X^{C_1/D}$ for all $i$.
\end{enumerate}
\end{main-geo-num-rpt}

Just to reiterate: $s$, the $v_i$, the $n_i$ and the $L_i$ will all depend on $d$, as well as on $\theta$ which should be thought of as fixed.

The proof of Proposition \ref{main-geo-num} is somewhat lengthy. We begin by establishing a preliminary statement, Lemma \ref{lem71}, featuring related (but weaker) statements, but which does not require any diophantine assumption on $\theta$. This statement is essentially the principle, well-known in additive combinatorics, that ``Bohr sets contain large generalised progressions''. Usually in the literature this is given for Bohr sets in $\Z/p\Z$, whereas we require it in $\Z$, which requires a minor tweak to the proof and causes the dimension of the resulting progressions to be greater by one than in the cyclic group case.

\begin{lemma}\label{lem71}
There are positive integers $n_1,\dots, n_{D+1}$ and also lengths $L'_1,\dots, L'_{D+1}$ such that the following hold:\begin{enumerate}
\item The elements $\sum_{i=1}^{D+1} \ell_i n_i$, $\ell_i \in \Z$, $0 \leq \ell_i < L'_i$, are all distinct and at most $D^{-D} X$;
\item $\Vert \theta d n_i \Vert _{\T^D} \leq 1/L'_i$ for all $i$;
\item $\prod_{i=1}^{D+1} L'_i \geq D^{-3D} X$.
\end{enumerate}
\end{lemma}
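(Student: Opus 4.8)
The statement is the classical principle that Bohr sets contain large generalised arithmetic progressions, proved via Minkowski's second theorem; the only twist is that we work in $\Z$ rather than in a cyclic group, which is precisely what forces the progression dimension to be $D+1$ rather than $D$. No diophantine hypothesis on $\theta$ is needed.

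The plan is as follows. Put $\alpha := \pi^{-1}(\theta d)\in(-\tfrac12,\tfrac12]^D$ and $Y := D^{-D}X$, and let $\Lambda\subset\R^{D+1}$ be the lattice generated by $u_0 := (Y^{-1},\alpha_1,\dots,\alpha_D)$ together with $u_j := (0,\dots,0,1,0,\dots,0)$ (with the $1$ in coordinate $j+1$) for $1\le j\le D$; its covolume is $Y^{-1}$. A lattice vector $b = m_0 u_0 + \sum_j m_j u_j$ has first coordinate $m_0/Y$ and remaining coordinates $m_0\alpha_j + m_j$, so if $\Vert b\Vert_\infty$ is small then $n := |m_0|$ satisfies $n\le Y\Vert b\Vert_\infty$ and $\Vert\theta d n\Vert_{\T^D} = \max_j\Vert n\alpha_j\Vert_{\T}\le\Vert b\Vert_\infty$. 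I would apply Minkowski's second theorem to $\Lambda$ and the cube $K := [-1,1]^{D+1}$: with successive minima $\lambda_1\le\dots\le\lambda_{D+1}$ there are linearly independent $b_1,\dots,b_{D+1}\in\Lambda$ with $\Vert b_i\Vert_\infty\le\lambda_i$ and $\prod_{i}\lambda_i\le 2^{D+1}(\det\Lambda)/\vol(K)=Y^{-1}$. From each $b_i$ (replacing $b_i$ by $-b_i$ if needed) read off a non-negative integer $n_i$ with $n_i\le Y\lambda_i$ and $\Vert\theta d n_i\Vert_{\T^D}\le\lambda_i$, and set $L'_i := \max\!\bigl(1,\lfloor(2(D+1)\lambda_i)^{-1}\rfloor\bigr)$.

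Two points need care. First, a $b_i$ with $n_i=0$ lies in the integer sublattice $\{0\}\times\Z^D$, hence $\Vert b_i\Vert_\infty\ge 1$, so $\lambda_i\ge 1$ and $L'_i=1$; for such $i$ I reset $n_i:=1$, which is harmless since $L'_i=1$ forces the corresponding box-coordinate $\ell_i$ to vanish. These degenerate directions are the only real subtlety, and are what the ``$+1$ in the dimension'' remark in the preamble alludes to. Property (2) is then immediate: if $L'_i\ge 2$ then $1/L'_i\ge 2(D+1)\lambda_i\ge\lambda_i\ge\Vert\theta d n_i\Vert_{\T^D}$, and if $L'_i=1$ then $\Vert\theta d n_i\Vert_{\T^D}\le\tfrac12\le 1/L'_i$. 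Second, and more substantially, property (1): suppose $\sum_i m_i n_i=0$ with $|m_i|<L'_i$. Then $v := \sum_i m_i b_i\in\Lambda$ has first coordinate $Y^{-1}\sum_i m_i n_i=0$, so $v\in\{0\}\times\Z^D$, while $\Vert v\Vert_\infty\le\sum_{i\,:\,L'_i\ge 2}|m_i|\lambda_i < (D+1)\cdot\tfrac{1}{2(D+1)} = \tfrac12$; an integer vector of $\ell^\infty$-norm $<1$ is zero, so $v=0$, and then $m_i=0$ for all $i$ by linear independence of the $b_i$. This gives the distinctness in (1), and the bound $n_i\le Y\lambda_i$ also shows every box element $\sum_i\ell_i n_i$ is $<\sum_{i\,:\,L'_i\ge2}L'_i n_i\le Y/2<D^{-D}X$, which is the size claim in (1).

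For property (3) I would split the bound $\prod_i\lambda_i\le Y^{-1}$ according to whether $\lambda_i<\tfrac{1}{2(D+1)}$ or not: on the first set $L'_i\ge\tfrac{1}{4(D+1)\lambda_i}$ (using $\lfloor x\rfloor\ge x/2$ for $x\ge1$), while on the second $\lambda_i\ge\tfrac{1}{2(D+1)}$; combining these with $\prod_i\lambda_i\le Y^{-1}$ gives $\prod_i L'_i\ge Y\,(4(D+1))^{-(D+1)} = D^{-D}X\,(4(D+1))^{-(D+1)}$, which exceeds $D^{-3D}X$ once $D$ is large. The main obstacle is genuinely just the bookkeeping: one must tune the single free parameter $Y$ (here $D^{-D}X$) so that the box stays inside $[\,0,D^{-D}X\,)$ while the product of the $L'_i$ still retains a factor $\gtrsim X$, all the while preventing the degenerate $n_i=0$ directions from eroding either estimate — and the choices above thread exactly this needle.
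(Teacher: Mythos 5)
Your proof is correct and takes essentially the same route as the paper: apply Minkowski's second theorem to the lattice generated by $(\cdot,\alpha)$ and $\{0\}\times\Z^D$ in $\R^{D+1}$, read off $n_i$ and $L'_i$ from a directional basis, and verify (1)--(3) by the standard bookkeeping (your choice of a cube $K$ with a rescaled lattice is equivalent to the paper's choice of a rectangular box with the unscaled lattice, and your integer $L'_i$ versus the paper's real $L'_i=\frac{1}{(D+1)\lambda_i}$ are interchangeable). If anything you are slightly more careful than the paper on one point: the paper only arranges $n_i\ge 0$ and leaves implicit that a degenerate $b_i\in\{0\}\times\Z^D$ forces $\lambda_i$ to be large and hence $L'_i<1$, so that direction is invisible; your explicit reset $n_i:=1$ makes the "positive integers $n_i$" claim literally true.
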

\emph{Remark.} Write $\alpha := \theta d$ throughout the proof. The $n_i$ appearing in Proposition \ref{main-geo-num} will be a subset of the ones appearing here, but the lengths $L_i$ appearing there will be modified versions of the $L'_i$. That is why we have put dashes on these lengths.
\begin{proof} In $\R \times \R^D$, consider the lattice
\[ \Lambda = \Z (\frac{1}{X}, \alpha_1,\dots, \alpha_D) \oplus (\{0\} \times \Z^D)\] and the centrally-symmetric convex body
\[ K := [-D^{-D}, D^{-D}] \times [-\frac{1}{2}, \frac{1}{2}]^D.\] We have $\vol (K) = 2D^{-D}$ and $\det (\Lambda) = \frac{1}{X}$, so Minkowski's second theorem tells us that the successive minima $\lambda_1,\dots, \lambda_{D+1}$ for $K$ with respect to $\Lambda$ satisfy
\begin{equation}\label{minkowski-ii}  \lambda_1 \cdots \lambda_{D+1} \leq \frac{2^{D+1} \det(\Lambda)}{\vol(K)} = (2D)^D \frac{1}{X}. \end{equation} Consider a directional basis $\b_1,\dots, \b_{D+1}$ for $\Lambda$ with respect to $K$. Thus $\b_i \in \Lambda$, the $\b_i$ are linearly independent and $\b_i \in \lambda_i K$ (but the $\b_i$ need not be an integral basis for $\Lambda$). Write
\[ \b_i = (\frac{n_i}{X}, n_i \alpha - m_{i}),\] where $n_i \in \Z$ and $m_i \in \Z^D$. Replacing $\b_i$ by $-\b_i$ if necessary, we may assume that $n_i \geq 0$. We take these $n_i$s to be the ones in Lemma \ref{lem71}. Set
\[ L'_i := \frac{1}{(D+1) \lambda_i}.\]
We now verify statements (1), (2) and (3) in the lemma. Item (3) is a straightforward consequence of the definition of the $L'_i$ and \eqref{minkowski-ii}:
\[ \prod_{i = 1}^{D+1} L'_i  = (D+1)^{-D - 1} \big(\prod_{i=1}^{D+1} \lambda_i \big)^{-1} \geq (D+1)^{-D-1} (2D)^{-D} X > D^{-3D} X.\]
Item (2) follows from the fact that $\b_i \in \lambda_i K$; looking at the last $D$ coordinates, one sees that this means that $\Vert n_i \alpha \Vert_{\T^D} \leq \lambda_i/2 < 1/L'_i$.
Finally we turn to (1). We have
\begin{equation}\label{ell-bee} \ell_1 \b_1 + \dots + \ell_{D+1} \b_{D+1} = \big(\frac{1}{X} \sum_{i=1}^{D+1} \ell_i n_i , \sum_{i=1}^{D+1} \ell_i (n_i \alpha - m_i)\big).\end{equation}
Since $\b_i \in \lambda_i K$, comparing first coordinates we see that if $0 \leq \ell_i < L'_i$ then
\[ 0 \leq \frac{1}{X} \sum_{i=1}^{D+1} \ell_i n_i \leq D^{-D} \sum_{i=1}^{D+1} \ell_i \lambda_i \leq D^{-D}.\]
This is one part of statement (1). For the statement about distinctness, suppose that $\sum_{i=1}^{D+1} \ell_i n_i = \sum_{i=1}^{D+1} \ell'_i n_i$ with $0 \leq \ell_i, \ell'_i < L'_i$. Then $\sum_{i=1}^{D+1} (\ell_i - \ell'_i) \b_i \in \{0\} \times \Z^D$ (by \eqref{ell-bee} and its analogue for the $\ell'_i$). However,
\[ \Vert \sum_{i=1}^{D+1} (\ell_i - \ell'_i) \b_i \Vert_{\infty} \leq \sum_{i=1}^{D+1} L'_i \lambda_i \cdot \frac{1}{2}  < 1.\] It follows that $\sum_{i=1}^{D+1} (\ell_i - \ell'_i) \b_i = 0$ and hence, since the $\b_i$ are linearly independent, that $\ell_i = \ell'_i$ for all $i$.
\end{proof}

From now on we work with the $n_i$ generated in Lemma \ref{lem71}, and set $L_i := \Vert \theta d n_i \Vert^{-1}_{\T^D}$ (which agrees with the statement of Proposition \ref{main-geo-num}). Let us remind the reader that we are thinking of $d$ as fixed; the $L_i$ of course depend on $d$. By Lemma \ref{lem71} (2) we have
\begin{equation} \label{lili} L'_i \leq L_i .\end{equation}
Before turning to the proof of Proposition \ref{main-geo-num} itself, we use Proposition \ref{prop31} (2) (that is, the second condition in the definition of $\theta$ being diophantine) to get some rough control on the lengths $L_i$. The following lemma, though sufficient for our needs, is rather weak, asserting that it is not possible for almost all of the product $\prod L_i$ to be concentrated on a few values of $i$.

\begin{lemma}\label{weak-i-control}
Suppose that $D = C_3r^2$ and that $N \geq D^{D^2}$. Suppose that $I \subset [D+1]$ and that $|I| \leq 2^{-7} D$. Then $\prod_{i \notin I} L_i \geq X^{1/80}$.
\end{lemma}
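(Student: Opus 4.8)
The plan is to argue by contradiction: from a hypothetical failure of the lemma I will manufacture a large subset of $[X]$ lying very close to $0$ in $\T^D$, which violates the second diophantine condition, Proposition \ref{prop31}(2).

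First I would suppose, for contradiction, that $\prod_{i\notin I}L_i < X^{1/80}$. Since $L'_i \leq L_i$ for every $i$ (this is \eqref{lili}) and $\prod_{i=1}^{D+1}L'_i \geq D^{-3D}X$ by Lemma \ref{lem71}(3), dividing out the indices $i\notin I$ gives $\prod_{i\in I}L'_i > D^{-3D}X^{79/80}$. Now $|I|\leq 2^{-7}D$, so the factors $L'_i$ with $i\in I$ cannot all be small: setting $J:=\{i\in I : L'_i\geq DX^{1/D}\}$, the indices in $I\setminus J$ contribute at most $(DX^{1/D})^{|I\setminus J|}\leq (DX^{1/D})^{D/128}=D^{D/128}X^{1/128}$ to the product, whence $\prod_{i\in J}L'_i\geq D^{-4D}X^{79/80-1/128}$; in particular $J\neq\emptyset$.

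Next I would exhibit a dense piece of the generalised progression of Lemma \ref{lem71} that sits near $0$. Consider the integers $n=\sum_{i\in J}\ell_i n_i$ with $0\leq\ell_i<L'_i/(DX^{1/D})$ — a genuinely nonempty range of integers for each $i\in J$, by definition of $J$. By Lemma \ref{lem71}(1) distinct tuples give distinct $n$, and all such $n$ lie in $[0,D^{-D}X]\subseteq[0,X]$; there are at least $2^{-D}\prod_{i\in J}\frac{L'_i}{DX^{1/D}}\geq D^{-6D}X^{79/80-1/64}$ of them, which exceeds $X^{9/10}$ once $r$ is large, since $X=N^{1/r}\geq D^{D^2/r}=D^{C_3^2 r^3}$ dwarfs $D^{O(D)}$ (here is where $N\geq D^{D^2}$ and $D=C_3r^2$ are used). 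On the other hand, by the triangle inequality for $\Vert\cdot\Vert_{\T^D}$ together with $L'_i\leq L_i$,
\[ \Vert \theta d n\Vert_{\T^D}\leq\sum_{i\in J}\ell_i\Vert\theta d n_i\Vert_{\T^D}=\sum_{i\in J}\frac{\ell_i}{L_i}\leq\sum_{i\in J}\frac{\ell_i}{L'_i}<\frac{|J|}{DX^{1/D}}\leq\frac{1}{128\,X^{1/D}}<X^{-1/D}. \]
Hence $\#\{n\leq X : \Vert\theta d n\Vert_{\T^D}\leq X^{-1/D}\}>X^{9/10}$ (even discarding $n=0$), contradicting Proposition \ref{prop31}(2) for the diophantine $\theta$ and $d\leq N/X$; this contradiction proves the lemma.

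The main obstacle is the exponent bookkeeping in the third paragraph: one must verify that after (i) discarding from $I$ the indices with $L'_i<DX^{1/D}$ and (ii) shrinking each surviving $\ell_i$-range by the factor $DX^{1/D}$ needed to force $\theta d n$ into the Bohr ball $\{\Vert\cdot\Vert_{\T^D}\leq X^{-1/D}\}$, the number of surviving lattice points is still bigger than $X^{9/10}$. This goes through precisely because $|I|\leq 2^{-7}D$ forces each of the two operations to cost only a factor $X^{1/128}$ in the exponent of $X$, and $79/80-2/128>9/10$, while the extraneous powers $D^{O(D)}$ are negligible against $X=N^{1/r}$ under the standing hypotheses.
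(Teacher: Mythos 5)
Your argument is correct and follows essentially the same route as the paper: both assume $\prod_{i\notin I}L_i<X^{1/80}$, deduce via $L'_i\leq L_i$ and Lemma \ref{lem71}(3) that $\prod_{i\in I}L'_i$ is large, then shrink the $\ell_i$-ranges of the generalised progression from Lemma \ref{lem71} by a factor $\sim DX^{1/D}$ so that $\|\theta d n\|_{\T^D}\leq X^{-1/D}$, and count the resulting (distinct, $\leq X$) elements to contradict Proposition \ref{prop31}(2). The only superfluous step is the passage from $I$ to $J$: even when $L'_i<DX^{1/D}$ the range $0\leq\ell_i<L'_i/(DX^{1/D})$ still contains $\ell_i=0$, so the lower bound for the number of tuples, $\prod_{i\in I}L'_i/(DX^{1/D})$, holds term by term without any pruning, and the paper simply works with all of $I$.
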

\begin{proof}
Suppose that this is not the case for some index set $I$.  Then certainly (by \eqref{lili}) $\prod_{i \notin I} L'_i < X^{1/80}$, and so by Lemma \ref{lem71} (3) we have
\[ \prod_{i \in I} L'_i \geq D^{-3D} X^{1 - 1/80} > X^{1 - 1/40}.\]
Now look at all sums $\sum_{i \in I} \ell_i n_i$ with $0 \leq \ell_i \leq \frac{1}{2D} X^{-1/D} L'_i$.  By Lemma \ref{lem71} (1) and (2), these sums are all distinct, and for each one
\[ \Vert \theta d \sum_{i \in I} \ell_i n_i \Vert_{\T^D} \leq \sum_{i \in I} \ell_i \Vert \theta d n_i \Vert_{\T^D} \leq X^{-1/D}.\]  
However, Proposition \ref{prop31} (2) tells us that 
\[ \# \{ n \leq X : \Vert \theta d n \Vert_{\T^D} \leq X^{-1/D} \} \leq X^{9/10}.\]
It follows that 
\[   \big(  \frac{X^{-1/D}}{2D}   \big)^{2^{-7} D} X^{1 - 1/40} \leq \prod_{i \in I}(\frac{X^{-1/D}}{2D}  L'_i) \leq  X^{9/10},\] which is a contradiction.
\end{proof}

Now we turn to the proof of Proposition \ref{main-geo-num} itself.

\begin{proof}[Proof of Proposition \ref{main-geo-num}] The idea is to take the $n_i$ output by Lemma \ref{lem71}, but discard indices $i$ which cause items (2), (3) and (4) of Proposition \ref{main-geo-num} to be violated, whilst ensuring that the lower bound (1) is maintained.  When we say that (2), (3) or (4) holds for indices in a set $I$, this has the obvious meaning (namely, for (2) we mean that $\sum_{i \in I} n_i L_i \leq X/2$, and for (3) that $\vol ((w_i)_{i \in I}) \geq D^{-C_1 D}$). Note that properties (2), (3) and (4) are all hereditary, that is to say if they hold for indices in $I$ then they also hold for indices in $I'$, for any subset $I' \subset I$. For (2) and (4) this is obvious; for (3), it follows from Corollary \ref{volume-hereditary}.

This, together with Lemma \ref{weak-i-control}, allows us to treat (2), (3) and (4) of Proposition \ref{main-geo-num} essentially separately. We will show, for each $n \in \{2,3,4\}$, that there is a set $I_{(n)} \subset [D+1]$ of indices,
\begin{equation}\label{exceptional-i} |I^c_{(2)} |,|I^c_{(3)}|  \leq 2^{-9} D, \; \; |I^c_{(4)}| \leq 2^{-8} D,\end{equation} such that item $(n)$ of Proposition \ref{main-geo-num} holds on $I_{(n)}$.  If we then set $I := I_{(2)} \cap I_{(3)} \cap I_{(4)}$ then properties (2) , (3) and (4) all hold on $I$, and by Lemma \ref{weak-i-control} we have the lower bound $\prod_{i \in I} L_i \geq X^{1/80}$. Relabelling so that $I = \{1,\dots, s\}$, Proposition \ref{main-geo-num} then follows.

The arguments for properties (2) and (3) share some common features, which we introduce now. Consider the set
\begin{equation}\label{to-slab} B =  \{ \sum_{i=1}^{D+1} \ell_i n_i : 0 \leq \ell_i \leq \frac{L'_i}{20D^2}\}.\end{equation}
All the elements in this presentation of $B$ are distinct by Lemma \ref{lem71} (1), so (rather crudely)
\begin{equation}\label{b-lower} |B| \geq D^{-6D} X\end{equation} by Lemma \ref{lem71} (3) and the fact that $D$ is large. 
Recall that $v_i = \pi^{-1}(\theta d n_i)$ (that is, the unique smallest lift under the projection map $\pi : \R^D \rightarrow \T^D$, so in particular $\Vert v_i \Vert_{\infty} = \Vert \theta d n_i \Vert_{\T^D}$) and that $w_i = v_i/\Vert v_i \Vert_2$ is the associated unit vector. Therefore, since $\pi$ is a homomorphism
\[ \theta d ( \sum_{i=1}^{D+1} \ell_i n_i) = \pi (x),\]
where \begin{equation}\label{xvw} x = \sum_{i=1}^{D+1} \ell_i v_i = \sum_{i=1}^{D+1} \ell_i \Vert v_i \Vert_2 w_i.\end{equation}
By \eqref{lili} (and the fact that $\ell_i < L'_i/20D^2 \leq L_i/20D^2$) we have 
\begin{equation}\label{livi} |\ell_i| \Vert v_i \Vert_2 \leq \frac{L_i}{20D^2} \cdot D^{1/2} \Vert v_i \Vert_{\infty} = \frac{L_i}{20D^2} \cdot D^{1/2} \Vert \alpha n_i \Vert_{\T^D} = \frac{1}{20 D^{3/2}},\end{equation} and so in particular
\begin{equation}\label{x-tenth} \Vert x \Vert_2 \leq (D+1) \frac{1}{20 D^{3/2}} < \frac{1}{10}.\end{equation}
Now we look at properties (2), (3) and (4) of Proposition \ref{main-geo-num} separately.

\emph{Property (2).} Set $I_{(2)} := \{ i : L_i \leq D^{C_1} L'_i.\}$ Then property (2) holds on $I_{(2)}$, since
\[ \sum_{i \in I_{(2)}} n_i L_i \leq D^{C_1} \sum_{i \in I_{(2)}} n_i L'_i \leq D^{C_1-D} X < \frac{X}{2},\] by Lemma \ref{lem71} (1).
It remains to prove \eqref{exceptional-i}, that is to say that $|I_{(2)}^c| \leq 2^{-9} D$. Suppose not, and take $V := \Span_{\R}((w_i)_{i \in I_{(2)}})$. Suppose that \eqref{exceptional-i} fails; then $\dim V \leq (1 - 2^{-9})D$. Consider an element $b = \sum_i \ell_i n_i \in B$, with $B$ defined in \eqref{to-slab}. As explained in \eqref{x-tenth} above, we have $\theta d b = \pi(x)$, with $x \in B_{1/10}(0)$. From \eqref{xvw}, since the $w_i$ are unit vectors we see that 
\begin{equation}\label{dist-V} \dist(x, V) \leq \sum_{i \notin I_{(2)}} |\ell_i |\Vert v_i \Vert_2 .\end{equation}
Now observe that if $i \notin I_{(2)}$ then
\begin{align*} |\ell_i| \Vert v_i \Vert_2 \leq \frac{L'_i \Vert v_i \Vert_2}{20D^2}  \leq \frac{L'_i \Vert v_i \Vert_{\infty} }{20 D^{3/2}}  & = \frac{L'_i \Vert \theta d n_i \Vert_{\T^D}}{20 D^{3/2}}  \\ & = \frac{L'_i}{20 D^{3/2} L_i} < \frac{1}{2}D^{-1-C_1},\end{align*}and so from \eqref{dist-V} we have $\dist(x, V) < D^{-C_1}$. We have shown that $\theta d B  \subset \pi( N_{D^{-C_1}}(V) \cap B_{1/10}(0))$, and so by \eqref{b-lower} and the fact that $B \subset [X]$ we have
\begin{equation}\label{slab-lower} \# \{ \theta d [X]  \cap \pi( N_{D^{-C_1}}(V) \cap B_{1/10}(0))\} \geq D^{-6D} X.\end{equation}
On the other hand,  Lemma \ref{slab-avoid} (with $\eps = 2^{-9}$, since $\dim V \leq (1 - 2^{-9}) D$) tells us that 
\begin{equation}\label{slab-avoid-app}  \# \{ \theta d [X] \cap \pi(\mathcal{N}_{D^{-C_1}}(V) \cap B_{1/10}(0))\} \leq 2D^{( 1-2^{-10} C_1)D} X.
  \end{equation}
 If $C_1$ is big enough, statements \eqref{slab-lower} and \eqref{slab-avoid-app} contradict one another, so we were wrong to assume that $|I_{(2)}^c| > 2^{-9} D$.

\emph{Property (3).} Suppose that there does not exist a set $I_{(3)} \subset [D+1]$, $|I_{(3)}| \geq D +1 - 2^{-9} D$, with $\vol ((w_i)_{i \in I_{(3)}}) \geq D^{-C_1 D}$. Then applying Lemma \ref{volume-dichotomy}, we see that there is a subspace $V \leq \R^D$, $\dim V < D(1 - 2^{-9})$, such that 
\begin{equation}\label{distw-v} \dist (w_i, V) \leq D^{-C_1} \end{equation}for all $i$. We now proceed much as before. Consider an element $b = \sum_i \ell_i n_i \in B$, with $B$ defined in \eqref{to-slab}. As explained above, we have $\theta d b = \pi(x)$, with $x \in B_{1/10}(0)$. Now, using \eqref{xvw}, \eqref{livi} and \eqref{distw-v}, we see that

\begin{equation}\label{dist-first} \dist (x, V ) \leq \sum_i |\ell_i| \Vert v_i \Vert_2 \dist(w_i, V)  \leq D^{-C_1}.\end{equation}
We have shown that $\theta d B  \subset \pi( N_{D^{-C_1}}(V) \cap B_{1/10}(0))$, and so once again we conclude from \eqref{b-lower} and the fact that $B \subset [X]$ that
\[  \# \{ \theta d [X]  \cap \pi( N_{D^{-C_1}}(V) \cap B_{1/10}(0))\} \geq D^{-6D} X.\]
Once again, this contradicts Lemma \ref{slab-avoid} (if $C_1$ is large enough). Thus we were wrong to assert that $I_{(3)}$ does not exist.

\emph{Property (4).} Set $J := \{ i \in I_{(2)} : L_i \geq X^{C_1/D} \}$. Since $J  \subset I_{(2)}$, we have
\[ \prod_{i \in J} L_i \leq D^{C_1 (D+1)} \prod_i L'_i  < D^{C_1 (D+1)} X < X^2,\] by Lemma \ref{lem71} (1) and the assumption on $N$. If $C_1 \geq 2^{11}$ then it follows that $|J| \leq 2D/C_1 < 2^{-10} D$, and so if we set $I_{(4)} := I_{(2)} \setminus J$ then the required bound \eqref{exceptional-i} follows.

Finally -- a very minor point -- we note that property (3) implies that $w_1,\dots, w_s$ are linearly independent and so $s \leq D$. This completes the proof of Proposition \ref{main-geo-num}.
\end{proof}

\emph{Remark.} In Section \ref{geom-num} (which depended heavily on Sections \ref{diophantine-sec} and \ref{slab-avoid-sec}) we obtained what amounts to some weak information about the ``shape'' of a random Bohr set such as \[ \{ n \leq X : \Vert n \theta_i \Vert_{\T} \leq \frac{1}{4} \; \mbox{for $i = 1,\dots, D$}\}.\] We were not interested in just the almost sure behaviour, but rather what can be said with very small exceptional probability on the order of $X^{-r}$, say. Many aspects of this situation remain very mysterious to me and it may be of interest to study the problem further.

\section{Comparison of two distributions on quadratic forms}\label{measure-compare-sec}

In this section, which essentially stands by itself, we prove Proposition \ref{measure-compare}. Recall that we define a map $\sigma : \R^{n(n+1)/2} \rightarrow \Sym_n(\R)$ as in Section \ref{sec4}: to any tuple $\mathbf{x} \in \R^{n(n+1)/2}$, we associate a symmetric matrix $\sigma(\mathbf{x}) \in \Sym_n(\R)$ as follows: $(\sigma(\mathbf{x}))_{ii} = x_{ii}$, $(\sigma(\mathbf{x}))_{ij} = \frac{1}{2}x_{ij}$ for $i < j$, and $(\sigma(\mathbf{x}))_{ij} = \frac{1}{2}x_{ji}$ for $i > j$. We will use this for $n = D$ and for $n = s$. 

The inverse $\sigma^{-1} : \Sym_n(\R) \rightarrow \R^{n(n+1)/2}$ is given by $(\sigma^{-1}(M))_{ii} = M_{ii}$ and $(\sigma^{-1}(M))_{ij} = 2M_{ij}$ for $i < j$. Note, in particular, that if $M$ is symmetric and if $x \in \R^n$ then
\begin{equation}\label{quad-form-rel}  x^T M x = \sum_{i \leq j} \sigma^{-1}(M)_{ij} x_i x_j.\end{equation}

Recall that $\mathbf{e}$ is sampled uniformly from $[-\frac{1}{D^4} , \frac{1}{D^4} ]^{D(D+1)/2}$.

\begin{measure-compare-rpt}
Let $w_1,\dots, w_{s} \in \R^D$, $s \leq D$, be linearly independent unit vectors. Write $f : \R^{D(D+1)/2} \rightarrow \R^{s(s+1)/2}$ for the map defined by \[ f(x) = \sigma^{-1} \big( (\langle (I + \sigma(x)) w_i, (I + \sigma(x))w_j\rangle)_{1 \leq i, j \leq s} \big)\] Then for any measurable set $U \subset \R^{s(s+1)/2}$ we have \[ \P_{\mathbf{e}} (f(\mathbf{e}) \in U) \leq D^{4D^2} \vol(w_1,\dots, w_{s})^{-D-1} \mu(U),\] where $\mu$ denotes Lebesgue measure. 
\end{measure-compare-rpt}
\begin{proof} We first handle the case $s = D$ (which we will prove with the slightly stronger constant $D^{3D^2}$), and then deduce the case $s < D$ from it. Suppose, for the moment, that $s = D$. 

We write $f(x) = f_2(f_1(x))$ as a composition of two maps
\[ f_1 : \R^{D(D+1)/2} \rightarrow \R^{D(D+1)/2}: \quad f_1(x) = \sigma^{-1} \big((I + \sigma(x))^T (I + \sigma(x))    \big)\] (note here that the transpose is superfluous, since $\sigma(\mathbf{x})$ is symmetric) and 
\[ f_2 : \R^{D(D+1)/2} \rightarrow \R^{D(D+1)/2}: \quad f_2(x) = \sigma^{-1} \big( W^T \sigma(x) W),\] where $W_{ij} = (w_j)_i$ (that is, the $i$th coordinate of $w_j$ in the standard basis). Checking that $f$ is indeed the composition of these two maps amounts to checking that $(W^T A^T A W)_{ij} = \langle A w_i, A w_j \rangle$ for any matrix $A$, which is an easy exercise. 

We claim that $f_1$ is injective on the domain $[-\frac{1}{D^4}, \frac{1}{D^4}]^{D(D+1)/2}$. If we have $f_1(x_1) = f_1(x_2)$, then $(I + \sigma(x_1))^2 = (I + \sigma(x_2))^2$. Suppose that $I + \sigma(x_i) = U_i^{-1} \Delta_i U_i$ for $i = 1,2$, with the $U_i$ orthogonal and $\Delta_i$ diagonal with entries non-increasing down the diagonal.  If $v$ is a unit vector and $x$ lies in the domain then $\Vert \sigma(x) v \Vert_2 \leq D \Vert x \Vert_{\infty} \leq D^{-3}$, and so all eigenvalues of $I + \sigma(x_i)$ (that is, entries of $\Delta_i$) are extremely close to $1$ and in particular positive. Therefore $\Delta_1^2, \Delta_2^2$ are also diagonal with entries non-increasing down the diagonal. Moreover $U_1^{-1} \Delta^2_1 U_1 = U_2^{-1} \Delta^2_2 U_2$, so $\Delta^2_1$ and $\Delta^2_2$ are similar matrices and therefore the same. It follows that $\Delta_1 = \Delta_2$, and also $U_1 U_2^{-1}$ commutes with $\Delta_1^2$ ($= \Delta_2^2$). However, a diagonal matrix $\Delta$ with positive entries and its square $\Delta^2$ have the same centraliser (block matrices based on equal diagonal entries) and hence $U_1 U_2^{-1}$ commutes with $\Delta_1$ ($= \Delta_2$), which means that $I + \sigma(x_1) = U_1^{-1} \Delta_1 U_1 = U_2^{-1} \Delta_1 U_2 = U_2^{-1} \Delta_2 U_2 = I + \sigma(x_2)$ and so $x_1 = x_2$. The claim follows.

For the two maps $f_1, f_2$, we additionally claim that:
\begin{enumerate}
\item $f_1$ is differentiable on $[-\frac{1}{D^4} , \frac{1}{D^4} ]^{D(D+1)/2}$, with Jacobian bounded below by $1$;
\item $f_2$ is linear, with $|\det f_2| = |\det W|^{D+1}$.
\end{enumerate}

The proposition then follows by change of variables, in fact with the more precise constant $(2D^4)^{D(D+1)/2}$ for the $D$-dependence, noting that $\vol(w_1,\dots, w_D) = |\det (W^T W)|^{1/2} = |\det W|$.

\emph{Proof of (1).} This can be established by direct, co-ordinatewise, calculation. Indeed, it is easy to check that 
\[ (f_1(\mathbf{x}))_{ij} = 1_{i = j} + 2 x_{ij}  + q_{ij}(\mathbf{x}),\] where $q_{ij}(\mathbf{x})$ is a quadratic form with at most $D$ terms, each with coefficient bounded by $1$. Thus 
\begin{equation}\label{jac-near} \big| \frac{\partial (f_1)_{ij}}{\partial x_{uv}}(\mathbf{e}) - 2 \cdot 1_{(i,j) = (u,v)} \big| \leq 2D \Vert \mathbf{e} \Vert_{\infty}.\end{equation}
To bound the Jacobian of $f_1$ below, we need a lower bound for determinants of perturbations of the identity. Using Fredholm's identity $\det (I + E) = \exp(\sum_{k = 1}^{\infty} \frac{(-1)^{k-1}}{k} \mbox{tr}(E^k))$, one can check that if $E$ is an $n$-by-$n$ matrix with entries bounded in absolute value by $\eps < \frac{1}{2n}$ then $\det(I+E) \geq e^{-2 n \eps}$. (In fact, the stronger bound $1 - n \eps$ for $\eps \leq \frac{1}{n}$ was shown by Ostrowksi \cite[Eq (5.5)]{ostrowski} in 1938; for our purposes, fairly crude bounds would suffice.)

It follows from this, \eqref{jac-near} and the fact that $\vert \mathbf{e} \Vert_{\infty} \leq D^{-4}$ that, if $D$ is large, the Jacobian of $f_1$ is indeed greater than $1$.

\emph{Proof of (2).} If $A$ is a $D \times D$ matrix, define $f_2^A : \R^{D(D+1)/2} \rightarrow \R^{D(D+1)/2}$ by $f_2^A(x) = \sigma^{-1}(A^T \sigma(x) A)$. It is clear that $f_2^A$ is linear in $x$ for fixed $A$, and also that $f_2^{AA'} = f_2^{A'} \circ f_2^A$ for all $A, A'$. Thus, to prove that $|\det (f_2^A)| = |\det A|^{D+1}$ for all $A$ (and in particular for $A = W$) it suffices, by the existence of singular value decomposition, to prove this in the cases (i) $A$ is diagonal and (ii) $A$ is orthogonal. 

When $A$ is diagonal with diagonal entries $\lambda_1,\dots, \lambda_D$ then one sees by inspection that $(f_2^A(x))_{ij} = \lambda_i \lambda_j x_{ij}$, which renders the result clear in this case. When $A$ is orthogonal, 
$f_2^A$ preserves a non-degenerate quadratic form, namely $\psi(x) := \Vert \sigma(x) \Vert_{\operatorname{HS}}$, where $\Vert \cdot \Vert_{\operatorname{HS}}$ is the Hilbert-Schmidt norm. Therefore in this case $|\det f_2^A| = 1$.

This completes the proof in the case $D = s$. Now suppose that $s < D$.  In this case, we first complete $w_1,\dots, w_{s}$ to a set $w_1,\dots, w_D$ of unit vectors with 
\begin{equation}\label{vol-comp} \vol(w_1,\dots, w_D) \geq \vol(w_1,\dots, w_{s}).\end{equation} For the proof that this is possible, see Corollary \ref{lemc4}.

Let $\tilde f : \R^{D(D+1)/2} \rightarrow \R^{D(D+1)/2}$ be \[ \tilde f(x) = \sigma^{-1} \big( (\langle (I + \sigma(x)) w_i, (I + \sigma(x))w_j\rangle)_{1 \leq i, j \leq D} \big);\]
then $f = \pi \circ \tilde f$, where $\pi : \R^{D(D+1)/2} \rightarrow \R^{s(s+1)/2}$ is the natural projection, i.e.
\[ \pi((a_{ij})_{1 \leq i \leq j \leq D}) = (a_{ij})_{1 \leq i \leq j \leq s}.\]
The case $s =D$ just established shows that for any measurable $\tilde U$
\begin{equation}\label{tilde-case} \P_{\mathbf{e}} (\tilde f(\mathbf{e}) \in \tilde U) \leq D^{3D^2} \vol(w_1,\dots, w_D)^{-D-1} \tilde{\mu}(\tilde U),\end{equation} where $\tilde \mu$ is the Lebesgue measure on $\R^{D(D+1)/2}$. 

Now suppose that $U \subset \R^{s(s+1)/2}$. Then, since (by \eqref{e-operator}) $\Vert \tilde f (\mathbf{e}) \Vert_{\infty} \leq 8$ for all $\mathbf{e} \in [-\frac{1}{D^4} , \frac{1}{D^4} ]^{D(D+1)/2}$, we see using \eqref{tilde-case} that 
\begin{align*} \P_{\mathbf{e}} (f(\mathbf{e}) \in U) & = \P_{\mathbf{e}} (\tilde f(\mathbf{e}) \in \pi^{-1}(U)) \\ & = \P_{\mathbf{e}}(\tilde f(\mathbf{e}) \in \pi^{-1}(U) \cap [-8,8]^{D(D+1)/2}) \\ & \leq D^{3D^2} \vol(w_1,\dots, w_D)^{-D-1} \tilde \mu(\pi^{-1}(U) \cap [-8,8]^{D(D+1)/2}) \\ & \leq D^{4D^2} \vol(w_1,\dots, w_D)^{-D-1} \mu(U). \end{align*}
To complete the proof, apply \eqref{vol-comp}.
\end{proof}

\part{Small gaps and quadratic forms}

\section{Determinants of random matrices}\label{random-matrix-sec}

In this section the main result is Lemma \ref{random-matrix} below, which gives an upper bound on the probability that certain random matrices with fixed diagonal entries have very small determinant. The key ingredient is the following lemma about random symmetric matrices with uniform entries, without any restriction on the diagonal.

\begin{lemma}\label{random-sym}
Let $n$ be sufficiently large and let $W$ be a random symmetric $n \times n$ matrix with entries drawn uniformly and independently from $[-\frac{1}{n},\frac{1}{n}]$. Then $\P(|\det W| \leq \delta) \leq n^{2n^2} \delta$.
\end{lemma}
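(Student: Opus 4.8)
\emph{The plan; reduction to a density bound.} The plan is to reduce to an upper bound on the \emph{density} of $\det W$ and then establish it via the coarea formula together with an induction on $n$. Writing $W = n^{-1}M$, where $M$ is a symmetric matrix whose on‑and‑above‑diagonal entries are i.i.d. uniform on $[-1,1]$, we have $\det W = n^{-n}\det M$, so it is enough to prove $\P(|\det M| \le \eta) \le n^{n^2}\eta$ for all $\eta>0$ (this gives the stated bound with room to spare once $n$ is large). I would deduce this from the stronger claim that the law of $\det M$ has a density on $\R$ bounded by a power of $n$. Equip $Q = [-1,1]^{n(n+1)/2}$ with the uniform density $\rho_0 = 2^{-n(n+1)/2}$ and use the map $\sigma$ of Section~\ref{sec4} to identify $Q$ with a region of $\Sym_n(\R)$. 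By the coarea formula the density of $\det$ at a value $\ell\neq 0$ equals $\rho_0\int_{\{\det=\ell\}\cap Q}|\nabla\det|^{-1}\,d\mathcal{H}^{n(n+1)/2-1}$, the gradient being taken in the coordinates $\mathbf{x}$. Since $\partial_{x_{ij}}\det\sigma(\mathbf{x})$ is the $(i,j)$‑cofactor of $\sigma(\mathbf{x})$ (doubled when $i<j$), one has $\nabla\det(\mathbf{x})=\sigma^{-1}(\operatorname{adj}\sigma(\mathbf{x}))$, hence $|\nabla\det(\mathbf{x})|\ge\Vert\operatorname{adj}V\Vert\ge|\det V|/s_{\min}(V)$, where $V=\sigma(\mathbf{x})$ and $s_{\min}(V)$ is the least singular value (for symmetric $V$, the least modulus of an eigenvalue). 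Thus it suffices to prove
\[ \int_{\{\det V=\ell\}\cap Q} s_{\min}(V)\,d\mathcal{H}^{n(n+1)/2-1}(V) \;\le\; c(n)\,|\ell| \qquad (\ell\neq 0), \]
with $c(n)$ a power of $n$.

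\emph{The key estimate, by induction on $n$.} On the level set $s_{\min}(V)=|\ell|/\prod_{i<n}s_i(V)$, and $\prod_{i<n}s_i(V)=|\det V'|$, where $V'$ is the $(n-1)\times(n-1)$ symmetric matrix obtained by restricting $V$ to the orthocomplement of an eigenvector for the eigenvalue of least modulus; so the estimate is equivalent to $\int_{\{\det V=\ell\}\cap Q}|\det V'|^{-1}\,d\mathcal{H}^{n(n+1)/2-1}(V)\le c(n)$. I would prove this by splitting the level set dyadically according to the size of $s_{\min}(V)$ (equivalently of $|\det V'|=|\ell|/s_{\min}(V)$). Parametrising a neighbourhood in $\{\det V=\ell\}$ by the least‑modulus eigenvector $v\in\mathbf{RP}^{n-1}$ and the restricted matrix $V'$ — the attached eigenvalue being forced to $\ell/\det V'$ — with Jacobian bounded above and below by powers of $n$ over the relevant range, the $\mathcal{H}$‑measure of $\{V\in\{\det V=\ell\}\cap Q:|\det V'|<\beta\}$ is at most a bounded factor times the Lebesgue measure of $\{V'\in Q':|\det V'|<\beta\}$, which by the \emph{inductive hypothesis} in dimension $n-1$ is $O_n(\beta)$. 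Feeding this back into the dyadic sum — the dominant term coming from $V$ with $s_{\min}(V)$ of its typical size $\asymp_n|\ell|$, for which $\prod_{i<n}s_i(V)\asymp_n 1$, while atypically large $s_{\min}(V)$ is damped by the measure bound just described — yields the estimate. A careful treatment peels off one further level: the portion with $s_{\min}(V)>\tau$ forces $s_{\min}(V')>\tau$ as well, so running the same argument one dimension down removes a logarithmic factor that would otherwise appear, and this is where the recursion really pays its way.

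\emph{Base case, and the main obstacle.} The induction starts from $n=1$, or concretely from $n=2$, where $\det V=x_{11}x_{22}-x_{12}^2$: conditioning on the diagonal entries, using that the resulting monic quadratic in $x_{12}$ has modulus $\le\eta$ on a set of measure $O(\sqrt\eta)$ together with $\E\,|x_{11}x_{22}|^{-1/2}<\infty$, one gets $\P(|\det V|\le\eta)=O(\eta)$ directly. The main obstacle throughout is obtaining a \emph{genuinely linear} bound: $\det W$ does not behave like a product of $n$ independent, individually linearly anti‑concentrated quantities (such a product has anti‑concentration $\asymp\eta(\log\tfrac1\eta)^{n-1}$, which for exponentially small $\eta$ would already violate the claimed bound), so no Schur‑complement factorisation works directly — any such factorisation exhibits $\det W$ with a factor $\det(\text{sub‑block})$ whose reciprocal has infinite expectation. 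One must instead exploit the geometry of $\{\det=\ell\}$ near the rank‑deficiency strata of $\Sym_n(\R)$, whose codimensions $\binom{k+1}{2}$ (for corank $k$) are exactly what defeat the $k$ factors of $s_{\min}^{-1}$; that is the role of the induction above. All implied constants are crude powers of $n$, and a final bookkeeping check confirms their product sits comfortably below $n^{2n^2}$ once $n$ is large.
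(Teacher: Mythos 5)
Your route (coarea formula plus a spectral-type parametrisation) is genuinely different from the paper's, which compares the uniform measure to GOE and then integrates against the explicit joint eigenvalue density. But there is a gap in the step you lean on most. The Jacobian of the map $(v,V')\mapsto V$ is \emph{not} ``bounded above and below by powers of $n$''. The correct Jacobian (before restricting to the level set) is proportional to $\prod_{i>1}\lvert\lambda_i-\lambda_1\rvert$, where $\lambda_1=\ell/\det V'$ and the $\lambda_i$ run over the spectrum of $V'$; this factor vanishes when $\lambda_1$ collides with an eigenvalue of $V'$. Because you assume a Jacobian of order $n^{O(1)}$, you are led to try to bound $\int\lvert\det V'\rvert^{-1}\,dv\,dV'$, whose dyadic decomposition plus the inductive density bound for $\det V'$ only yields $\sum_j 2^j\cdot O(2^{-j})=O(\log)$; the resulting $\log(1/\lvert\ell\rvert)$ is not removed by the remark about $s_{\min}(V)>\tau$ forcing $s_{\min}(V')>\tau$, which cuts the range of $j$ at one end but leaves the other end (small $\lvert\ell\rvert$) untouched. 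So the argument as written would produce a density bound with a spurious $\log(1/\lvert\ell\rvert)$ and fail for $\ell$ very small.

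Ironically, once the Jacobian is computed correctly the proof simplifies and neither the induction nor the dyadic decomposition is needed. Because $\lambda_1$ is by construction the eigenvalue of least modulus, $\lvert\lambda_i-\lambda_1\rvert\le 2\lvert\lambda_i\rvert$ for each $i>1$, and so $\prod_{i>1}\lvert\lambda_i-\lambda_1\rvert\le 2^{\,n-1}\lvert\det V'\rvert$. Feeding this into the change of variables, the surface Jacobian $J_1$ for the parametrisation of $\{\det V=\ell\}$ by $(v,V')$ satisfies $J_1\ll_n\lvert\det V'\rvert$, which exactly cancels the weight $\lvert\det V'\rvert^{-1}$; the integrand becomes $O_n(1)$ and the density bound follows from the compactness of the domain. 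This is in spirit the same cancellation the paper exploits: there the Vandermonde factor in the GOE eigenvalue density is estimated by $\prod_{i<j}\lvert\lambda_i-\lambda_j\rvert\le\prod_{i<j}2\lvert\lambda_i\rvert$, after ordering by modulus, and then one $\lambda$ is integrated out linearly. So the approaches share the same engine; but the step in your sketch that is supposed to take over from this cancellation (the asserted boundedness of the Jacobian) is where the argument breaks.
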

\begin{proof}
The idea is to compare $W$ with a random (symmetric) matrix $Z$ from the Gaussian Orthogonal Ensemble (GOE) and then compute using the joint density function for eigenvalues there, which is explicit.
The density function of $\mbox{GOE}(n)$ is $f(Z) = c_n e^{-\frac{1}{4} \tr (Z^2)}$, where $c_n  = 2^{-n/2} (2\pi)^{-n(n+1)/4}$ (see \cite[equation 2.5.1]{agz}). If $W$ has entries in $[-\frac{1}{n},\frac{1}{n}]$ then $\tr W^2 \leq 1$, so $f(W) \geq c_n e^{-1/4}$. Therefore we have the comparison estimate
\begin{equation}\label{comparison} \P_W (|\det W| \leq \delta) \leq (\frac{n}{2})^{n(n+1)/2} c_n^{-1} e^{1/4} \P_Z (|\det Z| \leq \delta).\end{equation}

It remains to estimate the right-hand side. For this, we use the well-known formula (see \cite[Theorem 2.5.2]{agz}) for the joint eigenvalue density of $\mbox{GOE}(n)$, together with the fact that the determinant is the product of the eigenvalues. This tells us that $\P_Z(|\det Z| \leq \delta)$ is a normalising constant times
\[ I := \int_{\substack{ \lambda_1 \geq \lambda_2 \geq \dots \geq \lambda_n \\ |\lambda_1 \cdots \lambda_n| \leq \delta}} \prod_{i < j} |\lambda_i - \lambda_j| e^{-\frac{1}{4} \sum_{i = 1}^n \lambda_i^2} d\lambda_1 \dots d\lambda_n .\] The normalising constant is (far) less than $1$ and we do not need anything else about it.
By symmetry, $I$ is equal to 
\[ \int_{\substack{ |\lambda_1| \geq |\lambda_2| \geq \dots \geq |\lambda_n| \\ |\lambda_1 \cdots \lambda_n| \leq \delta}} \prod_{i < j} |\lambda_i - \lambda_j| e^{-\frac{1}{4} \sum_{i = 1}^n \lambda_i^2} d\lambda_1 \dots d\lambda_n .\] Now with this ordering we have $|\lambda_i - \lambda_j| \leq 2 |\lambda_i|$ whenever $j > i$, and so 
\begin{equation}\label{i-desym}  I \leq 2^{n(n-1)/2} \int_{|\lambda_1 \cdots \lambda_n| \leq \delta} |\lambda_1|^{n-1} |\lambda_2|^{n-2} \cdots |\lambda_{n-1}| e^{-\frac{1}{4} \sum_{i = 1}^n \lambda_i^2} d\lambda_1 \dots d\lambda_n  .\end{equation}
Now one may easily check the real-variable inequality $|x|^{k-1} e^{-x^2/8} \leq k^k$ for all positive integers $k$ and all $x \in \R$, which implies that $|x|^k e^{-x^2/4} \leq k^k |x| e^{-x^2/8}$. Substituting into \eqref{i-desym} gives
\[ I \leq 2^{n(n-1)/2} (\prod_{k = 1}^{n-1} k^k) \int_{|\lambda_1 \cdots \lambda_n| \leq \delta}  |\lambda_1\lambda_2 \dots \lambda_{n-1}| e^{-\frac{1}{8} \sum_{i = 1}^n \lambda_i^2} d\lambda_n \cdots d\lambda_1.\]
The inner integral over $\lambda_n$ is 
\[ \int_{|\lambda_n| \leq \delta/|\lambda_1 \cdots \lambda_{n-1}|} e^{-\frac{1}{8}\lambda_n^2} d\lambda_n \leq \frac{2\delta}{|\lambda_1 \cdots \lambda_{n-1}|},\] and therefore
\[ I \leq 2^{n(n-1)/2+1} \delta  (\prod_{k=1}^{n-1} k^k) \int_{\R^{n-1}} e^{-\frac{1}{8}\sum_{i = 1}^{n-1} \lambda_i^2} d\lambda_{n-1} \dots d\lambda_1  = C_n \delta,\]
where
\[ C_n := 2^{n(n-1)/2+1} (8 \pi)^{(n-1)/2}  (\prod_{k=1}^{n-1} k^k) .\]
For $n$ large, a crude bound is $C_n \leq n^{n^2}$ and so the result follows from this and \eqref{comparison}.\end{proof}

\emph{Remarks.} For us, the dependence on $\delta$ (which is sharp) is the important thing. So long as we were not ridiculously profligate, the $n$-dependence of the constant in Lemma \ref{random-sym} was of secondary importance.  However, a weaker $\delta$-dependence such as $\delta^{1/n}$ (which follows from a Remez-type inequality, just treating $\det$ as an arbitrary degree $n$ polynomial) would lead to a weaker exponent in our main theorem.

For fixed $n$, asymptotic formulae of the form $\P(|\det Z| \leq \delta) = (\beta_n  + o_{\delta \rightarrow 0}(1))\delta$ can be extracted from \cite{delannay-lecaer} (I thank Jon Keating for bringing this reference to my attention). So far as I am aware nothing of this type is known for the uniform distribution on matrix entries. 

Here is the result we will actually need in the next section. It requires us to be able to fix the diagonal entries of the symmetric matrix of interest.

\begin{lemma}\label{random-matrix}
Let $m$ be sufficiently large, and let $a$ be an $m \times m$ upper-triangular matrix selected at random as follows. Fix diagonal entries $a_{ii}$ with $1 \leq a_{11},\dots, a_{mm} \leq Q$, and select the off-diagonal entries $a_{ij}$, $i < j$, independently and uniformly at random from $[-Q, Q]$. Then $\P(|\det (a + a^T)| \leq \delta) \leq (Qm)^{5m^2} \delta$, uniformly in the fixed choice of the diagonal terms $a_{ii}$.
\end{lemma}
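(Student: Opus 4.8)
The plan is to reduce the statement to Lemma~\ref{random-sym} by a short chain of reductions, the essential difficulty being that here the diagonal of $a+a^T$ is held \emph{fixed}, whereas in Lemma~\ref{random-sym} every entry is random.

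First I would pass to a Gaussian off-diagonal. Writing $M=a+a^T$, each off-diagonal entry $M_{ij}$ ($i<j$) is uniform on $[-Q,Q]$, and on that interval its density $\frac1{2Q}$ is at most a fixed constant times the $N(0,Q^2)$-density; hence $\P_M(E)\le 2^{\binom m2}\P_{M'}(E)$ for any event $E$ depending only on the off-diagonal entries, where $M'$ has the same fixed diagonal and i.i.d.\ $N(0,Q^2)$ off-diagonal entries. Second, since the diagonal entries $2a_{ii}$ lie in $[2,2Q]$ and in particular are positive, I would factor $M'=D^{1/2}(I+R)D^{1/2}$ with $D=\operatorname{diag}(2a_{11},\dots,2a_{mm})$ and $R:=D^{-1/2}(M'-D)D^{-1/2}$; then $R$ is symmetric with zero diagonal and Gaussian off-diagonal entries of variance in $[\frac14,\frac{Q^2}{4}]$, and $\det M'=(\prod_i 2a_{ii})\det(I+R)\ge 2^m\det(I+R)$. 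So it is enough to prove $\P(|\det(I+R)|\le s)\le (Qm)^{O(m^2)}s$ for such $R$; this has the effect of normalising the fixed diagonal to a scalar multiple of the identity.

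The remaining task — showing $\det(I+R)=\prod_i(1+\mu_i(R))$ is not concentrated near $0$ at the linear-in-$s$ rate — is exactly where the fixed diagonal bites, and this is the main obstacle. The tool one wants is the one used inside Lemma~\ref{random-sym}: compare $R$ with a genuine $\operatorname{GOE}$ matrix $Z$ and then use the explicit joint eigenvalue density of $Z$, peeling off the eigenvalue of $Z$ nearest $-1$ just as the smallest eigenvalue was peeled off there, which is what makes the dependence on $s$ \emph{linear}. But $R$ has a fixed (zero) diagonal, so $I+R$ is not $\operatorname{GOE}$: fixing the diagonal destroys the orthogonal invariance underlying the closed-form eigenvalue density, and one cannot dominate the law of $R$ by that of $Z$ entrywise because the diagonal of $R$ carries no density. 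Naively peeling a single row and column via the Schur complement $\det(I+R)=\det(I+R')-x^T\operatorname{adj}(I+R')x$ (with $x$ the last off-diagonal column) does not help directly, since $x$ enters \emph{quadratically} and a quadratic form in $x$ is, in general, only $\sqrt s$-anti-concentrated.

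To push past this I would keep the Schur-complement decomposition but run it inductively on $m$ and handle the quadratic form with care. Conditioning on the block $I+R'$ — which, after the reductions, is again an instance of the same model with $m-1$ variables — one needs that $x\mapsto x^T\operatorname{adj}(I+R')x$ lands in an interval of length $\tau$ with probability $\ll_{Q,m}\tau$. Since $\operatorname{adj}(I+R')=\det(I+R')(I+R')^{-1}$, its spectrum has at least two eigenvalues of a common sign whose product is bounded below by a fixed power of $|\det(I+R')|$ divided by a power of $Qm$; convolving the two corresponding one-dimensional pieces (each of density of order $t^{-1/2}$ near $0$) gives a \emph{bounded} conditional density for the quadratic form, hence a linear-in-$\tau$ bound with implied constant a negative power of $|\det(I+R')|$. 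Integrating this against the law of $I+R'$ would then be controlled by splitting on whether $|\det(I+R')|$ is below or above a threshold of size roughly $s$: on the small event one invokes the inductive bound $\P(|\det(I+R')|\le t)\le (Qm)^{O(m^2)}t$, on the large event the quadratic-form estimate; optimising the threshold leaves a clean $(Qm)^{O(m^2)}s$ bound after unwinding the reductions. The delicate endpoint — and the part I expect to be hardest to make clean — is squeezing the genuinely linear (not $\sqrt s$, and not $s\log(1/s)$) rate out of this quadratic structure, which is precisely the price of not having the $\operatorname{GOE}$ eigenvalue density available for the fixed-diagonal matrix.
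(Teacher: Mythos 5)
Your route is genuinely different from the paper's, and it has a real gap at exactly the point you flag. The paper does not use Gaussian comparison, Schur complements, or induction on $m$ at all: it introduces $m$ \emph{auxiliary} random degrees of freedom, a diagonal matrix $D$ with entries drawn from $(1,2)$, and considers the smooth map $\Psi(D,M)=DMD$. Since $M$ has fixed diagonal $(a_{ii})$ and free off-diagonal entries, the extra $m$ coordinates of $D$ make $\Psi$ a diffeomorphism onto an open subset of the full $\tfrac12 m(m+1)$-dimensional space of symmetric matrices, with Jacobian bounded away from zero (and $\det(DMD)=(\det D)^2\det M$ off by only a $4^m$ factor). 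After that change of variables, Lemma~\ref{random-sym} on fully random symmetric matrices applies directly. This one-shot amplification is what makes the linear-in-$\delta$ rate drop out with no loss.

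Your inductive Schur-complement scheme stalls precisely where you say it might. After conditioning on $R'$, the quadratic form $x^T\operatorname{adj}(I+R')x$ has conditional density at scale $\lesssim (Qm)/|\det(I+R')|$ — that part is fine. But to integrate $\min\bigl(1,\, s\,(Qm)/|\det(I+R')|\bigr)$ against the law of $R'$ you only have the inductive \emph{cdf} bound $\P(|\det(I+R')|\le t)\le Kt$, and this forces
\[
\int \min\Bigl(1,\tfrac{s\,(Qm)}{|\det(I+R')|}\Bigr)\,d\P(R') \;\lesssim\; K\,s\,(Qm)\,\log\!\bigl(1/s\bigr),
\]
no matter how you pick the splitting threshold: the contribution from the range $|\det(I+R')|>t$ is $s(Qm)\int_t^U u^{-1}\,dF(u)$, and a cdf bound linear in $u$ gives at best a logarithm there. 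So ``optimising the threshold leaves a clean $(Qm)^{O(m^2)}s$ bound'' is not substantiated, and the statement you are trying to prove is the sharp linear rate. To close the gap you would need either a strictly stronger inductive hypothesis (e.g.\ a density bound for $\det(I+R')$, which you do not establish), or a way to peel a larger block so the conditional anticoncentration improves from bounded density to a density vanishing at $0$ — neither of which is in the proposal. Compare this with the paper's $DMD$ trick, which sidesteps both the induction and the quadratic-form anticoncentration entirely.
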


\begin{proof}
The idea is to amplify the problem so that we are considering a full set of symmetric matrices rather than those with fixed diagonal. To this end, consider the map
\[ \Psi : (1,2)^{m} \times (-Q, Q)^{m(m-1)/2} \rightarrow (1,4Q)^m \times (-4Q, 4Q)^{m(m-1)/2}\] defined by $\Psi(D, M) := D M D$, where here $D$ is a diagonal $m \times m$ matrix with entries in $(1,2)$ (with the space of these being identified with $(1,2)^m$), $M$ is a symmetric matrix with $a_{11},\dots, a_{mm}$ on the diagonal (with the space of these being identified with $\R^{m(m-1)/2}$ by restriction to the entries above the diagonal) and $DMD$ is a symmetric matrix with diagonal entries in the interval $(1,4Q)$ (with the space of these being identified with $(1,4Q)^m \times \R^{m(m-1)/2}$ by restriction to the diagonal and the entries above it). Note that $\Psi$ is a diffeomorphism onto an open set $U = \Psi((1,2)^m \times (-Q,Q)^{m(m-1)/2}) \subset (1,4Q)^m \times (-4Q, 4Q)^{m(m-1)/2}$ with smooth inverse $\Psi^{-1} : U \rightarrow (1,2)^m \times (-Q, Q)^{m(m-1)/2}$ given by
\[ \Psi^{-1} (x) =  \big(  (x_{ii}/a_{ii})^{1/2}_{i = 1,\dots, m},  \big( ( a_{ii} a_{jj} /x_{ii} x_{jj} )^{1/2}x_{ij} )_{1 \leq i < j \leq m} \big). \] All of the partial derivatives of this map are bounded in modulus by $4Q^2$ on its domain, and therefore (crudely) the Jacobian of $\Psi^{-1}$ is bounded by $(\frac{1}{2} m (m+1))! (4Q^2)^{m(m+1)/2}$. Therefore (crudely, and using the fact that $m$ is sufficiently large)
\begin{equation}\label{jac-lower} |\mbox{Jac}(\Psi)| \geq \frac{1}{(\frac{1}{2} m (m+1))! (4Q^2)^{m(m+1)/2}} \geq (Qm)^{-2m^2},\end{equation} uniformly on the domain.

Now set $\Omega := \{ M \in [-Q,Q]^{m(m-1)/2} : |\det M| \leq \delta\}$ (where, recall, $[-Q, Q]^{m(m-1)/2}$ is being identified with the space of symmetric matrices with $a_{11},\dots, a_{mm}$ on the diagonal); our task is to give an upper bound for $\mu_{\R^{m(m-1)/2}}(\Omega)$. First observe that
\begin{equation}\label{compare-1} \mu_{\R^{m(m+1)/2}}( (1,2)^m \times \Omega) = \mu_{\R^{m(m-1)/2}}(\Omega).\end{equation}
To estimate the left-hand side, note that if $M \in \Omega$ and $D$ is diagonal with entries in $(1,2)$ then $\det (\Psi(D,M)) = (\det D)^2 \det M \leq 4^m \delta$. 
Since, moreover,
\[ \Psi((1,2)^{m} \times \Omega) \subset [-4Q, 4Q]^{m(m+1)/2},\] we may use Lemma \ref{random-sym} (rescaling the sample space by a factor $4Qm$) to conclude that 
\begin{align*} \mu_{\R^{m(m+1)/2}} (\Psi ((1,2)^m \times \Omega)) & \leq (4Qm)^{m(m+1)/2} m^{2m^2} (Qm)^{-m} \delta \\ & < (Qm)^{3m^2} \delta.  \end{align*}
By \eqref{jac-lower} and change of variables, it follows that 
\begin{equation}\label{compare-3} \mu_{\R^{m(m+1)/2}} ((1,2)^m \times \Omega)) < (Qm)^{5 m^2}\delta.\end{equation}
Comparing with \eqref{compare-1} concludes the proof.
\end{proof}

\section{An application of the circle method}

In this section we establish the key ingredient in Proposition \ref{main-quad-gap}, which is Proposition \ref{cont-disc-compare}.

For any parameter $\delta > 0$, we will make use of a cutoff function $\chi := \chi_{\delta} : \R \rightarrow [0,\infty)$ satisfying the following properties, where the implied constants are absolute and do not depend on $\delta$.

\begin{enumerate}
\item $\chi(x) \geq 1$ for $|x| \leq \delta/2$;
\item $\chi(x) = 0$ for $|x| > \delta$;
\item $\int \chi \ll \delta$;
\item $\Vert \hat{\chi} \Vert_1 \ll 1$;
\item $\int_{|\xi| > \delta^{-2}} |\hat{\chi}(\xi)| \ll \delta^{2}$.
\end{enumerate}

For the proof that such a $\chi$ exists, see Lemma \ref{chi-15-exist}.

\begin{proposition}\label{cont-disc-compare}
Let $B > 1$. Set $m = C_1 B$. Suppose that $L > (Qm)^{m}$. Let $L_1,\dots, L_m \in [L, L^{1 + 1/48}]$ be lengths. Denote by $\mu$ the Lebesgue measure on $\R^m$, and by $\mu_{\disc}$ the uniform probability measure on the points $(x_1/L_1,\dots, x_m/L_m)$, with the $x_i$ integers satisfying $0 \leq x_i < L_i$. 

Fix $1 \leq a_{11},\dots, a_{mm} \leq Q$, and choose $a_{ij}$, $1 \leq i < j \leq s$  independently and uniformly at random from $[-Q, Q]$. For $b = (b_1,\dots, b_m)$ and $c \in \R$ and write $q_{a,b.c} : \R^s \rightarrow \R$ for the quadratic form defined by $q_{a,b,c}(t) := \sum_{i \leq j} a_{ij} t_i t_j + \sum_i b_i t_i + c$. Let $\chi = \chi_{L^{-B}}$ with $\chi$ as above, and let $w : \R \rightarrow \R$ be a smooth function supported on $[0,1]$ with $\Vert w \Vert_{\infty} \leq 1$ and $\Vert w' \Vert_{\infty}, \Vert \hat{w} \Vert_1 \leq L^{1/C_1}$. Then with probability at least $1 - L^{-B}$ in the random choice of $a$ we have
\begin{equation}\label{key-est}  \big| \int w^{\otimes m}(t) \chi(q_{a,b,c}(t)) d\mu(t) - \int w^{\otimes m}(t) \chi(q_{a,b,c}(t)) d\mu_{\disc}(t) \big| \leq  L^{-B - 1/4} ,\end{equation}
for all $b \in \R^m$, $c \in \R$ with $|b_i|, |c| \leq Q$. \end{proposition}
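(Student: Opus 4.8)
The plan is to establish \eqref{key-est} by comparing the continuous and discrete integrals via Fourier expansion in the $\chi$-variable, reducing everything to exponential-sum estimates for the quadratic form $q_{a,b,c}$ along a box. First I would write $\chi = \chi_{L^{-B}}$ using Fourier inversion, $\chi(y) = \int_{\R} \hat{\chi}(\xi) e(\xi y)\, d\xi$, so that both sides of \eqref{key-est} become integrals (or sums) over $\xi$ of
\[ \hat{\chi}(\xi) \Big( \int w^{\otimes m}(t) e(\xi q_{a,b,c}(t))\, d\mu(t) - \int w^{\otimes m}(t) e(\xi q_{a,b,c}(t))\, d\mu_{\disc}(t)\Big). \]
The difference inside the brackets is a ``continuous minus discrete'' comparison for a Gaussian-type oscillatory integral/sum with phase $\xi q_{a,b,c}$, smoothly weighted by $w^{\otimes m}$. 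Using property (5) of $\chi$, the contribution of $|\xi| > L^{2B}$ is $O(L^{-2B}) \cdot \Vert w \Vert_{\infty}^m \cdot (\text{trivial bound})$, which is negligible against $L^{-B-1/4}$ provided the trivial bounds on the integral/sum are only polynomially large in $L$ (they are: the measures have total mass $1$ after normalisation, and the box has $\prod L_i \leq L^{m(1+1/48)}$ points). So the main range is $|\xi| \leq L^{2B}$.

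For fixed $\xi$ in this range, the key is a one-dimensional (iterated) comparison: for each coordinate $t_i$, compare $\int w(t_i) e(\phi(t_i))\, dt_i$ with the Riemann-type sum $\frac{1}{L_i}\sum_{0 \le x_i < L_i} w(x_i/L_i) e(\phi(x_i/L_i))$, where $\phi$ collects the terms of $\xi q_{a,b,c}$ involving $t_i$ (a quadratic polynomial in $t_i$ with the other coordinates frozen). The error in such a comparison is controlled by $L_i^{-1}$ times the total variation of $t \mapsto w(t) e(\phi(t))$ on $[0,1]$, hence by $L_i^{-1}(\Vert w'\Vert_\infty + \Vert w\Vert_\infty \cdot \sup|\phi'|)$. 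Here $\sup |\phi'| \ll |\xi| \cdot Q \ll L^{2B} Q$ and $\Vert w'\Vert_\infty \le L^{1/C_1}$, so each coordinate swap costs $O(L^{-1} \cdot L^{2B} Q) = O(L^{2B-1}Q)$, and telescoping over the $m$ coordinates (bounding the $e(\cdot)$ factors trivially by $1$ and $\Vert w \Vert_\infty \le 1$) gives a total error $O(m L^{2B-1} Q)$ for each fixed $\xi$. Integrating against $|\hat{\chi}(\xi)|$ over $|\xi| \le L^{2B}$ and using $\Vert \hat\chi\Vert_1 \ll 1$ (property (4)) gives a bound $O(m L^{2B - 1} Q)$ — \textbf{which is not small enough}: $L^{2B-1}$ beats $L^{-B-1/4}$ badly. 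This naive ``swap each coordinate'' argument fails, and that is precisely where the randomness of $a$ and the circle method must enter.

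The genuine argument, I expect, is a Davenport--Heilbronn major/minor-arc decomposition \emph{in the $\xi$ variable}. On the major arc $|\xi| \ll L^{B+\text{something}}$ small, the continuous and discrete integrals are both well-approximated by the same main term (a Gaussian integral coming from completing the square in $q_{a,b,c}$, whose value involves $(\det a)^{-1/2}$); the Poisson summation formula expresses the discrete sum as the continuous integral plus ``dual'' terms indexed by nonzero lattice points, and those dual terms are exponentially small because $w$ is smooth and $L$ is large relative to $\xi$. On the minor arcs $|\xi|$ large (but $\le L^{2B}$), one uses Weyl/van der Corput estimates for the exponential sum $\sum_{x} w(x/L) e(\xi q_{a,b,c}(x/L))$: for a \emph{generic} coefficient matrix $a$, the relevant quadratic form has no small-denominator structure, so the sum exhibits square-root cancellation in each variable, yielding a gain of $L^{-m/2+o(1)}$ which, since $m = C_1 B$ is large compared to $B$, comfortably absorbs the $L^{2B}$ from the $\xi$-range and the $O(1)$ from $\Vert\hat\chi\Vert_1$. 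The role of Lemma \ref{random-matrix} is to guarantee, with probability $\ge 1 - L^{-B}$ over $a$, that $|\det(a+a^T)| \gg L^{-\kappa}$ for a small $\kappa$, which both makes the major-arc main term legitimate (the Gaussian integral is $\ll |\det a|^{-1/2} \ll L^{\kappa/2}$, still polynomial) and prevents the degenerate behaviour that would spoil the minor-arc bounds; the probability $L^{-B}$ in the conclusion is exactly the failure probability of that determinant lower bound.

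The hard part will be the minor-arc estimate: one must show that for typical $a$, the weighted exponential sum over the box $[0,L_1)\times\cdots\times[0,L_m)$ in $\Z^m$ of $e(\xi q_{a,b,c})$ is small \emph{uniformly} over the whole minor-arc range $L^{-B'} \ll |\xi| \ll L^{2B}$ and over all admissible shifts $b,c$, with a power saving strong enough to beat $L^{2B}$. This requires either a van der Corput iteration exploiting that the Hessian $a+a^T$ of the quadratic form is, generically, far from any rational matrix of small height, or a direct diagonalisation (completing the square changes the box into a skewed region, which introduces its own technicalities handled by smoothing). Getting the exceptional set over $a$ down to $L^{-B}$ — rather than, say, $L^{-1}$ — while keeping uniformity in $b, c$ is the delicate point, and is likely why the paper needs $m = C_1 B$ with $C_1$ large (so the per-variable square-root saving, raised to the $m$-th power, dominates every other loss). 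I would also keep careful track of the $Q$-dependence throughout, since all the oscillatory-integral main terms and error terms carry powers of $Q$, and the hypothesis $L > (Qm)^m$ is what makes those powers harmless.
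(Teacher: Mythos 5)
Your high-level scheme — Fourier-expand $\chi$, split $\xi$ into low/middle/high ranges, run a Davenport--Heilbronn-style analysis, and control the bad $a$ via a small-determinant estimate for the random symmetric matrix — is indeed the paper's scheme, and you correctly diagnose why the naive coordinate-by-coordinate Riemann-sum swap fails over the full range $|\xi| \leq L^{2B}$. However, the sketch leaves genuine gaps and misidentifies how the pieces fit together.

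First, the ``major arc'' in the paper is $|\xi| \leq L^{1/8}$, not $|\xi| \ll L^{B+\varepsilon}$, and on that range the naive comparison you derived actually \emph{succeeds}: with $|\xi| \leq L^{1/8}$ the per-coordinate phase derivative is $\ll QmL^{1/8}$, so the swap error is $\ll QmL^{1/8-1}\ll L^{-1/2}$, and integrating against $|\hat{\chi}(\xi)| \ll L^{-B}$ over $|\xi|\leq L^{1/8}$ gives $\ll L^{-B-3/8}$. Poisson summation is neither used nor usable at the scale you propose; its ``dual'' terms are only negligible when $|\xi|Q\ll L$, far short of $L^{B}$. Second, you treat $T$ (continuous) and $S$ (discrete) symmetrically, but they require quite different arguments. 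For $|\xi| \geq L^{1/8}$ the paper shows directly that $|T| \ll L^{-2B}$ by orthogonally diagonalising $a+a^T$, Fourier-expanding the weight $w^{\otimes m}$, and applying a Fresnel (stationary phase) bound in each eigencoordinate; this produces a factor $|\det(a+a^T)|^{-1/2}|\xi|^{-m/2}$, and the determinant lower bound $\geq L^{-2B}$ together with $m=C_1B$ makes this tiny. This is where the determinant estimate is used on the continuous side — not, as you suggest, in legitimising a ``main term'' via completing the square.

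Third, the real content — the minor-arc bound for the discrete sum $S$ — is not supplied. After Weyl differencing, the paper splits this into two regimes. For $L^{1/8} < |\xi| < L^{5/4}$ the bound on $S$ is \emph{deterministic} given $\det(a+a^T)\geq L^{-2B}$: a set of measure $\geq L^{-O(B)}$ in $\R^m$ would have to be mapped under $x\mapsto (a+a^T)x$ into a set of measure $\leq L^{-m/48}$, contradicting the determinant lower bound. For $L^{5/4}\leq |\xi| \leq L^{2B}$ a genuinely probabilistic argument is needed: pigeonholing produces a short nonzero $h^*$ forcing $\gtrsim m$ linear conditions on the entries of $a$, and conditioning on all off-diagonal entries except one row makes these conditions independent, each with probability $\ll L^{-1/8}$, giving a per-$\xi$ failure probability $\leq L^{-\Omega(m)}$; a union bound is then taken over an $L^{-3B}$-net in $\xi$, using the Lipschitz continuity of $S$ in $\xi$. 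None of this appears in your proposal. Relatedly, the total exceptional probability is the sum of the determinant event and the union over the $\xi$-net; it is not, as you assert, ``exactly the failure probability of the determinant lower bound.'' Your outline correctly locates where the difficulty lies, but the actual minor-arc estimates — both the deterministic measure-theoretic one and the conditioned independence argument — would need to be supplied to make this a proof.
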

\emph{Remarks.} Here, $w^{\otimes m}(t) = w(t_1) \cdots w(t_m)$. 

The detailed statement is somewhat complicated. What it says, roughly, is that for almost all $a$ the distribution of the quadratic form $q_{a,b,c}(t)$ on discrete points $t = (x_1/L_1,\dots, x_m/L_m)$ is closely approximated by the distribution over all of $[0,1]^m$, even on the level of rather short intervals of length $L^{-B}$. The two smoothings $\chi, w$ of course make the statement look more exotic, but are necessary for technical reasons in the proof.

The need to fix the diagonal terms $a_{11},\dots, a_{mm}$ and only let the off-diagonal terms vary randomly is important for the key application of the proposition in the next section. This also makes the argument somewhat more complicated. 

The proof is rather lengthy. The reader will lose almost nothing should they wish to look through the proof in the case $L_1 = \cdots = L_m$, $Q = 1$ and without worrying about the dependence on $w$; it is then fairly clear that the argument can be modified, provided one makes suitable assumptions on $Q$ and $w$, so that it works under the slightly looser hypotheses.

Let us outline the proof of the proposition. By Fourier inversion on $\chi$ we have, for $\nu = \mu$ or $\nu = \mu_{\disc}$,
\[ \int_{\R^m} w^{\otimes m}(t) \chi(q_{a,b,c}(t))d\nu(t)  = \int_{\R} \hat{\chi}( \xi)  \int_{\R^m} w^{\otimes m}(t)e(\xi q_{a,b,c}(t)) d\nu(t) d\xi.\] 
Write 
\begin{equation}\label{s-sum-def} S_{a,b,c}(\xi) := \int_{\R^m} w^{\otimes m}(t) e(\xi q_{a,b,c}(t)) d\mu_{\disc}(t) \end{equation}
and
\begin{equation}\label{t-sum-def} T_{a.b,c}(\xi) := \int_{\R^m} w^{\otimes m}(t) e(\xi q_{a,b,c}(t)) d\mu(t);\end{equation} the task is then to prove the estimate
\begin{equation}\label{to-prove-quads} \int_{\R} \hat{\chi}(\xi) (T_{a,b,c}(\xi) - S_{a,b,c}(\xi)) d\xi  \ll L^{-B - 1/4} \end{equation} (for all $b,c$, with high probability in $a$).
To prove this, we will analyse various different ranges of $\xi$, proving the following four lemmas. In these lemmas, we assume that the assumptions (on $L$ and $w$) from Proposition \ref{cont-disc-compare} remain in force.

\begin{lemma}\label{quad-1}
For $|\xi| \leq L^{1/8}$, we have $|S_{a,b,c}(\xi) - T_{a,b,c}(\xi)| \ll L^{-1/2}$, uniformly for all $a, b,c$ with $|a_{ij}|, |b_i|, |c| \leq Q$.
\end{lemma}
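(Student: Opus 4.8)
The plan is to recognise $S_{a,b,c}(\xi)$ and $T_{a,b,c}(\xi)$ as, respectively, a Riemann sum and the integral of the single function
\[ g(t) := w^{\otimes m}(t)\, e\big(\xi\, q_{a,b,c}(t)\big),\]
which is smooth, supported on $[0,1]^m$ (because $w$ is supported on $[0,1]$), and has $\|g\|_\infty \leq 1$. Since $\mu$ is Lebesgue measure, $T_{a,b,c}(\xi) = \int_{[0,1]^m} g$. Since $\mu_{\disc}$ is the uniform probability measure on the $\prod_i \lceil L_i\rceil$ points $(x_1/L_1,\dots,x_m/L_m)$ with $x_i \in \Z$, $0 \leq x_i < L_i$, and $g$ vanishes at every integer point outside this box (the only such point with a chance of being nonzero is one with some $x_i = L_i \in \Z$, where the factor $w(1)=0$), we have $S_{a,b,c}(\xi) = \frac{1}{\prod_i \lceil L_i\rceil}\sum_{x \in \Z^m} g(x_1/L_1,\dots,x_m/L_m)$. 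Replacing $\prod_i\lceil L_i\rceil$ by $\prod_i L_i$ in the denominator alters this by $O(m/L) \ll L^{-1/2}$, so it is enough to compare $\frac{1}{\prod_i L_i}\sum_{x\in\Z^m} g(x/L)$ with $\int_{[0,1]^m} g$, where I abbreviate $x/L = (x_1/L_1,\dots,x_m/L_m)$.

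For this I first bound the first partials of $g$. On $[0,1]^m$ one has $|\partial_j q_{a,b,c}(t)| \leq \sum_{i<j}|a_{ij}| + 2|a_{jj}| + \sum_{k>j}|a_{jk}| + |b_j| \leq 2mQ$, so, using $|\xi|\leq L^{1/8}$ and $\|w'\|_\infty \leq L^{1/C_1} \leq L^{1/8}$ (valid since $C_1 \geq 8$),
\[ \|\partial_j g\|_\infty \;\leq\; \|w'\|_\infty + 2\pi|\xi|\cdot 2mQ \;\ll\; mQ\,L^{1/8}.\]
Now compare the Riemann sum to the integral one coordinate at a time: for $0 \leq j \leq m$ let $\Phi_j$ denote the hybrid quantity in which coordinates $1,\dots,j$ are averaged over the grid $\{0,1/L_i,2/L_i,\dots\}$ and coordinates $j+1,\dots,m$ are integrated over $[0,1]$, so that $\Phi_0 = \int_{[0,1]^m} g$ and $\Phi_m = \frac{1}{\prod_i L_i}\sum_x g(x/L)$. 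Passing from $\Phi_{j-1}$ to $\Phi_j$ changes only coordinate $j$; freezing all other coordinates and invoking the elementary estimate $\big|\frac{1}{L}\sum_{0\leq x<L}h(x/L) - \int_0^1 h\big| \ll \|h'\|_\infty/L$ for $C^1$ functions $h$ supported on $[0,1]$ (telescope over the subintervals $[x/L,(x+1)/L]$), together with the fact that the outer average-and-integral over the remaining $m-1$ coordinates has total mass $1$, we get $|\Phi_j - \Phi_{j-1}| \ll \|\partial_j g\|_\infty/L_j$. Summing over $j$,
\[ |S_{a,b,c}(\xi) - T_{a,b,c}(\xi)| \;\ll\; \frac{m}{L} + \sum_{j=1}^m \frac{\|\partial_j g\|_\infty}{L_j} \;\ll\; \frac{m}{L} + \frac{m^2 Q\, L^{1/8}}{L}.\]
Finally, the hypothesis $L > (Qm)^m$ gives $m \leq L^{1/m}$ and $Qm \leq L^{1/m}$, hence $m^2 Q \leq L^{2/m} \leq L^{1/8}$ (as $m = C_1 B \geq C_1 \geq 16$), so the right-hand side is $\ll L^{-3/4} \ll L^{-1/2}$, uniformly over all $a,b,c$ with $|a_{ij}|,|b_i|,|c|\leq Q$. (The constant $c$ enters $g$ only through the unimodular factor $e(\xi c)$ and plays no role.)

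There is essentially no serious obstacle here: the argument is a routine comparison of a Riemann sum with an integral, and the only care needed is (i) the bookkeeping around the $\lceil L_i\rceil$-versus-$L_i$ normalisation of $\mu_{\disc}$, and (ii) checking that the hypotheses $L > (Qm)^m$, $|\xi|\leq L^{1/8}$ and $C_1$ large really do dominate the crude derivative bound $m^2 Q L^{1/8}/L$. If one wished, one could instead integrate by parts twice in each coordinate and apply Poisson summation, gaining extra powers of $L$ (and allowing $|\xi|$ well beyond $L^{1/8}$), but nothing of the sort is needed in the stated range.
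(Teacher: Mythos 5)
Your proof is correct and uses essentially the same idea as the paper: bound the Riemann sum versus integral discrepancy by a derivative estimate, with the same derivative bound $\|\partial_j g\|_\infty \ll m|\xi|Q + \|w'\|_\infty$ and the same handling of the $\lceil L_i\rceil/L_i$ normalization. The only cosmetic difference is that you telescope one coordinate at a time, whereas the paper applies a multivariate mean value estimate on the subcubes directly; the resulting bound and its dependence on $m$, $Q$, $\xi$, $\|w'\|_\infty$, $L$ is the same.
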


\begin{lemma}\label{quad-2}
For $|\xi| \geq L^{1/8}$ we have $|T_{a,b,c}(\xi)| \ll L^{-2B}$,  uniformly in $a$ with $|a_{ij}| \leq Q$ and $\det (a + a^T) \geq L^{-2B}$, and for all $b,c$ with $|b_i|, |c| \leq Q$.
\end{lemma}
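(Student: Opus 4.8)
The plan is to recognise $T_{a,b,c}(\xi)$ as a smoothly truncated Fresnel (Gaussian oscillatory) integral and to extract from it the sharp decay rate $|\xi|^{-m/2}\det(a+a^{T})^{-1/2}$. Since $m=C_1B$ and $C_1$ is a sufficiently large absolute constant, the factor $|\xi|^{-m/2}\leq L^{-C_1B/16}$ (using $|\xi|\geq L^{1/8}$) comfortably absorbs the target $L^{-2B}$, the determinant loss $\det(a+a^{T})^{-1/2}\leq L^{B}$ and the smoothing loss $\|\widehat w\|_{L^{1}(\R)}^{m}\leq L^{B}$; so the plain Gaussian bound, with no further oscillatory cancellation, already suffices.

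Concretely, set $M:=a+a^{T}$; this is symmetric with $\det M=\det(a+a^{T})\geq L^{-2B}>0$, and $q_{a,b,c}(t)=\frac12 t^{T}Mt+\langle b,t\rangle+c$, so with $g:=w^{\otimes m}$,
\[ T_{a,b,c}(\xi)=e(\xi c)\int_{\R^{m}} g(t)\,e(\xi\langle b,t\rangle)\,e\bigl(\tfrac{\xi}{2}\,t^{T}Mt\bigr)\,dt. \]
The function $g$ is smooth and supported on $[0,1]^{m}$, hence Schwartz, with $\|\widehat g\|_{L^{1}(\R^{m})}=\|\widehat w\|_{L^{1}(\R)}^{m}\leq L^{m/C_1}$. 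Since the Gaussian factor is not integrable I would introduce the regularisation $H_{\eps}(t):=e(\tfrac{\xi}{2}t^{T}Mt)e^{-\eps|t|^{2}}$, which is Schwartz; dominated convergence (dominating function $|g|\in L^{1}$) gives
\[ T_{a,b,c}(\xi)=e(\xi c)\lim_{\eps\to0^{+}}\int_{\R^{m}} g(t)\,e(\xi\langle b,t\rangle)\,H_{\eps}(t)\,dt. \]
Now apply the Parseval identity $\int FK=\int \widehat F(\gamma)\,\widehat K(-\gamma)\,d\gamma$ (valid for Schwartz $F,K$, directly from Fourier inversion) with $F(t)=g(t)e(\xi\langle b,t\rangle)$, so $\widehat F(\gamma)=\widehat g(\gamma-\xi b)$, and $K=H_{\eps}$. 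Diagonalising $M=O^{T}\Lambda O$ with $O$ orthogonal and $\Lambda=\operatorname{diag}(\lambda_{1},\dots,\lambda_{m})$, an orthogonal change of variables splits $\widehat{H}_{\eps}$ into a product of one-dimensional Gaussian integrals,
\[ \widehat{H}_{\eps}(\eta)=\prod_{j=1}^{m}\sqrt{\frac{\pi}{\eps-\pi i\xi\lambda_{j}}}\;\exp\Bigl(-\frac{\pi^{2}(O\eta)_{j}^{2}}{\eps-\pi i\xi\lambda_{j}}\Bigr), \]
with principal branches (legitimate as $\operatorname{Re}(\eps-\pi i\xi\lambda_{j})=\eps>0$). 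Each exponential factor has modulus $\leq1$ because $\operatorname{Re}\bigl((\eps-\pi i\xi\lambda_{j})^{-1}\bigr)=\eps/(\eps^{2}+\pi^{2}\xi^{2}\lambda_{j}^{2})\geq0$, and $|\eps-\pi i\xi\lambda_{j}|\geq\pi|\xi|\,|\lambda_{j}|$, so $|\widehat{H}_{\eps}(\eta)|\leq|\xi|^{-m/2}|\det M|^{-1/2}$ uniformly in $\eta$ and $\eps$. Hence
\[ |T_{a,b,c}(\xi)|\leq|\xi|^{-m/2}|\det M|^{-1/2}\,\|\widehat g\|_{L^{1}(\R^{m})}\leq L^{m/C_1}\,|\xi|^{-m/2}\,|\det M|^{-1/2}, \]
and note that no constraint on $b$ or $c$ was used.

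To finish, insert $|\xi|\geq L^{1/8}$, $|\det M|=\det(a+a^{T})\geq L^{-2B}$ and $m=C_1B$:
\[ |T_{a,b,c}(\xi)|\leq L^{B}\cdot L^{-C_1B/16}\cdot L^{B}=L^{(2-C_1/16)B}\leq L^{-2B}, \]
the last inequality holding once $C_1\geq64$, which is guaranteed by our standing convention that $C_1$ is sufficiently large. The only genuinely delicate point is the honest justification of the Fresnel step — the $\eps$-regularisation and the uniform modulus bound on $\widehat{H}_{\eps}$ — and this is standard; everything else is bookkeeping. Since the argument uses only $\det(a+a^{T})\neq0$ quantitatively (nothing about the sizes of the individual $a_{ij}$, and nothing about $b,c$), it is manifestly uniform over the allowed ranges, as required.
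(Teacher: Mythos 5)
Your proof is correct and follows essentially the same route as the paper: diagonalise $a+a^T$ orthogonally, Fourier-expand the smooth cutoff $w^{\otimes m}$ so as to pay only a $\Vert\hat w\Vert_1^m$ loss, and bound the remaining product of one-dimensional Fresnel factors by $\prod_j|\xi\lambda_j|^{-1/2}=|\xi|^{-m/2}|\det(a+a^T)|^{-1/2}$. The only difference is technical: the paper keeps the integration domain truncated to $\Vert u\Vert_\infty\leq\sqrt m$ and invokes a uniform $\int_{Y_1}^{Y_2}e(x^2)\,dx=O(1)$ estimate, while you pass through an $\varepsilon$-regularisation and use the exact Gaussian Fourier transform, which is marginally cleaner and even avoids the $O(1)^m$ factor that the paper absorbs via $L>m^m$.
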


\begin{lemma}\label{quad-3-med}
Suppose that $L^{1/8} < |\xi| < L^{5/4}$ and that $\det (a + a^T) \geq L^{-2 B}$. Then $\max_{b,c} |S_{a,b,c}(\xi)| \leq L^{-2B}$.
\end{lemma}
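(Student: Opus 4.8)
The plan is to bound $|S_{a,b,c}(\xi)|$ directly by a single Weyl differencing step and then control the resulting exponential sum over differences using, crucially, both hypotheses: the lower bound $\det(a+a^T)\ge L^{-2B}$, and the ceiling $|\xi|<L^{5/4}$ which puts us safely below the ``resonance'' scale $\asymp L^2$.

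\emph{Step 1: reduction to a pure quadratic exponential sum.} The constant $c$ is a global phase $e(\xi c)$ and is irrelevant to $|S_{a,b,c}(\xi)|$. The smooth weight $w^{\otimes m}$ is stripped off by partial summation in each of the $m$ coordinates after differencing; since $\Vert w\Vert_\infty\le 1$ and $\Vert w'\Vert_\infty\le L^{1/C_1}$, every such step costs only $O(L^{1/C_1})$, hence a total factor $L^{O(m/C_1)}$, which is harmless because $m=C_1B$. The linear term $b$ is carried symbolically: it will enter only as an additive shift inside the $\Vert\cdot\Vert_{\T}$'s below, and all estimates will be uniform in $b$.

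\emph{Step 2: Weyl differencing.} Write $D:=\operatorname{diag}(L_1,\dots,L_m)$ and $M:=a+a^T$, the Hessian of $q_{a,b,c}$. For $t=x/L$, $t'=(x+h)/L$ one computes that $q_{a,b,c}(t)-q_{a,b,c}(t')$ is affine-linear in $x$, with coefficient of $x_i$ equal (mod $1$) to $\theta_i(h):=-(\xi D^{-1}MD^{-1}h)_i-\xi b_i/L_i$. Expanding $|S_{a,b,c}(\xi)|^2$, shifting $x\mapsto x+h$, factoring the inner sum over coordinates, and applying partial summation in each coordinate gives
\[ |S_{a,b,c}(\xi)|^2 \;\ll\; \frac{L^{O(m/C_1)}}{(\prod_i L_i)^2}\sum_{|h_i|<L_i}\ \prod_{i=1}^m\min\!\big(L_i,\ \Vert\theta_i(h)\Vert_{\T}^{-1}\big). \]
Set $\Phi:=\xi D^{-1}MD^{-1}$, so $\theta(h)\equiv-\Phi h-\xi D^{-1}b\ (\mathrm{mod}\ \Z^m)$. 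Two features of $\Phi$ are decisive. First, $\Phi$ is a genuine contraction: $\Vert\Phi\Vert\le |\xi|\,\Vert D^{-1}\Vert^2\,\Vert M\Vert\le 2Qm|\xi|/L^2\ll L^{-1/2}$, using $|\xi|<L^{5/4}$, $\Vert M\Vert\le 2Qm$, and the fact that $Q\le L^{1/m}$ and $m\ll\log L$ (from $L>(Qm)^m$). Second, $\Phi$ is non-degenerate with $|\det\Phi|=|\xi|^m\det(a+a^T)/(\prod_i L_i)^2$, and more precisely $\sigma_{\min}(\Phi)\ge |\xi|\,\sigma_{\min}(M)/L_{\max}^2\ge L^{-2B-o(m)}$, using $\sigma_{\min}(M)\ge\det(a+a^T)/\Vert M\Vert^{m-1}\ge L^{-2B-o(m)}$.

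\emph{Step 3: estimating the difference sum, and conclusion.} Decomposing each $\min(L_i,\Vert\theta_i\Vert_{\T}^{-1})$ dyadically reduces the sum to bounding, for each multi-index $\mathbf{j}\in\{0,1,\dots,\lceil\log_2 L\rceil\}^m$, the count $N(\mathbf{j}):=\#\{h:|h_i|<L_i,\ \Vert\theta_i(h)\Vert_{\T}\le 2^{j_i}/L_i\}$, i.e. the number of $h$ in the box with $\Phi h$ lying in a ($b$-dependent) translate of $\Z^m$ thickened by a box of volume $2^{|\mathbf{j}|+O(m)}/\prod_i L_i$. Using the contraction bound to control the number of relevant $\Z^m$-translates and the successive minima of the lattice $\Phi^{-1}\Z^m$ (governed by the singular-value spectrum of $\Phi$, with $\sigma_{\min}(\Phi)$ as above) to bound the number of $h$ per translate, one obtains a bound for $N(\mathbf{j})$ which, summed against the weights $2^{-|\mathbf{j}|}$ and with the $\le L^{o(m)}$ loss from the $(\log L)^{O(m)}$ dyadic choices, yields
\[ \sum_{|h_i|<L_i}\prod_{i=1}^m\min\!\big(L_i,\Vert\theta_i(h)\Vert_{\T}^{-1}\big)\;\ll\;L^{o(m)}\Big(\textstyle\prod_i L_i+\dfrac{(\prod_i L_i)^2}{|\xi|^m\det(a+a^T)}\Big),\]
uniformly in $b$. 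Feeding this back and using $|\xi|>L^{1/8}$, $\det(a+a^T)\ge L^{-2B}=L^{-2m/C_1}$, the first term contributes $|S_{a,b,c}(\xi)|^2\ll L^{O(m/C_1)+o(m)}/\prod_iL_i\le L^{-m/2+o(m)}$, and the second contributes $L^{O(m/C_1)+o(m)}L^{-m/8}L^{2B}=L^{-m/8+O(m/C_1)+o(m)}$. For $C_1$ sufficiently large both are $\le L^{-4B}$, so $\max_{b,c}|S_{a,b,c}(\xi)|\le L^{-2B}$.

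\emph{Main obstacle.} The delicate point is Step 3: bounding $N(\mathbf{j})$ uniformly in the shift coming from $b$ and simultaneously over all dyadic $\mathbf{j}$. When $\det(a+a^T)$ is close to its floor $L^{-2B}$, the matrix $\Phi^{-1}$ is badly elongated along the near-null direction of $M$ — $\Vert\Phi^{-1}\Vert$ can reach $\approx L^{2B}\gg L$ — so one cannot merely bound $N(\mathbf{j})$ by the volume of the preimage parallelepiped; one must genuinely use the interaction with the box constraint $|h_i|<L_i$ and the whole singular-value profile of $\Phi$. This is precisely the place where the two hypotheses of the lemma, the lower bound on $\det(a+a^T)$ and the upper bound $|\xi|<L^{5/4}$, are both indispensable.
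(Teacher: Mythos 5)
Your overall strategy --- Weyl differencing once, then controlling the resulting difference sum using both the lower bound on $\det(a+a^T)$ and the ceiling $|\xi|<L^{5/4}$ --- is the right shape, and Steps 1 and 2 (stripping the weight, introducing $\Phi=\xi D^{-1}MD^{-1}$, computing its norm and minimal singular value) are fine. But Step 3, which is where the actual content of the lemma lives, is asserted rather than proved, and the asserted estimate does not appear to be correct.

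Concretely: your claimed bound
\[ \sum_{|h_i|<L_i}\prod_{i}\min\bigl(L_i,\Vert\theta_i(h)\Vert_{\T}^{-1}\bigr)\ll L^{o(m)}\Bigl(\prod_iL_i+\frac{(\prod_iL_i)^2}{|\xi|^m\det(a+a^T)}\Bigr) \]
is missing (at least) the ``resonant'' term of order $L^{o(m)}|\xi|^m\det(a+a^T)=L^{o(m)}(\prod_iL_i)^2|\det\Phi|$. Already in a single coordinate with $L_i=L$, $a_{ii}=32$, and $\alpha_i:=2a_{ii}\xi/L_i^2$ equal to the reciprocal of an integer $k\approx L^{3/4}$, the one-variable sum $\sum_{|h|<L}\min(L,\Vert\alpha_i h\Vert^{-1})$ is of size $\asymp L^2/k\asymp L^{5/4}$, which exceeds your claimed $L^{o(1)}(L+1/\alpha_i)\asymp L^{1+o(1)}$ by a factor $L^{1/4}$. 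The lemma's conclusion still happens to hold in this example because a weaker bound on the difference sum suffices once one takes $m=C_1B$ large; but the displayed estimate as stated is false, so the deduction at the end of your Step 3 is not justified. More importantly, as you yourself flag in your ``main obstacle'' paragraph, when $\det(a+a^T)$ sits near its floor $L^{-2B}$ the lattice $\Phi^{-1}\Z^m$ is highly elongated and the count $N(\mathbf{j})$ is no longer governed by $|\det\Phi|$ alone. You correctly identify that one must ``use the interaction with the box constraint and the whole singular-value profile of $\Phi$,'' but you do not supply the argument that does this. That missing argument \emph{is} the lemma.

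The paper sidesteps the direct lattice count entirely. After the common Weyl differencing and pigeonholing (which produce a set $H'$ of density $\geq L^{-18B}$ and an index set $I$ with $|I|\geq m/2$ such that $\Vert\tfrac{\xi}{L_i}\ell_{i,a}(\hh/\LL)\Vert_\T\le L^{-15/16}$ for $h\in H'-H'$, $i\in I$), the argument is by contradiction. The hypothesis $|\xi|<L^{5/4}$ is used not to make $\Phi$ a contraction on the whole box but to show that, on $H'-H'$, the linear forms $\ell_{i,a}(\hh/\LL)$ are either genuinely small $(<L^{-2/3})$ or genuinely large $(>L^{-1/3})$ --- the intermediate annulus is forbidden. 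An elementary covering argument (the paper's Lemma \ref{diff-lem}) then forces the real image $\ell_{i,a}(X)$ of the thickened set $X$ to have small one-dimensional measure, and applying the linear map $\psi=(\ell_{1,a},\dots,\ell_{m,a})$ together with $\det\psi=\det(a+a^T)\geq L^{-2B}$ gives $\mu(X)\ll L^{2B-m/48}$, contradicting $\mu(X)\geq L^{-18B}$. This ``forbidden annulus'' step is exactly the mechanism that replaces the hard lattice-point count in your Step 3, and it is absent from your proposal. To repair your proof you would need either to prove a correct (and necessarily more complicated) version of the Step 3 estimate that genuinely incorporates the singular-value profile of $\Phi$ together with the box constraint, or to switch to the contradiction-via-measure route.
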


\begin{lemma}\label{quad-3-large}
For each fixed $\xi$, $L^{5/4} \leq |\xi| \leq L^{2B}$, we have 
\[ \P_a ( \max_{b,c} |S_{a,b,c}(\xi)| \geq L^{-2B} ) \leq L^{-7B}.\]
\end{lemma}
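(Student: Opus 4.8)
\emph{Proof plan for Lemma \textup{\ref{quad-3-large}}.} The plan is to first strip the maximum over $b,c$ down to a single fixed pair, and then to apply a high-moment estimate in the random off-diagonal coefficients. Since $c$ enters $q_{a,b,c}$ only through the additive constant, $e(\xi c)$ is a unimodular global phase and $|S_{a,b,c}(\xi)| = |S_{a,b,0}(\xi)|$, so only the maximum over $b$ with $|b_i|\le Q$ matters. Differentiating under the integral and using $\|w\|_\infty\le 1$, $|t_i|\le 1$ on $\Supp w^{\otimes m}$ and that $\mu_{\disc}$ is a probability measure, we get $|\partial_{b_i} S_{a,b,0}(\xi)| \le 2\pi|\xi|\le 2\pi L^{2B}$; thus $S_{a,b,0}(\xi)$ is $O(mL^{2B})$-Lipschitz in $b$. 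A net $\mathcal N\subset[-Q,Q]^m$ of mesh $\ll L^{-4B}/m$ can be taken with $|\mathcal N|\le L^{O(Bm)} = L^{O(C_1B^2)}$ (using $Q<L^{1/m}$ and $m<L$), and then $\max_b |S_{a,b,0}(\xi)| \le \tfrac12 L^{-2B} + \max_{b\in\mathcal N}|S_{a,b,0}(\xi)|$. By the union bound it therefore suffices to prove, for each fixed $b$, that $\P_a(|S_{a,b,0}(\xi)|\ge \tfrac12 L^{-2B}) \le L^{-\gamma C_1 B^2}$ for a large absolute $\gamma$; applying Markov's inequality to $|S_{a,b,0}(\xi)|^{2k}$ with $k:=m$, this in turn reduces to a bound of the shape $\E_a|S_{a,b,0}(\xi)|^{2k} \le L^{-\alpha m k}$ for an absolute $\alpha>0$ (here one uses $L>m^m$ to absorb the $4^k$-type loss from Markov into a power of $L$, and $C_1$ large so that $4Bk-\alpha mk = (4-\alpha C_1)Bk$ is very negative).

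\emph{The moment computation.} Separating off the part of the phase not involving the random $a_{ij}$ ($i<j$), write $S_{a,b,0}(\xi) = \big(\prod_i\lceil L_i\rceil\big)^{-1}\sum_{0\le x_i<L_i} F(x)\,e\big(\xi\sum_{i<j}a_{ij}x_ix_j/(L_iL_j)\big)$, where $F(x):=w(x_1/L_1)\cdots w(x_m/L_m)\,e\big(\xi\sum_i(a_{ii}x_i^2/L_i^2+b_ix_i/L_i)\big)$ is fixed and satisfies $|F|\le 1$. Expanding $|S|^{2k}$ and using independence of the $a_{ij}$, together with $\E_{a_{ij}}[e(\xi a_{ij}u)] = \frac{\sin(2\pi Q\xi u)}{2\pi Q\xi u}$ (of modulus $\le\min(1,\frac{1}{2\pi Q|\xi||u|})$), one obtains
\[ \E_a\,|S_{a,b,0}(\xi)|^{2k} \;\le\; \frac{1}{\big(\prod_i \lceil L_i\rceil\big)^{2k}} \sum_{\vec x,\vec y}\ \prod_{1\le i<j\le m}\min\!\Big(1,\ \tfrac{L_iL_j}{2\pi Q\,|\xi|\,|U_{ij}|}\Big), \]
the sum running over $2k$-tuples $\vec x=(x^{(1)},\dots,x^{(k)})$, $\vec y=(y^{(1)},\dots,y^{(k)})$ of box points, with $U_{ij}:=\sum_{l=1}^k(x_i^{(l)}x_j^{(l)}-y_i^{(l)}y_j^{(l)})$, and the $(i,j)$-factor interpreted as $1$ when $U_{ij}=0$. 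Since $|\xi|\ge L^{5/4}$ and $L_iL_j\le L^{2+1/24}$, each factor is at most $\min(1,L^{3/4+1/24}/|U_{ij}|)$; in particular a pair with $|U_{ij}|\ge L^{11/10}$ (call it \emph{heavy}) contributes a factor $\le L^{-1/4}$. It is the lower cutoff $|\xi|\ge L^{5/4}$ that makes these factors genuine gains, and the upper cutoff $|\xi|\le L^{2B}$ that keeps the net from the first step of size $L^{O(C_1B^2)}$; a purely deterministic reduction via Lemma \ref{quad-2} and the determinant bound of Lemma \ref{random-matrix} would require $|\xi|$ to be a power of $L$ with a large fixed exponent, which is precisely why this range of $\xi$ must be handled probabilistically.

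\emph{The combinatorial heart.} It remains to show the weighted count above is $\le(\prod_i\lceil L_i\rceil)^{2k}L^{-\alpha mk}$, and this is the step I expect to be the main obstacle. I would classify tuples $(\vec x,\vec y)$ by the graph $G$ on $\{1,\dots,m\}$ whose edges are the non-heavy (``light'') pairs. The ``diagonal'' case, where $\{y^{(l)}\}_{l\le k}$ is a permutation of $\{x^{(l)}\}_{l\le k}$, contributes $\le k!\,(\prod_i\lceil L_i\rceil)^k$ tuples, i.e. a contribution $\le k!/(\prod_i\lceil L_i\rceil)^k \le L^{-mk+o(mk)}$, where $L>(Qm)^m$ absorbs $k!$ and similar factors into $L^{o(mk)}$. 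For the remaining tuples one must exhibit enough heavy pairs to beat the (larger) number of tuples. When $k=1$ this is a clean structural fact: after localising to coordinates bounded away from $0$ so that one may take logarithms, if $G$ has no heavy edge then $G$ is triangle-free once the set of coordinates on which $x$ and $y$ agree is removed, so $G$ has $\binom m2 - \Omega(m)$ edges with deficiency growing with the size of the disagreement set, which gives the required gain since each heavy pair saves $L^{-1/4}$. The hard part is making this robust for $k\ge 2$, where smallness of $U_{ij}$ no longer forces any single summand $x_i^{(l)}x_j^{(l)}-y_i^{(l)}y_j^{(l)}$ to be small: internal cancellation among the $l$'s must be controlled, essentially by bounding the number of $2k$-tuples satisfying a prescribed system of bilinear near-equations $U_{ij}=O(L^{11/10})$ — a bound on low-rank integer near-factorizations of symmetric matrices, for which a divisor-bound input and, where needed, the near-singularity estimate of Lemma \ref{random-matrix} would be the tools. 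Throughout, the hypotheses $L>(Qm)^m$, $L_i\in[L,L^{1+1/48}]$ and $m=C_1B$ keep every combinatorial loss within $L^{o(mk)}$ and every union-bound loss within $L^{O(C_1B^2)}$, so that the final $\E_a|S_{a,b,0}(\xi)|^{2k}\le L^{-\alpha mk}$ closes the argument.
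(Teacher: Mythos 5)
Your proposal takes a genuinely different route from the paper. The paper handles all of $L^{1/8}<|\xi|<L^{2B}$ by Weyl differencing: squaring $S'$ produces sums over difference vectors $h$, a pigeonhole on the large set $H'$ of good $h$ yields a single small vector $h^*\in H'-H'$ with $0<|h^*|\ll L^{19B/m}$, and then one conditions on all $a_{ij}$ except one row (indexed by $j$ with $h^*_j\neq 0$). Over that row the events $\Vert \tfrac{\xi}{L_iL_j}a_{ij}h^*_j+c_i\Vert_{\T}\le 2L^{-15/16}$, $i\in I_-$, are independent, each of probability $\ll L^{-1/8}$, giving $\le L^{-m/48}$; the final union bound over $I,h^*$ costs only $4^mL^{19B}$. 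The parameters $b,c$ drop out automatically after Weyl differencing, so no $\eps$-net over $b$ is needed. Your plan instead uses a $2m$-th moment bound plus Markov and an explicit net over $b$; the $c$-elimination and the net step are fine, and the reduction to $\E_a|S_{a,b,0}|^{2m}\le L^{-\alpha m^2}$ is correctly set up.

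However, your proposal has a genuine gap at exactly the point you flag yourself: the "combinatorial heart." The needed statement is a weighted count of $2k$-tuples with the weight $\prod_{i<j}\min(1,L^{3/4+1/24}/|U_{ij}|)$, where $U_{ij}=\sum_l(x^{(l)}_ix^{(l)}_j-y^{(l)}_iy^{(l)}_j)$. Even your $k=1$ argument (taking logarithms to deduce near-triangle-freeness of the "light" graph) is only a sketch and already requires localising away from small coordinates, which must be done carefully because the $\min(1,\cdot)$ is interpreted as $1$ when $U_{ij}=0$, so small coordinates are not a negligible set. For $k\ge 2$ — which is the case you actually need, since you take $k=m$ — you explicitly acknowledge that the argument is missing: internal cancellation among the $l$'s destroys the rigidity that the $k=1$ argument relies on, and "bounding the number of low-rank integer near-factorizations of symmetric matrices" with "a divisor-bound input" is a direction, not a proof. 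There is also a smaller issue: to reuse Lemma \ref{random-matrix} inside this combinatorial count one would need a statement about deterministic integer matrices, not the probabilistic determinant estimate the paper proves. As it stands, the central estimate $\E_a|S_{a,b,0}(\xi)|^{2m}\le L^{-\alpha m^2}$ is unestablished, so the proof does not close. The paper's conditioning-on-one-row device avoids this entirely: because $h^*$ is a single fixed difference vector, the events across $i\in I_-$ literally involve disjoint random variables $a_{ij}$, and no moment/combinatorial analysis is required.
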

The most involved part of the argument is the proof of Lemmas \ref{quad-3-med} and \ref{quad-3-large}. Before turning to the proofs of the lemmas, let us see how they assemble to give a proof of Proposition \ref{cont-disc-compare}, via \eqref{to-prove-quads}.  

\begin{proof}[Proof of Proposition \ref{cont-disc-compare}] (assuming Lemmas \ref{quad-1} -- \ref{quad-3-large}) In this argument we write $o(1)$ to denote a quantity tending to zero as $L \rightarrow \infty$. First, note that for each fixed $a,b,c$ with $|a_{ij}|, |b_i| \leq Q$ the sum $S_{a,b,c}(\xi)$ is weakly continuous in $\xi$, in the sense that $|S_{a,b,c}(\xi) - S_{a,b,c}(\xi')| \ll  Qm^2 |\xi - \xi'|$. This follows from the definition \eqref{s-sum-def} and the fact that $|q_{a,b,c}| \ll Qm^2$ on $[0,1]^m$. If $|\xi - \xi'| \leq L^{-3B}$ then, under our assumption on $L$, this comfortably implies that
\begin{equation}\label{weak-cont} |S_{a,b,c}(\xi) - S_{a,b,c}(\xi')| <  L^{-2B}.\end{equation}

Let $\xi_1,\dots, \xi_{L^{5B}}$ be a $L^{-3B}$-dense set of points in $[L^{1/8}, L^{2B}]$.  Suppose that $\det(a + a^T) \geq L^{-2 B}$. Then by Lemmas \ref{quad-3-med}, \ref{quad-3-large} and the union bound we have 
\[ \P_a(\max_i \max_{b,c} |S_{a,b,c}(\xi_i)| \geq L^{-2B}) \leq L^{-2B} = o( L^{-B}),\] and so by \eqref{weak-cont}
\[ \P_a(\max_{L^{1/8} \leq |\xi| \leq L^{2B}} \max_{b,c}  |S_{a,b,c}(\xi)| \geq 2L^{-2B}) = o(L^{-B}).\]
That is, with probability at least $1 -o(L^{-B})$ in $a$, 
\begin{equation}\label{good-est} |S_{a,b,c}(\xi)| \leq 2L^{-2B} \; \mbox{for $L^{1/8} \leq |\xi| \leq L^{2B}$ and all $b,c$ with $|b_i|, |c| \leq Q$.}\end{equation}

Now by Lemma \ref{random-matrix}, the probability that $|\det(a + a^T)| < L^{-2B}$ is at most $(Qm)^{5m^2} L^{-2 B}$, which is certainly $o(L^{-B})$ with the assumption $L > (Qm)^m$. Suppose from now on that $a$ has the property \eqref{good-est} and that $\det(a + a^T) > L^{-2B}$. We have shown that this is true with probability $1 - o(L^{-B})$.

Returning to the main task \eqref{to-prove-quads}, we divide into low-, middle- and high-frequency ranges. For the low-range frequencies $|\xi| \leq L^{1/8}$ we use Lemma \ref{quad-1} and the trivial bound $|\hat{\chi}(\xi)| \leq L^{-B}$ (which follows from the fact that $\int \chi \ll L^{-B}$, which is property (3) of $\chi$), obtaining
\begin{align} \nonumber 
\int_{|\xi| \leq L^{1/8}} \hat{\chi}( \xi)  (T_{a,b,c}(\xi) - S_{a,b,c}(\xi)) d\xi & \ll L^{-1/2} \int_{|\xi| \leq L^{1/8}} |\hat{\chi}(\xi)| \\ & = o(L^{-B - 1/4}) .\label{small-range}\end{align}

For the middle range frequencies $L^{1/8} < |\xi| < L^{2B}$ we have, by Lemma \ref{quad-2},
\begin{equation}\label{last-1} \int_{L^{1/8} < |\xi| < L^{2B}} |\hat{\chi}( \xi) T_{a,b,c}(\xi)| d\xi  \ll L^{-2B} \int_{\R} |\hat{\chi}( \xi)| d\xi \ll L^{-2B},\end{equation} where the last inequality follows from the fact that $\Vert \hat{\chi} \Vert_1 \ll 1$, which is item (4) of the list of properties satisfied by $\chi$. By \eqref{good-est}, we similarly have
\begin{equation}\label{last-2} \int_{L^{1/8} < |\xi| < L^{2B}} |\hat{\chi}( \xi) S_{a,b,c}(\xi)| d\xi  \ll L^{-2B} \int_{\R} |\hat{\chi}(\xi)| d\xi \ll L^{-2B}.\end{equation}  By the triangle inequality, \eqref{last-1} and \eqref{last-2} together give
\begin{equation}\label{middle-range} \int_{L^{1/8} < |\xi| < L^{2B}} \hat{\chi}(\xi) (T_{a,b,c}(\xi) - S_{a,b,c}(\xi)) d\xi  \ll L^{-2B}.\end{equation}
Finally, for the high-frequencies $|\xi| \geq L^{2B}$ we use the trivial bounds $|S_{a,b,c}(\xi)|, |T_{a,b,c}(\xi)| \leq 1$ (both of which follow immediately from the definitions \eqref{s-sum-def}, \eqref{t-sum-def}, remembering that $w$ is supported on $[0,1]$ and has $\Vert w \Vert_{\infty} \leq 1$) and the estimate $\int_{|\xi| \geq L^{2B}} |\hat{\chi}(\xi)| \ll L^{-2B}$ (item (5) on the list of properties satisfied by $\chi$) to get \begin{equation}\label{high-range} \int_{|\xi| > L^{2B}} \hat{\chi}(\xi) (T_{a,b,c}(\xi) - S_{a,b,c}(\xi)) d\xi  \ll  \int_{|\xi| > L^{2B}} |\hat{\chi}(\xi)|  \ll L^{-2B}. \end{equation}
Putting \eqref{small-range}, \eqref{middle-range} and \eqref{high-range} together completes the proof of \eqref{to-prove-quads}, this having been shown to be true with probability $1 - o(L^{-B})$ in $a$ (and for all $b, c$ with $|b_{i}|, |c| \leq Q$). This completes the proof of Proposition \ref{cont-disc-compare}, subject of course to proving Lemmas \ref{quad-1}, \ref{quad-2}, \ref{quad-3-med} and \ref{quad-3-large}. \end{proof}

We now begin the task of proving those four lemmas.
\begin{proof}[Proof of Lemma \ref{quad-1}] For any function $f$ supported on $[0,1]^m$ we have
\[ \int_{\R^m} f(t) d\mu(t) = \sum_{0 \leq x_i < L_i} \int_{\prod_{i=1}^m [0,\frac{1}{L_i}]^m} f(\frac{x}{L_i} + t) d\mu(t).\]
However for $t \in [0,\frac{1}{L_i}]^m$ the mean value theorem gives
\[ |f(\frac{x}{L_i} + t) - f(\frac{x}{L_i})| \leq \frac{m}{L} \max_j  \Vert\partial_j f\Vert_{\infty}, \] whilst
\begin{align*}
\sum_{0 \leq x_i < L_i} \int_{\prod_{i = 1}^m [0, \frac{1}{L_i}]^m} f(\frac{x}{L_i}) d\mu(t) & = \frac{\lceil L_1 \rceil \cdots \lceil L_m \rceil}{L_1 \cdots L_m} \int f d\mu_{\disc} \\ & = \int f d\mu_{\disc} + O(\frac{m}{L} \Vert f \Vert_{\infty}).
\end{align*}
Therefore
\begin{equation}\label{152a} \big| \int_{\R^m} f(t) d\mu(t) - \int_{\R^m} f(t) d\mu_{\disc}(t) \big| \ll \frac{m}{L} \max_j  \Vert\partial_j f\Vert_{\infty} + \frac{m}{L} \Vert f \Vert_{\infty}.\end{equation}
Taking $f(t) = w^{\otimes m}(t) e(\xi q_{a,b,c}(t))$, we see that for $t \in [0,1]^m$ 
\begin{align}\nonumber \partial_j f(t) & = w^{\otimes w}(t) \partial_j e(\xi q_{a,b,c}(t)) + \partial_j (w^{\otimes m}(t)) e(\xi q_{a,b,c}(t)) \\ &\ll m |\xi| Q + \Vert w' \Vert_{\infty}.\label{152b}\end{align}
Now $|\xi| \leq L^{1/8}$, and the assumptions of Proposition \ref{cont-disc-compare} guarantee that the terms $Qm, \Vert w' \Vert_{\infty}$ are much smaller than $L^{1/8}$. The result follows by combining \eqref{152a}, \eqref{152b}. 
\end{proof}

\begin{proof}[Proof of Lemma \ref{quad-2}.] Recall the definition \eqref{t-sum-def} of $T_{a,b,c}$, that is to say 
\begin{equation}\label{t-sum-rpt} T_{a,b,c}(\xi) = \int_{\R^m} w^{\otimes m}(t) e(\xi q_{a,b,c}(t)) d\mu(t),\end{equation} where 
\[ q_{a,b,c}(t) = \sum_{i \leq j} a_{ij} t_i t_j + \sum_i b_i t_i + c = \frac{1}{2}t^T (a + a^T) t + b^T t + c.\]
Let $\lambda_1,\dots, \lambda_m$ be the eigenvalues of $a + a^T$. Thus by assumption we have
\begin{equation}\label{det-bd}\lambda_1 \cdots \lambda_m = \det(a + a^T) \geq L^{-2B}.\end{equation} Let $\Psi$ be an orthogonal matrix so that $\Psi^T (a + a^T) \Psi = D$, where $D$ is the diagonal matrix with entries $\lambda_1,\dots, \lambda_m$. Making the change of variables $t = \Psi u$ in \eqref{t-sum-rpt} gives
\begin{equation}\label{t-fourier} T_{a,b,c}(\xi) = \int_{\R^m}  w^{\otimes m}(\Psi u) \prod_{j=1}^m e(\frac{1}{2}\xi \lambda_j u_j^2 + \alpha_j u_j + \beta) du_1\cdots du_m.\end{equation}  Here, $\alpha_1,\dots, \alpha_m,\beta$ are real numbers depending on $\xi, b, c, \Psi$, but their precise identity is unimportant. Note that if $\Psi u \in [0,1]^m$ then $\Vert u \Vert_{\infty} \leq \Vert u \Vert_2 = \Vert \Psi u \Vert_2  \leq \sqrt{m}$. Applying Fourier inversion to the cutoff $w^{\otimes m}$ we then obtain
\begin{align*} w^{\otimes m}(\Psi u) & = 1_{\Vert u \Vert_{\infty} \leq \sqrt{m}}  w^{\otimes m}(\Psi u) \\ & =  \prod_{j=1}^m \int_{\R^m} \hat{w}( \gamma_j) 1_{[-\sqrt{m}, \sqrt{m}]}(u_j) e( \gamma_j (\Psi u)_j) d\gamma_j.\end{align*} This, \eqref{t-fourier} and the triangle inequality imply that
\begin{equation}\label{to-use-t} |T_{a,b,c}(\xi)| \leq \Vert \hat{w} \Vert_1^m \prod_{j=1}^m \sup_{\alpha'_j} \big| \int^{\sqrt{m}}_{-\sqrt{m}} e(\frac{1}{2}\xi \lambda_j u_j^2 + \alpha'_j u_j) du_j \big|.\end{equation}
Now we use the fact that
\begin{equation}\label{gaussian} \int^{Y_2}_{Y_1} e(x^2) dx = O(1),\end{equation} uniformly in $Y_1, Y_2$. To see this, divide into positive and negative ranges, and make the substitution $x^2 = w$; it then suffices to show that $\int^Y_0 e(w) w^{-1/2} dw = O(1)$, uniformly in $Y$. Bounding the portion of the integral on $[0,1]$ trivially, it is sufficient to show that $\int^Y_1 e(w) w^{-1/2} dw = O(1)$, uniformly in $Y \geq 1$. This can be done by integration by parts, since $e(w) w^{-1/2}$ is bounded uniformly and moreover \[ |\int^Y _1 e(w) w^{-3/2} dw| < \int^{\infty}_1 w^{-3/2} dw = O(1).\]

Completing the square and making a substitution in \eqref{gaussian}, we have
\[ \int^{\sqrt{m}}_{-\sqrt{m}} e(\frac{1}{2}\xi \lambda_j u_j^2 + \alpha'_j u_j) du_j \ll  |\xi \lambda_j|^{-1/2},\] with the implied constant absolute. Substituting into \eqref{to-use-t}, it follows that \begin{align*} T_{a,b,c}(\xi)  \leq O(1)^m \Vert \hat{w} \Vert_1^m & \prod_{j=1}^m |\xi \lambda_j|^{-1/2} \leq O(1)^m \Vert \hat{w} \Vert_1^m  |\xi|^{-m/2} L^{B}  \\ & \leq O(1)^m \Vert \hat{w} \Vert_1^m L^{-m/16} L^{B} \leq L^{-2B}  ,\end{align*}where this last line of inequalities follows from the assumption that $|\xi| \geq L^{1/8}$, \eqref{det-bd},  $m = C_1 B$ (with $C_1 \geq 50$) and the assumptions that $\Vert \hat{w} \Vert_1 \leq L^{1/C_1}$ and $L > m^m$.\end{proof}

\emph{Remark.} Note how important it was in this proof that we had a smooth cutoff $w^{\otimes m}$ to $1_{[0,1]^m}$; dealing with a rough cutoff under the orthogonal transformation $\Psi$ is problematic. For a very similar use of this device, see Heath-Brown and Pierce \cite[Section 3]{hbp}.

We turn now to the proofs of Lemmas \ref{quad-3-med} and \ref{quad-3-large}. We first begin with some initial arguments common to the proof of both lemmas.

First, we remove the weight $w^{\otimes m}$ by Fourier expansion. By the inversion formula we have, for $\xi \neq 0$, 
\[ S_{a,b,c}(\xi) = \int_{\gamma \in \R^m} (\prod_{j = 1}^m \hat{w}(\gamma_i) ) S'_{a, b +  \frac{\gamma}{\xi},c}(\xi) d\gamma,\] where 
\[ S'_{a,b,c}(\xi) = \int_{\R^m} e(\xi q_{a,b,c}(t)) d\mu_{\disc}(t)\]
is the unsmoothed exponential sum. Therefore for any $\xi$
\begin{equation}\label{unsmooth} \max_{b,c} |S_{a,b,c}(\xi)| \leq \Vert \hat{w} \Vert_1^m \max_{b,c} |S'_{a,b,c}(\xi)|.\end{equation} Note that the maxima here are over \emph{all} $b,c$; in these lemmas we are not assuming any bounds on their size. They will, in any case, shortly disappear from view.

Fix some $\xi$ in the relevant range $L^{1/8} \leq |\xi| \leq L^{2B}$. The initial steps follow the standard proof of Weyl's inequality for quadratic exponential sums. Squaring and making a substitution, we have
\begin{equation} \label{s-lower}|S'_{a,b,c}(\xi)|^2 \leq (L_1 \dots L_m)^{-2} \sum_{h_i \in [-L_i, L_i]} \sum_{x \in B_h} e (\xi (q_{a,b,c}(\frac{\xx+\hh}{\LL})  - q_{a,b,c}(\frac{\xx}{\LL}))),\end{equation} where, for $h \in \Z^m$, $B_h := \prod_{i=1}^m [L_i] \cap (\prod_{i=1}^m [L_i] - h)$.
Here, and in what follows, we use the shorthands
\[ \frac{\xx}{\LL} =(\frac{x_1}{L_1},\dots, \frac{x_m}{L_m}),\quad  \frac{\xx+\hh}{\LL} = (\frac{x_1+h_1}{L_1},\dots, \frac{x_m + h_m}{L_m}) \] (and similar); note this is just notation and we are not dividing vectors, thus we still write $x = (x_1,\dots, x_m)$, $h = (h_1,\dots, h_m)$ without bold font. We have
\[ q_{a,b,c} (\frac{\xx+\hh}{\LL}) - q_{a,b,c}(\frac{\xx}{\LL}) = (\frac{\xx}{\LL})^T (a + a^T) \frac{\hh}{\LL} + q_{a,b,0}(\frac{\hh}{\LL}), \] where here and in what follows we abuse notation slightly and identify $a = (a_{ij})_{1 \leq i \leq j \leq m}$ with an upper-triangular matrix in the obvious way.

Equation \eqref{s-lower} then implies that 
\begin{equation}\label{s-lower-mod} |S'_{a,b,c}(\xi)|^2 \leq (L_1 \cdots L_m)^{-2} \sum_{h_i \in [-L_i, L_i]} \big| \sum_{x \in B_h} e(\xi  (\frac{\xx}{\LL})^T (a + a^T) \frac{\hh}{\LL}) \big|.  \end{equation} Note that $b,c$ no longer appear here, so if $\max_{b,c} |S_{a,b,c}(\xi)| \geq L^{-7B}$ then (using also \eqref{unsmooth} and the assumption $\Vert \hat{w} \Vert^m_1 \leq L^B$) we have
\[  \sum_{h_i \in [-L_i, L_i]} \big| \sum_{x \in B_h} e(\xi (\frac{\xx}{\LL})^T (a + a^T) \frac{\hh}{\LL}) \big| \geq (L_1 \cdots L_m)^2 L^{-16B}.\] Since the inner sum is trivially bounded by $L_1 \cdots L_m$, this means that there is a set $H \subset \prod_{i=1}^m [-L_i, L_i]^m$, $|H| \geq \frac{1}{2}(L_1 \cdots L_m) L^{-16B} \geq (L_1 \cdots L_m) L^{-17B}$, such that 
\begin{align}\nonumber \big| \sum_{x \in B_h} e(\xi (\frac{\xx}{\LL})^T (a + a^T) \frac{\hh}{\LL}) \big| & \geq 2^{-m-1}(L_1 \cdots L_m) L^{-16B} \\ & \geq (L_1 \cdots L_m) L^{- 17B}\label{h-large}\end{align} for $h \in H$. 

\begin{equation}\label{h-large}\big| \sum_{x \in B_h} e(\xi (\frac{\xx}{\LL})^T (a + a^T) \frac{\hh}{\LL}) \big| \geq (L_1 \cdots L_m) L^{- 17B}\end{equation} for $h \in H$. 
For $t \in \R^m$, write 
\[ \ell_{i, a}(t) := ((a + a^T) t)_i, \quad \mbox{$i = 1,\dots, m$}.\] Thus \eqref{h-large} becomes
\[\big| \sum_{x \in B_h} e(\xi \sum_{i=1}^m \frac{x_i}{L_i} \ell_{i,a}(\frac{\hh}{\LL})  )\big| \geq (L_1 \cdots L_m) L^{- 17B}\] for $h \in H$. Noting that, for each $h$, $B_h$ is a sub-box of $\prod_{i=1}^m [L_i]$, we may evaluate the sum as a geometric series and conclude that for $h \in H$ we have
\begin{equation}\label{h-large-geo}  \prod_{i = 1}^m \min(L_i, \Vert \frac{\xi}{L_i} \ell_{i,a}(\frac{\hh}{\LL}) \Vert_{\T}^{-1}) \geq (L_1 \cdots L_m) L^{ - 17B}.  \end{equation}
It follows that for each $h \in H$ there is a set $I(h) \subseteq \{1,\dots, m\}$, $|I(h)| \geq m/2$, such that 
\[ \Vert \frac{\xi}{L_i} \ell_{i,a}(\frac{\hh}{\LL}) \Vert_{\T} \leq L^{-1 + 34B/m} \leq \frac{1}{2}L^{-15/16}\] for all $i \in I(h)$ (recalling that $B = C_1 m$, and if $C_1$ is big enough). Pigeonholing in $h$, we may find a set $I \subseteq \{1,\dots, m\}$, $|I| \geq m/2$, and a set $H'$, \begin{equation}\label{h-dash} |H'| \geq 2^{-m} |H| \geq (L_1 \cdots L_m) L^{-18B},\end{equation} such that 
\[  \Vert \frac{\xi}{L_i} \ell_{i,a}(\frac{\hh}{\LL}) \Vert_{\T} \leq \frac{1}{2}L^{-15/16} \; \mbox{for $h \in H'$ and $i \in I$}.\] Hence, since the $\ell_{i,a}$ are linear, we have
\begin{equation}\label{bigell}  \Vert \frac{\xi}{L_i} \ell_{i,a}(\frac{\hh}{\LL}) \Vert_{\T} \leq L^{-15/16} \; \mbox{for $h \in H' - H'$ and $i \in I$}.\end{equation}
This fact will be the key to the proofs of both Lemma \ref{quad-3-med} and \ref{quad-3-large}, but the subsequent treatment of those two lemmas differs.

\begin{proof}[Proof of Lemma \ref{quad-3-med}] We handle the ranges $L^{1/8} < |\xi| < L^{3/4}$ and $L^{3/4} \leq |\xi| < L^{5/4}$ separately, starting with the latter, which is slightly harder.

Suppose then that $L^{3/4} < |\xi| < L^{5/4}$ and that $\det(a + a^T) \geq L^{-2B}$, as in the statement of the lemma. Note that if $h \in H' - H'$, $i \in I$ and
\begin{equation}\label{first-h-bd} |\ell_{i,a}(\frac{\hh}{\LL})| < L^{-1/3}\end{equation}
then
\[  \frac{|\xi|}{L_i} |\ell_{i,a}(\frac{\hh}{\LL})| < \frac{L^{5/4 - 1/3}}{L_i} <  \frac{1}{2}\] and so \[ \Vert \frac{\xi}{L_i} \ell_{i,a}(\frac{\hh}{\LL}) \Vert _{\T} = \frac{|\xi|}{L_i} |\ell_{i,a}(\frac{\hh}{\LL})|.\] Therefore, if \eqref{bigell}, \eqref{first-h-bd} hold then (using $L_i \in [L, L^{1 + 1/48}]$),
\[ |\ell_{i,a}(\frac{\hh}{\LL})| \leq L^{-15/16} \frac{L_i}{|\xi|} < L^{-15/16 }\frac{L^{1+1/48}}{L^{3/4}} = L^{-2/3}.\] 
Thus we have shown that if \eqref{bigell} holds then 
\begin{equation}\label{do-not} \mbox{for $h \in H' - H'$, $i \in I$ we do not have} \;  L^{-2/3} < |\ell_{i,a}(\frac{\hh}{\LL})| < L^{-1/3}.\end{equation}
To analyse \eqref{do-not} we need the following lemma. 
\begin{lemma}\label{diff-lem}
Let $S \subset [-2mQ,2mQ] \subset \R$ be a set, and suppose that $S - S$ contains no element in $[2L^{-2/3}, \frac{1}{2}L^{-1/3}]$. Then $\mu_{\R}(S) \ll mQ L^{-1/3}$.
\end{lemma}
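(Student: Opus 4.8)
The plan is to exploit the gap hypothesis directly: with $a := 2L^{-2/3}$ and $b := \tfrac12 L^{-1/3}$ we are told that $(S-S)\cap[a,b]=\emptyset$, and the point is that the ratio $b/a = \tfrac14 L^{1/3}$ is large. I would first record the elementary fact that if $(S-S)\cap[a,b]=\emptyset$ then the translates $S,\,S+a,\,S+2a,\dots,\,S+ka$ are pairwise disjoint for any $k\le b/a$. Indeed, if $0\le i<j\le k$ and some point lay in $(S+ia)\cap(S+ja)$, then $(j-i)a$ would be an element of $S-S$; but $1\le j-i\le k\le b/a$ forces $(j-i)a\in[a,b]$, contradicting the hypothesis.

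Next I would take $k:=\lfloor b/a\rfloor\asymp L^{1/3}$ (which is $\ge 1$ once $L$ is large enough that $a<b$), and note that since each translate $S+ia$ is contained in $[-2mQ,\,2mQ+ka]\subseteq[-2mQ,\,2mQ+1]$, an interval of length at most $5mQ$ (using $m,Q\ge 1$ together with $ka\le b\le 1$), disjointness and translation-invariance of Lebesgue measure give
\[ (k+1)\,\mu_{\R}(S)=\mu_{\R}\Big(\bigcup_{i=0}^{k}(S+ia)\Big)\le 5mQ.\]
Dividing by $k+1> b/a=\tfrac14 L^{1/3}$ then yields $\mu_{\R}(S)<20\,mQ\,L^{-1/3}$, which is the asserted bound.

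I do not anticipate any real obstacle here: the only things to verify are that $L$ is large enough for $k\ge 1$ and that the chosen ambient interval genuinely contains all $k+1$ translates, both of which are immediate from the numerology. (If one is fussy about measurability of $S$, the identical argument works with inner measure, which is superadditive over disjoint sets; in the intended application $S$ will be a finite union of intervals, so this point does not arise.)
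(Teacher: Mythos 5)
Your proof is correct, and it takes a genuinely different route from the paper's. The paper argues by covering: partition $[-2mQ,2mQ]$ into $O(mQL^{1/3})$ subintervals of length $\tfrac12 L^{-1/3}$, and observe that inside any one subinterval $I$, two points of $S$ are within $\tfrac12 L^{-1/3}$ of each other, hence (by the gap hypothesis on $S-S$) within $2L^{-2/3}$ of each other, so $\mu(S\cap I)\le 2L^{-2/3}$; summing over the cover gives the bound. You instead use a packing/translation argument: translating $S$ by $0,a,2a,\dots,ka$ with $a=2L^{-2/3}$ and $k=\lfloor b/a\rfloor$ gives pairwise disjoint sets (since any coincidence would produce an element of $S-S$ in $[a,ka]\subset[a,b]$), and all translates live in an interval of length $O(mQ)$, so $(k+1)\mu(S)\ll mQ$. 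The two arguments are essentially dual — one cuts the ambient space at scale $b$ and bounds $S$ locally at scale $a$, the other shifts at scale $a$ and packs into the ambient interval — and they yield bounds of the same quality, roughly $\mu(S)\ll mQ\,a/b$. Your version has the slight conceptual advantage of being the standard "Steinhaus-type" disjoint-translates trick, and you correctly handle both the degenerate case $b\le a$ (where the claim is trivial) and the measurability caveat (irrelevant in the intended application, where $S$ is a linear image of a finite union of boxes).
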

\begin{proof}
Cover $[-2mQ,2mQ]$ with $O( QmL^{1/3})$ disjoint intervals of length $\frac{1}{2}L^{-1/3}$. The intersection of $S$ with any such interval has measure at most $2L^{-2/3}$ (it is either empty, or if it contains some $x$ then it only contains points in an interval of diameter $\leq 2L^{-2/3}$ about $x$). 
\end{proof}

Returning to the analysis of \eqref{do-not}, consider the set $X \subset [-1,1]^m$ defined by
\[ X := \{ \frac{\hh}{\LL} : h \in H'\}  + \prod_{i=1}^m [0,\frac{1}{L_i}].\]
Thus, by \eqref{h-dash},
\begin{equation}\label{x-lower} \mu_{\R^m}(X) = (L_1 \cdots L_m)^{-1} |H'| \geq L^{-18B}.\end{equation}
Also, if $x,x' \in X$ then for some $h, h' \in H'$
\[ \ell_{i,a}(x - x') = \ell_{i,a}(\frac{\hh - \hh'}{\LL}) + O(\frac{mQ }{L}).\] Thus,  by \eqref{do-not} (and the assumption that $mQ < L^{\frac{1}{4}}$) we see that for $i \in I$, $\ell_{i,a}(X - X) = \ell_{i,a}(X) - \ell_{i,a}(X)$ contains no element in the interval $[2 L^{-2/3}, \frac{1}{2} L^{-1/3}]$. By Lemma \ref{diff-lem} (and noting that $\ell_{i,a}(X) \subset [-2mQ,2mQ]$), 
\[ \mu_{\R} (\ell_{i,a}(X)) = O(mQ L^{-1/3}) \leq L^{-1/12}.\]
Thus, the image of $X$ under the linear map $\psi : \R^m \rightarrow \R^m$ defined by \[ \psi(x_1,\dots, x_m) := (\ell_{1,a}(x),\dots, \ell_{m,a}(x))\] has measure at most $(2mQ)^m L^{-|I|/12} \leq L^{-m/48}$ (here we have used the fact that $\ell_{j,a}(x)$ takes values in $[-2Qm, 2Qm]$ for all $j$, even if $j \notin I$). But $\det\psi = \det (a + a^T) \geq L^{-2 B}$, and so 
\[ \mu_{\R^m}(X) \ll  (\det \psi)^{-1} L^{-m/48} \ll  L^{2B - m/48}.\] 
Recalling that $m = C_1 B$, this contradicts \eqref{x-lower} if $C_1$ is big enough. This completes the proof of Lemma \ref{quad-3-med} in the range $L^{3/4} \leq |\xi| < L^{5/4}$.

Now consider the remaining range $L^{1/8} < |\xi| < L^{3/4}$. Once again, we refer to \eqref{bigell}, which tells us that 
\begin{equation}\label{bigell-rpt-new}  \Vert \frac{\xi}{L_i} \ell_{i,a}(\frac{\hh}{\LL}) \Vert_{\T} \leq L^{-15/16} \; \mbox{for $h \in H' - H'$ and $i \in I$}.\end{equation}
Now
\[ | \frac{\xi}{L_i} \ell_{i,a}(\frac{\hh}{\LL}) | \leq \frac{L^{3/4}}{L_i} \cdot 2Qm < \frac{1}{2}.\] Therefore \eqref{bigell-rpt-new} implies that 
\[ |\ell_{i,a}(\frac{\hh}{\LL})| \ll L^{-15/16} \frac{L_i}{|\xi|} \leq L^{-15/16} \frac{L^{1 + 1/48}}{L^{1/8}} = L^{-1/24}.\]
That is, 
\begin{equation}\label{do-not-new} \mbox{for $h \in H' - H'$, $i \in I$ we have} \;   |\ell_{i,a}(\frac{\hh}{\LL})| \leq L^{-1/24}.\end{equation} This should be compared with \eqref{do-not}, but the subsequent analysis is easier and does not require Lemma \ref{diff-lem} since we immediately have
\[ \mu_{\R} (\ell_{i,a}(X)) \leq 2L^{-1/24}.\] One may now obtain a contradiction essentially as before, with minor numerical modifications.
\end{proof}

\begin{proof}[Proof of Lemma \ref{quad-3-large}] Suppose that $|\xi| \geq L^{5/4}$, as in the statement of the lemma. Recall the key statement \eqref{bigell} established above, that is to say \begin{equation}\label{bigell-rpt}  \Vert \frac{\xi}{L_i} \ell_{i,a}(\frac{\hh}{\LL}) \Vert_{\T} \ll L^{-15/16} \; \mbox{for $h \in H' - H'$ and $i \in I$}.\end{equation} Recall also (see \eqref{h-dash}) that $|H'| \geq (L_1 \cdots L_m)L^{ - 18B}$.
By an application of the pigeonhole principle (dividing $\prod_{i=1}^m [-L_i, L_i]$ into cubes of sidelength $10 L^{18B/m} \leq L^{19 B/m}$) there is some $h^* \in H' - H'$ with $0 < |h^*| \leq L^{19B/m}$. Then \eqref{bigell-rpt} implies that 
\begin{equation}\label{bigell-smallh}  \Vert \frac{\xi}{L_i} \ell_{i,a}(\frac{\hh^*}{\LL}) \Vert_{\T} \leq 2 L^{-15/16} \; \mbox{for $i \in I$}.\end{equation}
Let us summarise the situation so far: under the assumption that $\sup_{b,c} |S_{a,b,c}(\xi)| \geq L^{-7B}$ and the analysis leading to \eqref{bigell}, we have shown that \eqref{bigell-smallh} holds for some $I \subset \{1,\dots,m\}$ with $|I| \geq m/2$, and for some $h^* \in \Z^m$, $0 < |h^*| \ll L^{19B/m}$. Thus, denoting by $E_{I, h^*}(a)$ be the event that \eqref{bigell-smallh} holds, we have the inclusion of events
\begin{equation}\label{inclu-events} \{ a : \sup_{b,c} |S_{a,b,c}(\xi)| \geq L^{-7B} \} \subset \bigcup_{\substack{I \subseteq [m] \\ |I| \geq m/2}} \bigcup_{0 < |h^*| \leq L^{19B/m}} E_{I, h^*}(a).\end{equation}
We will now bound $\P_a (E_{I, h^*}(a))$ for $I, h^*$ fixed, before applying the union bound to \eqref{inclu-events}. Pick some index $j \in [m]$ such that $h^*_j \neq 0$. We are now going to condition on all except some fairly small subset of the random entries $a_{ij}$. Let $I_- := \{ i \in I : i < j\}$ and $I_+ := \{ i \in I : i > j\}$. 

If  $|I_-| \geq m/6$ then we condition on all except the $a_{ij}$, $i \in I_-$.  With all the other variables fixed, the conditions \eqref{bigell-smallh} for $i \in I_-$ become
\begin{equation}\label{i-plus} \Vert \frac{\xi}{L_i L_j} a_{ij} h^*_j + c_i \Vert_{\T} \leq 2L^{-15/16},\end{equation} where the $c_i$ depend on the fixed variables but not on the random variables $(a_{ij})_{i \in I_-}$. ($c_i$ will depend on the entries $(a + a^T)_{ik}$, $k \neq j$, that is to say on $a_{ik}$ for $k > i$ and on $a_{ki}$ for $k < i$; none of these variables are one of the $a_{i_-,j}$, $i_- \in I_-$.)

If $|I_+| \geq m/6$ then we proceed in very similar fashion, but now we condition on all except the $a_{ji}$, $i \in I_+$. With all the other variables fixed, the conditions \eqref{bigell-smallh} for $i \in I_+$ become
\begin{equation}\label{i-minus} \Vert \frac{\xi}{L_i L_j} a_{ji} h^*_j + c_i \Vert_{\T} \leq 2L^{-15/16},\end{equation} with the $c_i$ as before; note that none of the variables in $c_i$ depends on any $a_{j i_+}$ with $i_+ \in I_+$.)
The treatment of the two cases $|I_-| \geq m/6$ and $|I_+| \geq m/6$ is now essentially identical, so we detail only the former. 

Consider a single value of $i \in I_-$ and a fixed $c_i$. As $a_{ij}$ ranges uniformly in $[-Q,Q]$, $\frac{\xi}{L_i L_j} a_{ij} h^*_j + c_i$ ranges uniformly over a subinterval of $\R$ of length at least $|\xi| |h^*_j|/L_i L_j  \geq L^{-3/4 - 1/24}$ (here we use the hypothesis that $|\xi| \geq L^{5/4}$, as well as the assumption that $L_i, L_j \leq L^{1 + 1/48}$). Thus the probability (in $a$) that \eqref{i-minus} holds is bounded above by $O(L^{-15/16 + 3/4 + 1/24}) <  L^{-1/8}$.

As $i$ ranges over $I_-$, these events are independent. Therefore we see, averaging over all choices of the fixed variables, that 
\[ \P_a E_{I, h^*}(a) \leq \P_a \big(  \Vert \frac{\xi}{L_i L_j} a_{ij} h^*_j + c_i \Vert_{\T} \leq 2L^{-15/16} \; \mbox{for $i \in I_-$} \big) \leq L^{-|I_-|/8} .\] Since $|I_-| \geq m/6$, this is at most $L^{-m/48}$. The same bound holds in the case that $|I_+| \geq m/6$. 

Finally, applying the union bound to \eqref{inclu-events}, noting that the number of events in the union is $\leq 4^m L^{19B}$ we see that indeed
\[ \P_a (\sup_{b,c} |S_{a,b,c}(\xi)| \geq L^{-7B}) \leq 4^m L^{19B - m/48} \leq L^{-2B},\] assuming that $C_1$ is sufficiently large. 
\end{proof}

This concludes the proof of all the lemmas, and hence the proof of Proposition \ref{cont-disc-compare}.

\section{An amplification argument}

In this section, we finally prove our key proposition about the density of values taken by random quadratic forms, Proposition \ref{main-quad-gap}.
\begin{main-quad-gap-rpt}
Let $B \geq 1$ be an exponent and let $Q \geq 1$ be a parameter. Suppose that $s \geq C_1 B^2$ is an integer. Let $L \geq (QB)^{C_2B}$.
Let $L_1,\dots, L_s$ be lengths with $L_i \in [L, L^{1+1/48}]$. Choose an $\frac{1}{2}s(s+1)$-tuple $a = (a_{ij})_{1 \leq i \leq j \leq s}$ of coefficients by choosing the $a_{ij}$  independently and uniformly at random from $[-Q, Q]$, except for the diagonal terms $a_{ii}$ which are selected uniformly from $[32, Q]$.  For $b = (b_1,\dots, b_m)$ and $c \in \R$ and write $q_{a,b.c} : \R^s \rightarrow \R$ for the quadratic form defined by $q_{a,b,c}(t) := \sum_{i \leq j} a_{ij} t_i t_j + \sum_i b_i t_i + c$. Let $\Sigma = \Sigma(a)$ be the event that the set 
\[ \{ q_{a,b,c}(\frac{x_1}{L_1},\dots, \frac{x_s}{L_s}) : 0 \leq x_i < L_i\}\] is $L^{-B}$-dense in $[\frac{1}{2}, \frac{3}{2}]$ for all $b,c$ satisfying $|b_i| \leq Q$, $|c| \leq \frac{1}{4}$ and $b_i^2 - 4a_{ii}c < 0$ for all $i$. Then $\P_a(\Sigma(a)) = 1 - O(L^{-Bs/16})$.
\end{main-quad-gap-rpt}
We follow the strategy outlined in Section \ref{second-step-sec}. First, we establish the following, a kind of preliminary version of the result with a smaller number $m = O(B)$ of variables, but with a much weaker exceptional probability of $O(L^{-B})$.

\begin{proposition}\label{quad-gap-fixed}
Let $B \geq 1$, and let $Q \geq 1$ be a parameter. Let $m = C_1 B$. Let $L_1,\dots, L_m$ be lengths with $L_i \in [L, L^{1+1/48}]$, where $L \geq (QB)^{C_2B}$. Fix diagonal terms $a_{ii}$ with $32 \leq a_{11},\dots, a_{mm} \leq Q$, and select $a_{ij}$, $1 \leq i < j \leq m$, uniformly and independently at random from $[-Q, Q]$. Let $a$ be the (random) upper triangular matrix thus formed. For $b = (b_1,\dots, b_m)$ and $c \in \R$ write $q_{a,b,c}(t) := t^T a t + b^T t + c$. Let $\Sigma$ be the event that the set 
\begin{equation}\label{quad-set-rp} \{ q_{a,b,c}(\frac{x_1}{L_1},\dots, \frac{x_m}{L_m}) : 0 \leq x_i < L_i\}\end{equation} is $L^{-B}$-dense in $[\frac{1}{2}, \frac{3}{2}]$ for all $b, c$ satisfying $|b_i| \leq Q$, $|c| \leq \frac{1}{4}$ and $b_i^2 - 4a_{ii}c < 0$. Then $\P_a(\Sigma(a)) \geq 1 - L^{-B}$.
\end{proposition}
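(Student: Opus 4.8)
The plan is to deduce Proposition~\ref{quad-gap-fixed} from Proposition~\ref{cont-disc-compare}, by first proving a \emph{deterministic} continuous lower bound and then transferring it to the discrete average. First I would fix a convenient weight: let $w:\R\to[0,1]$ be a smooth bump supported on $[0,1]$, equal to $1$ on $[\eta,1-\eta]$ with $\eta:=2L^{-1/C_1}$, and satisfying $\|w'\|_\infty\le L^{1/C_1}$, $\|\hat w\|_1\ll\log L\le L^{1/C_1}$ (a routine mollification); and let $\chi:=\chi_{L^{-B}}$ be the cutoff of the previous section, so $\chi\ge1$ on $[-\tfrac12 L^{-B},\tfrac12 L^{-B}]$, $\chi=0$ off $[-L^{-B},L^{-B}]$ and $\chi\ge0$. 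Since $q_{a,b,c-y}=q_{a,b,c}-y$ and $|c-y|\le2$ for $y\in[\tfrac12,\tfrac32]$, the form $q_{a,b,c-y}$ is admissible in Proposition~\ref{cont-disc-compare}. It is then enough to show that, with probability $\ge1-L^{-B}$ in the random off-diagonal entries, the discrete average $\int w^{\otimes m}(t)\,\chi(q_{a,b,c-y}(t))\,d\mu_{\disc}(t)$ is strictly positive for every admissible $b,c$ and every $y\in[\tfrac12,\tfrac32]$: because $\mu_{\disc}$ lives on the points $(x_1/L_1,\dots,x_m/L_m)$ and $\chi$ vanishes off $[-L^{-B},L^{-B}]$, positivity forces one such point with $|q_{a,b,c}(x/L)-y|\le L^{-B}$, which is exactly the asserted $L^{-B}$-density of the set \eqref{quad-set-rp}.

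The crux is the \emph{deterministic} bound
\[
\int_{\R^m}w^{\otimes m}(t)\,\chi\big(q_{a,b,c}(t)-y\big)\,d\mu(t)\ \ge\ L^{-B}(mQ)^{-2m},
\]
valid for \emph{every} $a$ with $a_{ii}\in[32,Q]$ and $|a_{ij}|\le Q$, every $b,c$ with $|b_i|\le Q$, $|c|\le\tfrac14$, $b_i^2-4a_{ii}c<0$, and every $y\in[\tfrac12,\tfrac32]$. From the discriminant condition and $c\le\tfrac14$ one gets $|b_i|<\sqrt{a_{ii}}$. I would then restrict to the box $R:=[\eta,\tfrac{c_0}{mQ}]^{m-1}$ in the last $m-1$ variables ($c_0$ a small absolute constant; $R$ is nonempty and inside $[0,1]^{m-1}$ because $L$ is huge). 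For fixed $(t_2,\dots,t_m)\in R$ the one-variable quadratic $\phi(t_1):=q_{a,b,c}(t_1,\dots,t_m)$ has leading coefficient $a_{11}>0$, $\phi(0)\le\tfrac14+O(c_0)<\tfrac12-L^{-B}$, and, using $a_{11}\ge32$ and $|b_1|<\sqrt{a_{11}}$, $\phi(\tfrac12)\ge\tfrac{a_{11}}4-\tfrac{|b_1|}2-O(c_0)=\tfrac{\sqrt{a_{11}}(\sqrt{a_{11}}-2)}4-O(c_0)>2$. So for each $s\in[y-\tfrac12 L^{-B},y+\tfrac12 L^{-B}]$ there is a unique $t_1\in(0,\tfrac12)$ with $\phi(t_1)=s$ and $\phi'(t_1)>0$; moreover $a_{11}t_1^2+\ell t_1=\phi(t_1)-\phi(0)\ge\tfrac15$ with $|\ell|<\sqrt{a_{11}}+1$, which forces $t_1\ge c_1 Q^{-1/2}\ge\eta$. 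Hence $w^{\otimes m}\equiv1$ and $\chi(\cdot-y)\ge1$ on the part of $q^{-1}\big([y-\tfrac12 L^{-B},y+\tfrac12 L^{-B}]\big)$ lying over $R$ with $t_1\in[c_1Q^{-1/2},\tfrac12]$, and since $|\nabla q|\le4m^{3/2}Q$ on $[0,1]^m$ and these level sets project onto all of $R$, the co-area formula delivers the displayed lower bound.

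Finally I would apply Proposition~\ref{cont-disc-compare} to the form with constant term $c-y$: on an event of probability $\ge1-L^{-B}$ in $a$ we get $\big|\int w^{\otimes m}\chi(q_{a,b,c-y})\,d\mu-\int w^{\otimes m}\chi(q_{a,b,c-y})\,d\mu_{\disc}\big|\le L^{-B-1/4}$, uniformly over $b,c,y$. Because $L\ge(QB)^{C_2B}$ with $C_2$ large forces $(mQ)^{2m}<L^{1/4}$, combining this with the deterministic bound yields $\int w^{\otimes m}\chi(q_{a,b,c-y})\,d\mu_{\disc}\ge L^{-B}(mQ)^{-2m}-L^{-B-1/4}>0$ on that event, for all admissible $b,c$ and all $y\in[\tfrac12,\tfrac32]$, whence $\P_a(\Sigma(a))\ge1-L^{-B}$. (All the size conditions on $L$ used above --- $R\neq\emptyset$, $c_1Q^{-1/2}\ge\eta$, $(mQ)^{2m}<L^{1/4}$, and $|c-y|\le Q$ --- follow from $L\ge(QB)^{C_2B}$ for $C_2$ large, $L$ being large in absolute terms.)

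The step I expect to be the main obstacle is the deterministic estimate of the second paragraph, which must hold uniformly over \emph{all} legal coefficient matrices $a$. The awkward regime is when the diagonal entries $a_{ii}$ are comparable to $Q$: then the target value $y$ is attained only at $t_1\asymp a_{ii}^{-1/2}$, uncomfortably close to the boundary of $\Supp(w)$, while one must also keep the measures of the relevant fibres large enough to dominate the error $L^{-B-1/4}$ from Proposition~\ref{cont-disc-compare}. Both difficulties are absorbed by the fact that $L$ is astronomically larger than $Q$, which is precisely what the hypothesis $L\ge(QB)^{C_2B}$ provides.
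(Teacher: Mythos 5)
Your proposal is correct and takes essentially the same route as the paper: minorize via a product bump $w^{\otimes m}$ and the cutoff $\chi_{L^{-B}}$, prove a deterministic lower bound on the continuous integral $\int w^{\otimes m}\chi(q_{a,b,c-u})\,d\mu$ by freezing $t_2,\dots,t_m$ to be small and using the discriminant condition together with $a_{ii}\ge 32$ to force a sign change of the one-variable quadratic inside the support of $w$, and then transfer to $\mu_{\disc}$ via Proposition~\ref{cont-disc-compare} and compare against $L^{-B-1/4}$. The only differences are cosmetic: the paper takes the box for $(t_2,\dots,t_m)$ to be $[\eta,2\eta]^{m-1}$ with $\eta=(Qm)^{-2}$ and bounds the measure of an explicit tube around the IVT solution, whereas you take $[\eta,c_0/(mQ)]^{m-1}$ with $\eta\approx L^{-1/C_1}$ and invoke the coarea formula; both give a lower bound of the form $L^{-B}(\mathrm{poly}(mQ))^{-m}$ which dominates $L^{-B-1/4}$ under $L\ge(QB)^{C_2B}$. (One small slip: for a width-$\eta$ mollified indicator as in Lemma~\ref{w-tent}, $\|\hat w\|_1\ll\eta^{-1}$, not $\ll\log L$; but since you only ever use $\|\hat w\|_1\le L^{1/C_1}$, this does not affect the argument.)
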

\begin{proof}
We apply Proposition \ref{cont-disc-compare}. Let $\chi$ be a minorant to the interval $I$ of length $L^{-B}$ about the origin constructed in Lemma \ref{chi-15-exist}, as in the statement of Proposition \ref{cont-disc-compare}. Set $\eta :=  (Qm)^{-2}$ and let $w$ be a smooth bump function as constructed in Lemma \ref{w-tent}, thus $w$ is supported on $[0,1]$, with $w = 1$ on $[\eta, 1 - \eta]$, and with $\Vert w' \Vert_{\infty} ,\Vert \hat{w} \Vert_1 = O(\eta^{-1}) \leq L^{1/C_1}$, by the condition on $L$ in the statement of the proposition. This means that the conditions involving $w$ in Proposition \ref{cont-disc-compare} are satisfied.

Suppose that $a$ satisfies the conclusion \eqref{key-est} of Proposition \ref{cont-disc-compare} (which happens with probability $\geq 1 - L^{-B}$).

Let $u \in [\frac{1}{2}, \frac{3}{2}]$. We wish to show that the set \eqref{quad-set-rp} meets $u + I$. If it does not then, in the notation of Proposition \ref{cont-disc-compare}, 
\[ \int w^{\otimes m}(t) \chi(q_{a,b,c-u}(t)) d\mu_{\disc}(t) = 0.\]
By the conclusion of Proposition \ref{cont-disc-compare} it follows that
\begin{equation}\label{to-contradict} \int w^{\otimes m}(t) \chi(q_{a,b,c-u}(t)) d\mu(t) \leq L^{-B - 1/4}.\end{equation}
By contrast we claim that the LHS of \eqref{to-contradict} is in fact $\gg \eta^mL^{-B}$. To prove the claim, let $\eta_2,\dots, \eta_m \in [\eta, 2\eta]$ be arbitrary and consider the function
\[ F(t) = F_{\eta_2,\dots, \eta_m}(t) := q_{a,b,c-u}(t, \eta_2, \dots, \eta_m).\]
Note that 
\begin{align*} F(\eta) = q_{a,b,c-u}(\eta, \eta_2,\dots, \eta_m)  & = q_{a,b,c-u}(0) + O(Qm \eta) \\ & = c - u+  O(Qm\eta) < 0\end{align*} (by the choice of $\eta$, and also since $c \leq \frac{1}{4} < \frac{1}{2} \leq u$) whilst for all $t$
\begin{align*} F(t) & = q_{a,b,c-u}(t, \eta_2,\dots, \eta_m) \\ & = q_{a,b,c}(t,0,0,\dots) - u -O(mQ \eta) \\ & \geq a_{11}(t + \frac{b_1}{2a_{11}})^2 - u - O(mQ \eta).\end{align*} 
In this last step we used that $b_1^2 - 4a_{11} c < 0$.
Since
\[ \max_{1/4 \leq t \leq 3/4} a_{11}(t + \frac{b_1}{2a_{11}})^2 \geq \frac{1}{16} a_{11} \geq 2\] and $u \leq \frac{3}{2}$ it follows that 
\[ \max_{1/4 \leq t \leq 3/4} F(t) \geq 2 - u - O(m Q \eta) > 0.\]
Thus, we may apply the intermediate value theorem to see that there is some $x = x(\eta_2,\dots, \eta_m) \in [\eta, \frac{3}{4}] \subset [\eta, 1 - \eta]$ such that $F(x) = 0$, that is to say
\[ q_{a,b,c} (x(\eta_2,\dots, \eta_m), \eta_2,\dots, \eta_m) = u.\] Thus if $t$ lies in the set
\[ S := [\eta , 1 - \eta]^m \cap \bigcup_{\eta \leq \eta_2,\dots, \eta_m \leq 2 \eta} (x(\eta_2,\dots, \eta_m) + [-\eta L^{-B}, \eta L^{-B}], \eta_2,\dots, \eta_m)\] then $|q_{a,b,c}(t) - u| \leq \frac{1}{2}L^{-B}$. We have $\mu(S) \gg \eta^m L^{-B}$. 

Now if $t \in S$ then (by construction of $\chi$ as in Lemma \ref{chi-15-exist}) we have $\chi(q_{a,b,c-u}(t)) = 1$. Also, since $S \subset [\eta, 1 - \eta]^m$, we have $w^{\otimes}(t) = 1$. It follows that 
\[ \int w^{\otimes m}(t) \chi(q_{a,b,c-u}(t)) d\mu(t) \geq \mu(S) \gg \eta^m L^{-B},\] contradicting \eqref{to-contradict} in view of the assumption that $L > (QB)^{C_2 B}$.
\end{proof}

Proposition \ref{main-quad-gap} is deduced from the preliminary version, Proposition \ref{quad-gap-fixed}, by a kind of amplification argument. It is driven by the following combinatorial lemma.

\begin{lemma}\label{cliques}
Let $m$ be a positive integer, and suppose that $s \geq 16m^2$. Then there are sets $I_1, \dots, I_k \subset [s]$, $k \geq s/16$, with $|I_{\ell}| = m$ and such that the sets of pairs $\{(i,j) : i < j, i, j \in I_{\ell}\}$ are disjoint as $\ell$ ranges over $\ell = 1,\dots, k$.
\end{lemma}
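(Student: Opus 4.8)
The plan is to realize each $I_\ell$ as (the first $m$ points of) a line in a small affine plane, packing many vertex-disjoint copies of such planes inside $[s]$. Two distinct lines in an affine plane meet in at most one point, so they share no edge, which is exactly the disjointness we want.

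First dispose of the trivial case $m=1$ (take any $s\geq s/16$ singletons). Assume $m\geq 2$. By Bertrand's postulate choose a prime $p$ with $m\leq p<2m$, so that $p^2<4m^2\leq s/4$. Partition $[s]$ into $T:=\lfloor s/p^2\rfloor$ consecutive blocks $B_1,\dots,B_T$, each of size exactly $p^2$ (discarding the at most $p^2-1$ leftover elements). Note $T\geq s/p^2-1$, so $T p^2\geq s-p^2\geq s-s/4=3s/4\geq s/16$.

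Next, inside a fixed block $B_t$, identify $B_t$ with $\F_p\times\F_p$ via some bijection. For each pair $(a,b)\in\F_p^2$ set
\[ L_{a,b}^{(t)}:=\{(x,ax+b): 0\leq x\leq m-1\}\subset B_t,\]
which has exactly $m$ elements since $m\leq p$. This gives $p^2$ sets per block. The key check is that for $(a,b)\neq(a',b')$ one has $|L_{a,b}^{(t)}\cap L_{a',b'}^{(t)}|\leq 1$: if there were two common points, there would be distinct $x_1,x_2\in\{0,\dots,m-1\}$ (distinct also mod $p$, as $m-1<p$) with $ax_i+b=a'x_i+b'$, forcing $(a-a')(x_1-x_2)=0$ in $\F_p$, hence $a=a'$ and then $b=b'$, a contradiction. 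Consequently any two of the sets $L_{a,b}^{(t)}$ (within the same block) share at most one vertex and therefore no unordered pair; sets from different blocks are disjoint, hence also edge-disjoint.

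Finally, list all the $L_{a,b}^{(t)}$, over $1\leq t\leq T$ and $(a,b)\in\F_p^2$, as $I_1,\dots,I_k$ with $k=Tp^2\geq s/16$. Each has size $m$, and by the above their edge sets are pairwise disjoint, which is the assertion of the lemma. There is no real obstacle here beyond choosing the prime via Bertrand and verifying the counting inequality $Tp^2\geq 3s/4$; the finite-geometry input is just the one-line fact that two distinct lines meet in at most one point. (One could instead use lines in $\mathbf{P}^2(\F_p)$ to waste slightly fewer elements of $[s]$ and improve the constant, but the affine version above already gives $k\geq s/16$.)
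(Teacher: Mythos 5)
Your proof is correct and follows essentially the same approach as the paper: pack edge-disjoint $m$-sets into $[s]$ via lines in a plane over $\F_p$, with $p$ a Bertrand prime in $[m,2m)$, using the fact that two distinct lines share at most one point. The only difference is that you work in the affine plane $\F_p^2$ (non-vertical lines, $p^2$ sets per $p^2$-element block) where the paper uses the projective plane over $\F_p$ ($p^2+p+1$ lines per $p^2+p+1$-element block); both yield the same efficiency of one $m$-set per occupied element, so this is a cosmetic variant, and your more explicit count ($k\geq 3s/4$) comfortably exceeds the required $s/16$.
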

\begin{proof}
Let $p$, $m \leq p < 2m$, be a prime. Consider the projective plane of order $p^2 + p + 1$ over $\F_p$. This contains $p^2 + p + 1$ lines, each with $p+1 > m$ elements. Identifying $[p^2 + p + 1] \subset \N$ with the projective plane in some arbitrary way, we may take $I_1,\dots, I_{p^2+p+1} \subset [p^2 + p + 1]$ to be subsets of these lines, each of size $m$. For these sets $I_{\ell}$, the required disjointness statement is simply the fact that two points in projective space determine a unique line. Provided that $t(p^2 + p + 1) \leq s$, we may embed $t$ disjoint copies of this construction inside $[s]$, and thereby take $k = t(p^2 + p + 1)$. Using the crude bound $p^2 + p + 1 \leq 2p^2 < 8m^2$ and $\lfloor s/8m^2 \rfloor \geq s/16m^2$, the result follows.
\end{proof}
\emph{Remark.} Erd\H{o}s and R\'enyi \cite{erdos-renyi} attribute the use of projective planes in this context to Thoralf Skolem. One may equivalently think of this lemma as a result about embedding cliques $K_m$ into the complete graph $K_s$ in an edge-disjoint fashion, and the lemma states that in the regime $s \sim C m^2$ this may be done quite efficiently, so as to use up a positive proportion of the edges. Similar constructions would allow one to do the same for, say, $s \sim Cm^3$, but it seems to me to be unclear\footnote{Added in proof: Stefan Glock has drawn my attention to the paper \cite{kuzjurin}, which clarifies this issue.} what the situation is when (for instance) $s \sim m^{5/2}$, or even when $s = m^2/10$. Most of the extensive literature on questions of this type considers the case $m$ fixed and $s \rightarrow \infty$.

\begin{proof}[Proof of Proposition \ref{main-quad-gap}]
Condition on the choice of diagonal terms $a_{ii}$; it suffices to prove the required bound, uniformly in each fixed choice of these terms. Let $I_1,\dots, I_k \subset [s]$ be as in Lemma \ref{cliques} above, and let $\Sigma(\ell)$ be the event that the set
\begin{equation}\label{quad-set-restrict} \{ q_{a,b}(\frac{x_1}{L_1},\dots, \frac{x_s}{L_s}) ) : 0 \leq x_i < L_i, x_j = 0 \; \mbox{for $j \notin I_{\ell}$}\}\end{equation} is $L^{-B}$-dense in $[\frac{1}{2}, \frac{3}{2}]$ for all $b$ with $|b_i| \leq Q$, $|c| \leq \frac{1}{4}$ and $b_i^2 - 4a_{ii} c < 0$ for all $i$. Since the set \eqref{quad-set-restrict} is contained in the set \eqref{quad-set-rp}, we have the containment $\Sigma(\ell) \subset \Sigma$.

Note moreover that the event $\Sigma(\ell)$ only depends on the variables $(a_{ij})_{i < j}$ with $i, j \in I(\ell)$. By construction, these sets of variables are disjoint as $\ell$ varies and therefore the events $\Sigma(\ell)$, $\ell = 1,\dots, k$, are independent. 

By Proposition \ref{quad-gap-fixed}, $\P(\neg \Sigma(\ell)) \leq L^{-B}$. It follows that 
\[ \P(\neg \Sigma) \leq \P \big( \bigwedge_{\ell = 1}^k \neg \Sigma(\ell) \big) \leq L^{-Bk}  \leq L^{-Bs/16}.\]
Averaging over the choices of $a_{11}, \dots, a_{ss}$ (that is, undoing the conditioning on diagonal terms), this at last concludes the proof of Proposition \ref{main-quad-gap}.
\end{proof}

We have now proven all three of the ingredients stated in Section \ref{second-step-sec}, and hence by the arguments of that section Proposition \ref{second-step} is true. This completes the proof that there are (with high probability) no progressions of length $N^{1/r}$ in the red points of our colouring, and hence finishes the proof of Theorem \ref{mainthm-1}.

\part*{Appendix}

\appendix

\section{Lattice and geometry of numbers estimates}

In this appendix we use the convention (also used in the main paper) that if $x \in \Z^D$ then $|x|$ means $\Vert x \Vert_{\infty}$.The following lemma is probably standard, but we do not know a reference.

\begin{lemma}\label{hermite}
Suppose that $\Lambda, \Lambda'$ are two $m$-dimensional lattices and that $\Lambda' \leq \Lambda$. Let $(e_i)_{i = 1}^m$ be an integral basis for $\Lambda$. Then there is an integral basis $(e'_i)_{i = 1}^m$ for $\Lambda'$ such that the following is true: if $x \in \Lambda'$ and $x = \sum x_i e_i = \sum x'_i e'_i$, then $\max_i |x'_i| \leq 2^m \max_i |x_i|$.
\end{lemma}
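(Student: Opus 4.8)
The plan is to obtain $(e_i')$ from $(e_i)$ by the Hermite normal form of the inclusion $\Lambda' \le \Lambda$. Write each $e_i'$ in the $(e_j)$-basis, so that the matrix $H$ with columns the coordinate vectors of the $e_i'$ is an $m \times m$ integer matrix with $\det H = [\Lambda : \Lambda'] \ne 0$. By performing integer column operations (which amount to changing the choice of $\Z$-basis of $\Lambda'$, without changing $\Lambda$ or the $e_i$) we may put $H$ into lower-triangular Hermite normal form: $H_{ij} = 0$ for $i < j$, each diagonal entry $H_{ii} \ge 1$, and each below-diagonal entry reduced modulo the diagonal entry in its row, i.e. $0 \le H_{ij} < H_{ii}$ for $i > j$. (Alternatively, one can use upper-triangular HNF; the triangular shape is all that matters.) This fixes the basis $(e_i')$.

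Now suppose $x \in \Lambda'$ with $x = \sum_i x_i e_i = \sum_i x_i' e_i'$. Then the coordinate vectors are related by $x = H x'$ (in the $(e_j)$-basis), where $x = (x_1,\dots,x_m)^T$ and $x' = (x_1',\dots,x_m')^T$. Since $H$ is lower-triangular with positive diagonal, we recover $x'$ by forward substitution: reading off the first row gives $H_{11} x_1' = x_1$, so $|x_1'| \le |x_1| \le \max_i |x_i|$; and inductively, the $k$-th row gives $H_{kk} x_k' = x_k - \sum_{j < k} H_{kj} x_j'$, whence
\[
|x_k'| \;\le\; |x_k| + \sum_{j<k} H_{kj} |x_j'| \;\le\; \max_i |x_i| + \sum_{j<k} H_{kk}\, |x_j'|
\]
using $H_{kj} < H_{kk}$ — but this is too lossy because of the $H_{kk}$ factor. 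The fix is to bound $|x_k'|$ in terms of the previous $|x_j'|$ \emph{without} the $H_{kk}$: since $H_{kj} \le H_{kk} - 1$ is still not enough, one instead uses the sharper route of clearing denominators differently. Better: divide the $k$-th equation by $H_{kk}$ and use $H_{kj}/H_{kk} < 1$, giving $|x_k'| \le |x_k|/H_{kk} + \sum_{j<k} |x_j'| \le \max_i|x_i| + \sum_{j<k}|x_j'|$. A trivial induction on this recursion then yields $|x_k'| \le 2^{k-1} \max_i |x_i| \le 2^m \max_i |x_i|$, which is the claim.

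The one point requiring care — and the only real obstacle — is the sign/magnitude bookkeeping in the forward substitution: one must make sure that the reduced off-diagonal entries $H_{kj}$ satisfy $|H_{kj}| \le H_{kk} - 1$ (take them in the range $[0, H_{kk})$ as in standard HNF, or in $(-H_{kk}/2, H_{kk}/2]$, either works and in fact the latter improves the constant), so that dividing the $k$-th row by $H_{kk}$ produces coefficients of absolute value $< 1$ multiplying the earlier $x_j'$. Everything else is the standard existence of Hermite normal form for integer matrices of full rank, which I would either cite or prove in two lines by the usual column-reduction (Euclidean algorithm on each row). I would also remark that the factor $2^m$ is surely not optimal but is all that is needed downstream.
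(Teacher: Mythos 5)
Your proof is correct and is essentially the same as the paper's: both invoke Hermite normal form for the inclusion $\Lambda' \le \Lambda$ (the paper writes it as $e_i' = d_i e_i + \sum_{j>i} b_{i,j} e_j$ with $0 \le b_{i,j} < d_j$, which is exactly the transpose of your lower-triangular $H$ with $0 \le H_{ij} < H_{ii}$) and then run the same forward-substitution induction, dividing the $k$-th equation by the diagonal entry so that the off-diagonal coefficients have absolute value $< 1$ and the base term contributes at most $\max_i|x_i|$, yielding $|x_k'| \le 2^{k-1}\max_i|x_i|$. Note that the intermediate "too lossy" inequality you wrote before the fix is not merely lossy but incorrect as stated (it drops the factor $1/H_{kk}$); the corrected version you then give is the right argument, so this is only a matter of cleaning up the exposition.
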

\begin{proof}
By the existence of Hermitian normal form, $\Lambda'$ has a basis $(e'_i)_{i = 1}^m$ in which
\[ e'_i = d_i e_i + \sum_{j > i} b_{i,j} e_j,\] with $d_1,\dots, d_m \geq 1$ integers and $0 \leq b_{i,j} < d_j$.
It follows that 
\[ x_i = x'_i d_i + \sum_{j < i} b_{j,i} x'_j.\]
Suppose that  $|x_i| \leq M$ for all $i$. Then an easy induction confirms that $|x'_i| \leq 2^{i-1} M$, and the result follows. (For example, 
\[ |x'_2| =  \big| \frac{x_2}{d_2} - \frac{b_{1,2} x'_1}{d_2} \big| \leq |x_2| + 2^0 M \leq 2^1 M.)\]
\end{proof}

\begin{lemma}\label{quant-lattice}
Let $Q \geq 1$ be a parameter. Let $V \leq \Q^D$ be a vector subspace, spanned over $\Q$ by linearly independent vectors $v_1,\dots, v_m \in \Z^D$ with $|v_i| \leq Q$ for all $i$. Then there is an integral basis $w_1,\dots, w_m$ for $V \cap \Z^D$ such that every element $x \in V \cap \Z^D$ with $|x| \leq Q$ is a (unique) $\Z$-linear combination $x = \sum_{i = 1}^m n_i w_i$ with $|n_i| \leq m!(2Q)^m$.
\end{lemma}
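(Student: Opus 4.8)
\textbf{Proof plan for Lemma \ref{quant-lattice}.}

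The plan is to combine Minkowski-type reduction (via Cramer's rule bounds) for the lattice $\Lambda := \Span_{\Z}(v_1,\dots,v_m) \leq \Z^D$ with a finite-index comparison to $V \cap \Z^D$ supplied by Lemma \ref{hermite}. First I would work inside $\Lambda$, the lattice generated by the given $v_i$. Any $x \in V \cap \Z^D$ lies in $V = \Span_{\Q}(v_1,\dots,v_m)$, so we can write $x = \sum_i \lambda_i v_i$ with $\lambda_i \in \Q$; a standard application of Cramer's rule gives $\lambda_i = \det(\text{Gram-type } m\times m \text{ minors})/\det(\dots)$, and since all entries of the relevant matrices are integers of size $O(Q^2)$ (inner products or coordinates of the $v_i$ and $x$, all bounded by $Q$ in sup-norm, hence the $m\times m$ determinants are bounded by $m!\,(DQ^2)^m$ or so — though working coordinatewise with an $m\times m$ submatrix of the $D\times m$ matrix $[v_1|\cdots|v_m]$ that is invertible keeps the entries at size $Q$), one gets $|\lambda_i| \leq m!\,(2Q)^m$ after clearing the (integer) denominator, which is itself at least $1$ in absolute value. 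The cleanest route: pick $m$ rows so that the corresponding $m\times m$ submatrix $M$ of $[v_1|\cdots|v_m]$ has nonzero determinant; restricting $x = \sum \lambda_i v_i$ to those rows gives $M\lambda = x'$ with $\|x'\|_\infty \leq Q$, and Cramer's rule plus the cofactor bound $|\det(\text{$m\times m$ matrix, entries} \leq Q)| \leq m!\,Q^m$ yields $|\lambda_i| \leq |\det M|^{-1} m!\,Q^m \leq m!\,Q^m$ since $|\det M|\geq 1$.

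However, the $\lambda_i$ need not be integers, because $(v_i)$ is generally not an integral basis of $V\cap\Z^D$ — it only spans a finite-index sublattice $\Lambda \leq V\cap\Z^D$. This is where I expect the main (mild) obstacle, and it is resolved by Lemma \ref{hermite}: that lemma is stated for $\Lambda' \leq \Lambda$, but the inclusion I need runs the other way, $\Lambda \leq V\cap\Z^D$. So I would apply Lemma \ref{hermite} with the roles reversed — take $\Lambda_{\text{big}} := V\cap\Z^D$ (an $m$-dimensional lattice, since $V$ is a rational subspace) and $\Lambda' := \Lambda$, obtaining from Hermite normal form an integral basis $w_1,\dots,w_m$ of $V\cap\Z^D$ such that any element written as $\sum x_i w_i$ and also as $\sum \lambda_i v_i'$ — wait, the cleanest formulation: there is an integral basis $(v_i')$ of $\Lambda$ and an integral basis $(w_i)$ of $V\cap\Z^D$ with $v_i' = \sum_j c_{ij} w_j$ for an integer matrix $(c_{ij})$ of bounded size. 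Actually the directly usable statement is the conclusion of Lemma \ref{hermite} itself: choosing $(e_i) := (w_i)$ (to be the integral basis of $V\cap\Z^D$) and $(e_i') := $ the Hermite basis of $\Lambda$, Lemma \ref{hermite} says that if $x\in\Lambda$ and $x = \sum x_i w_i = \sum x_i' (\text{basis of }\Lambda)$ then $\max|x_i| \leq 2^m \max|x_i'|$. Since by the Cramer argument $x = \sum \lambda_i v_i$ with $|\lambda_i|\leq m!\,Q^m$ and the $v_i$ differ from the Hermite basis of $\Lambda$ by a unimodular (hence bounded-distortion — but not obviously bounded!) change of basis, I need to be a little careful: rather than going through $(v_i)$, I should directly produce the integral basis of $\Lambda$ via Hermite/LLL reduction so that coordinates with respect to it are controlled by coordinates with respect to $(v_i)$.

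Assembling: the final bound $|n_i| \leq m!\,(2Q)^m$ should come out of multiplying the $m!\,Q^m$ from Cramer's rule by the $2^m$ from the Hermite index comparison. Concretely, the write-up I would give is: (1) reduce to an invertible $m\times m$ integer submatrix $M$ of $[v_1|\cdots|v_m]$; (2) for $x\in V\cap\Z^D$ with $|x|\leq Q$, solve $M\lambda=x'$ and bound $|\lambda_i|\leq m!\,Q^m$ by Cramer and $|\det M|\geq 1$; (3) note $\Lambda:=\Span_\Z(v_i)$ has finite index in $V\cap\Z^D$ and apply Lemma \ref{hermite} with the $m$-dimensional lattices $V\cap\Z^D \geq \Lambda$ to pass from coordinates in the $v$-basis (or a Hermite-reduced version thereof) to coordinates $n_i$ in an integral basis $w_1,\dots,w_m$ of $V\cap\Z^D$, at the cost of a factor $2^m$; (4) conclude $|n_i|\leq 2^m\cdot m!\,Q^m \leq m!\,(2Q)^m$. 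Uniqueness is automatic since $(w_i)$ is a basis. The one genuinely fiddly point to get right is step (3) — making sure Lemma \ref{hermite} is invoked with the correct ordering of the lattices and that the "$x_i$" appearing there are indeed the $v$-coordinates I have bounds on, which may require first replacing $(v_i)$ by the Hermite-reduced basis of $\Lambda$ and checking that reduction does not blow up the coordinates (it does not, again by a Cramer/cofactor estimate inside $\Lambda$).
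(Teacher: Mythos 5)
Your steps (1) and (2) match the paper's: restrict to an invertible $m\times m$ integer submatrix $M$ of the matrix with columns $v_1,\dots,v_m$, apply Cramer's rule with the adjugate bound, and conclude that for $x = \sum_i \lambda_i v_i \in V\cap\Z^D$ the quantities $q\lambda_i$ with $q := \det M$ are integers of absolute value at most $m!\,Q^m$. But step (3) has a genuine gap that you half-notice but do not resolve. You want to apply Lemma \ref{hermite} to the inclusion $\Span_\Z(v_i) \leq V\cap\Z^D$. That inclusion does run the right way, but look at the shape of Lemma \ref{hermite}: it \emph{takes as input} an integral basis $(e_i)$ of the \emph{larger} lattice, and \emph{outputs} a basis $(e'_i)$ of the \emph{smaller} lattice together with a bound on the $e'$-coordinates of points of the smaller lattice in terms of their $e$-coordinates. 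With your choice $\Lambda_{\mathrm{big}} := V\cap\Z^D$ you would need to already possess an integral basis of $V\cap\Z^D$ (which is precisely what you are trying to construct), the lemma would then hand you a basis of $\Span_\Z(v_i)$ rather than of $V\cap\Z^D$, the inequality would bound the $v$-coordinates in terms of the as-yet-unknown $w$-coordinates (the wrong direction), and the bound would only apply to $x\in\Span_\Z(v_i)$, not to general $x\in V\cap\Z^D$. Every one of these is backwards from what is needed.

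The paper's fix is a small but essential trick that your write-up is missing: instead of using $\Span_\Z(v_i)$ (which sits \emph{inside} $V\cap\Z^D$), pass to the rescaled vectors $e_i := \tfrac{1}{q} v_i$ and set $\Lambda := \Span_\Z(e_1,\dots,e_m)$. Since $q\lambda_i\in\Z$ for every $x = \sum\lambda_i v_i \in V\cap\Z^D$, one has $x = \sum (q\lambda_i) e_i \in \Lambda$, so $V\cap\Z^D \leq \Lambda$ --- now the inclusion runs in the direction Lemma \ref{hermite} wants, with $\Lambda$ big and $\Lambda' := V\cap\Z^D$ small, and $(e_i)$ is an explicit integral basis of the big lattice with the coordinates $x_i := q\lambda_i$ of every $x\in V\cap\Z^D$ (with $|x|\leq Q$) bounded by $m!\,Q^m$. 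Lemma \ref{hermite} then produces an integral basis $(w_i)$ of $V\cap\Z^D$ (exactly what the lemma asks for) with $|n_i| \leq 2^m\cdot m!\,Q^m = m!(2Q)^m$. Your Cramer estimate and your final arithmetic are right; what is missing is the realisation that dividing the spanning vectors by $q=\det M$ is what converts the sublattice you control into an overlattice to which Lemma \ref{hermite} applies.
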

\begin{proof}
Write $\Lambda' := V \cap \Z^D$. Consider the $m$-by-$D$ matrix whose $(i,j)$-entry is the $j$th coordinate $v^{(i)}_j$. This has full rank $m$, and so it has a nonsingular $m$-by-$m$ minor. Relabelling, we may suppose that this is $(v^{(i)}_j)_{1 \leq i, j \leq m}$. Suppose now that $x = \sum_{i=1}^m \lambda_i v^{(i)}  \in V \cap \Z^D$, where the $\lambda_i$ lie in $\Q$. Then the $\lambda_i$ may be recovered by applying the inverse of $(v^{(i)}_j)_{1 \leq i, j \leq m}$ to $(x_1,\dots, x_m)$. By the formula for the inverse in terms of the adjugate, this inverse has entries in the set $\{ \frac{a}{q} : a \in \Z, |a| \leq (m-1)! Q^{m-1}\}$ where $q = \det( (v^{(i)}_j))_{1 \leq i ,j \leq m}$, and so $q\lambda_i$ is an integer of size at most $m! Q^m$.

Take $e_i := \frac{1}{q} v^{(i)}$, and let $\Lambda$ be the lattice generated by the $e_i$. We have shown that $\Lambda' \leq \Lambda$, and moreover that if $x \in \Lambda'$ and $|x| \leq Q$ then $x = \sum_{i} x_i e_i$ with $|x_i|  \leq m! Q^m$. Applying Lemma \ref{hermite}, the result follows.
\end{proof}

\begin{lemma}\label{lattice-subspace}
Let $Q \geq 1$. Let $n$ be sufficiently large and let $V \leq \R^n$ be a subspace of dimension $m$.  Then, uniformly in $V$, $\# \{ x \in \Z^n : |x| \leq Q, x \in V \} \leq 20^n m^{n/2} Q^m$.
\end{lemma}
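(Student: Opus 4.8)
The plan is to bypass the geometry of numbers entirely and instead project $V$ isomorphically onto a coordinate subspace of $\R^m$, where the count collapses to a trivial lattice-point count in a box. Throughout we may assume $m\ge 1$ (for $m=0$ the only point is $0$, and the statement is to be read in the obvious way).

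First I would choose good coordinates. Since $\dim V=m$, a basis of $V$ forms the columns of an $n\times m$ matrix of rank $m$, so some $m$ of its rows, say rows $i_1<\dots<i_m$, are linearly independent. Equivalently, the coordinate projection $\pi:\R^n\to\R^m$ given by $\pi(x)=(x_{i_1},\dots,x_{i_m})$ restricts to a linear isomorphism $V\to\R^m$; in particular $\pi$ is injective on $V$, and this construction is carried out separately for each $V$, so everything below is uniform in $V$ as required.

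Next I would observe that if $x\in\Z^n\cap V$ with $|x|\le Q$, then $\pi(x)\in\Z^m$ and every coordinate of $\pi(x)$ has absolute value at most $Q$, so $\pi(x)\in\Z^m\cap[-Q,Q]^m$. Since $\pi$ is injective on $V$, and hence on the set being counted, this gives
\[ \#\{x\in\Z^n:\ |x|\le Q,\ x\in V\}\ \le\ \#\bigl(\Z^m\cap[-Q,Q]^m\bigr)\ =\ (2\lfloor Q\rfloor+1)^m\ \le\ (3Q)^m, \]
the last inequality using $Q\ge 1$. Finally $(3Q)^m=3^mQ^m\le 20^n m^{n/2}Q^m$, because $3^m\le 3^n<20^n$ and $m^{n/2}\ge 1$.

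There is essentially no obstacle: the only step meriting a word of justification is the existence of $m$ linearly independent rows in a rank-$m$ matrix, which is immediate. Note that the bound proved this way, $(3Q)^m$, is enormously stronger than the stated $20^n m^{n/2}Q^m$, so no care with constants is needed and the hypothesis that $n$ is large is not actually used; one could simply state the sharper inequality, but the weaker form recorded in the lemma is more than enough for all its applications (e.g.\ \eqref{bd-num-pts}).
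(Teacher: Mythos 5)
Your proof is correct, and it takes a genuinely different and considerably simpler route than the paper's. The paper chooses an orthonormal basis adapted to $V$, builds a family of pairwise-disjoint rotated boxes around the lattice points of $V\cap[-Q,Q]^n$, and compares the total volume of these boxes to the volume of a large Euclidean ball; this is a standard packing argument in the spirit of geometry of numbers, and it is what produces the factors $20^n$ and $m^{n/2}$. You instead observe that $V$ projects isomorphically onto some $m$ coordinates, and that this coordinate projection carries integer points to integer points and preserves the $\ell^\infty$ bound, which immediately yields $\#\{x\in\Z^n\cap V : |x|\le Q\}\le(2\lfloor Q\rfloor+1)^m\le(3Q)^m$. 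This is a cleaner argument: it is purely linear-algebraic, it avoids any volume comparison, it does not use the hypothesis that $n$ is large, and the bound $(3Q)^m$ is substantially sharper than the stated $20^n m^{n/2}Q^m$. The only thing the paper's packing argument ``buys'' is uniformity in $V$ without selecting coordinates, but your construction is also manifestly uniform in $V$, so nothing is lost. For the lemma's sole application (the bound \eqref{bd-num-pts}), either form is ample.
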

\begin{proof}
Let $S := \{ x \in \Z^n : |x| \leq Q, x \in V \}$. Pick an orthonormal basis $v_1,\dots, v_n$ for $\R^n$ with $v_1,\dots, v_m$ being a basis for $V$. For each $x \in \R^n$, consider the rotated box
\[ R(x) := \{ x + \sum_{i = 1}^m c_i v_i + Q\sum_{i = m+1}^n c_i v_i : x \in S, |c_i| < \frac{1}{2}m^{-1/2}\; \mbox{for all $i$}\}.\] We claim that for distinct $x, x' \in S$, $R(x)$ and $R(x')$ are disjoint. Indeed if not we would have (since $x, x' \in V$) $x + \sum_{i=1}^m c_i v_i = x' + \sum_{i = 1}^m c'_i v_i$ and hence by orthogonality $\Vert x - x' \Vert_2^2 = \sum_{i = 1}^m |c_i - c'_i|^2 < 1$, which is a contradiction since $x - x' \in \Z^D$.

Now the volume of $R(x)$ is $m^{-n/2} Q^{n-m}$, and if $y \in R(x)$ then
\[\Vert y \Vert_2  \leq \Vert x \Vert_2 + \Vert \sum_{i = 1}^m c_i v_i + Q\sum_{i = m+1}^n c_i v_i \Vert_2  \leq 2n^{1/2} Q.\]
Using a crude upper bound of $(100/n)^{n/2}$ for the volume of the unit ball in $\R^n$, the volume of this set is at most $20^n Q^n$.  The result follows. 
\end{proof}

\section{Smooth bump functions}

In this appendix we give constructions of the various cutoff functions used in the main body of the paper. We begin with cutoffs with compact support in physical space. These are all variants of the classical Fej\'er kernel construction, sometimes with an extra convolution to create more smoothing.

\begin{lemma}\label{w-tent}
Let $\eta > 0$. Then there is a continuously differentiable function $w : \R \rightarrow [0,\infty)$ with the following properties.
\begin{enumerate}
\item $w$ is supported on $[0,1]$, $w = 1$ on $[\eta, 1 - \eta]$, and $0 \leq w \leq 1$ everywhere;
\item $\Vert w' \Vert_{\infty} \ll \eta^{-1}$;
\item $\Vert \hat{w} \Vert_1 \ll \eta^{-1}$.
\end{enumerate}
\end{lemma}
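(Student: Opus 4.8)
The plan is to build $w$ as a convolution of a trapezoidal (tent-like) function with a smoothing bump, in the style of the Fej\'er kernel constructions used throughout the appendix. First I would introduce the scale $\eta$ and set $g := \frac{1}{\eta}\mathbf{1}_{[0,\eta]}$, a probability density supported on $[0,\eta]$, and let $h := \mathbf{1}_{[\eta/2,\, 1-\eta/2]}$ be the indicator of a slightly shrunken interval. Then define $w := g * h$, the convolution. This is automatically continuous; in fact since $g$ is bounded and $h$ is bounded, $w$ is Lipschitz, and with a tiny bit more care (replacing $g$ by a continuous approximate identity supported on $[0,\eta]$, e.g. a triangular bump of width $\eta$ and height $2/\eta$) one gets $w$ continuously differentiable. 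Let me carry this out with the triangular choice: $g(x) := \frac{4}{\eta^2}\big(\tfrac{\eta}{2} - |x - \tfrac{\eta}{2}|\big)_+$ has $\int g = 1$, $g \geq 0$, $\Supp(g) \subseteq [0,\eta]$, $\Vert g\Vert_\infty \ll \eta^{-1}$, $\Vert g'\Vert_\infty \ll \eta^{-2}$, and crucially $\widehat{g}$ is the square of a sinc-type factor, so $\Vert \widehat{g}\Vert_1 \ll \eta^{-1}$ (the Fej\'er-kernel fact).

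Next I would verify the three required properties of $w = g*h$. Property (1): since $\Supp(g) \subseteq [0,\eta]$ and $\Supp(h) = [\eta/2, 1-\eta/2]$, we get $\Supp(w) \subseteq [\eta/2,\, 1+\eta/2]$; one should instead center things so the support lands in $[0,1]$ — take $h := \mathbf{1}_{[0,\,1-\eta]}$ and $g$ supported on $[0,\eta]$, giving $\Supp(w)\subseteq[0,1]$. For $x \in [\eta, 1-\eta]$ the translate $x - \Supp(g) \subseteq [0, 1-\eta]$ lies entirely in $\Supp(h)$, so $w(x) = \int g = 1$; and $0 \leq w \leq \int g = 1$ everywhere since $0\le h\le 1$. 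Property (2): $w' = g' * h$, so $\Vert w'\Vert_\infty \leq \Vert g'\Vert_1 \Vert h\Vert_\infty \ll \eta^{-1}$ — here I should use $\Vert g'\Vert_1 \ll \eta^{-1}$ rather than $\Vert g'\Vert_\infty$, which holds for the triangular bump since $g'$ is a step function of height $\ll \eta^{-2}$ on an interval of length $\eta$; alternatively $w' = g * h'$ with $h' $ a sum of two unit point masses, giving $\Vert w'\Vert_\infty \leq 2\Vert g\Vert_\infty \ll \eta^{-1}$, which is cleaner and avoids differentiating $h$. Property (3): $\widehat{w} = \widehat{g}\,\widehat{h}$, and $|\widehat{h}(\xi)| \ll \min(1, |\xi|^{-1})$ (Fourier transform of an interval indicator), while $\Vert\widehat{g}\Vert_1 \ll \eta^{-1}$; but to bound $\Vert\widehat{w}\Vert_1$ I would rather write $\Vert\widehat{w}\Vert_1 = \Vert \widehat{g}\widehat{h}\Vert_1 \leq \Vert\widehat{g}\Vert_1\Vert\widehat{h}\Vert_\infty$, and since $\Vert\widehat h\Vert_\infty \le \int|h| \le 1$, this gives $\Vert\widehat w\Vert_1 \ll \eta^{-1}$ directly.

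The only genuinely delicate point — and the one I'd flag as the main obstacle — is getting $w$ to be $C^1$ while keeping $\Vert \widehat w\Vert_1 \ll \eta^{-1}$ rather than a worse power. A bare trapezoid (convolving two indicators) is only Lipschitz, not $C^1$, so one really does need at least one honest smoothing. The triangular-bump choice for $g$ makes $w$ piecewise-$C^1$ but $w'$ has jumps; to get true continuous differentiability one should take $g$ to be a $C^0$ approximate identity that vanishes at the endpoints of $[0,\eta]$, e.g. $g(x) = c\eta^{-1}(1-\cos(2\pi x/\eta))\mathbf 1_{[0,\eta]}$, whose Fourier transform still decays like that of a Fej\'er-type kernel so that $\Vert\widehat g\Vert_1 \ll \eta^{-1}$, and then $w = g*h$ is $C^1$ with $w' = g'*h$, $\Vert w'\Vert_\infty \le \Vert g'\Vert_\infty\cdot|\Supp h| \ll \eta^{-1}$ after noting $\Vert g'\Vert_\infty \ll \eta^{-2}$ but the support of the relevant convolution integrand has length only $O(\eta)$ near the transition regions. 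I would spell out the Fourier $L^1$ bound for this explicit $g$ as the one nontrivial computation, citing the standard Fej\'er-kernel estimate, and leave the rest as the routine verifications sketched above.
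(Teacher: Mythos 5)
Your triangular-bump construction $w = g*h$ with $g = \tfrac{4}{\eta^2}\mathbf 1_{[0,\eta/2]}*\mathbf 1_{[0,\eta/2]}$ and $h = \mathbf 1_{[0,1-\eta]}$ is exactly the paper's construction (it is a triple convolution of indicator functions), and the proof you give of (1), (2), (3) for it is correct. Your Fourier-$L^1$ bound $\Vert\widehat w\Vert_1 \le \Vert\widehat g\Vert_1\Vert\widehat h\Vert_\infty \ll \eta^{-1}$ is a slightly different packaging from the paper, which bounds $\widehat w$ directly by $\eta^{-2}\min(\eta,|\xi|^{-1})^2$ and integrates, but both are fine.

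The third paragraph, however, is wrong, and you should delete it rather than act on it. You claim that ``the triangular-bump choice for $g$ makes $w$ piecewise-$C^1$ but $w'$ has jumps.'' It does not. As you yourself observe in paragraph two, $w' = g*h' = g(\cdot) - g(\cdot - (1-\eta))$, a difference of two shifted copies of $g$; since $g$ is continuous (and in particular vanishes at both endpoints of its support $[0,\eta]$ --- a requirement the triangular bump already satisfies), this difference is continuous, so $w$ is $C^1$. It is $g'$, not $w'$, that has jumps, and those jumps are smoothed out by the convolution with $h$. The $(1-\cos)$ replacement you propose as a ``fix'' also works, and would even give $C^2$, but it is unnecessary and not what's in the paper. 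Commit to the triangular route and the argument is complete.
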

\begin{proof}
This is a standard kind of ``tent'' function.  Take 
\[ w := \frac{4}{\eta^2} 1_{[\eta/2, 1 - \eta/2]} \ast 1_{[-\eta/4, \eta/4]} \ast 1_{[-\eta/4, \eta/4]}.\]
It is straightforward to see that this has the relevant support properties (1). For (2), on the intervals where $w$ is not constant it is of the form $\psi(\frac{x+a}{2\eta})$, where $\psi$ is the triple convolution $1_{[-1/2,1/2]} \ast 1_{[-1/2,1/2]} \ast 1_{[-1/2,1/2]}$. It is well-known that such a triple convolution is continuously differentiable; one mode of argument is via the Fourier transform, noting that $|\hat{\psi}(\xi)| \ll |\xi|^{-3}$, so one gets a convergent integral by differentiating $\psi(x) = \int \hat{\psi}(\xi) e(\xi \cdot x) d\xi$ under the integral. Alternatively, one can work entirely in physical space.

(3) By performing the integrals explicitly,
\[ \hat{w}(\xi) = \frac{4}{\eta^2} \hat{1}_{[\eta/2, 1 - \eta/2]}(\xi) \hat{1}_{[-\eta/4, \eta/4]}(\xi)^2 \ll \eta^{-2} \min(\eta, |\xi|^{-1})^2.\]
Now consider the contributions from $|\xi| \leq \eta^{-1}$ and $|\xi| \geq \eta^{-1}$ separately.
\end{proof}

\begin{lemma}\label{fejer} Let $X \geq 1$. There is a function $w : \Z \rightarrow [0,\infty)$ such that
\begin{enumerate}
\item $w$ is supported on $[-X/5, X/5]$;
\item $\hat{w} : \T \rightarrow \C$ is real and non-negative;
\item $\sum_n w(n) \geq X$;
\item $|\hat{w}(\beta)| \leq 2^5 X^{-1} \Vert \beta \Vert_{\T}^{-2}$ for all $\beta \in \T$.
\end{enumerate}
\end{lemma}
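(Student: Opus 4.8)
\textbf{Proof plan for Lemma \ref{fejer}.} The plan is to build $w$ from a Fej\'er kernel. Recall that the classical Fej\'er kernel of order $K$ is $F_K(\beta) := \sum_{|n| \leq K} (1 - \tfrac{|n|}{K})e(n\beta) = \tfrac{1}{K}\big( \tfrac{\sin(\pi K \beta)}{\sin(\pi \beta)}\big)^2 \geq 0$, and that its Fourier coefficients $\widehat{w}(\beta)$ on the dual side correspond to the triangle function $n \mapsto \max(0, 1 - |n|/K)$. So I would set $K := \lfloor X/5 \rfloor$ and define $w : \Z \to [0,\infty)$ by $w(n) := K \cdot \max\big(0, 1 - \tfrac{|n|}{K}\big)$ (the factor $K$ being inserted to arrange the normalisation in (3)). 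Then property (1) is immediate since $w$ is supported on $[-K, K] \subset [-X/5, X/5]$, and (2) follows because, with the paper's convention $\widehat{w}(\beta) = \sum_n w(n) e(-n\beta)$, we get $\widehat{w}(\beta) = K \cdot F_K(\beta) \geq 0$, which is real and non-negative as a Fej\'er kernel.

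For (3), sum the triangle function: $\sum_n w(n) = K \sum_{|n| \leq K}(1 - |n|/K) = K \cdot K = K^2$ (using $\sum_{|n|\leq K}(1-|n|/K) = K$, since this is the value of $F_K$ at $\beta = 0$, or just a direct computation). Since $K = \lfloor X/5\rfloor \geq X/5 - 1 \geq X^{1/2}$ say for $X$ large — actually we need $K^2 \geq X$, which holds once $X \geq 40$ or so, and for small $X$ one can rescale or note $K^2 \geq (X/5 - 1)^2$; the cleanest fix is to pick $K = \lceil X/5 \rceil$ instead and check $K^2 = \lceil X/5\rceil^2 \geq X$ for all $X \geq 1$ — indeed $\lceil X/5\rceil \geq \max(1, X/5)$ so $\lceil X/5\rceil^2 \geq \max(1, X^2/25) \geq X$ precisely when $X \leq 25$ or $X^2/25 \geq X$ i.e. $X \geq 25$, so it holds for all $X\geq 1$. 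I should double-check the support then becomes $[-\lceil X/5\rceil, \lceil X/5\rceil]$, which is contained in $[-X/5, X/5]$ only if $\lceil X/5 \rceil \leq X/5$, failing for non-integer $X/5$; so it is safer to keep $K = \lfloor X/5\rfloor$ for the support and instead use the weight $w(n) = c_X \max(0,1-|n|/K)$ with $c_X$ chosen so that $\sum_n w(n) = X$ exactly, i.e. $c_X = X/K \leq X/(X/5 - 1) \leq 10$ for $X \geq 10$ (and handle $X < 10$ by hand or by absorbing into the implied constants). This keeps (1) and (3) clean at the cost of a harmless constant in $c_X$.

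For (4), the key estimate is the standard bound $|F_K(\beta)| \leq \tfrac{1}{K}\big(\tfrac{1}{\sin(\pi\beta)}\big)^2 \leq \tfrac{1}{K} \cdot \tfrac{1}{(2\Vert\beta\Vert_{\T})^2} = \tfrac{1}{4K \Vert\beta\Vert_{\T}^2}$, using $|\sin(\pi\beta)| \geq 2\Vert\beta\Vert_{\T}$ for $\beta \in \T$. Hence $|\widehat{w}(\beta)| = c_X |F_K(\beta)| \leq \tfrac{c_X}{4K\Vert\beta\Vert_{\T}^2}$. With $c_X = X/K$ this is $\tfrac{X}{4K^2 \Vert\beta\Vert_{\T}^2}$, and since $K = \lfloor X/5\rfloor \geq X/10$ for $X \geq 10$, we get $K^2 \geq X^2/100$, so $|\widehat{w}(\beta)| \leq \tfrac{X}{4 (X^2/100)\Vert\beta\Vert_{\T}^2} = \tfrac{25}{X\Vert\beta\Vert_{\T}^2} \leq 2^5 X^{-1}\Vert\beta\Vert_{\T}^{-2}$, as required. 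There is essentially no real obstacle here; the only mild annoyances are the book-keeping of floor functions and the small-$X$ edge cases, both of which are handled by choosing the normalising constant $c_X$ appropriately and, if desired, verifying the finitely many small cases directly (or simply noting the claim is only ever applied with $X = N^{1/r}$ large). I would present the construction with $K = \lfloor X/5 \rfloor$ and $w(n) = \tfrac{X}{K}\max(0, 1 - |n|/K)$, then verify (1)--(4) in order as above, citing $|\sin \pi\beta| \geq 2\Vert\beta\Vert_\T$ and the closed form for $F_K$.
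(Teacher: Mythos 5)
Your construction is the same Fej\'er-kernel triangle function as the paper's; the paper just writes it as $w = \frac{25}{X}\,1_{[-X/10,X/10]} * 1_{[-X/10,X/10]}$ so that nonnegativity of $\hat w$ is immediate (it is $\frac{25}{X}|\widehat{1}_{[-X/10,X/10]}|^2$), whereas you invoke the closed form of $F_K$ directly and then sum a geometric series. Both work, and your explicit treatment of the floor-function and small-$X$ edge cases is if anything slightly more careful than the paper's.
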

\begin{proof}
This is a standard Fej\'er kernel construction. Take 
\[ w := \frac{25}{X}1_{[-X/10, X/10]} \ast 1_{[-X/10, X/10]}(n).\] Then $w$ is immediately seen to satisfy (1), (2) and (3). For (4), we evaluate the Fourier transform explicitly as 
\[ \hat{w}(\beta) = \frac{25}{X}|\sum_{|n| \leq X/10} e(-\beta n)|^2.\] The sum here is a geometric progression; summing it, we obtain
\[ |\sum_{|n| \leq X/10} e(-\beta n)| \leq \frac{2}{|1 - e(\beta n)|} = \frac{1}{|\sin \pi \beta|} \leq \Vert \beta \Vert_{\T}^{-1}.\] The result follows.
\end{proof}

\begin{lemma}\label{chi-15-exist} Let $\delta \in (0,1)$. Then there is $\chi := \chi_{\delta} : \R \rightarrow [0,\infty)$ satisfying the following properties, where the implied constants are absolute and do not depend on $\delta$.
\begin{enumerate}
\item $\chi(x) \geq 1$ for $|x| \leq \delta/2$;
\item $\chi(x) = 0$ for $|x| > \delta$;
\item $\int \chi \ll \delta$;
\item $\Vert \hat{\chi} \Vert_1 \ll 1$;
\item $\int_{|\xi| > \delta^{-2}} |\hat{\chi}(\xi)| \ll \delta^{2}$.
\end{enumerate}
\end{lemma}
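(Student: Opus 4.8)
The statement to prove is Lemma~\ref{chi-15-exist}, which asserts the existence of a smooth-ish cutoff $\chi=\chi_\delta$ supported on $[-\delta,\delta]$, bounded below by $1$ on $[-\delta/2,\delta/2]$, with $\int\chi\ll\delta$, $\|\hat\chi\|_1\ll 1$, and with the tail bound $\int_{|\xi|>\delta^{-2}}|\hat\chi(\xi)|\ll\delta^2$.

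\medskip

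\textbf{Approach.} The plan is the standard one: build $\chi$ by convolving an indicator of an interval of length $\asymp\delta$ with a scaled smoothing kernel, where the kernel is itself an $n$-fold self-convolution of a short indicator (a Fej\'er-type construction), choosing $n$ just large enough that the Fourier decay $|\hat\chi(\xi)|\ll(\delta|\xi|)^{-n}$ (times something integrable) delivers both $\|\hat\chi\|_1\ll 1$ and the quantitative tail bound at scale $|\xi|>\delta^{-2}$. Concretely I would set
\[
\chi \;:=\; c\,\delta^{-n+1}\,\mathbf 1_{[-3\delta/4,\,3\delta/4]}\;\ast\;\underbrace{\mathbf 1_{[-\delta/8n,\,\delta/8n]}\ast\cdots\ast\mathbf 1_{[-\delta/8n,\,\delta/8n]}}_{n\text{ times}},
\]
with $n$ a fixed absolute integer (e.g. $n=3$ already suffices, as in Lemma~\ref{w-tent}/Lemma~\ref{fejer}) and $c$ an absolute normalising constant. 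First I would fix the support: each of the $n$ short intervals has half-length $\delta/8n$, so the total spread added is at most $2\cdot n\cdot\delta/8n=\delta/4$, hence $\chi$ is supported on $[-3\delta/4-\delta/4,\,3\delta/4+\delta/4]=[-\delta,\delta]$, giving~(2). For~(1): on $[-\delta/2,\delta/2]$ the convolution of the long indicator with the smoothing mass equals $c\,\delta^{-n+1}$ times the full mass $(\delta/4n)^{n}$ of the kernel — wait, more carefully, since the smoothing kernel is supported in $[-\delta/4,\delta/4]$, for $|x|\le\delta/2$ the translate $[-3\delta/4+x,3\delta/4+x]$ contains $[-\delta/4,\delta/4]$, so $(\mathbf 1_{[-3\delta/4,3\delta/4]}\ast K)(x)=\int K = (\delta/4n)^{n}$; choosing $c:=\delta^{n-1}/(\delta/4n)^{n}=(4n)^n/\delta$... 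I would instead just pick the normalisation so that this lower value is exactly $1$, i.e. $c\,\delta^{-n+1}(\delta/4n)^{n}=1$, and then~(1) holds. Note $c=(4n)^n$ is an absolute constant. For~(3): $\int\chi = c\,\delta^{-n+1}\cdot\frac{3\delta}{2}\cdot(\delta/4n)^n = \frac{3}{2}\delta\cdot c\,\delta^{-n+1}(\delta/4n)^n=\frac32\delta\ll\delta$.

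\medskip

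\textbf{Fourier side.} By the convolution theorem, $\hat\chi(\xi)= c\,\delta^{-n+1}\,\hat{\mathbf 1}_{[-3\delta/4,3\delta/4]}(\xi)\cdot\big(\hat{\mathbf 1}_{[-\delta/8n,\delta/8n]}(\xi)\big)^n$, and $|\hat{\mathbf 1}_{[-R,R]}(\xi)|=\Big|\frac{\sin(2\pi R\xi)}{\pi\xi}\Big|\le\min(2R,\,|\xi|^{-1})$. Hence
\[
|\hat\chi(\xi)| \;\ll\; c\,\delta^{-n+1}\,\min(\delta,|\xi|^{-1})\cdot\min(\delta/n,|\xi|^{-1})^n \;\ll\; \min\!\big(1,\ \delta^{-n}|\xi|^{-n-1}\big)\cdot C(n),
\]
using that near $\xi=0$ the product is $\ll \delta^{-n+1}\cdot\delta\cdot(\delta/n)^n\asymp 1$. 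For~(4), split $\int|\hat\chi|$ at $|\xi|=\delta^{-1}$: on $|\xi|\le\delta^{-1}$ we bound $|\hat\chi|\ll 1$ to get $\ll\delta^{-1}\cdot\delta=1$... hmm, that gives $\ll 1$ only with room to spare if I instead bound by $\delta^{-1}$ the length and $1$ the height, total $\ll\delta^{-1}$, which is \emph{not} $\ll 1$. So I must be more careful: actually $|\hat\chi(\xi)|\ll\min(1,\delta^{-n}|\xi|^{-n-1})$ with the crossover at $|\xi|\asymp\delta^{-1}$, so $\int_{|\xi|\le\delta^{-1}}|\hat\chi|\ll\delta^{-1}$ — still too big. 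The fix is that the \emph{true} bound is $|\hat\chi(\xi)|\ll\min(\delta,\delta^{-n}|\xi|^{-n-1})\cdot\delta^{-1}$... I will instead simply invoke the sharper pointwise bound $|\hat\chi(\xi)|\le\int|\chi|=\int\chi\ll\delta$ (the trivial $L^1$ bound on $\chi$), giving $\int_{|\xi|\le\delta^{-1}}|\hat\chi|\ll\delta\cdot\delta^{-1}=1$, and $\int_{|\xi|>\delta^{-1}}|\hat\chi|\ll\int_{|\xi|>\delta^{-1}}\delta^{-n}|\xi|^{-n-1}\,d\xi\ll\delta^{-n}\cdot\delta^{n}=1$, establishing~(4). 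For~(5), use the decay on the whole range $|\xi|>\delta^{-2}$: $\int_{|\xi|>\delta^{-2}}|\hat\chi|\ll\delta^{-n}\int_{\delta^{-2}}^\infty|\xi|^{-n-1}\,d\xi\asymp\delta^{-n}\cdot(\delta^{-2})^{-n}=\delta^{n}\le\delta^2$, since $n\ge2$.

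\medskip

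\textbf{Main obstacle.} There is no serious obstacle — the construction is classical — but the delicate point is the calibration of the many $\delta$-powers so that all five constraints hold \emph{simultaneously} with $\delta$-independent constants: in particular choosing the number of convolutions $n$ large enough ($n\ge 2$, and I will just take $n=2$ or $3$) that the tail integral in~(5) converges with a clean power of $\delta$, while keeping the short intervals short enough ($\le\delta/8n$) that the support stays inside $[-\delta,\delta]$ and the plateau inside $[-\delta/2,\delta/2]$ still sees the full kernel mass. One must also double-check differentiability/positivity, but $\chi\ge 0$ is immediate (convolution of nonnegative functions) and we do not actually need smoothness for this lemma — only the stated $L^1$-Fourier bounds — so a mild (continuous, piecewise-polynomial) cutoff from an $n$-fold convolution is already enough. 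I would write the proof in four short paragraphs: (i) define $\chi$ and fix the normalisation; (ii) verify support~(2) and lower bound~(1); (iii) compute $\int\chi$ for~(3); (iv) compute $\hat\chi$, record the pointwise bound $|\hat\chi(\xi)|\ll\min(\delta,\ \delta^{-n}|\xi|^{-n-1})$, and deduce~(4) and~(5) by splitting the integral at $|\xi|=\delta^{-1}$ and at $|\xi|=\delta^{-2}$ respectively.
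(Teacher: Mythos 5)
Your construction is essentially the same as the paper's: the paper reduces to the case $\delta=1$ and takes $\psi := 16\cdot\mathbf{1}_{[-3/4,3/4]}\ast\mathbf{1}_{[-1/8,1/8]}\ast\mathbf{1}_{[-1/8,1/8]}$ (your $n=2$), then sets $\chi_\delta(x):=\psi(\delta^{-1}x)$, which automatically makes all five bounds $\delta$-independent. One small arithmetic slip in yours: with the prefactor $c\,\delta^{-n+1}$ the plateau value is $c\,\delta^{-n+1}(\delta/4n)^n = c\,\delta\,(4n)^{-n}$, so forcing this to equal $1$ gives $c=(4n)^n/\delta$, which is not absolute; the prefactor should instead be $c\,\delta^{-n}$ with $c=(4n)^n$ (equivalently, adopt the paper's rescaling trick), after which the rest of your computation goes through unchanged.
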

\begin{proof}
It suffices to construct a function $\psi : \R \rightarrow [0,\infty)$ satisfying
\begin{enumerate}
\item $\psi(x) \geq 1$ for $|x| \leq 1/2$;
\item $\psi(x) = 0$ for $|x| > 1$;
\item $\int \psi \ll 1$;
\item $\Vert \hat{\psi} \Vert_1 \ll 1$;
\item $\int_{|\xi| > X} |\hat{\psi}(\xi)| \ll X^{-2}$ for $X \geq 1$.
\end{enumerate}
Then one may take $\chi_{\delta} := \psi(\delta^{-1} x)$, and the five properties of $\chi$ in the lemma follow from the five properties of $\psi$ just stated using $\hat{\chi}_{\delta}(\xi) = \delta \hat{\psi}(\xi \delta)$, and taking $X = \delta^{-1}$. A function with these properties is
\[ \psi(x) := 16 \cdot 1_{[-3/4, 3/4]} \ast 1_{[-1/8, 1/8]} \ast 1_{[-1/8, 1/8]}.\]
Properties (1), (2), (3) are easily checked. For (4) and (5), one may compute the Fourier transform explicitly and thereby obtain the bound $\hat{\psi}(\xi) \ll \min(1, |\xi|^{-3})$, from which (4) and (5) both follow straight away.
\end{proof}

Now we turn to some cutoff functions with compact support in frequency space.

\begin{lemma}\label{band-limited-r}
There is a function $\psi : \R \rightarrow \R$ satisfying the following:
\begin{enumerate}
\item $\psi \geq 0$ everywhere, and $\psi(x) \geq 1$ for $|x| \leq 1$
\item $\hat{\psi}(y) = 0$ for $|y| \geq 1$;
\item $\int \psi \leq 5$.
\end{enumerate}
\end{lemma}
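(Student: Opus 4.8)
The statement to prove is Lemma~\ref{band-limited-r}: the existence of a function $\psi : \R \to \R$ with $\psi \geq 0$, $\psi(x) \geq 1$ for $|x| \leq 1$, $\hat\psi$ supported in $[-1,1]$, and $\int \psi \leq 5$. The plan is to build $\psi$ as the square of the Fourier transform of a suitable bump, so that positivity is automatic and the frequency support is controlled by convolution. First I would fix a nonnegative even function $\phi$ supported on $[-\tfrac12,\tfrac12]$ with $\phi \not\equiv 0$, say a Fej\'er-type or smooth-tent profile, and set $\psi := \lambda \, |\check\phi|^2$ for an appropriate normalising constant $\lambda > 0$, where $\check\phi(x) = \int \phi(y) e(xy)\,dy$. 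Then $\psi \geq 0$ everywhere for free, and since $|\check\phi|^2 = \check\phi \cdot \overline{\check\phi}$ has Fourier transform $\phi \ast \tilde\phi$ (with $\tilde\phi(y) = \overline{\phi(-y)} = \phi(y)$), which is supported on $[-\tfrac12,\tfrac12] + [-\tfrac12,\tfrac12] = [-1,1]$, property (2) holds.

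The remaining two properties are just a matter of scaling $\phi$ and choosing $\lambda$. For (3), Parseval gives $\int \psi = \lambda \int |\check\phi|^2 = \lambda \int \phi^2$, so $\int \psi$ is a fixed finite multiple of $\lambda$. For (1), I need $\psi(x) \geq 1$ on $[-1,1]$; here the key point is that $\check\phi(0) = \int \phi$ is a positive number, and $\check\phi$ varies slowly near the origin because $\phi$ is supported in a bounded set — concretely $|\check\phi(x) - \check\phi(0)| \leq \int \phi(y)|e(xy) - 1|\,dy \leq 2\pi |x| \int |y|\phi(y)\,dy \leq \pi |x| \int \phi$, so on $|x|\le 1$ one has $\check\phi(x) \geq (1-\pi)\check\phi(0)$, which unfortunately can be negative. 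The cleaner fix is to rescale: replace $\phi$ by $\phi_t(y) := t^{-1}\phi(y/t)$ for small $t$ (still supported in $[-\tfrac{t}{2},\tfrac{t}{2}] \subseteq [-\tfrac12,\tfrac12]$), so that $\check\phi_t(x) = \check\phi(tx)$ is essentially constant on $[-1,1]$ when $t$ is small; then $|\check\phi_t(x)|^2 \geq \tfrac12 (\int\phi)^2$ on $[-1,1]$, and I choose $\lambda$ so that $\psi(x) \geq 1$ there. Finally I check $\int\psi = \lambda t^{-1}\int\phi^2$ stays $\leq 5$: with $\lambda \asymp (\int\phi)^{-2}$ and $t$ a fixed small constant, this is a single numerical inequality, easily arranged (e.g. take $\phi = 1_{[-t/2,t/2]}$, compute $\check\phi(x) = t\,\mathrm{sinc}(\pi t x)$ explicitly, pick $t = \tfrac1{10}$, and verify $\lambda = 2$ works and $\int\psi = 2t \leq 5$).

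I expect no genuine obstacle here; the only thing to be careful about is making the two competing requirements — $\psi \geq 1$ on an interval of length $2$ versus $\int\psi \leq 5$, with $\hat\psi$ confined to an interval of length $2$ — simultaneously satisfiable, which is exactly why the scaling parameter $t$ is introduced: shrinking $t$ flattens $\check\phi_t$ near $0$ (helping (1)) at the cost of shrinking $\int\phi_t^2$ (helping (3)), so both improve together. An alternative, if one prefers to avoid explicit sinc computations, is to take $\phi$ itself smooth so that $\check\phi$ decays rapidly, but that is not needed: the crude bounds above suffice. I would present the argument with the explicit choice $\phi = 1_{[-1/20,1/20]}$ (so the support of $\phi \ast \phi$ is $[-1/10,1/10] \subseteq [-1,1]$, comfortably inside), $\psi = c|\check\phi|^2$ with $c$ chosen to make $\psi(0) = 1/\text{(something} < 1)$ on $[-1,1]$, and then verify $\int\psi$ against $5$ in one line.
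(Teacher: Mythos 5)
Your approach is the same as the paper's: the paper takes $\psi(x) = \frac{\sin^2 x}{x^2 \sin^2 1}$, which is exactly a Fej\'er kernel $\lambda|\check\phi|^2$ with $\phi$ a normalised indicator of an interval, so the strategy ``square of a band-limited function'' is correct. However, there is a genuine error in how you resolve the trade-off between (1) and (3), and the concluding numerical check fails. You correctly record $\int\psi = \lambda t^{-1}\int\phi^2$ (for $\phi_t = t^{-1}\phi(\cdot/t)$), but then assert that shrinking $t$ ``shrinks $\int\phi_t^2$, helping (3), so both improve together.'' In fact $\int\phi_t^2 = t^{-1}\int\phi^2$ \emph{grows} as $t\to 0$: conditions (1) and (3) pull in opposite directions, and the limit $t\to 0$ is the worst possible choice for (3). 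Consequently the final line ``pick $t=\tfrac{1}{10}$ ... $\lambda = 2$ works and $\int\psi = 2t\le 5$'' is wrong on both counts: the integral is $\lambda/t$, not $\lambda t$, and with $\lambda=2$, $t=\tfrac1{10}$ either (1) fails (if you use $\phi = 1_{[-t/2,t/2]}$ you get $\psi(0) = 2t^2 = 0.02$) or, if you normalise so $\check\phi_t(0)=1$, then $\psi\ge 1$ on $[-1,1]$ but $\int\psi = 2/t = 20 > 5$.

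The fix is to take $t$ of \emph{moderate} size rather than small. Writing $\int\psi = \lambda/t$ with the minimal permissible $\lambda = \mathrm{sinc}^{-2}(\pi t)$ (where $\mathrm{sinc}(u) = \sin u / u$), one is minimising $1/(t\,\mathrm{sinc}^2(\pi t)) = \pi^2 t/\sin^2(\pi t)$ over $t\in(0,1)$, which achieves its minimum around $t\approx 0.37$ with value $\approx 4.3$, and is already $< 5$ for $t=1/\pi$. The paper's function corresponds exactly to $t = 1/\pi$ in your parametrisation: $\widehat{\sin^2 x/x^2}(\xi) = \pi(1-\pi|\xi|)_+$ is supported on $|\xi|\le 1/\pi < 1$, the bound $\psi\ge 1$ on $[-1,1]$ is monotonicity of $\sin x/x$ on $[0,1]$, and $\int\psi = \pi/\sin^2 1 \approx 4.44 < 5$. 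So the construction you propose does work once the direction of the scaling and the arithmetic are corrected; as written, the proof as stated would not verify (3).
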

\begin{proof}
Take $\psi(x) :=  \frac{\sin^2 x}{x^2 \sin^2 1}$.
Then (1) is immediate. For (2) and (3), observe the Fourier transform $\int^{\infty}_{-\infty} \frac{\sin^2 x}{x^2} e(-\xi x)dx = \pi (1 - \pi |\xi|)_+$, and then finally use $\pi < 5 \sin^2 1$.
\end{proof}

\begin{lemma}\label{cutoff-for-dio}
There is a smooth cutoff $\chi : \T^D \rightarrow [0,\infty)$ satisfying
\begin{enumerate}
\item $\chi(x) \geq 1$ for $\Vert x \Vert_{\T^D} \leq X^{-1/D}$;
\item $\int \chi \leq 5^D X^{-1}$;
\item $\hat{\chi}(\xi) = 0$ for $|\xi| \geq X^{1/D}$; 
\end{enumerate}
\end{lemma}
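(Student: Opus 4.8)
The statement to prove is Lemma \ref{cutoff-for-dio}: the existence of a smooth cutoff $\chi : \T^D \rightarrow [0,\infty)$ with $\chi \geq 1$ on the box of radius $X^{-1/D}$, with $\int \chi \leq 5^D X^{-1}$, and with $\hat\chi$ supported on $|\xi| < X^{1/D}$.

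\medskip

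The plan is to build $\chi$ as a $D$-fold tensor product of a one-dimensional band-limited cutoff, reducing everything to Lemma \ref{band-limited-r}. First I would take $\psi : \R \rightarrow \R$ as provided by Lemma \ref{band-limited-r}: nonnegative everywhere, $\geq 1$ on $[-1,1]$, with $\int_\R \psi \leq 5$ and $\hat\psi(y) = 0$ for $|y| \geq 1$. Rescale by setting $\psi_X(t) := \psi(X^{1/D} t)$, so that $\psi_X \geq 1$ for $|t| \leq X^{-1/D}$, $\int_\R \psi_X = X^{-1/D}\int_\R \psi \leq 5 X^{-1/D}$, and (since $\widehat{\psi_X}(y) = X^{-1/D}\hat\psi(X^{-1/D} y)$) the Fourier transform $\widehat{\psi_X}$ is supported on $|y| < X^{1/D}$. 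Then define, for $x = (x_1,\dots,x_D) \in \T^D$,
\[ \chi(x) := \prod_{j=1}^D \Big( \sum_{\lambda \in \Z} \psi_X(x_j + \lambda) \Big), \]
i.e. the periodisation of $\psi_X^{\otimes D}$ pushed down to the torus. One should check this sum converges absolutely and defines a smooth function: since $\widehat{\psi_X}$ is compactly supported, $\psi_X$ is the (inverse) Fourier transform of an $L^1$, compactly supported function, hence Schwartz-class and in particular rapidly decaying, so the periodisation converges and is smooth.

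\medskip

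Next I would verify the three properties. For (1): if $\Vert x\Vert_{\T^D} \leq X^{-1/D}$ then for each $j$ there is a unique integer $\lambda_j$ with $|x_j + \lambda_j| \leq X^{-1/D}$ (namely $\lambda_j$ such that $x_j + \lambda_j = \pi^{-1}(x)_j$), and all other terms in the inner sum are nonnegative; hence the inner sum is $\geq \psi_X(x_j + \lambda_j) \geq 1$, so the product is $\geq 1$. For (2): by Fubini/Tonelli (all terms nonnegative) and unfolding the periodisation,
\[ \int_{\T^D} \chi = \prod_{j=1}^D \int_{\T} \sum_{\lambda \in \Z} \psi_X(x_j + \lambda)\, dx_j = \prod_{j=1}^D \int_\R \psi_X(t)\, dt \leq (5 X^{-1/D})^D = 5^D X^{-1}. \]
For (3): by Poisson summation, the Fourier coefficients of the one-dimensional periodisation $\sum_\lambda \psi_X(\cdot + \lambda)$ at frequency $\xi_j \in \Z$ are exactly $\widehat{\psi_X}(\xi_j)$, which vanish for $|\xi_j| \geq X^{1/D}$; since $\chi$ is a tensor product, $\hat\chi(\xi) = \prod_j \widehat{\psi_X}(\xi_j)$, and this vanishes unless $|\xi_j| < X^{1/D}$ for all $j$, i.e. unless $|\xi| = \max_j |\xi_j| < X^{1/D}$. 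That gives (3) (with the convention $|\xi| = \Vert\xi\Vert_\infty$ as fixed in Section \ref{notation-sec}).

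\medskip

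There is essentially no hard part here — it is a routine tensorisation-and-periodisation argument — but the one point deserving a word of care is the justification that the periodised series converges and that Poisson summation applies; this is immediate from the fact that $\psi_X$ is band-limited, hence Schwartz, so I would state that explicitly rather than gloss over it. (One might alternatively note that Lemma \ref{band-limited-r} exhibits $\psi(x) = \sin^2 x/(x^2\sin^2 1)$ concretely, which is manifestly $O(x^{-2})$, so even without invoking Schwartz decay the periodisation converges absolutely and uniformly.) No normalisation subtleties arise because the hypotheses of the lemma leave an enormous amount of slack ($5^D$ versus the sharp $\sim (\pi/2)^D$).
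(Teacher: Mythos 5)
Your proof is correct and follows essentially the same route as the paper: both reduce to the one-dimensional case, take the band-limited $\psi$ from Lemma~\ref{band-limited-r}, rescale by $X^{1/D}$, periodise to $\T$, and tensorise; your Poisson-summation step and the paper's unfolding computation of $\hat\phi$ are the same calculation. Your added remark on the absolute convergence of the periodisation (via Schwartz decay or the explicit $O(x^{-2})$ bound) is correct and fills in a detail the paper leaves implicit.
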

\begin{proof}
It suffices to prove the following $1$-dimensional result: given $\eps > 0$, there is $\phi = \phi_{\eps}: \T \rightarrow [0,\infty)$ such that 
\begin{enumerate}
\item $\phi(x) \geq 1$ for $\Vert x \Vert_{\T} \leq \eps$;
\item $\int \phi \leq 5 \eps$;
\item $\hat{\phi}(\xi) = 0$ for $|\xi| \geq 1/\eps$, $\xi \in \Z$.
\end{enumerate}
Indeed, one may then take $\eps = X^{-1/D}$ and $\chi(x_1,\dots, x_D) = \prod_{i=1}^D \phi(x_i)$ to satisfy the desiderata of the lemma.

It remains to construct $\phi$. With a slight abuse of notation, we construct $\phi$ as a $1$-periodic real function rather than a function on $\T$ (they are, of course, basically the same thing). To do this, take the function $\psi$ constructed in Lemma \ref{band-limited-r}, and set.
\[ \phi(x) := \sum_{n \in \Z} \psi(\eps^{-1}(x + n)).\]  Then $\phi$ is a smooth function on $\T$ taking non-negative values, and it satisfies (1) above (just by taking the term $n = 0$ in the sum).
By unfolding the sum we have
\[ \int_{\T} \phi(x) = \int_{t \in \R} \psi(\eps^{-1} t) dt \leq 5 \eps, \] by change of variables and Lemma \ref{band-limited-r} (3).
Turning to the Fourier transform, if $\xi \in \Z$ then 
\begin{align*} \hat{\phi}(\xi) & = \int^1_0 \sum_{n \in \Z} \psi(\eps^{-1}(x + n))e(-\xi x) \\ & = \int^1_0 \sum_{n \in \Z} \psi(\eps^{-1}(x + n))e(-\xi (x + n)) \\ & = \int_{\R} \psi (\eps^{-1} t) e(- \xi t) dt  = \eps \hat{\psi}(\xi \eps). \end{align*}
By Lemma \ref{band-limited-r} (2), this vanishes when $|\xi| \geq 1/\eps$.

\end{proof}

Finally, for use in Section \ref{first-step-sec} we construct a minorant function with compactly supported, nonnegative Fourier transform. For the construction we use a trick shown to me in a different context by Joni Ter\"av\"ainen.

\begin{lemma}\label{tera}
Let $D$ be sufficiently large and suppose that $\rho \leq D^{-4}$.  There is a function $\chi : \T^D \rightarrow \R$ such that
\begin{enumerate}
\item $\chi(x) \leq 0$ unless $x \in \pi(B_{\rho/10}(0))$, where $B_{\eps}(0) \subset \R^D$ is the Euclidean ball of radius $\eps$ and $\pi : \R^D \rightarrow \T^D$ is the natural projection;
\item $\hat{\chi}(\xi) \geq 0$ for all $\xi \in \Z^D$;
\item $\hat{\chi}$ is supported on $|\xi| \leq \rho^{-3}$;
\item $\int \chi  = 1$.
\item $\int |\chi| \leq 3$.
\end{enumerate}
\end{lemma}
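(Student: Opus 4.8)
The plan is to build the one‑dimensional function $\phi$ first, then tensor it up, exactly as the reduction in Lemma~\ref{cutoff-for-dio} suggests, but now we need the extra feature that $\hat\chi\geq 0$ and that $\chi$ is \emph{negative} outside the small ball rather than merely supported there. The trick (attributed to Ter\"av\"ainen) is to start from a nonnegative, band‑limited bump $u$ whose mass is concentrated near the origin, take $\chi_0 := u\ast u - \lambda$ for a suitable constant $\lambda>0$: then $\widehat{\chi_0} = \hat u^2 - \lambda\cdot 1_{\xi=0}$, which is nonnegative away from $\xi=0$, and a careful choice of $\lambda$ makes it nonnegative at $\xi=0$ too, while $\chi_0$ becomes negative wherever $u\ast u$ is small, i.e. outside a neighbourhood of $0$. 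One then normalises.

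Concretely, first I would fix a one‑dimensional $\psi$ as in Lemma~\ref{band-limited-r}, rescale it to $u(x) := \psi(c\rho^{-1} x)$ for a small absolute constant $c$, so that $\hat u$ is supported in $|\xi|\le \rho^{-3}/(2D)$ (there is lots of room since $\rho\le D^{-4}$), and $u$ is a nonnegative function of mass $\asymp \rho/c$ which is $\ge 1$ on $|x|\le \rho/c$. Set $U := u\ast u$, a nonnegative band‑limited function with $\widehat U = \hat u^2\ge 0$; $U(x)\gtrsim \rho^2$ for $|x|\le \rho/c$ while $U(x)\to 0$ as $|x|\to\infty$ at a polynomial rate (since $\hat u$ is $C^\infty$ of compact support, $u$ and hence $U$ decay faster than any polynomial). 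Now form the periodisation $\Phi(x) := \sum_{n\in\Z} U(x+n)$ on $\T$; its Fourier coefficients are $\widehat\Phi(\xi) = \widehat U(\xi)\ge 0$, supported on $|\xi|\le \rho^{-3}/(2D)$, and $\int_\T\Phi = \int_\R U = (\int u)^2 \asymp \rho^2/c^2$. Choosing $c$ small enough that $U$ (and its tails under periodisation) is smaller than, say, $\tfrac12\widehat\Phi(0)$ outside $\Vert x\Vert_\T\le \rho/20$, the function $\phi := \Phi - \tfrac12\widehat\Phi(0)$ satisfies $\phi(x)<0$ for $\rho/20 \le \Vert x\Vert_\T \le 1/2$, $\widehat\phi(\xi)\ge 0$ for all $\xi\ne 0$, $\widehat\phi(0) = \tfrac12\widehat\Phi(0)>0$, and $\widehat\phi$ supported on $|\xi|\le \rho^{-3}/(2D)$.

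Next I would tensor: set $\chi_1(x_1,\dots,x_D) := \prod_{i=1}^D \phi(x_i)$. Its Fourier transform is $\prod_i\widehat\phi(\xi_i)$, which is nonnegative (each factor is nonnegative) and supported on $|\xi|\le \rho^{-3}/(2D) \le \rho^{-3}$, giving (2) and (3). For the support property (1): if $x\notin \pi(B_{\rho/10}(0))$ then $\Vert x\Vert_{\T^D}>\rho/(10\sqrt D)$, so some coordinate has $\Vert x_i\Vert_\T > \rho/(10\sqrt D)\ge \rho/20$ for $D$ large — wait, that inequality goes the wrong way, so instead I use that $\Vert x\Vert_2 > \rho/10$ forces $\Vert x_i\Vert_\T > \rho/(10\sqrt D)$ for at least one $i$, and by shrinking the cutoff radius in the construction of $\phi$ (replace $\rho/20$ by $\rho/(20\sqrt D)$, still fine since $\rho\le D^{-4}$ leaves polynomial room in the frequency bound) we get $\phi(x_i)<0$ for that $i$ while $\phi(x_j)>0$ for the rest (they lie in the small ball), so $\chi_1(x)<0$; adjusting signs this gives (1). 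Finally normalise: $\chi := \chi_1/\int\chi_1$. We have $\int\chi_1 = (\int_\T\phi)^D = (\tfrac12\widehat\Phi(0))^D > 0$, so (4) holds. For (5) I would bound $\int|\chi_1| = \big(\int_\T|\phi|\big)^D$ and show $\int_\T|\phi| \le (1+o(1))\int_\T\phi$: the negative part of $\phi$ is $\le \tfrac12\widehat\Phi(0)$ in magnitude and supported on a set of measure $<1$ minus the tiny ball, so $\int_\T\phi^- \le \tfrac12\widehat\Phi(0)$ while $\int_\T\phi = \tfrac12\widehat\Phi(0)$; hence $\int_\T|\phi| = \int_\T\phi + 2\int_\T\phi^- \le \tfrac32\widehat\Phi(0) < 3^{1/D}\int_\T\phi$ for $D$ large, giving $\int|\chi|\le 3$.

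The main obstacle is quantitative: making the constant $3$ in (5) work requires that the negative part of $\phi$ has $L^1$ norm genuinely small compared to $\int\phi$, which in turn forces $c$ (equivalently the concentration scale of $u$) to be chosen carefully against $D$, and one must simultaneously keep the frequency support inside $|\xi|\le\rho^{-3}$ — here the slack $\rho\le D^{-4}$ versus a budget of $\rho^{-3}$ is exactly what saves us, but the bookkeeping of how small $c$ can be while respecting $\rho^{-3}/(2D)$ needs care. A cleaner route to (5), which I would use if the direct estimate is awkward, is to note $\int|\chi_1| = (\int|\phi|)^D$ and instead design $\phi$ so that $\int_\T|\phi| \le 1 + 1/D$ directly (rescaling $\phi$ so $\int\phi=1$ first and then tracking $\int\phi^-$), since $(1+1/D)^D \le e < 3$; this localises all the loss into a single one‑dimensional estimate and avoids any accumulation across the $D$ coordinates.
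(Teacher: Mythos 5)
The key step in your proposal --- tensoring a one-dimensional minorant $\phi$ to $\chi_1(x) := \prod_{i=1}^D \phi(x_i)$ --- does not give the required sign property~(1). Since $\phi$ takes both signs, $\chi_1(x) > 0$ whenever an \emph{even} number of coordinates $x_i$ fall in the region where $\phi < 0$; your argument implicitly assumes that if $x \notin \pi(B_{\rho/10}(0))$ then exactly one coordinate can be ``bad'' (you write ``$\phi(x_j) > 0$ for the rest, they lie in the small ball''), but this is false. For instance $x = (\rho/10, \rho/10, 0, \dots, 0)$ has $\Vert \pi^{-1}(x) \Vert_2 = \sqrt{2}\,\rho/10 > \rho/10$, so lies outside the ball, yet both $x_1$ and $x_2$ exceed your cutoff radius $\rho/(20\sqrt D)$, so both factors $\phi(x_1), \phi(x_2)$ are negative and $\chi_1(x) > 0$. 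The root of the problem is the order of operations: you subtract a constant in one dimension and then tensor, producing $\prod_i\bigl(\Phi(x_i) - \lambda\bigr)$, whereas what would be needed is to tensor the nonnegative function first and subtract a single constant on the full torus, $\prod_i \Phi(x_i) - \Lambda$, so that any single bad coordinate drags the whole product below $\Lambda$.

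The paper's proof sidesteps this entirely by building $\psi$ from a $k$-th power of a \emph{sum} rather than a product: with $k = \lfloor\rho^{-3}\rfloor$ it takes $\psi(x) = 4^k\bigl(\cos^2\pi x_1 + \cdots + \cos^2\pi x_D\bigr)^k - 4^k\bigl(D - \rho^{7/3}\bigr)^k$. The base sum drops below $D - \rho^{7/3}$ as soon as \emph{any one} coordinate exceeds $\rho^{7/6}$, so raising to the $k$-th power and subtracting the constant makes $\psi \leq 0$ outside a small $\ell^\infty$-box contained in $\pi(B_{\rho/10}(0))$. The Fourier nonnegativity away from $\xi = 0$ comes for free because $\bigl(2D + \sum_i(e(x_i)+e(-x_i))\bigr)^k$ is a power of a trigonometric polynomial with nonnegative coefficients, and subtracting a constant only perturbs $\hat\psi(0) = \int\psi$, which is shown to remain positive; the $\int|\chi| \leq 3$ bound then follows by comparing $\int\psi_-$ to $\int\psi$. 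Your underlying idea --- an autocorrelation-type band-limited bump minus a constant, normalised at the end --- is in the right spirit, but the ``subtract then tensor'' step is a genuine gap, and fixing it would essentially lead you back to a ``tensor (or sum) then subtract'' construction like the paper's.
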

\begin{proof}
Rather than working with Euclidean balls and the $\ell^2$-norm, it is easier to work with the distance $\Vert \cdot \Vert_{\T^D}$ directly. Note that $D^{1/2}\Vert \pi(t) \Vert_{\T^D} \geq \Vert t \Vert_2$, so it suffices to replace (1) by the stronger condition (1') that $\chi(x) \leq 0$ outside of the box $\Vert x \Vert_{\T^D} \leq \rho^{7/6} < D^{-1/2}\rho/10$.

Let $k = \lfloor \rho^{-3} \rfloor$. Consider
\begin{align*} \psi(x) & := (2D + \sum_{i=1}^D(e(x_i) + e(-x_i)) )^k - 4^k (D - \rho^{7/3})^k \\ & 
=  4^k (\cos^2(\pi x_1) + \dots + \cos^2(\pi x_D))^{k} - 4^k(D - \rho^{7/3})^{k} .\end{align*}
Since $\cos^2 (\pi t) \leq 1 - t^2$ for $|t| \leq \frac{1}{2}$, if $\Vert x \Vert_{\T^D} > \rho^{7/6}$ then we have
\begin{equation}\label{x1} 0 \leq \cos^2 (\pi x_1) + \cdots + \cos^2 (\pi x_D) \leq D - \rho^{7/3}\end{equation} and so $\psi(x) \leq 0$. It is clear by expanding out the definition that $\hat{\psi}(\xi)$ is supported on $|\xi| \leq k$ (in fact on $\Vert \xi \Vert_1 \leq k$), and also that $\hat{\psi}(\xi) \geq 0$ except possibly at $\xi = 0$.  

To get a lower bound for $\int \psi$, we use the inequality $\cos^2 \pi t \geq 1 - \pi^2t^2$, which is valid for $|t| \leq \frac{1}{2}$, to conclude that if $\Vert x_i \Vert_{\T}  \leq 1/4\sqrt{k}$ for all $i$ then
\[(\cos^2 (\pi x_1) + \dots + \cos^2(\pi x_D))^k \geq D^k (1  - \frac{1}{k})^k \geq \frac{1}{3} D^k.  \] Therefore
\begin{equation}\label{lower-cos-2} \int_{\T^D} (\cos^2(\pi x_1) + \dots + \cos^2(\pi x_D))^k \geq \frac{1}{3}(2 k^{-1/2})^D D^k > 2 k^{-D} D^k.\end{equation}

By contrast, using $k = \lfloor \rho^{-3}\rfloor$, $\rho \leq D^{-4}$ and the fact that $D$ is large, we see that  \begin{equation}\label{substantial} (D - \rho^{7/3})^{k} \leq D^k e^{-\rho^{7/3} k/D} <  k^{-D} D^k .\end{equation}
Therefore, comparing with \eqref{lower-cos-2}, we see that $\int \psi >k^{-D}(4D)^k$.

Now define $\chi := (\int \psi)^{-1}\psi$. Then, from what has been said above, (1'), (2), (3) and (4) all hold. 

It remains to establish (5). For this write $\psi = \psi_+ - \psi_-$ in positive and negative parts, and note that $\psi_- \leq 4^k (D - \rho^{7/3})^k$ pointwise. By \eqref{substantial} it follows that $\int \psi_- < k^{-D} (4D)^k < \int \psi$.  Since $|\psi| = \psi + 2 \psi_-$, it follows that $\int |\psi| \leq 3 \int \psi$, and (5) follows immediately.

\end{proof}

\end{document}